\newcommand{\Stab}{\mathrm{Stab}}
\newcommand{\lab}{\mathrm{lab}}
\newcommand{\pl}{\mathrm{PL}}
\newcommand{\Asum}{\mathcal A^{\mathrm{sum}}}
\newcommand{\Asuf}{\mathcal A^{\mathrm{suf}}}
\newcommand{\sump}{\mathrm{sum}_p}
\newcommand{\Psum}{\mathcal P^{\mathrm{sum}}}
\newcommand{\Psuf}{\mathcal P^{\mathrm{suf}}}
\newcommand{\sufp}{\mathrm{suf}_p}
\newcommand{\Fp}{\arr{F}_p}
\newcommand{\CFp}{\mathcal C(\arr{F}_p)}
\newcommand{\C}{\mathcal C}
\newcommand{\Ssum}{S^{\mathrm{sum}}}
\newcommand{\Ssuf}{S^{\mathrm{suf}}}
\newcommand{\la}{\langle}
\newcommand{\ra}{\rangle}
\renewcommand{\P}{\mathcal{P}}
\newcommand{\ddd}{{\mathcal {DG}}}
\newcommand{\be}{\begin{equation}}
\newcommand{\ee}{\end{equation}}
\newcommand {\N}{\mathbb{N}} %% positive integers
\newcommand {\iv}{^{-1}}
\newcommand {\ab}{\mathrm{ab}}
\newcommand{\Cl}{\mathrm{Cl}}
\numberwithin{equation}{section}
\newcounter{AbcT}
\newcommand{\nc}{\newcommand}
\nc{\meet}{\wedge}
\nc{\op}{\operatorname}\nc{\FP}{\op{FP}}\nc{\FS}{\op{FS}}\nc{\FPhat}{\widehat{\op{FP}}}
\newtheorem {Theorem}    {Theorem}[section]
\newtheorem {Problem}    [Theorem]{Problem}
\newtheorem {Lemma}      [Theorem]    {Lemma}
\newtheorem {Corollary}   [Theorem] {Corollary}
\newtheorem {Proposition}[Theorem]    {Proposition}
\newtheorem {Example}      [Theorem]    {Example}
\theoremstyle{remark}
\newtheorem {Remark}		 [Theorem]    {\bf{Remark}}
\newtheorem {Definition} [Theorem]    {\bf{Definition}}
\newcommand{\1}{\mathbf{1}}
\newcommand{\arr}{\overrightarrow}
\newcommand{\A}{\mathcal A}
\begin{document}

\title{On Maximal Subgroups of Thompson's Group $F$}

\author{Gili Golan Polak\thanks{The research was supported by ISF grant 2322/19.}}
\date{}
\maketitle
%\begin{center}
%	{\large {	\textit{Dedicated to the memory of Mark V. Sapir}}}
%\end{center}

\abstract{
	We study subgroups of Thompson's group $F$ by means of an automaton associated with them. We prove that every maximal subgroup of $F$ of infinite index is \emph{closed}, that is, it coincides with the subgroup of $F$ accepted by the automaton associated with it. It follows that every finitely generated maximal subgroup of $F$ is undistorted in $F$. We also prove that every finitely generated  subgroup of $F$ is contained in a finitely generated maximal subgroup of $F$ and construct an infinite family of non-isomorphic maximal subgroups of infinite index in $F$. 
 }

\section{Introduction}

Recall  that $R.$ Thompson's group $F$ is the group of all piecewise-linear homeomorphisms of the interval $[0,1]$, where all breakpoints are dyadic fractions (i.e., elements of the set $\mathbb{Z}[\frac{1}{2}]\cap (0,1)$) and all slopes are integer powers of $2$. The group $F$ is finitely presented, does not contain free non-abelian subgroups and satisfies many other remarkable properties.

In \cite{Sav,Sav1} Savchuk initiated the study of maximal subgroups of Thompson's group $F$ by proving that for every number $\alpha$ in $(0,1)$ the stabilizer of $\alpha$ in $F$ (i.e., the subgroup of $F$ of all functions which fix $\alpha$) is a maximal subgroup of $F$. He asked whether these are all the maximal subgroups of infinite index in $F$ (maximal subgroups of $F$ of finite index are in one-to-one correspondence with maximal subgroups of its abelianization $\mathbb{Z}^2$ and are well understood).

In answer to this problem, in \cite{GS1}, Sapir and the author constructed an explicit maximal subgroup of infinite index in $F$ which does not fix any number in $(0,1)$. Recall that Jones showed that elements of $F$ encode in a natural way all links and knots and that elements of a subgroup of $F$, denoted $\arr{F}$, encode in a natural way all oriented links and knots \cite{Jones} (see also \cite{Jones2,A1}).  The explicit maximal subgroup of $F$  constructed in \cite{GS1} was the image of $\arr{F}$ under an injective endomorphism of $F$.

In addition to the explicit maximal subgroup of $F$ constructed in \cite{GS1}, we also gave a method for proving the existence of many other maximal subgroups of $F$ (but the method did not yield explicit examples). 
Improving on that method, in \cite{G}, the author demonstrated a method for constructing explicit examples of maximal subgroups of $F$. In that paper, $3$ new explicit examples of maximal subgroups of $F$ were constructed. One of the examples served as a strong counterexample to Savchuk's problem. Indeed, one of the maximal subgroups of $F$ constructed in \cite{G} acts transitively on the set of dyadic fractions in the interval $(0,1)$. In \cite{AN}, Aiello and Nagnibeda  constructed $3$ more explicit examples of maximal subgroups of $F$ (relying on the method from \cite{GS1} to a certain extent). 

Note that the constructions of all known maximal subgroups of $F$ of infinite index, other than Savchuk's subgroups and the first explicit example isomorphic to Jones' subgroup, relied  on the construction of the Stallings $2$-core of subgroups of $F$. 

The Stallings $2$-core (or the core, for short) of a subgroup $H$ of $F$ was defined in \cite{GS1} in an analogous way to the Stallings core of a subgroup of a free group. Recall that elements of Thompson's group $F$ can be viewed as diagrams over directed $2$-complexes (see \cite{GuSa97,GuSa99}) or as tree-diagrams (i.e., as pairs of finite binary trees, see Section \ref{sec:tree} below). In \cite{GS1,G} we considered elements of $F$ as diagrams over directed $2$-complexes and defined the core of a subgroup of $F$ in those terms. 
In this paper, we prefer the more standard approach to elements of $F$ in terms of tree-diagrams. In these terms, the core of a subgroup $H$ of $F$, denoted $\C(H)$, can be defined as a rooted tree-automaton (that is, a directed edge-labeled graph with a distinguished ``root'' vertex which satisfies certain properties, see Definition \ref{def:tree-aut}) associated with the subgroup. The core of a subgroup $H$ of $F$ accepts some of the tree-diagrams in $F$ (see Definition \ref{def:accepts} below). 

By construction, the core $\mathcal C(H)$ accepts all tree-diagrams in $H$, but unlike in the case of free groups, the core $\mathcal C(H)$ can accept tree-diagrams not in $H$. We defined the \emph{closure} of $H$ to be the subgroup of $F$ of all tree-diagrams accepted by the core $\mathcal C(H)$. The closure operation satisfies the usual properties of closure. Namely, $H\leq\Cl(H)$, $\Cl(\Cl(H))=\Cl(H)$ and if $H_1\leq H_2$ then $\Cl(H_1)\leq \Cl(H_2)$. We say that a subgroup $H$ of $F$ is \emph{closed} if $H=\Cl(H)$. If $H$ is finitely generated, then its core $\mathcal C(H)$ is a finite automaton and it is decidable whether  %there is an algorithm deciding if 
a given tree-diagram in $F$ is accepted by $\mathcal C(H)$. Hence, if $H$ is finitely generated then the membership problem in the closure of $H$ is decidable. Note that if $H$ is finitely generated then its closure $\Cl(H)$ is also finitely generated \cite{GS3}. 
Note also that the closure of subgroups of $F$ can also be described when $F$ is viewed as a group of homeomorphisms of the interval $[0,1]$. 
 Indeed, by \cite[Theorem 5.6]{G}, the closure of a subgroup $H$ of $F$ is the subgroup of $F$ of all piecewise-$H$ functions. In particular, a subgroup $H$ of $F$ is closed if and only if every piecewise-$H$ function in  $F$ belongs to $H$. 

In this paper, we prove the following. 

\begin{Theorem}\label{thm:int1}
	All maximal subgroups of $F$ which have infinite index in $F$ are closed. 
\end{Theorem}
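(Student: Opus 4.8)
The plan is to let maximality collapse the problem to a single dichotomy and then to exclude the bad alternative. The closure operation satisfies $M\le\Cl(M)$, is monotone, and $\Cl(M)$ is again a subgroup of $F$ — it is the group of all piecewise-$M$ functions by \cite[Theorem 5.6]{G}. Hence the chain $M\le\Cl(M)\le F$ together with the maximality of $M$ forces $\Cl(M)=M$ or $\Cl(M)=F$. In the first case $M$ is closed and we are done, so the entire content of the theorem is the claim that a maximal subgroup $M$ of \emph{infinite} index cannot satisfy $\Cl(M)=F$. I would therefore argue by contradiction: assume $M$ is maximal with $\Cl(M)=F$, and aim to deduce that $[F:M]<\infty$.

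Before attacking this, it is worth recording that infinite index alone is not enough and that maximality must be used in an essential way. Indeed, if $\phi\colon F\to\Z$ is a character whose class and its negative both lie in the Bieri--Neumann--Strebel invariant of $F$ (for instance the difference of the two endpoint-slope characters), then $\ker\phi$ is a finitely generated proper subgroup of infinite index, yet $\Cl(\ker\phi)=F$: near each endpoint of $[0,1]$, and at each interior dyadic point, $\ker\phi$ already realizes every germ of $F$. Thus $\Cl(M)=F$ does not by itself bound the index, and any argument that discards maximality — for example passing to a finitely generated $M_0\le M$ with $\Cl(M_0)=F$, which exists because finitely many elements of $M$ suffice to express $x_0,x_1$ piecewise — is doomed. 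The proof must genuinely exploit that the only subgroups between $M$ and $F$ are $M$ and $F$.

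So, assuming $\Cl(M)=F$, I would unwind the piecewise-$M$ description into a \emph{germ-surjectivity} statement: at every dyadic point, and along every branch of the binary tree, $M$ already realizes all of the local behavior of $F$. One clean consequence is that $M$ can fix no structure whose stabilizer is closed; for example $\Stab(\alpha)$ is closed, since a piecewise-$\Stab(\alpha)$ function still fixes $\alpha$, so $M\le\Stab(\alpha)$ would give $\Cl(M)=\Stab(\alpha)\ne F$. I would then translate germ-surjectivity into a strong transitivity of the $M$-action on the dyadic rationals of $(0,1)$ (equivalently on the leaves of the binary tree, or the Cantor set), and try to feed this, together with the primitivity of the coset action on $M\backslash F$ supplied by maximality, into an analysis of the core automaton $\C(M)$: the equality $\Cl(M)=F$ means $\C(M)$ accepts every tree-diagram, and the goal is to read off from the structure of $\C(M)$ a bound on $[F:M]$.

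The step I expect to be the main obstacle is precisely this last bridge — upgrading the \emph{local}, germ-level density encoded by $\Cl(M)=F$ to a \emph{global} statement about the coset structure of $M$. Germ-surjectivity is intrinsically local and, as the character-kernel example shows, is fully compatible with infinite index in the absence of maximality; the whole difficulty is to prove that once $M$ is maximal this local density propagates globally, forcing finite index (or producing an intermediate subgroup $M<H<F$, contradicting maximality directly). This is where the specialized machinery relating $\C(M)$ to the action of $F$ on the interval must do the real work, and I anticipate the delicate point to be ensuring that the finitely many local patterns witnessing $\Cl(M)=F$ can be organized, using maximality, into honest global coset information rather than merely germwise agreement.
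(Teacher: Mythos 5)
Your reduction is correct as far as it goes: maximality forces $\Cl(M)\in\{M,F\}$, so the theorem amounts to excluding $\Cl(M)=F$ for a maximal $M$ of infinite index, and your character--kernel example correctly shows that $\Cl(M)=F$ alone is compatible with infinite index. But the proposal stops exactly where the proof has to begin: the ``bridge'' you flag as the main obstacle is the entire content of the theorem, and the direction you sketch for crossing it (transitivity on dyadics plus primitivity of the coset action on $M\backslash F$, aiming to bound $[F:M]$ from the structure of $\C(M)$) is not carried out and is not how the argument actually works. In particular, your claim that ``any argument that discards maximality is doomed'' is misleading: in the paper maximality is used only twice, and only to establish two soft conditions. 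First, $M<\Cl(M)$ plus maximality gives $\Cl(M)=F$, hence $[F,F]\subseteq\Cl(M)$. Second, maximality plus infinite index means $M$ lies in no proper finite-index subgroup, which by Remark~\ref{rem:H[F,F]} (every proper subgroup of $\mathbb{Z}^2$ sits inside a finite-index one) gives $M[F,F]=F$. After that, maximality is never used again.

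The real work is Corollary~\ref{main cor} (Theorem~\ref{thm:int3}): \emph{any} subgroup $H$ with $H[F,F]=F$ and $[F,F]\subseteq\Cl(H)$ equals $F$. Your example is consistent with this because it violates the first condition, not the second; the missing global input is the abelianization condition, not primitivity. The proof of Corollary~\ref{main cor} does not bound an index; it shows $H\supseteq[F,F]$ outright, by combining the earlier criterion of \cite{G} (Theorem~\ref{thm:der}: it suffices to produce $h\in H$ fixing a dyadic $\alpha$ with $h'(\alpha^-)=2$, $h'(\alpha^+)=1$) with Proposition~\ref{main_pro}, whose proof is a bookkeeping argument with the $2$-tuples $(\ell_1(u_i)-\ell_1(v_i),\,\ell_0(u_{i+1})-\ell_0(v_{i+1}))$ attached to consecutive pairs of branches: one shows these tuples land in the subgroup $\mathcal S_H\leq\mathbb{Z}^2$ of germ data of $H$ at fixed dyadic points, and that a telescoping sum of them over a suitably conjugated and normalized element of $H$ forces $(1,0)\in\mathcal S_H$. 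None of this appears in your proposal, so as written there is a genuine gap: you have correctly framed the problem but not proved the statement.
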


Theorem \ref{thm:int1} answers  \cite[Problem 5.11]{GS1}. Note that Theorem \ref{thm:int1} implies that the membership problem is decidable in every finitely generated maximal subgroup of $F$. In \cite{GS3}, Sapir and the author proved that every finitely generated closed subgroup of $F$ is undistorted in $F$. Hence all finitely generated maximal subgroups of $F$ are undistorted in $F$.

Recall that in \cite{G}, we used the core of subgroups of $F$ to give a solution to the generation problem in $F$ (i.e., to give an algorithm which given a finite set of elements in $F$ determines whether it generates $F$). Indeed, in \cite{G}, we proved the following. 

\begin{Theorem}[{\cite[Corollary 1.4]{G}}]\label{thm:int2}
	Let $H$ be a subgroup of $F$. Then $H=F$ if and only if the following conditions hold. 
	\begin{enumerate}
		\item[$(1)$] $H[F,F]=F$
		\item[$(2)$] $[F,F]\leq \Cl(H)$
		\item[$(3)$] There is a function $h\in H$ which fixes a dyadic fraction $\alpha\in(0,1)$ such that the slope $h'(\alpha^-)=2$ and the slope $h'(\alpha^+)=1$. 
	\end{enumerate}
\end{Theorem}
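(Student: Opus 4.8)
The plan is to prove both implications, with essentially all the work in the reverse direction.

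\textbf{The forward implication} is routine. If $H=F$ then condition $(1)$ reads $F[F,F]=F$ and condition $(2)$ reads $[F,F]\le\Cl(F)=F$, both of which are immediate. For $(3)$ it suffices to exhibit a single element of $F$ with the stated germ: for instance, fix $\alpha=\tfrac12$, let $f$ be the identity on $[\tfrac12,1]$ and a rescaled copy of $x_0^{-1}$ on $[0,\tfrac12]$, so that $f$ fixes $\tfrac12$ with $f'(\tfrac12^-)=2$ and $f'(\tfrac12^+)=1$.

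\textbf{Reverse implication, first reduction.} First I would show that $(1)$ and $(2)$ together are equivalent to $\Cl(H)=F$. Since $H\le\Cl(H)$ always and $[F,F]\le\Cl(H)$ by $(2)$, the product set satisfies $F=H[F,F]\subseteq\Cl(H)$, so $\Cl(H)=F$. Conversely, because the piecewise-$H$ description of the closure (\cite[Theorem 5.6]{G}) shows that the first and last pieces determine the germs at $0$ and $1$, the closure does not enlarge the image of $H$ in $F/[F,F]\cong\Z^2$; hence $\Cl(H)=F$ already forces $(1)$, and $(2)$ is then automatic. Thus the real content of the theorem is: \emph{if $\Cl(H)=F$ and $(3)$ holds, then $H=F$.}

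\textbf{Propagating the germ condition.} By \cite[Theorem 5.6]{G}, $\Cl(H)=F$ means every element of $F$ is piecewise-$H$. This at once yields that $H$ acts transitively on the dyadic fractions of $(0,1)$: given dyadics $\alpha,\beta$, choose $f\in F$ with $f(\alpha)=\beta$, write $f$ as piecewise-$H$ with $\alpha$ interior to one piece, and read off an element of $H$ sending $\alpha$ to $\beta$. Transporting the element $h$ of $(3)$ by such conjugators, which now lie in $H$, and using that every element of $F$ is orientation preserving, I obtain for \emph{every} dyadic $\beta\in(0,1)$ an element $h_\beta\in H$ fixing $\beta$ with $h_\beta'(\beta^-)=2$ and $h_\beta'(\beta^+)=1$; taking powers and inverses then realizes all left-germs $(2^k,1)$ at $\beta$.

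\textbf{Reduction to local subgroups, and the main obstacle.} Since $H$ surjects onto $F/[F,F]$ by $(1)$, it suffices to prove $[F,F]\le H$. As every element of $[F,F]$ has finitely many breakpoints and trivial germs at $0$ and $1$, it is supported on a proper dyadic interval $[a,b]\subset(0,1)$; hence $[F,F]$ is generated by the subgroups $F_{[a,b]}$ of elements supported on such intervals, and it is enough to show each $F_{[a,b]}\le H$. This final step is the heart of the argument and the one I expect to be hardest: the elements $h_\beta$ are controlled only as germs and are \emph{globally} supported, whereas I need genuine, \emph{locally} supported elements that actually lie in $H$ rather than merely in $\Cl(H)=F$. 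The plan is to run an induction in the tree-diagram model of $F$, working directly with the automaton $\C(H)$: leaf by leaf, the single-caret splitting and merging move at a dyadic vertex is realized exactly by an element of the form $h_\beta$, while $\Cl(H)=F$ guarantees that each intermediate diagram is accepted by $\C(H)$ and can therefore be represented inside $H$ once the germ at the active vertex has been corrected. Iterating this move assembles each generator of $F_{[a,b]}$ as an honest product of elements of $H$, yielding $F_{[a,b]}\le H$ for every proper dyadic interval, whence $[F,F]\le H$ and finally $H=F$.
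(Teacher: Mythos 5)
Your forward implication and the first two reductions are fine: $(1)$ and $(2)$ do give $\Cl(H)=F$, the orbits of $H$ and $\Cl(H)$ on $\mathcal D$ coincide (so $H$ acts transitively on $\mathcal D$), and conjugating the element of $(3)$ inside $H$ produces, for every dyadic $\beta$, an element $h_\beta\in H$ with the germ $(2,1)$ at $\beta$. Note, though, that the paper does not prove this statement along your lines at all: it quotes \cite[Theorem 1.3]{G} (Theorem \ref{thm:der} here), which says that $(2)$ and $(3)$ alone already force $[F,F]\le H$, and then $(1)$ gives $H=H[F,F]=F$ in one line. What you are attempting in your last paragraph is essentially a from-scratch proof of that cited theorem, and that is where the gap is.

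The gap is concrete. Your mechanism for upgrading membership in $\Cl(H)$ to membership in $H$ is the assertion that ``$\Cl(H)=F$ guarantees that each intermediate diagram is accepted by $\C(H)$ and can therefore be represented inside $H$ once the germ at the active vertex has been corrected.'' Acceptance by $\C(H)$ only certifies membership in $\Cl(H)$, and since $\Cl(H)=F$ every tree-diagram is accepted; this carries no information whatsoever about $H$ itself, so nothing is ``therefore represented inside $H$.'' Likewise the elements $h_\beta$ are controlled only by their two one-sided germs at $\beta$ and may act arbitrarily elsewhere, so a leaf-by-leaf product of such elements does not converge to a prescribed element of $F_{[a,b]}$: the uncontrolled parts accumulate and there is no cancellation mechanism in your sketch. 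The missing ingredient is a quantitative statement about $H$ (not $\Cl(H)$) of the type of Lemma \ref{lm1} (\cite[Corollary 7.8]{G}): for any binary words $v_1,v_2$ containing both digits there is $k$ such that for all $w_1,w_2$ of length at least $k$ some element of $H$ has the pair of branches $v_1w_1\to v_2w_2$. Such elements of $H$ with prescribed behaviour on whole dyadic intervals, combined with the germ element of $(3)$ to cancel the residual slope discrepancy, are what actually drive the proof that $[F,F]\le H$; without them your final step does not go through.
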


Given a finite subset $X$ of $F$, we let $H$ be the subgroup generated by $X$. Then it is (easily) decidable if condition (1) holds for $H$ (indeed, one only has to check the image of $H$ in the abelianization $F/[F,F]\cong\mathbb{Z}^2$). Checking if condition (2) holds is also simple and amounts to constructing the core of $H$ (see Lemma \ref{core of derived subgroup} below). In \cite[Section 8]{G}, we gave an algorithm for deciding if $H$ satisfies condition (3), given that $H$ satisfies condition (2). Hence, we got a solution for the generation problem in $F$.

In this paper, we improve the solution.
 Namely, we prove that Condition (3) in Theorem \ref{thm:int2} is superfluous (giving a positive solution to \cite[Problem 5.12]{GS1} and \cite[Problem 12.2]{G}).

\begin{Theorem}\label{thm:int3}
	Let $H$ be a subgroup of $F$. Then $H=F$ if and only if the following conditions hold. 
	\begin{enumerate}
		\item[$(1)$] $H[F,F]=F$
		\item[$(2)$] $[F,F]\leq \Cl(H)$
	\end{enumerate}
\end{Theorem}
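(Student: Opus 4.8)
The plan is to prove the non-trivial direction: assuming (1) and (2), deduce $H=F$; the reverse implication is immediate, since $F$ itself satisfies both conditions. The first step is to collapse conditions (1) and (2) into the single statement $\Cl(H)=F$. This is a formal consequence of the stated properties of closure. Condition (2) gives $[F,F]\le\Cl(H)$, and since $H\le\Cl(H)$ with $\Cl(H)$ a subgroup, condition (1) yields $F=H[F,F]\le\Cl(H)$, whence $\Cl(H)=F$.

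Next I would argue by contradiction, supposing $H\ne F$. The key structural input here is that $F$ is finitely generated: a finitely generated group is never the union of a strictly increasing chain of proper subgroups, because a finite generating set would have to lie inside a single member of the chain. Applying Zorn's lemma to the poset of proper subgroups containing $H$ therefore produces a maximal subgroup $M$ of $F$ with $H\le M$.

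Now I split according to the index of $M$. If $[F:M]<\infty$, then by the correspondence between finite-index maximal subgroups of $F$ and maximal subgroups of the abelianization $F/[F,F]\cong\mathbb{Z}^2$ (recalled in the introduction), $M$ contains $[F,F]$. Combined with $H\le M$, condition (1) gives $F=H[F,F]\le M$, contradicting $M\ne F$. If instead $[F:M]=\infty$, then Theorem \ref{thm:int1} applies, so $M$ is closed, i.e. $M=\Cl(M)$. Since $H\le M$ forces $\Cl(H)\le\Cl(M)=M$ by monotonicity of closure, and we have already shown $\Cl(H)=F$, we again obtain $M=F$, a contradiction. In both cases the assumption $H\ne F$ fails, so $H=F$.

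The substantive content is carried entirely by Theorem \ref{thm:int1}; granting it, the argument is a short deduction rather than a computation. The point I would be most careful about is the reduction to a maximal subgroup containing $H$: this genuinely uses finite generation of $F$ (so that the chain argument produces an upper bound and Zorn yields a maximal element), together with the clean finite/infinite-index dichotomy, one branch of which is settled by the abelianization and the other by the closedness of infinite-index maximal subgroups. Crucially, no hypothesis analogous to condition (3) of Theorem \ref{thm:int2} enters anywhere, which is precisely the improvement being claimed.
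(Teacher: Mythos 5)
Your deduction is internally coherent, but it is circular relative to this paper: the only proof of Theorem \ref{thm:int1} given here is Corollary \ref{max closed}, whose proof consists precisely of invoking Corollary \ref{main cor} --- which \emph{is} Theorem \ref{thm:int3}. The introduction states this dependency explicitly (``Theorem \ref{thm:int3} implies Theorem \ref{thm:int1}''), and there is no independent argument anywhere in the paper that maximal subgroups of infinite index are closed. So by taking Theorem \ref{thm:int1} as your ``key structural input'' you have assumed a statement whose only available proof presupposes the conclusion you are trying to establish. The two theorems are indeed equivalent modulo the elementary reductions you carry out (Zorn plus finite generation of $F$, the finite-index case via the abelianization, monotonicity of closure), and that equivalence is worth observing --- but it does not constitute a proof of either one.

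The substantive content of the paper's proof, which your proposal bypasses entirely, is Proposition \ref{main_pro}: a concrete combinatorial argument (via the tuples $t_i$ attached to consecutive pairs of branches and the subgroup $\mathcal S_H\le\mathbb{Z}^2$) showing that when $\pi_{\ab}(H)$ is a closed subgroup of $\mathbb{Z}^2$ and $[F,F]\le\Cl(H)$, the subgroup $H$ automatically contains an element fixing a dyadic point with one-sided slopes $2$ and $1$. That is exactly condition (3) of Theorem \ref{thm:int2}, and feeding it into Theorem \ref{thm:der} yields $[F,F]\le H$, hence $H=H[F,F]=F$. To repair your argument you would need either to reproduce something like Proposition \ref{main_pro}, or to supply an independent proof that infinite-index maximal subgroups are closed; as written, the proposal has a genuine gap.
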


In addition to giving a better (linear-time) solution for the generation problem in $F$, Theorem \ref{thm:int3} implies Theorem \ref{thm:int1} (see the proof of Corollary \ref{main cor} below). 
Note that in \cite{GGJ}, Gelander, Juschenko and the author proved that Thomspon's group $F$ is invariably generated by $3$-elements (i.e., there are $3$ elements $f_1,f_2,f_3\in F$ such that regardless of how each one of them is conjugated, together they generate $F$.) The proof relied on the solution of the generation problem in $F$ from \cite{G}. The improved solution implies that in fact Thompson's group $F$ is invariably generated by a set of two elements (see also \cite[Lemma 15]{GS4}).

Theorem \ref{thm:int1} and the study of morphisms of  rooted tree-automata imply further results regarding maximal subgroups of $F$. In \cite{GS1} it was observed that since $F$ is finitely generated, by Zorn's lemma, every proper subgroup of $F$ is contained in some maximal subgroup of $F$ (this observation was used in proving the existence of maximal subgroups of $F$ of infinite index which do not fix any number in $(0,1)$). It was asked (see \cite[Problem 4.6]{GS1}) whether every proper finitely generated subgroup of $F$ is contained inside some finitely generated maximal subgroup of $F$. In Section \ref{sec:max}, we answer this problem affirmatively.

\begin{Theorem}\label{thm:int4}
	Let $H$ be a finitely generated proper subgroup of $F$. Then the
	following assertions hold. 
	\begin{enumerate}
		\item[$(1)$] There exists a finitely generated maximal subgroup $M\leq F$ which contains $H$. 
		\item[$(2)$] If the action of  $H$  on the set of dyadic fractions $\mathcal D$ has finitely many orbits then every maximal subgroup of $F$ which contains $H$ is finitely generated. Moreover, there are only finitely many maximal subgroups of infinite index in $F$ which contain $H$. 
	\end{enumerate}
\end{Theorem}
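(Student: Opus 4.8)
The plan is to prove both parts of Theorem~\ref{thm:int4} by exploiting the automaton-theoretic machinery together with the structural results already established, chiefly Theorem~\ref{thm:int1} (all infinite-index maximal subgroups are closed) and Theorem~\ref{thm:int3} (the simplified characterization of when a subgroup equals $F$). For part~(1), I would start from the observation that since $H$ is a proper subgroup, either $H[F,F]\neq F$ or $H[F,F]=F$. In the first case the image of $H$ in the abelianization $\mathbb{Z}^2$ lies in a proper subgroup, and I would lift a finite-index maximal subgroup of $\mathbb{Z}^2$ back to a finite-index (hence finitely generated) maximal subgroup of $F$ containing $H$. In the second case, Theorem~\ref{thm:int3} tells us that $H\neq F$ forces $[F,F]\not\leq\Cl(H)$; the strategy is then to build a maximal subgroup of infinite index containing $H$ whose finite generation is witnessed by finiteness of its core. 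The key technical input is that the closure $\Cl(H)$ is finitely generated when $H$ is (cited as \cite{GS3}) and that $\Cl(H)$ is closed, so passing to the closure loses nothing and we may work with a closed finitely generated subgroup.

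For the finite generation of the maximal subgroup in part~(1), the central idea is that a maximal subgroup $M$ of infinite index is closed by Theorem~\ref{thm:int1}, so $M=\Cl(M)$; hence it suffices to exhibit $M$ as the closure of some finitely generated subgroup, or equivalently to show its core $\C(M)$ is a finite automaton. The plan is to use Zorn's lemma within a carefully chosen class: rather than maximizing over all proper subgroups, I would maximize over closed proper subgroups $K\supseteq H$ subject to a constraint that keeps the core finite — for instance, subgroups whose action on $\mathcal D$ has a bounded number of orbits, or subgroups contained in a fixed point-stabilizer-type condition. The maximal element $M$ of such a chain will be maximal in $F$ (any strictly larger subgroup must be all of $F$ by Theorem~\ref{thm:int3}, since the constraint rules out the finite-index case), and its core will be controlled by the constraint, yielding finite generation.

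For part~(2), the hypothesis that $H$ acts on $\mathcal D$ with finitely many orbits is the crucial finiteness that propagates upward: any subgroup $M\supseteq H$ also acts with finitely many orbits on $\mathcal D$, and I expect this orbit-finiteness to translate, via the construction of the core as a rooted tree-automaton, into finiteness of the number of states (vertices) of $\C(M)$. Concretely, the vertices of the core correspond to equivalence classes of dyadic subintervals under the action, and bounding the orbits bounds the states. Once $\C(M)$ is finite, $M=\Cl(M)$ (maximal of infinite index, by Theorem~\ref{thm:int1}, or finite index and thus automatically finitely generated) is finitely generated. The ``moreover'' clause — finitely many infinite-index maximal subgroups containing $H$ — should follow from the observation that each such maximal subgroup is determined by its finite core, and there are only finitely many possible cores refining the fixed finite-orbit structure of $H$; here I would use that morphisms of rooted tree-automata from $\C(H)$ are finite in number when the target core is suitably bounded.

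The main obstacle I anticipate is the second case of part~(1): producing a finitely generated maximal subgroup of \emph{infinite index} containing a given $H$ with $H[F,F]=F$. The difficulty is that Zorn's lemma naturally produces maximal subgroups with no control on their core, and infinite-index maximal subgroups can have infinite cores in general. The real work is to set up the maximization within a class of subgroups for which finiteness of the core is preserved under unions of chains and under passage to the maximal element, while still guaranteeing that the maximal element is maximal in $F$ rather than merely maximal within the restricted class. I would expect this to require a delicate compactness or finiteness argument on the automata side, showing that an increasing chain of finitely generated closed subgroups with uniformly bounded core complexity stabilizes, so that the union is itself finitely generated; this is where the structural theory of morphisms of rooted tree-automata, together with Theorem~\ref{thm:int3} to exclude the escape to all of $F$, must do the heavy lifting.
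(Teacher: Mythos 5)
Your proposal correctly identifies the two pillars that the argument must rest on (infinite-index maximal subgroups are closed, and a closed subgroup is finitely generated iff its core is finite), but it is missing the two concrete devices that actually make the proof go through, and the substitutes you sketch would not work as stated. For part (2), the mechanism is not that ``orbit-finiteness bounds the number of states of $\C(M)$'': Lemma \ref{lem:finitely many orbits} runs in the other direction (it \emph{assumes} the core is finite and characterizes finitely many orbits by the absence of leaves), and there is no result in the paper letting you deduce finiteness of $\C(M)$ directly from finiteness of the orbits of $M$ on $\mathcal D$. The correct route is: finitely many orbits of the finitely generated $H$ forces $\C(H)$ to be finite \emph{with no leaves}; by Corollary \ref{cor:no-leaves} the absence of leaves makes the canonical morphism $\C(H)\to\C(M)$ \emph{surjective}; a surjective image of a finite automaton is finite, so $\C(M)$ is finite and $M$ is finitely generated. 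The same surjectivity gives the ``moreover'' clause, since a finite automaton has only finitely many surjective images up to isomorphism and distinct closed subgroups have non-isomorphic cores.

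For part (1), your plan to run Zorn's lemma inside a restricted class of subgroups is where the proposal genuinely breaks: a maximal element of a restricted class need not be maximal in $F$, and your claim that any strictly larger subgroup would have to be all of $F$ by Theorem \ref{thm:int3} does not follow (a larger subgroup could still fail condition (2) of that theorem). The paper avoids this entirely via Lemma \ref{full}: one first \emph{enlarges} $H$ to a finitely generated proper subgroup $G$ whose core has no leaves, by adjoining, for each leaf $\ell_i$ of $\C(H)$ reached by a path $u_i$, the natural copy $F_{[u_i]}$; Lemma \ref{Core of H,G} shows the resulting core is obtained by gluing copies of $\C(F)$ onto the leaves, so it stays finite and $G\neq F$ because its core has more than one middle vertex. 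After this enlargement, ordinary Zorn's lemma produces a maximal subgroup $M\supseteq G$, and part (2) applied to $G$ shows $M$ is automatically finitely generated --- no restricted maximization or chain-stabilization argument is needed. This leaf-filling construction is the missing idea in your write-up.
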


Recall that for each number $\alpha\in (0,1)$ the stabilizer of $\alpha$ in $F$, denoted $\Stab(\alpha)$, is a maximal subgroup of $F$. It follows that there are uncountably many distinct maximal subgroups of Thompson's group $F$. However, in \cite{GS2}, Sapir and the author proved that the subgroups $\Stab(\alpha)$ for $\alpha\in (0,1)$ fall into three isomorphism classes, depending on the type of $\alpha$ (i.e., on whether $\alpha$ is dyadic, rational non-dyadic or irrational). Hence, until now there were only finitely many known isomorphism classes of maximal subgroups of infinite index in $F$.
 In this paper we prove that there is an infinite  family of pairwise non-isomorphic maximal subgroups of infinite index in $F$. Indeed, in \cite{GS}, we studied a family of subgroups which we called \textit{Jones' subgroups} $\overrightarrow F_n$ (for $n\geq 2$). These subgroups can be defined in an analogous way to Jones' subgroup $\overrightarrow F$, where $\overrightarrow F_2=\overrightarrow F$ (for further details, see \cite[Section 5]{GS})\footnote{There is a subgroup introduced by Jones in \cite{Jones2}
 and studied by Aiello and Nagnibeda in \cite{AN1} that is also denoted $\arr{F}_3$. This subgroup is different from the subgroup $\arr{F}_3$ defined in \cite{GS} as part of the family of subgroups $\arr{F}_n$.}. In this paper we prove that for every prime number $p$, Thompson's group $F$ has a maximal subgroup isomorphic to Jones' subgroup $\arr{F}_p$. 
 
 %\begin{Theorem}
 %	For each prime number $p$, Thompson's group $F$ has a maximal subgroup isomorphic to $\arr{F}_p$. 
 %\end{Theorem}

\vskip.2cm

\textbf{Organization:} The paper is organized as follows. In Section \ref{sec:pre} preliminaries about Thompson's group $F$, closed subgroups and the core of subgroups of $F$ are given. In Section \ref{main section}, we improve the solution of the generation problem from \cite{G} and deduce that all maximal subgroups of infinite index in $F$ are closed. In Section \ref{sec:morph} we study  rooted tree-automata and morphisms between them. In Section \ref{sec:max} we derive results about maximal subgroups of $F$, proving Theorem \ref{thm:int4}. In Section \ref{sec:core} we study and recall properties of rooted tree-automata that are isomorphic to the core of a subgroup of $F$. In Section \ref{sec:Jones} we prove that there is an infinite family of non-isomorphic maximal subgroups of infinite index in $F$ and in Section \ref{sec:open} we give some final remarks and discuss some open problems. 
\vskip.2cm

\textbf{Acknowledgments}: The author would like to thank Mark Sapir for helpful conversations. The author would also like to thank the anonymous referee for helpful comments and suggestions.

\section{Preliminaries on $F$}\label{sec:pre}

\subsection{$F$ as a group of homeomorphisms}\label{ss:nf}

Recall that $F$ consists of all piecewise-linear increasing self-homeomorphisms of the unit interval with slopes of all linear pieces powers of $2$ and all break points of the derivative in $\mathbb{Z}[\frac{1}{2}]\cap(0,1)$. The group $F$ is generated by two functions $x_0$ and $x_1$ defined as follows \cite{CFP}.

\[
x_0(t) =
\begin{cases}
2t &  \hbox{ if }  0\le t\le \frac{1}{4} \\
t+\frac14       & \hbox{ if } \frac14\le t\le \frac12 \\
\frac{t}{2}+\frac12       & \hbox{ if } \frac12\le t\le 1
\end{cases} 	\qquad	
x_1(t) =
\begin{cases}
t &  \hbox{ if } 0\le t\le \frac12 \\
2t-\frac12       & \hbox{ if } \frac12\le t\le \frac{5}{8} \\
t+\frac18       & \hbox{ if } \frac{5}{8}\le t\le \frac34 \\
\frac{t}{2}+\frac12       & \hbox{ if } \frac34\le t\le 1 	
\end{cases}
\]

The composition in $F$ is from left to right.

Every element of $F$ is completely determined by how it acts on the set $\mathbb{Z}[\frac{1}{2}]\cap (0,1)$. Every number in $(0,1)$ can be described as $.s$ where $s$ is an infinite word in $\{0,1\}$. For each element $g\in F$ there exists a finite collection of pairs of (finite) words $(u_i,v_i)$ in the alphabet $\{0,1\}$ such that every infinite word in $\{0,1\}$ starts with exactly one of the $u_i$'s. The action of $F$ on a number $.s$ is the following: if $s$ starts with $u_i$, we replace $u_i$ by $v_i$ (the procedure of  associating the pairs of words $(u_iv_i)$ to an element of F is described on page 6). For example, $x_0$ and $x_1$  are the following functions:

\[
x_0(t) =
\begin{cases}
.0\alpha &  \hbox{ if }  t=.00\alpha \\
.10\alpha       & \hbox{ if } t=.01\alpha\\
.11\alpha       & \hbox{ if } t=.1\alpha\
\end{cases} 	\qquad	
x_1(t) =
\begin{cases}
.0\alpha &  \hbox{ if } t=.0\alpha\\
.10\alpha  &   \hbox{ if } t=.100\alpha\\
.110\alpha            &  \hbox{ if } t=.101\alpha\\
.111\alpha                      & \hbox{ if } t=.11\alpha\
\end{cases}
\]
where $\alpha$ is any infinite binary word.
For the generators $x_0,x_1$ defined above, the group $F$ has the following finite presentation \cite{CFP}.
$$F=\la x_0,x_1\mid [x_0x_1^{-1},x_1^{x_0}]=1,[x_0x_1^{-1},x_1^{x_0^2}]=1\ra,$$ where $a^b$ denotes $b\iv ab$.

Sometimes, it is more convenient to consider an infinite presentation of $F$. For $i\ge 1$, let $x_{i+1}=x_0^{-i}x_1x_0^i$. In these generators, the group $F$ has the following presentation \cite{CFP}
$$\la x_i, i\ge 0\mid x_i^{x_j}=x_{i+1} \hbox{ for every}\ j<i\ra.$$

\subsection{Elements of F as pairs of finite binary trees} \label{sec:tree}

Often, it is more convenient to describe elements of $F$ using pairs of finite binary trees drawn on a plane. Trees are considered up to isotopies of the plane. Elements of $F$ are  pairs of full finite binary trees $(T_+,T_-)$ which have the same number of leaves. Such a pair will sometimes be called a \emph{tree-diagram}.

If $T$ is a (finite or infinite) binary tree, a \emph{branch} in $T$ is a maximal simple path starting from the root.
Every vertex of $T$ is either a \emph{leaf} (i.e., a vertex with no outgoing edges) or has exactly two outgoing edges: a left edge and a right edge.
If every left edge of $T$ is labeled by $0$ and every right edge is labeled by $1$, then every branch of $T$ is labeled by a (finite or infinite) binary word $u$.  We will usually ignore the distinction between a branch and its label.

Let $(T_+,T_-)$ be a tree-diagram where $T_+$ and $T_-$ have $n$ leaves. Let $u_1,\dots,u_n$ (resp. $v_1,\dots,v_n$) be the branches of $T_+$ (resp. $T_-$), ordered from left to right.
For each $i=1,\dots,n$ we say that the tree-diagram $(T_+,T_-)$ has the \emph{pair of branches} $u_i\rightarrow v_i$. The function $g$ from $F$ corresponding to this tree-diagram takes binary fraction $.u_i\alpha$ to $.v_i\alpha$ for every $i$ and every infinite binary word $\alpha$. We will also say that the element $g$ takes the branch $u_i$ to the branch $v_i$.
The tree-diagrams of the generators of $F$, $x_0$ and $x_1$, appear in Figure \ref{fig:x0x1}.

\begin{figure}[ht]
	\centering
	\begin{subfigure}{.5\textwidth}
		\centering
		\includegraphics[width=.5\linewidth]{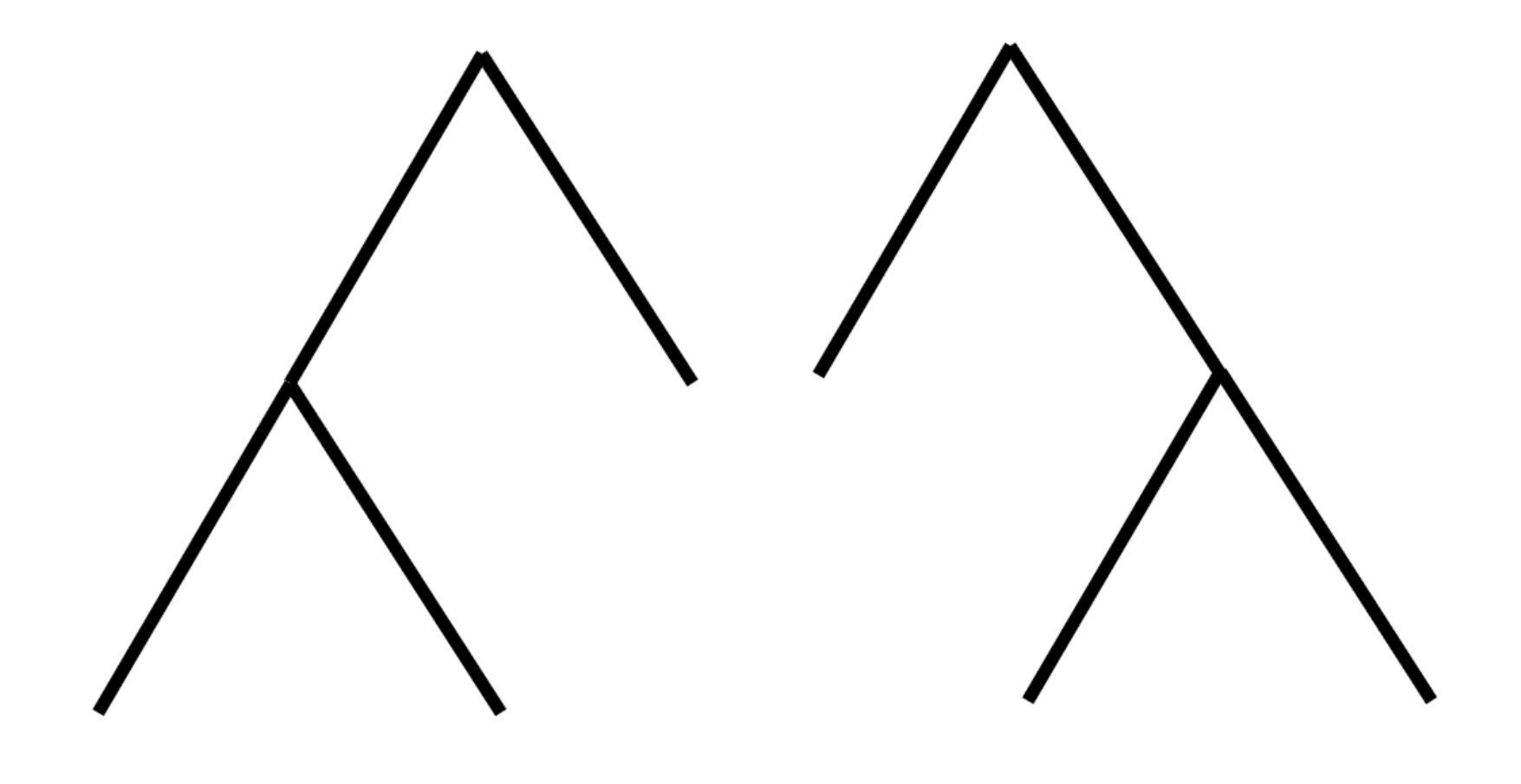}
		\caption{}
		\label{fig:x0}
	\end{subfigure}%
	\begin{subfigure}{.5\textwidth}
		\centering
		\includegraphics[width=.5\linewidth]{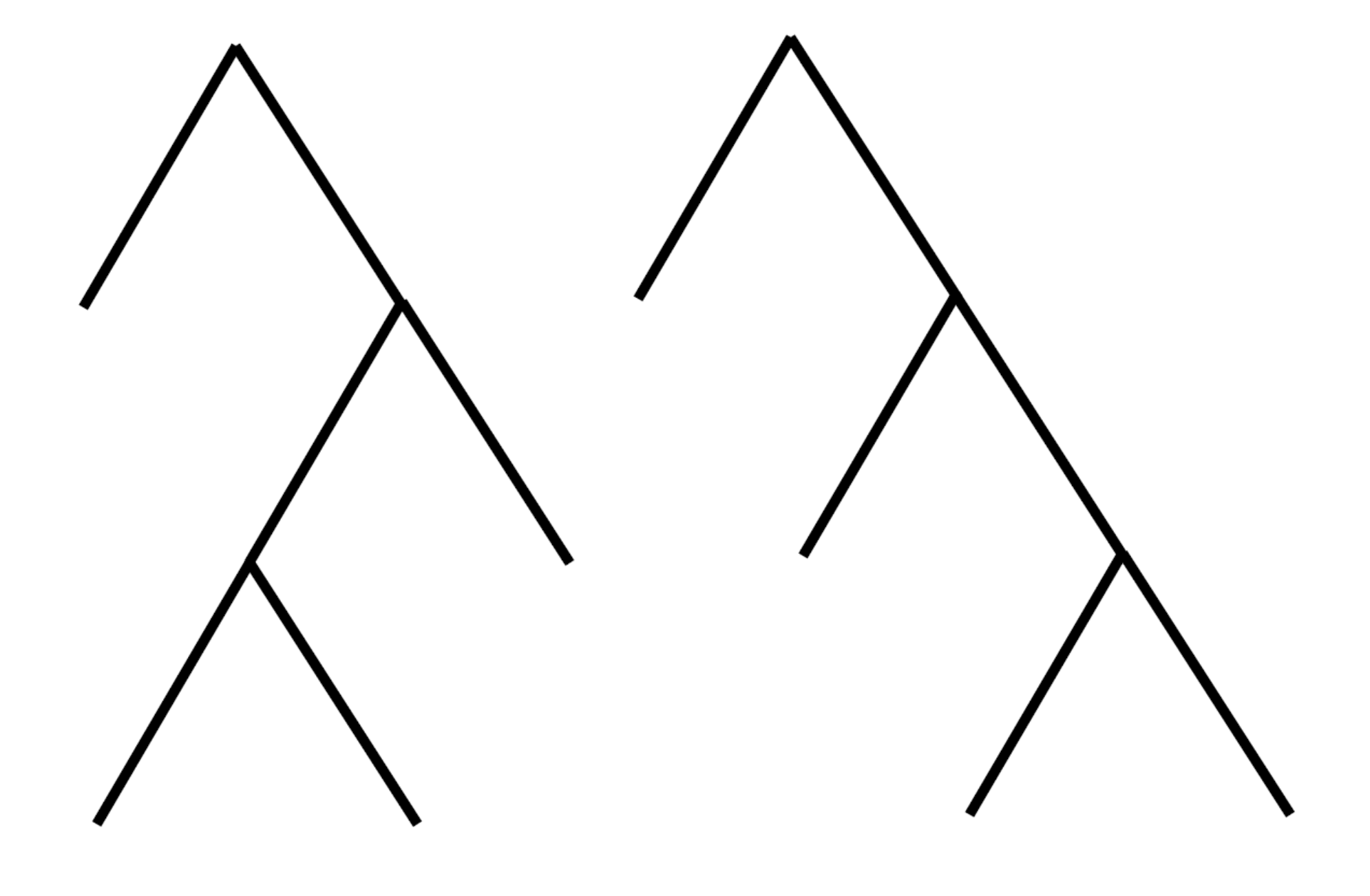}
		\caption{}%The tree diagram $(T_+,T_-)$ of $x_1$}
		\label{fig:x1}
	\end{subfigure}
	\caption{(a) The tree-diagram of $x_0$. (b) The tree-diagram of $x_1$. In both figures, $T_+$ is on the left and $T_-$ is on the right.}
	\label{fig:x0x1}
\end{figure}

A \emph{caret} is a binary tree which consists of a single vertex with two children. If $(T_+,T_-)$ is a tree-diagram, then attaching a caret to the $i$-th leaf of both $T_+$ and $T_-$ does not affect the function in $F$ represented by the tree-diagram $(T_+,T_-)$. The inverse action of \emph{reducing} common carets does not affect the function either (the pair $(T_+,T_-)$ has a \emph{common caret} if leaves number $i$ and $i+1$ have a common father in $T_+$ as well as in $T_-$).  Two pairs of trees $(T_+,T_-)$ and $(R_+,R_-)$ are said to be \emph{equivalent} if one results from the other by a finite sequence of inserting and reducing common carets. If $(T_+,T_-)$ does not have a common caret then $(T_+,T_-)$ is said to be \emph{reduced}. Every tree-diagram is equivalent to a unique reduced tree-diagram. Thus elements of $F$ can be represented uniquely by reduced tree-diagrams \cite{CFP}.

An alternative way of describing the function in $F$ corresponding to a given tree-diagram is the following. For each finite binary word $u$, we let the \emph{dyadic interval associated with $u$}, denoted by $[u]$, be the interval $[.u,.u1^\N]$. If $(T_+,T_-)$ is a tree-diagram for $f\in F$, we let $u_1,\dots,u_n$ be the branches of $T_+$ and $v_1,\dots,v_n$ be the branches of $T_-$. Then the intervals $[u_1],\dots,[u_n]$ (resp. $[v_1],\dots,[v_n]$) form a subdivision of the interval $[0,1]$. The function $f$ maps each interval $[u_i]$ linearly onto the interval $[v_i]$.

Below, when we say that a function $f$ has a pair of branches $u_i\rightarrow v_i$, the meaning is that some tree-diagram representing $f$ has this pair of branches. In other words, this is equivalent to saying that $f$ maps $[u_i]$ linearly onto $[v_i]$. The following  remark will be useful. 

\begin{Remark}\label{rem:branches}
	Let $f$ be a function in $F$ and assume that $u\to v$ is a pair of branches of $f$. Then, there exists a common (possibly empty) suffix $w$ of both $u$ and $v$ and finite binary words $p$ and $q$ such that $u\equiv pw$\footnote{Throughout this paper, for words $u$ and $v$, $u\equiv v$ denotes letter-by-letter equality.}, $v\equiv qw$ and such that $p\to q$ is a pair of branches of the reduced tree-diagram of $f$. 
\end{Remark}

\begin{Remark}[See \cite{CFP}]\label{r:000}
	The tree-diagram where both trees are just singletons plays the role of identity in $F$. Given a tree-diagram $(T_+^1,T_-^1)$, the inverse tree-diagram is $(T_-^1,T_+^1)$. If $(T_+^2,T_-^2)$ is another  tree-diagram then the product of $(T_+^1,T_-^1)$ and $(T_+^2,T_-^2)$ is defined as follows. There is a minimal finite binary tree $S$ such that $T_-^1$ and $T_+^2$ are rooted subtrees of $S$ (in terms of subdivisions of $[0,1]$, the subdivision corresponding to $S$ is the intersection of the subdivisions corresponding to $T_-^1$ and $T_+^2$). Clearly, $(T_+^1,T_-^1)$ is equivalent to a tree-diagram $(T_+,S)$ for some finite binary tree $T_+$. Similarly, $(T_+^2,T_-^2)$ is equivalent to a tree-diagram $(S,T_-)$. The \emph{product}  $(T_+^1,T_-^1)\cdot(T_+^2,T_-^2)$ is (the reduced tree-diagram equivalent to) $(T_+,T_-)$.
\end{Remark}

Obviously, the mapping of tree-diagrams to functions in $F$ respects the operations defined in Remark \ref{r:000}.

Now, let $\mathcal D$ be the set of dyadic fractions, i.e., the set $\mathbb{Z}[\frac{1}{2}]\cap (0,1)$. We will often be interested in  dyadic fractions $\alpha\in \mathcal D$ fixed by a function $f\in F$. 
More generally, if $S\subset(0,1)$, we say that an element $f\in F$ \emph{fixes} $S$, if it fixes $S$ pointwise. We  say that an element $f\in F$ \emph{stabilizes} $S$ if $f(S)=S$.

The following lemma will be useful. 

\begin{Lemma}[{\cite[Lemma 2.6]{G}}]\label{4parts}
	Let $f\in F$ be an element which fixes some dyadic fraction $\alpha\in \mathcal D$. Let $u\equiv u'1$ be the finite binary word such that $\alpha=.u$. Then the following assertions hold.
	\begin{enumerate}
		\item[$(1)$] $f$ has a pair of branches $u0^{m_1}\rightarrow u0^{m_2}$ for some $m_1,m_2\ge 0$. 
		\item[$(2)$] $f$ has a pair of branches $u'01^{n_1}\rightarrow u'01^{n_2}$ for some $n_1,n_2\ge 0$.
		\item[$(3)$] If $f'(\alpha^+)=2^k$ for $k\neq 0$, then every tree-diagram representing $f$ has a pair of branches $u0^m\rightarrow u0^{m-k}$ for some $m\ge \max\{0,k\}$. 
		\item[(4)] If $f'(\alpha^-)=2^\ell$ for $\ell\neq 0$, then every tree-diagram representing $f$ has a pair of branches $u'01^n\rightarrow u'01^{n-\ell}$ for some $n\ge \max\{0,\ell\}$. 
	\end{enumerate}
\end{Lemma}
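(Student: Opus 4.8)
The plan is to convert everything into the combinatorics of dyadic intervals and branch lengths, using two elementary facts: if a tree-diagram $(T_+,T_-)$ for $f$ has a pair of branches $a\to b$, then $f$ maps $[a]$ linearly onto $[b]$, so its slope there is $2^{|a|-|b|}$; and the dyadic interval $[w]$ has left endpoint $.w$ and right endpoint $.w+2^{-|w|}$. With the normalization $\alpha=.u$, $u\equiv u'1$, the first step is the bookkeeping: using $.u'01^\N=\alpha=.u'10^\N$, one checks that the dyadic intervals whose \emph{left} endpoint is $\alpha$ are exactly the $[u0^m]$, $m\ge 0$, and those whose \emph{right} endpoint is $\alpha$ are exactly the $[u'01^n]$, $n\ge 0$. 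These are precisely the two families of branches flanking $\alpha$.

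For (1) and (2) I would take any tree-diagram for $f$ and insert carets until $\alpha$ is a subdivision point of $T_+$; since $f(\alpha)=\alpha$, the point $\alpha$ is then automatically a subdivision point of $T_-$ as well. As $\alpha\in(0,1)$, there is a branch of $T_+$ immediately to the right of $\alpha$, which must be of the form $u0^{m_1}$, and it maps to the branch of $T_-$ immediately to the right of $f(\alpha)=\alpha$, which must be of the form $u0^{m_2}$; this gives (1). Symmetrically, the branches immediately to the \emph{left} of $\alpha$ have right endpoint $\alpha$ and produce the pair $u'01^{n_1}\to u'01^{n_2}$, which is (2).

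For (3) and (4) the statement concerns \emph{every} tree-diagram, so refining is not permitted and the real content is that $\alpha$ is forced to be a subdivision point as soon as the relevant one-sided slope is nontrivial. I would argue by contradiction: if $\alpha$ lies in the interior of a branch-interval $[u_i]$ of $T_+$ mapping linearly onto $[v_i]$, then $f$ is linear across $\alpha$ and $\alpha$ is an interior fixed point of the affine map $t\mapsto .v_i+2^{|u_i|-|v_i|}(t-.u_i)$. Solving gives $\alpha=(Q-P)/(2^{|v_i|}-2^{|u_i|})$, where $P,Q$ are the integers with binary representations $u_i,v_i$. Writing $\alpha=U/2^{|u|}$ with $U$ odd and noting that strict interiority forces $|u_i|,|v_i|<|u|$, a comparison of $2$-adic valuations in $\alpha\,(2^{|v_i|}-2^{|u_i|})=Q-P$ forces $\min(|u_i|,|v_i|)\ge |u|$ whenever the slope is nontrivial, a contradiction; hence the slope on $[u_i]$ is $1$. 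Consequently, if $f'(\alpha^+)=2^k$ with $k\ne 0$ then $\alpha$ cannot be interior to a branch-interval of $T_+$, so $\alpha$ is a subdivision point. The branch just to the right of $\alpha$ is then $u0^m\to u0^{m'}$, its slope $2^{m-m'}$ equals $f'(\alpha^+)=2^k$, so $m'=m-k$, and $m'\ge 0$ yields $m\ge\max\{0,k\}$; this proves (3). Part (4) is identical on the left, giving $u'01^n\to u'01^{n-\ell}$ with $n\ge\max\{0,\ell\}$.

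I expect the main obstacle to be the $2$-adic valuation computation establishing that a dyadic-affine self-map with a dyadic interior fixed point must have slope $1$; this is the engine that upgrades the existence statements (1)--(2) to the ``every tree-diagram'' statements (3)--(4) by pinning $\alpha$ as a subdivision point. A secondary point requiring care is the double binary representation of the dyadic number $\alpha$ and the bookkeeping forced by the normalization $u\equiv u'1$; once the flanking branch families and the slope--length dictionary are set up correctly, the remaining steps are routine.
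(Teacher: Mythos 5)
Your proof is correct. Note that the paper itself offers no argument for this lemma --- it is imported verbatim from \cite[Lemma 2.6]{G} --- so there is no in-paper proof to compare against; judged on its own, your write-up is a sound and self-contained derivation. The bookkeeping identifying the intervals flanking $\alpha$ as exactly the $[u0^m]$ and $[u'01^n]$ is right, parts (1)--(2) follow correctly by refining until $\alpha$ is a subdivision point (which forces $f(\alpha)=\alpha$ to be the corresponding subdivision point of $T_-$), and the engine for (3)--(4) checks out: if $\alpha$ lay in the interior of a branch-interval $[u_i]$ mapped onto $[v_i]$, then interiority gives $|u_i|,|v_i|<|u|$, while the fixed-point identity $\alpha\,(2^{|v_i|}-2^{|u_i|})=Q-P$ with $v_2(\alpha)=-|u|$ forces $\min(|u_i|,|v_i|)\ge|u|$ unless $|u_i|=|v_i|$; hence a nontrivial one-sided slope at $\alpha$ pins $\alpha$ as a breakpoint of every tree-diagram, after which the slope--length dictionary $2^{m-m'}=2^k$ gives the stated branches and the bound $m\ge\max\{0,k\}$ from $m'\ge0$. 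This is essentially the standard argument for this fact, and I see no gaps.
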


\subsection{Natural copies of $F$}\label{sec:copies}

Let $f$ be a function in Thompson group $F$. The \emph{support of $f$}, denoted $\mathrm{Supp}(f)$, is the closure in $[0,1]$ of the subset $\{x\in(0,1):f(x)\neq x\}$. We say that $f$ \emph{is supported in an interval $J$} if the support of $f$ is contained in $J$. Note that in this case the endpoints of $J$ are necessarily fixed by $f$. Hence the set of all functions from $F$  supported in $J$ is a subgroup of $F$. We denote this subgroup by $F_J$.

Thompson group $F$ contains many copies of itself (see \cite{Brin}). 
Let $a$ and $b$ be numbers from $\mathbb{Z}[\frac{1}{2}]$ and consider the subgroup $F_{[a,b]}$.
This subgroup is isomorphic to $F$ (we will refer to such subgroups of $F$ as \emph{natural copies} of $F$).  Indeed, $F$ can be viewed as a subgroup of $\pl_2(\mathbb{R})$ of all piecewise linear homeomorphisms of  $\mathbb R$ with finitely many dyadic break points and absolute values of all slopes powers of 2.  Let $f\in \pl_2(\mathbb{R})$ be a function which maps $0$ to $a$ and $1$ to $b$, (such a function clearly exists). Then $F^f$ is the subgroup of $\pl_2(\mathbb{R})$  of all orientation preserving homeomorphisms with support in $[a,b]$, that is, $F^f=F_{[a,b]}$.

Let $u$ be a finite binary word and let $[u]$ be the dyadic interval associated with it. The isomorphism between $F$ and $F_{[u]}$ can also be defined   using tree-diagrams. Let $g$ be an element of $F$ represented by a tree-diagram $(T_+,T_-)$. We map $g$ to an element in $F_{[u]}$, denoted by $g_{[u]}$ and referred to as the \emph{$[u]$-copy of $g$}.
To construct the element $g_{[u]}$ we start with a minimal finite binary tree $T$ which contains the branch $u$. We take two copies of the tree $T$. To the first copy, we attach the tree $T_+$ at the end of the branch $u$. To the second copy we attach the tree $T_-$ at the end of the branch $u$. The resulting trees are denoted by $R_+$ and $R_-$, respectively. The element $g_{[u]}$ is the one represented by the tree-diagram $(R_+,R_-)$.  Note that if $g$ consists of pairs of branches $v_i\to w_i, i=1,...,k,$ and $B$ is the set of branches of $T$ which are not equal to $u$, then $g_{[u]}$ consists of pairs of branches $uv_i\to uw_i, i=1,...,k$, and $p\to p, p\in B$. Note also that if $(T_+,T_-)$ is the reduced tree-diagram of $g$, then $(R_+,R_-)$ is the reduced tree-diagram of $g_{[u]}$. 

For example, the copies of the generators $x_0,x_1$ of $F$ in $F_{[0]}$ are depicted in Figure \ref{fig:0x0x1}.
It is obvious that these copies generate the subgroup $F_{[0]}$.

\begin{figure}[ht]
	\centering
	\begin{subfigure}{.5\textwidth}
		\centering
		\includegraphics[width=.5\linewidth]{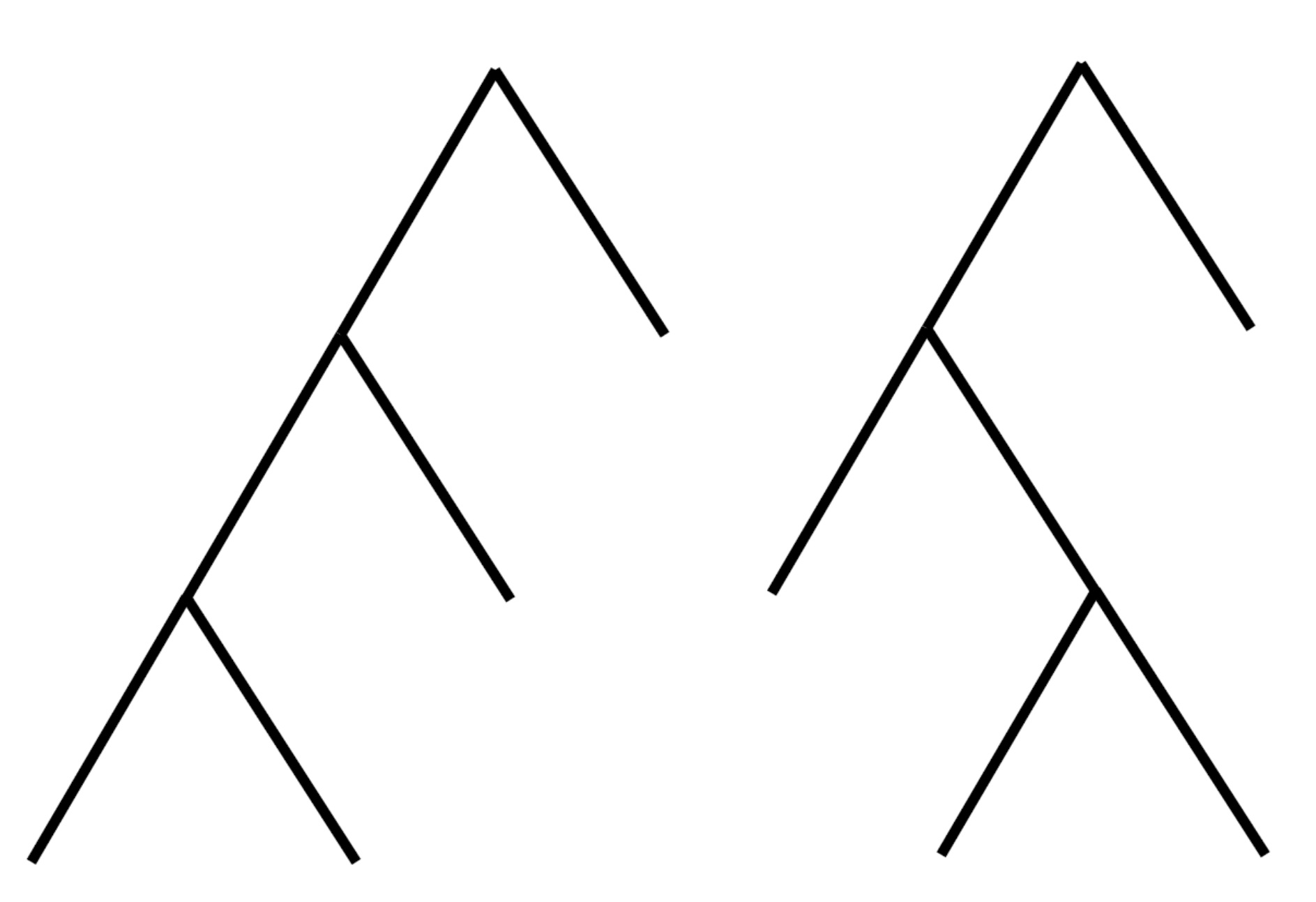}
		\caption{The tree-diagram of $(x_0)_{[0]}$}%The tree diagram $(T_+,T_-)$ of $x_0$}
		\label{fig:0x0}
	\end{subfigure}%
	\begin{subfigure}{.5\textwidth}
		\centering
		\includegraphics[width=.5\linewidth]{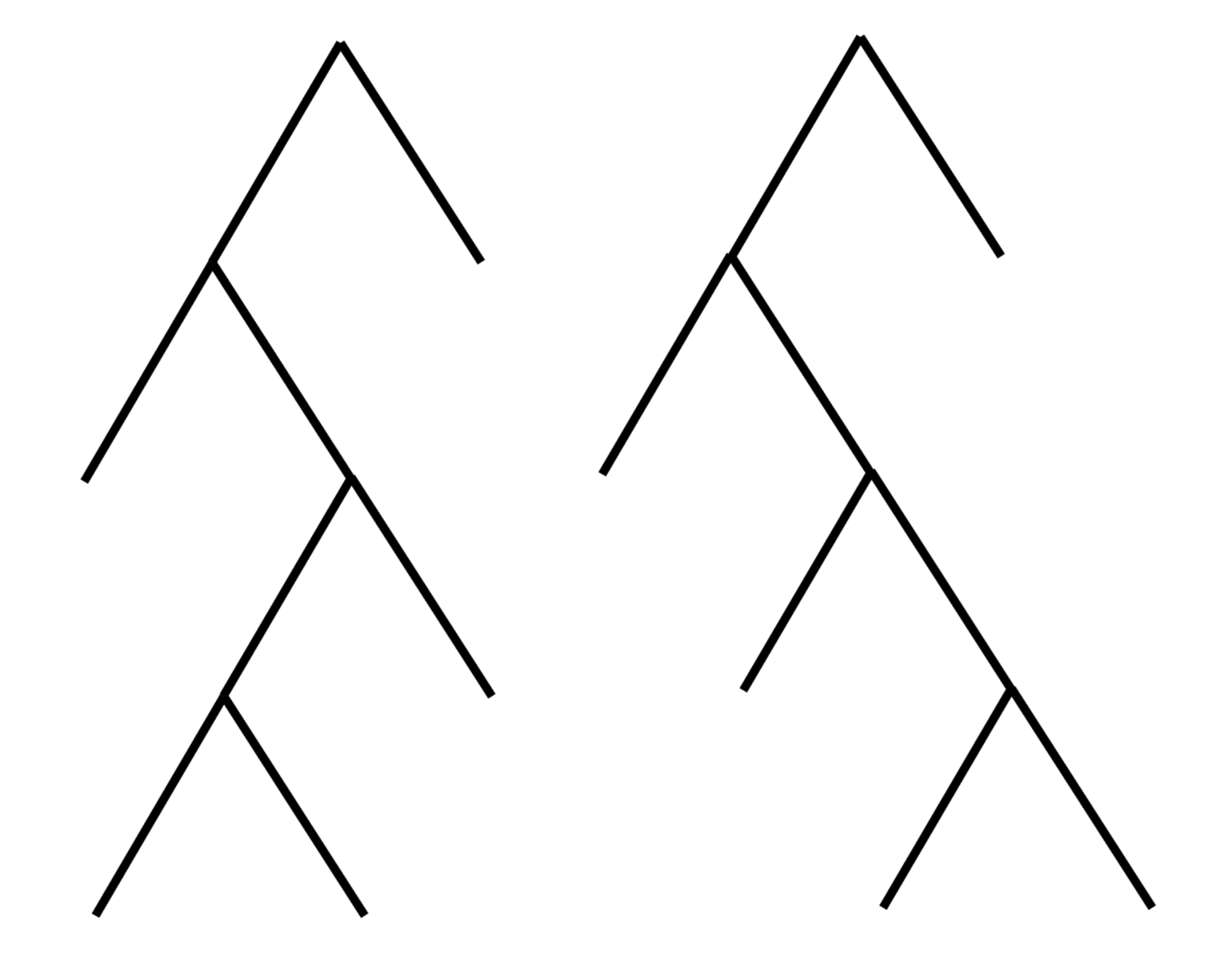}
		\caption{The tree-diagram of $(x_1)_{[0]}$}%The tree diagram $(T_+,T_-)$ of $x_1$}
		\label{fig:0x1}
	\end{subfigure}
	\caption{}%(a) The tree-diagram of $(x_0)_{[0]}$. (b) The tree-diagram of $(x_1)_{[0]}$.}
	\label{fig:0x0x1}
\end{figure}

The isomorphism above guarantees that if $f,g\in F$ then $f_{[u]}g_{[u]}=(fg)_{[u]}$. Given a subset $S$ of $F$ and a finite binary word $u$, we will denote by $S_{[u]}$ the image of $S$ in $F_{[u]}$ under the above isomorphism. Similarly, if $G$ is a subgroup of $F$, we will denote by $G_{[u]}$ the copy of $G$ in $F_{[u]}$ (i.e., the image of $G$ in $F_{[u]}$ under the above isomorphism).

Using this isomorphism, we define an addition operation in Thompson group $F$ as follows. We denote by $\1$ the trivial element in $F$. We define the sum of an element $g\in F$ with the trivial element $\1$, denoted by $g\oplus\1$,
to be the copy of $g$ in $F_{[0]}$. Similarly, the sum of $\1$ and $g$, denoted by $\1\oplus g$, is the copy of $g$ in $F_{[1]}$. If $g,h\in F$ we define the \emph{sum} of $g$ and $h$, denoted by $g\oplus h$, to be the product $(g\oplus \1)(\1\oplus h)$; i.e. $g\oplus h$ is an element from $Stab(\{\frac12\})$ that acts as a copy of $g$ on $[0]$ and as a copy of $h$ on $[1]$. It is easy to see that for $g=\1$ or $h=\1$ this definition coincides with the previous one.
Note that if $f,g\in F$, the slope of $f\oplus g$ at $0^+$ coincides with $f'(0^+)$ and the slope of $g$ at $1^-$ coincides with $g'(1^-)$.

\subsection{Closed subgroups of $F$}

The original definition of closed subgroups of $F$  was given in \cite{GS1} (see also \cite{G}) in the language of diagram groups over directed $2$-complexes. In this section, we adapt the definition (or rather, one of the equivalent definitions from \cite{G}) to the language of tree-diagrams and automata.

In this section, to define closed subgroups of $F$, we define diagram groups over rooted tree-automata (see below). Diagram groups over rooted tree-automata are a special case of the diagram groups studied by Guba and Sapir \cite{GuSa97,GuSa99}.

We choose to give our somewhat narrow definitions in the language of tree-diagrams (as opposed to the language of diagrams used in \cite{GS1,G,GS3}) in the hope that the notions of closed subgroups of $F$ and the core of subgroups of $F$ will be more easily accessible to the wider community of researchers of Thompson group $F$. The terminology of trees would also be convenient in Section \ref{main section}.

Recall that an \emph{automaton} $\mathcal A$ is a directed edge-labeled graph. Every automaton considered in this paper will have a distinguished vertex $r$ called the \emph{initial vertex} or the \emph{root}. We will usually denote such an automaton by $\A_r$ and call it a \emph{rooted automaton}. In this paper, a \emph{path} in a rooted automaton $\mathcal A_r$ is a finite directed path which starts from the root. More formally, if $e$ is a directed edge in a rooted automaton $\A_r$, we denote by $e_-$ the initial vertex of $e$ and by $e_+$ the terminal vertex of $e$. A path in $\A_r$ is a sequence of edges $e_1,\dots, e_n$ such that ${e_1}_-=r$ and for each $i=1,\dots,n-1$, we have ${e_i}_+={e_{i+1}}_-$.

\begin{Definition}\label{def:tree-aut}
	Let $\mathcal A_r$ be a rooted automaton with root $r$. 
	The automaton $\A_r$ is called a \emph{rooted tree-automaton}, or a \textit{tree-automaton} for short, if the following conditions hold. 
	\begin{enumerate}
		\item[$(1)$] Every vertex in $\mathcal A_r$ has either zero or two outgoing edges. 
		\item[$(2)$] If a vertex $x$ in $\A_r$ has two outgoing edges (in which case, we say $x$ is a \emph{father}), then one of the outgoing edges (which we call a \emph{left} edge) is labeled ``0'' and the other one (which we call a \emph{right} edge) is labeled ``1''. The end vertices of these edges are called the left and right \emph{children} of $x$ respectively.
		\item[$(3)$] If  $x_1$ and $x_2$  are distinct fathers in $\mathcal A_r$, then the left children of $x_1$ and $x_2$ are distinct or the right children of $x_1$ and $x_2$ are distinct.
		\item[$(4)$] For every vertex $x$ in $\A_r$, there is a directed path in $\A_r$
		 ending in $x$. 
	\end{enumerate} 
\end{Definition}

A vertex of a tree-automaton $\A_r$ which has no outgoing edges is called a \emph{leaf}. A vertex of $\A_r$ which has two outgoing edges is called an \emph{inner} vertex (or a father vertex).  
Note that if $\A_r$ is a tree-automaton then every  path in $\A_r$ is labeled by a finite binary word $u$. We will rarely distinguish between a path and its label. Note that every finite binary word labels at most one path in $\A_r$. If $u$ is (the label of) a path in $\A_r$, we will denote the end vertex of the path by $u^+$. We say that a finite binary word $u$ is \emph{readable} on $\A_r$ if $u$ labels a path in $\A_r$. 
 We say that a finite binary tree $T$ is \emph{readable} on $\A_r$ if every branch $u$ of $T$ labels a path in $\A_r$.

\begin{Definition}\label{def:accepts}
	Let $\mathcal A_r$ be a tree-automaton. Let $(T_+,T_-)$ be a tree-diagram of an element in $F$.
	\begin{enumerate}
		\item[$(1)$] We say that $(T_+,T_-)$ is \emph{readable} on    $\mathcal A_r$ if both $T_+$ and $T_-$ are readable on $\A_r$. 
		\item[$(2)$] 	We say that $(T_+,T_-)$ is  \emph{accepted} by  $\A_r$ if it is readable on $\A_r$ and for every pair of branches $u\to v$ of $(T_+,T_-)$, we have that $u^+=v^+$ in $\A_r$ (i.e., the end vertices $u^+$ and $v^+$ of the paths $u$ on $v$ in $\A_r$ coincide).	
	\end{enumerate}
\end{Definition}

Note that given a finite  tree-automaton $\A_r$ (i.e., a tree-automaton which has finitely many vertices) and a tree-diagram $(T_+,T_-)$, it is decidable if $(T_+,T_-)$ is accepted by $\A_r$. Indeed, one can check for each pair of branches $u\to v$ of $(T_+,T_-)$ whether $u$ and $v$ label paths in $\A_r$ and if so, whether they terminate on the same vertex of $\A_r$. 

\begin{Example}
	Consider the rooted tree-automaton $\A_r$ given in Figure \ref{fig:Ex}. 
	\begin{enumerate}
		\item[(1)] The reduced tree-diagram $(T_+,T_-)$ of $(x_1)_{[0]}$ (see Figure \ref{fig:0x0x1}(b)) is not readable on $\A_r$. Indeed, the tree $T_-$ is not readable on $\A_r$ since its branch $0111$ does not label a path in $\A_r$.
		\item[(2)] The reduced tree-diagram $(R_+,R_-)$ of $x_1$ (see Figure \ref{fig:x0x1}(b)) is readable on $\A_r$ but is not accepted by $\A_r$. Indeed, since each branch of the trees $R_+$ and $R_-$ labels a path in $\A_r$, the tree-diagram $(R_+,R_-)$ is readable on $\A_r$. Since $101\to 110$ is a pair of branches of $(R_+,R_-)$ and in $\A_r$ we have $(101)^+=k$ whereas $(110)^+=h$, the tree-diagram $(R_+,R_-)$ is not accepted by $\A_r$. 
		\item[(3)] The reduced tree-diagram $(S_+,S_-)$ of $x_0$ (see Figure \ref{fig:x0x1}(a)) is accepted by $\A_r$. Indeed, for each pair of branches $u\to v$ of $(S_+,S_-)$, both $u$ and $v$ label paths in $\A_r$ and $u^+=v^+$ in $\A_r$. 
	\end{enumerate}
\end{Example}

\begin{figure}[ht]
	\centering
	\includegraphics[width=.38\linewidth]{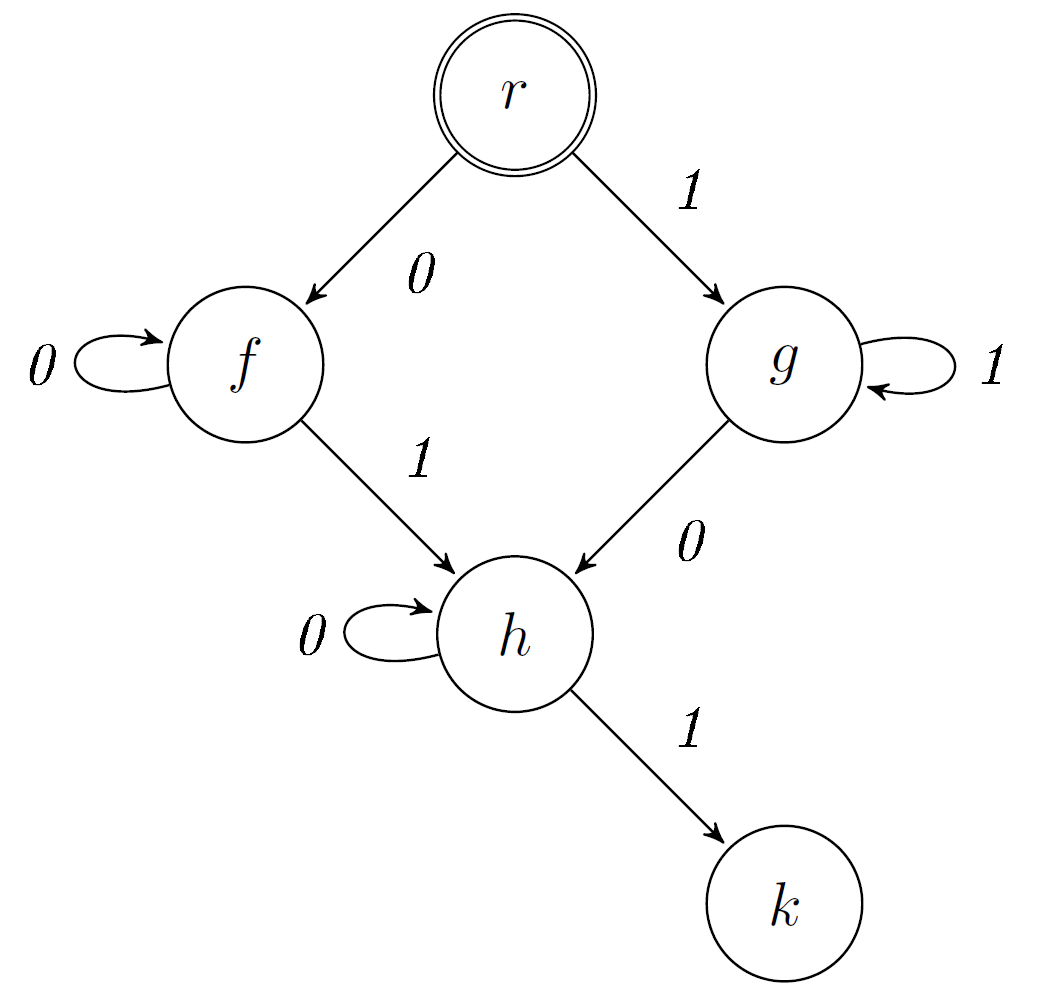}
	\caption{A rooted tree-automaton (we gave labels to the vertices in the figure so it would be easier to refer to them).}\label{fig:Ex}
\end{figure}

We make the following  observation. 

\begin{Lemma}\label{reduced accepted}
	Let $\mathcal A_r$ be a tree-automaton. Let $(T_+,T_-)$ be a tree-diagram accepted by $\A_r$. Then the reduced tree-diagram equivalent to $(T_+,T_-)$ is also accepted by $\A_r$. 
\end{Lemma}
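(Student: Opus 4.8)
The plan is to reduce the statement to a single caret-reduction step and then proceed by induction. Recall that two equivalent tree-diagrams differ by a finite sequence of insertions and reductions of common carets, and that passing from $(T_+,T_-)$ to its reduced form only involves reductions. Hence it suffices to show that if $(T_+,T_-)$ is accepted by $\A_r$ and $(T_+',T_-')$ is obtained from it by reducing a single common caret, then $(T_+',T_-')$ is also accepted. The full statement then follows by iterating this over the finitely many reductions needed to reach the reduced form.

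So I would fix a common caret of $(T_+,T_-)$: say leaves $i$ and $i+1$ have a common father in $T_+$, giving branches $w0$ and $w1$, and a common father in $T_-$, giving branches $z0$ and $z1$. Since leaf $i$ pairs with leaf $i$ and leaf $i+1$ with leaf $i+1$, the relevant pairs of branches of $(T_+,T_-)$ are $w0\to z0$ and $w1\to z1$; reducing the caret replaces these two pairs by the single pair $w\to z$, leaving all other pairs of branches unchanged. First I would check readability of $(T_+',T_-')$: the only new branches are $w$ (in $T_+'$) and $z$ (in $T_-'$), and these are prefixes of the readable words $w0$ and $z0$; since any initial segment of a path in $\A_r$ is again a path in $\A_r$, both $w$ and $z$ are readable, and all other branches are unchanged, so $T_+'$ and $T_-'$ remain readable on $\A_r$.

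The main step is to verify the acceptance condition for the new pair $w\to z$, i.e. that $w^+=z^+$ in $\A_r$. Because $(T_+,T_-)$ is accepted, we have $(w0)^+=(z0)^+$ and $(w1)^+=(z1)^+$. Since $w0$ and $w1$ are readable, the vertex $w^+$ has an outgoing left edge (labeled $0$) to $(w0)^+$ and an outgoing right edge (labeled $1$) to $(w1)^+$; thus $w^+$ is a father whose left child is $(w0)^+$ and whose right child is $(w1)^+$. Likewise $z^+$ is a father with left child $(z0)^+$ and right child $(z1)^+$. Therefore $w^+$ and $z^+$ are two fathers with equal left children, $(w0)^+=(z0)^+$, and equal right children, $(w1)^+=(z1)^+$. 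By the contrapositive of condition $(3)$ of Definition \ref{def:tree-aut}, two distinct fathers cannot agree on both children, so $w^+=z^+$, as required.

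The remaining pairs of branches of $(T_+',T_-')$ coincide with pairs of branches of $(T_+,T_-)$, and the automaton $\A_r$ is unchanged, so they continue to satisfy the acceptance condition. Hence $(T_+',T_-')$ is accepted by $\A_r$, and the induction goes through. The crux of the argument — and the only place where the defining properties of a tree-automaton are genuinely used — is the application of the ``folding'' condition $(3)$ of Definition \ref{def:tree-aut} to conclude $w^+=z^+$; everything else is routine bookkeeping about prefixes of readable words.
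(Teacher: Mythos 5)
Your proposal is correct and follows essentially the same route as the paper: reduce to a single common-caret reduction, note that the only new pair of branches is $w\to z$ replacing $w0\to z0$ and $w1\to z1$, and conclude $w^+=z^+$ from Condition $(3)$ of Definition \ref{def:tree-aut} since both are fathers with identical children. The paper's proof is exactly this argument, so there is nothing to add.
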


\begin{proof}
	Assume that $(T_+,T_-)$ is not reduced and let $(R_+,R_-)$ be a tree-diagram obtained from $(T_+,T_-)$ by the reduction of a single common caret. It suffices to prove that $(R_+,R_-)$ is accepted by $\A_r$. To do so, consider the relation between the pairs of branches of $(R_+,R_-)$ and the pairs of branches of $(T_+,T_-)$. 
	There exists one pair of branches $u\to v$ of $(R_+,R_-)$ such that $(T_+,T_-)$ has the pairs of branches $u0\to v0$ and $u1\to v1$. All other pairs of branches of $(R_+,R_-)$ are also pairs of branches of $(T_+,T_-)$. Hence, to prove that $(R_+,R_-)$ is accepted by $\A_r$ it suffices to prove that $u$ and $v$ label paths in $\A_r$ such that $u^+=v^+$. 
	Since $(T_+,T_-)$ is accepted by $\A_r$, $u0$, $v0$, $u1$ and $v1$ label paths in $\A_r$ such that $(u0)^+=(v0)^+$ %. Similarly, $u1$ and $v1$ label paths in $\A_r$ such that
	 and  $(u1)^+=(v1)^+$. It follows that $u$ and $v$ label paths in $\A_r$ and that both $u^+$ and $v^+$ are fathers such that their left child is $(u0)^+=(v0)^+$ and their right child is $(u1)^+=(v1)^+$. % (labeled ``1").
	  Hence, by Condition $(3)$ in the definition of a tree-automaton, the vertices $u^+$ and $v^+$ in $\A_r$ must coincide. Hence, $(R_+,R_-)$ is accepted by $\A_r$. 
\end{proof}

More generally, we have the following. 

\begin{Lemma}\label{inv_reduced}
	Let $\A_r$ be a tree-automaton and let $(T_+,T_-)$ be a  tree-diagram accepted by $\A_r$. Let $(R_+,R_-)$ be a tree-diagram equivalent to $(T_+,T_-)$. Then $(R_+,R_-)$ is accepted by $\A_r$ if and only if $R_+$ (equiv., $R_-$) is readable on $\A_r$. 
\end{Lemma}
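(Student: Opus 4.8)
The plan is to treat the two implications separately, since the forward direction is essentially definitional. If $(R_+,R_-)$ is accepted by $\A_r$, then by Definition \ref{def:accepts} it is readable, so in particular both $R_+$ and $R_-$ are readable; this also explains the parenthetical ``(equiv., $R_-$)'', since once the converse is proved the three conditions ``$(R_+,R_-)$ accepted'', ``$R_+$ readable'' and ``$R_-$ readable'' will all be equivalent. All the content therefore lies in the converse: assuming $R_+$ is readable on $\A_r$, I want to conclude that $(R_+,R_-)$ is accepted.

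For that direction I would first pass to the reduced tree-diagram $(S_+,S_-)$ equivalent to $(T_+,T_-)$. By Lemma \ref{reduced accepted} the diagram $(S_+,S_-)$ is accepted, so for every pair of branches $u_i\to v_i$ of $(S_+,S_-)$ we have $u_i^+=v_i^+$ in $\A_r$. Since $(R_+,R_-)$ is equivalent to the reduced diagram $(S_+,S_-)$ and the reduction moves are reversible, $(R_+,R_-)$ is obtained from $(S_+,S_-)$ purely by inserting common carets. The structural fact I would record is that $R_+$ (resp.\ $R_-$) arises from $S_+$ (resp.\ $S_-$) by attaching, at its $i$-th leaf, a finite binary tree $Q_i$, and that the tree $Q_i$ attached to $S_+$ at leaf $i$ is identical to the one attached to $S_-$ at leaf $i$, because common-caret insertions act on both trees at matching leaf indices. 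Consequently every branch of $R_+$ factors as $u_i w$, where $u_i$ is the $i$-th branch of $S_+$ and $w$ is a branch of $Q_i$, and its partner branch of $R_-$ is exactly $v_i w$.

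With this factorization the heart of the argument is short. Fix a pair of branches $u_i w\to v_i w$ of $(R_+,R_-)$. Since $R_+$ is readable, $u_i w$ labels a path in $\A_r$ that reaches $u_i^+$ after reading $u_i$ and then reads $w$ to reach $(u_i w)^+$. Because $u_i^+=v_i^+$, reading $w$ from the vertex $v_i^+=u_i^+$ is possible and ends at the same vertex, so $v_i w$ also labels a path in $\A_r$ with $(v_i w)^+=(u_i w)^+$. Letting the pair range over all branches, this simultaneously shows that $R_-$ is readable and that $u^+=v^+$ for every pair of branches $u\to v$ of $(R_+,R_-)$; hence $(R_+,R_-)$ is accepted. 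The version with ``$R_-$ readable'' then follows by applying the same argument to the inverse diagram $(R_-,R_+)$, which is equivalent to $(T_-,T_+)$; here I would note that acceptance is symmetric in the two trees (the condition $u_i^+=v_i^+$ is unchanged upon swapping), so $(T_-,T_+)$ is accepted and the previous case applies.

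The main obstacle I anticipate is bookkeeping rather than conceptual: carefully justifying the factorization step — that passing to the reduced diagram lets each branch of $R_+$ and $R_-$ be written as $u_i w$ and $v_i w$ with a common suffix $w$ coming from an \emph{identical} attached subtree $Q_i$ — and confirming that the pairing of branches of $(R_+,R_-)$ respects this decomposition. Once the common-suffix structure is in place, the readability of $R_+$ transports to $R_-$ through the shared vertex $u_i^+=v_i^+$ with no further difficulty.
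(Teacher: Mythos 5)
Your proof is correct and follows essentially the route the paper intends: the paper only sketches this proof (stating that it is similar to the proof of Lemma \ref{reduced accepted} and follows from Conditions $(2)$ and $(3)$ of Definition \ref{def:tree-aut}), and your argument --- passing to the reduced diagram via Lemma \ref{reduced accepted} and then transporting readability along the common suffixes $w$ through the shared vertex $u_i^+=v_i^+$ using the determinism of the automaton --- supplies exactly those details. The factorization step you flag as the main obstacle is indeed harmless: it is justified by Remark \ref{rem:branches}, which gives each pair of branches of $(R_+,R_-)$ the form $u_iw\to v_iw$ over a pair of branches $u_i\to v_i$ of the reduced diagram.
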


The proof of Lemma \ref{inv_reduced} is similar to the proof of Lemma \ref{reduced accepted}, 
and follows easily from Conditions $(2)$ and $(3)$ in the definition of a tree-automaton.

\begin{Lemma}\label{product}
	Let $\A_r$ be a tree-automaton. Let $(T_+,T_-)$ and $(R_+,R_-)$ be reduced tree diagrams accepted by $\A_r$. Then the product $(T_+,T_-)\cdot (R_+,R_-)$ is accepted by $\A_r$.
\end{Lemma}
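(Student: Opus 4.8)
The plan is to follow the explicit description of the product given in Remark~\ref{r:000} and to reduce the claim, via Lemmas~\ref{reduced accepted} and~\ref{inv_reduced}, to a single readability statement about the common refinement tree. Concretely, let $S$ be the minimal finite binary tree having both $T_-$ and $R_+$ as rooted subtrees. Then $(T_+,T_-)$ is equivalent to a tree-diagram $(T'_+,S)$ and $(R_+,R_-)$ is equivalent to a tree-diagram $(S,R'_-)$, where $T'_+$ and $R'_-$ are obtained by inserting common carets; the product $(T_+,T_-)\cdot(R_+,R_-)$ is, by definition, the reduced tree-diagram equivalent to $(T'_+,R'_-)$. So it suffices to show that $(T'_+,R'_-)$ is accepted and then invoke Lemma~\ref{reduced accepted}.

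The key step is to show that $S$ is readable on $\A_r$. Here I would use that the vertex set of $S$ is exactly the union of the vertex sets of $T_-$ and $R_+$: both are prefix-closed, and their union is again the vertex set of a finite binary tree, namely $S$. Since $(T_+,T_-)$ is accepted, $T_-$ is readable, so every branch of $T_-$ labels a path from the root in $\A_r$; consequently every prefix of such a branch --- that is, every vertex of $T_-$ --- also labels a path. The same holds for $R_+$. Hence every vertex of $S$, being a vertex of $T_-$ or of $R_+$, labels a path in $\A_r$; in particular every branch of $S$ does, so $S$ is readable.

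With $S$ readable, Lemma~\ref{inv_reduced} applied to the accepted diagram $(T_+,T_-)$ and its equivalent $(T'_+,S)$ shows that $(T'_+,S)$ is accepted; likewise $(S,R'_-)$ is accepted. Now I would compare branches: writing $u_i$, $s_i$, $w_i$ for the $i$-th branches (left to right) of $T'_+$, $S$, $R'_-$ respectively, the diagram $(T'_+,S)$ has the pair of branches $u_i\to s_i$ and $(S,R'_-)$ has the pair $s_i\to w_i$, while $(T'_+,R'_-)$ has precisely the pairs $u_i\to w_i$. Acceptance of the first two diagrams gives $u_i^+=s_i^+$ and $s_i^+=w_i^+$ in $\A_r$, whence $u_i^+=w_i^+$; moreover $u_i$ and $w_i$ are readable since $T'_+$ and $R'_-$ are. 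Thus $(T'_+,R'_-)$ is accepted, and Lemma~\ref{reduced accepted} shows its reduced form --- which is the product $(T_+,T_-)\cdot(R_+,R_-)$ --- is accepted as well.

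I expect the only genuinely delicate point to be the identification of the vertex set of $S$ with $V(T_-)\cup V(R_+)$ and the resulting readability of $S$; once that is in hand, everything reduces to bookkeeping of the branch correspondence through $S$ together with the two previous lemmas. In particular, I would take care to verify that the union of the two prefix-closed vertex sets is indeed complete (every non-leaf vertex has both children), which follows because $T_-$ and $R_+$ are themselves full binary trees: if a word $w$ lies in the union and has a child there, that child lies in $V(T_-)$ or $V(R_+)$, forcing $w$ to be an inner vertex of the corresponding tree and hence to have both children in it.
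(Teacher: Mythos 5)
Your proposal is correct and follows essentially the same route as the paper: form the minimal common refinement $S$ of $T_-$ and $R_+$, observe that $S$ is readable on $\A_r$, use Lemma \ref{inv_reduced} to see that $(T'_+,S)$ and $(S,R'_-)$ are accepted, chain the branch identifications $u_i^+=s_i^+=w_i^+$ to get acceptance of $(T'_+,R'_-)$, and finish with Lemma \ref{reduced accepted}. Your extra care about the vertex set of $S$ being $V(T_-)\cup V(R_+)$ only elaborates the paper's one-line observation that every branch of $S$ is a branch of $T_-$ or of $R_+$.
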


\begin{proof}
	By assumption, the trees $T_-$ and $R_+$ are readable on $\A_r$. Let $S$ be the minimal finite binary tree such that $T_-$ and $R_+$ are rooted subtrees of $S$. Since every branch of $S$ is either a branch of $T_-$ or a branch of $R_+$, the tree $S$ is readable on $\A_r$. 
	One can insert common carets to the tree-diagram $(T_+,T_-)$ until one gets an equivalent tree-diagram of the form $(T',S)$.
	Similarly, one can insert common carets to the tree-diagram $(R_+,R_-)$ to get the equivalent tree-diagram $(S,R')$. Since $S$ is readable on $\A_r$, by Lemma \ref{inv_reduced}, both $(T',S)$ and $(S,R')$ are accepted by $\A_r$. 
	It follows easily that $(T',R')$ is accepted by $\A_r$. But the product of $(T_+,T_-)$ and  $(R_+,R_-)$ is the reduced tree-diagram equivalent to $(T',R')$. Hence, by Lemma \ref{reduced accepted}, it is accepted by $\A_r$. 
\end{proof}

Lemma \ref{product} implies that if $\A_r$ is a tree-automaton then the set of all reduced tree-diagrams in $F$ accepted by $\A_r$ is a subgroup of $F$. 

\begin{Definition}\label{lab closed}
	Let $\A_r$ be a  tree-automaton. We define the \emph{diagram group over $\A_r$}, denoted $\ddd (\A_r)$, to be the subgroup of $F$ of all (reduced) tree-diagrams $(T_+,T_-)$ accepted by $\A_r$. 
\end{Definition}

Note that the diagram groups defined in Definition \ref{lab closed} are a special case of the diagram groups defined in \cite{GuSa97} by Guba and Sapir.  

\begin{Definition}\label{def closed}
	A subgroup $H$ of $F$ is  \emph{closed}  if it is a diagram group over some tree-automaton, i.e., if there exists a tree-automaton $\mathcal A_r$ such that $H=\ddd(\A_r)$. 
\end{Definition}

\begin{Example}
	Thompson's group $F$ is closed. Indeed, let $\A_r$ be the tree-automaton with a unique vertex: the root $r$; and two directed loops from $r$ to itself  (one labeled ``0'' and the other labeled ``1''). Then the diagram group $\ddd(\A_r)=F$. 
\end{Example}

Note that if $\A_r$ is a finite tree-automaton then the membership problem in the subgroup $\ddd(A_r)$ of Thompson's group $F$ is decidable. Indeed, as noted above, given a reduced tree-diagram $(T_+,T_-)$ in $F$, it is decidable whether $(T_+,T_-)$ is accepted by $\A_r$.

Let $H$ be a subgroup of $F$. A function $f\in F$ is said to be a \emph{piecewise-$H$} function if there is a finite subdivision of the interval $[0,1]$ such that on each interval in the subdivision $f$ coincides with some function in $H$. We note that since all breakpoints of elements in $F$ are dyadic fractions, a function $f\in F$ is a piecewise-$H$ function if and only if there is a  dyadic subdivision of the interval $[0,1]$ into finitely many pieces such that on each dyadic interval in the subdivision $f$ coincides with some function in $H$. Equivalently, a function $f\in F$ is a  piecewise-$H$ function if and only if it has a (not necessarily reduced) tree-diagram $(T_+,T_-)$ such that each pair of branches $u\to v$ of $(T_+,T_-)$ is a pair of branches of some element in $H$. 
 The following lemma was proved in \cite{G}.

\begin{Lemma}\label{dyadic piecewise}
	Let $H$ be a subgroup of $F$. Then $H$ is closed (i.e., there is a tree-automaton $\A_r$ such that $H=\ddd(\A_r)$) if and only if every function $f\in F$ which is a piecewise-$H$ function belongs to $H$.  	
\end{Lemma}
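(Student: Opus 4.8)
The plan is to prove the two implications separately. The substantive direction is that closedness implies piecewise-closedness; for the converse I will take the tree-automaton to be the core $\C(H)$ and invoke its basic properties. For the forward direction, assume $H=\ddd(\A_r)$ and let $f\in F$ be a piecewise-$H$ function. Choosing a tree-diagram $(T_+,T_-)$ witnessing that $f$ is piecewise-$H$ and refining it if necessary, I may assume $(T_+,T_-)$ is a common refinement of the reduced diagram of $f$; then on each branch $[a]$ of the reduced diagram of $f$ the function $f$ coincides with a single $h\in H$, since two increasing affine maps agreeing on a nondegenerate interval are equal. I will show the reduced diagram $(F_+,F_-)$ of $f$ is accepted by $\A_r$, whence $f\in\ddd(\A_r)=H$.

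The heart of the argument is readability. Suppose some branch $a$ of $F_+$ is not readable on $\A_r$, and write $a=a'c\cdots$ where $a'$ is the longest readable prefix of $a$ and $c\in\{0,1\}$; since a father has both outgoing edges, the vertex $z=(a')^+$ must be a leaf. Because $[a]\subsetneq[a']$ and $a$ is a reduced branch, $f$ subdivides $[a']$ into at least two reduced branches, and on two adjacent such branches $f$ agrees with elements $h_1,h_2\in H$. For $j=1,2$, the reduced branch $p_j$ of $h_j$ containing that piece cannot extend $a'$ (nothing extends a leaf readably, yet reduced branches of elements of $H$ are readable because their reduced diagrams are accepted), so $p_j$ is a prefix of $a'$, say $a'\equiv p_jw_j$. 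Writing the corresponding reduced pair as $p_j\to q_j$, acceptance gives $p_j^+=q_j^+$, and reading $w_j$ from this common vertex shows that $h_j$ maps $[a']$ onto the dyadic interval $I_j=[q_jw_j]$ whose address $q_jw_j$ reads to the leaf $z$. Since $f$ has a genuine breakpoint between the two pieces, $I_1\neq I_2$, and evaluating $f$ at the interior breakpoint gives $\mathrm{int}(I_1)\cap\mathrm{int}(I_2)\neq\emptyset$; hence the dyadic intervals $I_1,I_2$ are nested and one of the addresses $q_1w_1,q_2w_2$ is a proper prefix of the other. But then a word reading to the leaf $z$ is properly extended by another readable word, which is impossible since $z$ has no outgoing edge. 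This contradiction shows $F_+$, and symmetrically $F_-$, is readable. To finish, Remark \ref{rem:branches} applies to each pair of branches $a\to b$ of $(F_+,F_-)$: writing $a\equiv pw$, $b\equiv qw$ with $p\to q$ a reduced pair of the relevant $h\in H$ and $p^+=q^+$, the readability of $a$ forces the suffix $w$ read from $p^+=q^+$ to be readable, so $a^+=b^+$. Thus $(F_+,F_-)$ is accepted and $f\in H$.

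For the converse, assume every piecewise-$H$ function lies in $H$, and take $\A_r$ to be the core $\C(H)$, which is a tree-automaton. On one hand $H\subseteq\ddd(\C(H))$, since the reduced diagram of every element of $H$ is accepted by its core. On the other hand, every reduced diagram accepted by $\C(H)$ is piecewise-$H$: by construction of the core, the relation $a^+=b^+$ in $\C(H)$ records precisely that $[a]\to[b]$ is realized by some element of $H$, so each pair of branches of an accepted diagram is a pair of branches of an element of $H$. Hence $\ddd(\C(H))$ consists of piecewise-$H$ functions, which by hypothesis all lie in $H$; therefore $\ddd(\C(H))=H$ and $H$ is closed.

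The main obstacle is the readability step in the forward direction: a priori a branch of an element of $H$ need not be readable on $\A_r$, because an element may possess non-readable refinements at a leaf, so one cannot simply declare a piecewise diagram of $f$ accepted. The argument above circumvents this by passing to the \emph{reduced} diagram of $f$ and exploiting that $f$ is a homeomorphism (monotonicity, and that the image of an interval is an interval) together with the impossibility of extending a word past a leaf. In the converse direction the analogous delicate point—that acceptance by $\C(H)$ forces each pair of branches to be realized inside $H$—is exactly where the construction and basic properties of the core from \cite{GS1,G} are used.
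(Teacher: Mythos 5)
The paper does not actually prove this lemma; it quotes it from \cite{G}, so there is no in-text proof to compare against, and I can only judge your argument on its own terms. Your overall plan for the substantive direction --- prove that every branch of the \emph{reduced} diagram of $f$ is readable by exploiting that no readable word properly extends a word terminating at a leaf, then match endpoints via Remark \ref{rem:branches} --- is a sound strategy, but two steps do not hold up as written. The first is the claim that $f$ coincides with a \emph{single} $h\in H$ on each branch $[a]$ of its reduced diagram. The justification offered (``two increasing affine maps agreeing on a nondegenerate interval are equal'') does not give this: the element $h\in H$ witnessing agreement on a sub-piece $[u]\subsetneq[a]$ of the refining diagram need not be affine on all of $[a]$, so nothing forces it to agree with $f$ outside $[u]$. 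A priori this claim is exactly as strong as the conclusion $f\in H$ (a posteriori one may take $h=f$), so it cannot be assumed. It is load-bearing twice: in the choice of $h_1,h_2$ in the readability step, and in the final application of Remark \ref{rem:branches} to the reduced pairs $a\to b$. The natural repair is to run the argument on adjacent branches of the witnessing refined diagram, where the $h_i$ exist by hypothesis; but then the concluding acceptance step needs a further folding argument in the spirit of Lemma \ref{reduced accepted}, since branches of the refined diagram need not be readable even once $F_+$ is.

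The second gap is the assertion ``since $f$ has a genuine breakpoint between the two pieces, $I_1\neq I_2$.'' Adjacent branches of a reduced tree-diagram need not be separated by a breakpoint of $f$: a single affine piece of $f$ whose image is not a standard dyadic interval is necessarily split into several reduced branches (for instance, a map equal to the identity on $[0,3/4]$ and nontrivial on $[3/4,1]$ has adjacent reduced branches $0$ and $10$ with no breakpoint at $1/2$). So the case $I_1=I_2$ can genuinely occur, and your argument derives no contradiction from it. It does lead to one, but by a different route: if $I_i$ is the same interval for every adjacent pair inside $[a']$, then all the $h_i$ restrict to the same affine bijection of $[a']$ onto $I_1$, so $f$ maps $[a']$ linearly onto the dyadic interval $I_1$; Remark \ref{rem:branches} applied to the pair $a'\to$ (address of $I_1$) then yields a branch of the reduced diagram of $f$ that is a prefix of $a'$, contradicting the assumption that the reduced branch $a$ properly extends $a'$. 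Finally, a smaller inaccuracy in the converse: $u^+=v^+$ in $\C(H)$ is equivalent to $u\to v$ being a pair of branches of an element of $\Cl(H)$, not of $H$ (Lemma \ref{identified vertices in the core}); to conclude that a diagram accepted by $\C(H)$ is piecewise-$H$ you need Lemma \ref{identified vertices H} together with a sufficiently deep refinement of the diagram.
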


\begin{Remark}\label{intersection}
	It follows from Lemma \ref{dyadic piecewise} that the intersection of closed subgroups of $F$ is a closed subgroup of $F$. One can also show it directly from Definition \ref{lab closed} using an appropriately defined ``pullback'' of rooted tree-automata. 
\end{Remark}

\subsection{The core of subgroups of Thompson's group $F$}

Let $H$ be a subgroup of $F$. We are interested in the smallest closed subgroup of $F$ which contains $H$ (note that by Remark \ref{intersection}, such a subgroup exists). For that, we define the core of the subgroup $H$ of $F$. The following definition is an adaptation of the definition from \cite{GS1} to the language of tree-diagrams. 

\begin{Definition}\label{core}
	Let $H$ be a subgroup of $F$ generated by the set $\mathcal S=\{(T^i_+,T^i_-): i\in\mathcal I\}$ of reduced tree-diagrams. The \emph{core} of $H$ is the rooted tree-automaton denoted $\mathcal C(H)$ defined as follows. 
	
	For each $i\in \mathcal I$ we can consider the trees $T^i_+$ and $T^i_-$ as directed edge-labeled graphs, where edges are directed away from the root and left edges are labeled by ``0'' and right edges are labeled ``1''. For each $i\in \mathcal I$, we ``glue''  each leaf of  $T^i_+$ to the corresponding leaf of $T^i_-$ %, when $T^i_-$ is drawn upside down
	 (i.e., we identify each pair of corresponding leaves to a single vertex). We also glue the root of $T^i_+$ to the root of $T^i_-$ and denote the oriented graph obtained by $S_i$
	(one can think of $S_i$ as drawn on a sphere.). The ``root'' of $S_i$ is the vertex formed by the identification of the roots of $T^i_+$ and $T^i_-$. Note that this is the only vertex in $S_i$ with no incoming edges. 
	
	Next, we identify the roots of all the directed graphs $S_i$ to a single root vertex $r$. To the directed edge-labeled graph obtained we apply foldings of two different types:
	\begin{enumerate}
		\item[$(1)$] If a vertex $x$ has several outgoing edges labeled by the same label, we identify all of these edges to a single edge and all of their end-vertices to a single vertex. 
		
		We repeat step $(1)$ as long as it is applicable. 
		As a result (if $S$ is infinite, then in the limit state, after possibly infinitely many foldings) we get a directed edge labeled-graph where every vertex $x$ has either zero or two outgoing edges: one \textit{left} edge labeled ``0'' and one \textit{right} edge  labeled ``1'' (in which case we will refer to their end vertices as the \textit{children} of $x$).
		\item[$(2)$] If $x$ and $y$ are distinct vertices in the directed graph obtained such that both $x$ and $y$ have $2$ outgoing edges and such that each child of $x$ coincides with the respective child of $y$, we identify the vertices $x$ and $y$, we identify their left outgoing edges and identify  their right outgoing edges. 
		
		We repeat step $(2)$ as long as it is applicable (if $S$ is infinite we may have to apply infinitely many foldings). 
		%(if $S$ is infinite,  we consider the limit state, after possibly infinitely many foldings).
	\end{enumerate}
	Note that at the end of this process, %(if $\mathcal S$ is infinite, then after possibly infinitely many foldings),
	 every vertex has either zero or two ougoing edges, one labeled ``0'' and the other labeled ``1'', and the unique vertex with no incoming edges is the root $r$. The foldings guarantee that the resulting directed edge-labeled graph satisfies Conditions $(2)$ and $(3)$ from Definition \ref{def:tree-aut}. Condition (4) from the definition is also satisfied, since even before the application of foldings, for each vertex in the graph there was a directed path from the root to the vertex. Hence, the directed graph obtained is a rooted tree-automaton. It is called the \textit{core} of $H$ and denoted  $\mathcal C(H)$.   
\end{Definition}

%%%%%%%%%%%%%%%%%%%%%%%%%%%%%%%%%%%%%%%%%%%%%%%%%%%%%%%%%%%%%%%%%%%%%%%%%%%%%%%

\begin{Example}
	Let $H=\la x_0x_1^{-1},x_0^2x_1x_3^{-1}\ra$. We demonstrate the construction of the core $\mathcal C(H)$. First, we start with the reduced tree-diagrams $(T_+^1,T_-^1)$ and $(T_+^2,T_-^2)$ of the generatorts  $x_0x_1^{-1}$ and $x_0^2x_1x_3^{-1}$, where each tree is considered as a directed edge-labeld graph (see Figure \ref{fig:genH}). Next, for $i=1,2$, we identify each leaf of $T_+^i$ with the corresponding leaf of $T_-^i$ and we identify the roots of the trees $T_+^1,T_-^1,T_+^2,T_-^2$ to a single root vertex as depicted in Figure \ref{fig:step1}. In the figure, we labeled all the vertices, giving identified vertices the same label and labeling the root by $r$. Next, we apply foldings, starting with foldings of type (1): The vertex $r$ has $4$ distinct outgoing edges  labeled ``0''. We identify all of them to a single edge and identify their end vertices $11,1,15,5$ to a single vertex. Similarly, the vertex $r$ has $4$ distinct outgoing edges labeled ``1''. We identify all of them to a single edge and identify their end vertices $12,13,16,19$ to a single vertex. The result is depicted in Figure \ref{fig:firstfolding} (for convenience, when edges are identified we color them by the same (non-black) color, when several vertices are identified, we label all of them (in every place they appear in the figure) by the smallest of their labels).  Now, in Figure \ref{fig:firstfolding}, the vertex $1$ has two distinct outgoing edges labeled ``0'' and two distinct outgoing edges labeled ``1''. We identify the outgoing edges labeled ``0'' to a single edge and identify their end vertices $1,17$ to a single vertex. Similarly, we identify the outgoing edges labeled ``1'' to a single edge and their end vertices $2,8$ to a single vertex. The result is depicted in Figure \ref{fig:secondfolding}. %(notice that all the vertices labeled $``8''$ in Figure \ref{fig:firstfolding} are labeled $``2''$ in Figure \ref{fig:secondfolding}).
	 Now, In Figure \ref{fig:secondfolding}, the vertex $1$ has two distinct outgoing edges labeled ``0'' (a green edge and a black edge). We identify them to a single edge (note that their end vertices are already identified). Similarly, the vertex $1$ has two distinct outgoing edges labeled ``1'' (an orange edge and a black edge). We identify them to a single edge and we identify their end vertices $2,18$ to a single vertex. The result is depicted in Figure \ref{fig:thirdfolding}.  Now, in Figure \ref{fig:thirdfolding}, the vertex $12$ has four distinct outgoing edges labeled ``0''. We identify them to a single edge and their end vertices $3,14,9,6$ to a single vertex . Similarly, the vertex $12$ has four distinct outgoing edges labeled ``1''. We identify them to a single edge and we identify their end vertices $4,10,20$ to a single vertex. The result is depicted in Figure \ref{fig:fourthfolding}. Notice that in Figure \ref{fig:fourthfolding} there is no vertex with distinct outgoing edges labeled by the same label. Hence, we are done applying foldings of type (1) and we move on to applying foldings of type (2): In Figure \ref{fig:fourthfolding} the vertices $3$ and $22$ are distinct vertices, but their left children coincide and their right children coincide. Hence, we identify these vertices, as well as their left outgoing edges and their right outgoing edges. The result is depicted in Figure \ref{fig:first2folding}. Now, in Figure \ref{fig:first2folding}, the vertices $12$ and $21$ are distinct vertices, but their left children coincide and their right children coincide. Hence, we identify these vertices, as well as their left outgoing edges and their right outgoing edges. The result is depicted in Figure \ref{fig:second2folding}. Notice that in Figure \ref{fig:second2folding} there are no more applicable foldings. Hence, the process is finished and the rooted tree-automaton in Figure \ref{fig:second2folding} is the core of $H$ (where all vertices with the same label are identified and all non-black edges with the same color are identified). The obtained core of $H$ is also depicted in Figure \ref{fig:coreH}.

	% The vertex $r$ has $4$ distinct outgoing edges  labeled ``0''. We identify all of them to a single edge (colored red in Figure \ref{fig:firstfolding}) and identify their end vertices $11,1,15,5$ to a single vertex (labeled $1$ in Figure \ref{fig:firstfolding}). Similarly, the vertex $r$ has $4$ distinct outgoing edges edges labeled ``1''. We identify all of them to a single edge (colored blue in Figure \ref{fig:firstfolding}) and identify their end vertices $12,13,16,19$ to a single vertex (labeled $12$ in Figure \ref{fig:firstfolding}). Now (in Figure \ref{fig:firstfolding}) the vertex $1$ has two distinct outgoing edges labeled ``0'' and two distinct outgoing edges labeled ``1''. We identify the outgoing edges labeled ``0'' to a single edge (colored green in Figure \ref{fig:firstfolding}) and identify their end vertices $1,17$ to a single vertex (labeled $1$ in Figure \ref{fig:firstfolding}). Similarly, we identify the outgoing edges labeled ``1'' to a single edge (colored orange in Figure \ref{fig:secondfolding}) and their end vertices $2,8$ to a single vertex (labeled $2$ in Figure \ref{fig:secondfolding}). 
	
	%and two distinct outgoing edges labeled ``1''. We identify the out going edges to a single edge (colored green in Figure \ref{fig:secondfolding}) and their end vertices $1,17$ to a single vertex 

\end{Example}

\begin{figure}
	\begin{center}
		\includegraphics{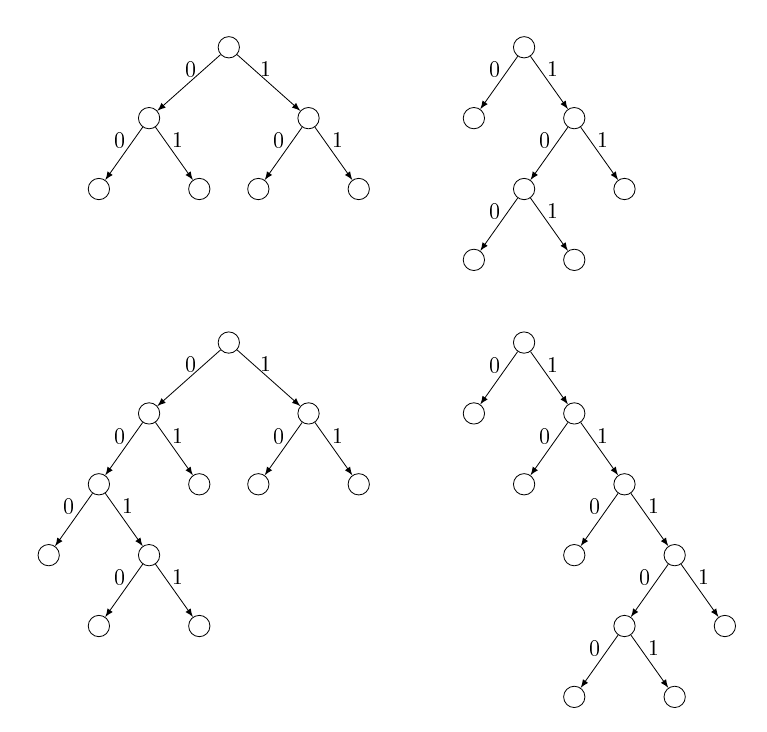}
	\end{center}
	\caption{The reduced tree-diagram $(T_+^1,T_-^1)$ of $x_0x_1^{-1}$ appears in the first row. The reduced tree-diagram $(T_+^2,T_-^2)$ of $x_0^2x_1x_3^{-1}$ appears in the second row.}
	\label{fig:genH}
\end{figure}

\begin{figure}
	\begin{center}
		\includegraphics{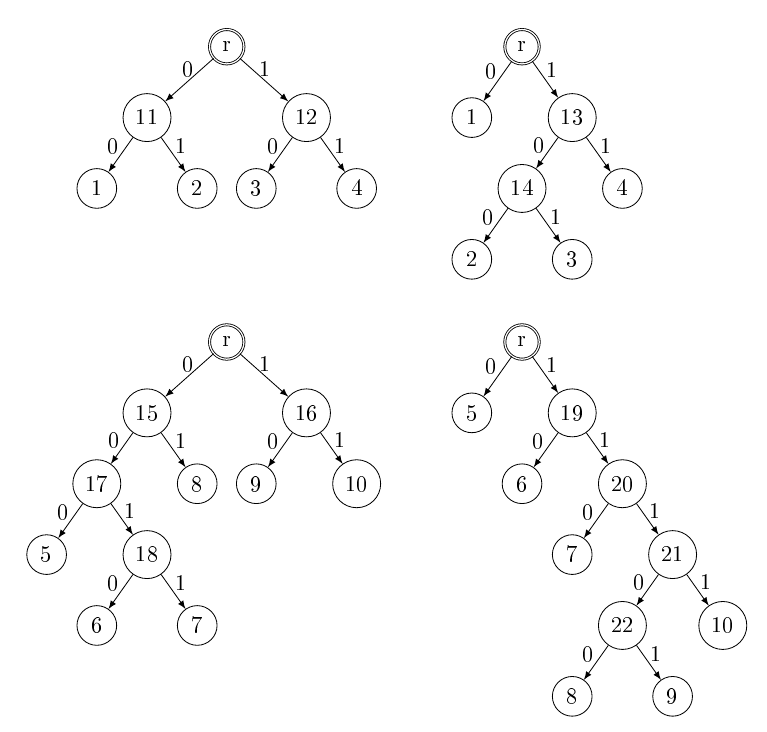}
	\end{center}
	\caption{For $i=1,2$, we identified each leaf of $T_+^i$ with the corresponding leaf of $T_-^i$ and we identified the roots of the trees $T_+^1,T_-^1,T_+^2,T_-^2$ to a single root vertex $r$.}
	\label{fig:step1}
\end{figure}

\begin{figure}
	\begin{center}
				\includegraphics{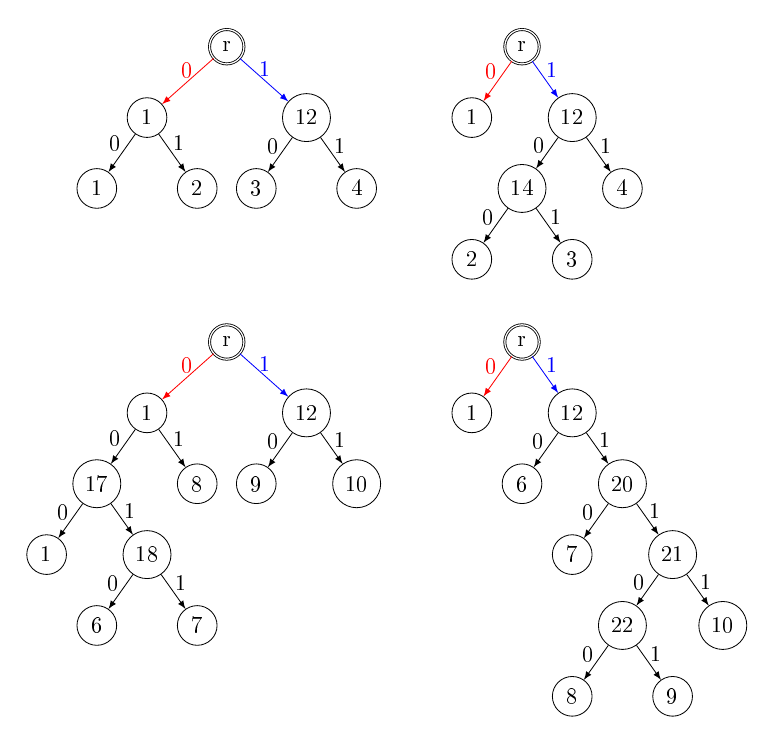}
	\end{center}
	\caption{The red edges are now identified and their end vertices are also identified. Similarly, for the blue edges.}
	\label{fig:firstfolding}
\end{figure}

\begin{figure}
	\begin{center}
		\includegraphics{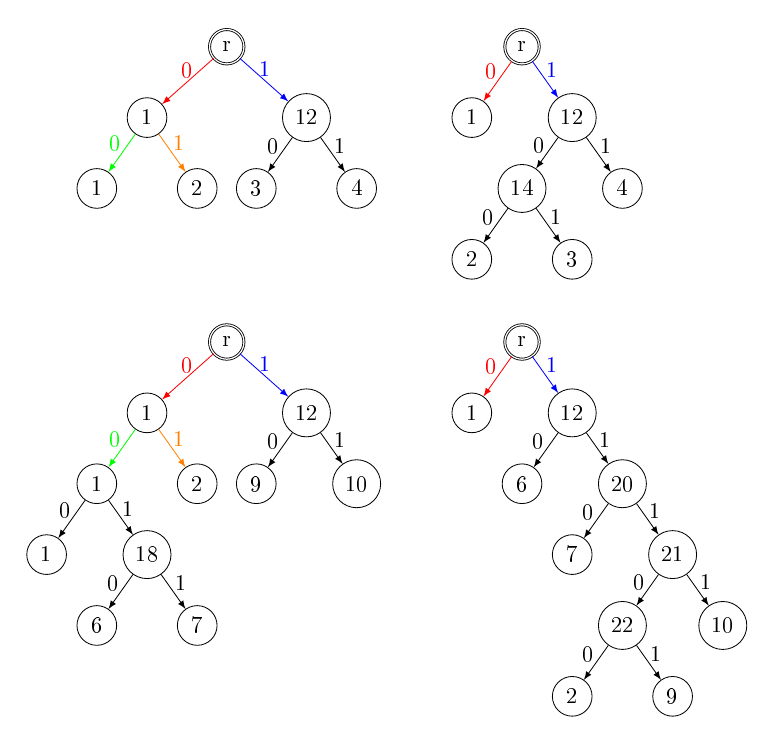}
	\end{center}
	\caption{The green edges are now identified and their end vertices are identified as well. Similarly for the orange edges.}
	\label{fig:secondfolding}
\end{figure}

\begin{figure}
	\begin{center}
		\includegraphics{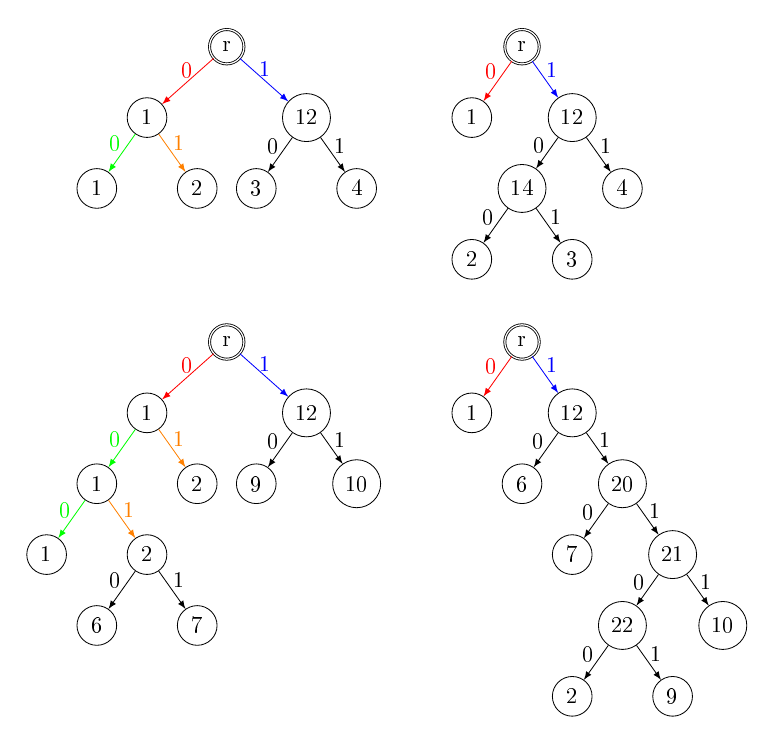}
	\end{center}
	\caption{All the green edges are now identified, all the orange edges are now identified and their end vertices are identified as well.}
	\label{fig:thirdfolding}
\end{figure}

\begin{figure}
	\begin{center}
		\includegraphics{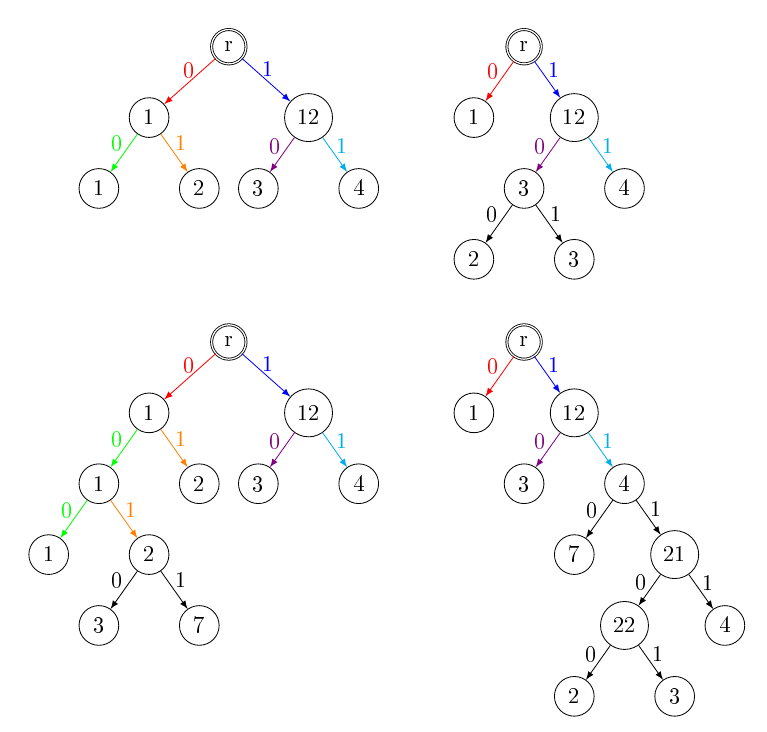}
	\end{center}
	\caption{All the violet edges are now identified and their end vertices are identified as well. Similarly for the cyan edges.}
	\label{fig:fourthfolding}
\end{figure}

\begin{figure}
	\begin{center}
		\includegraphics{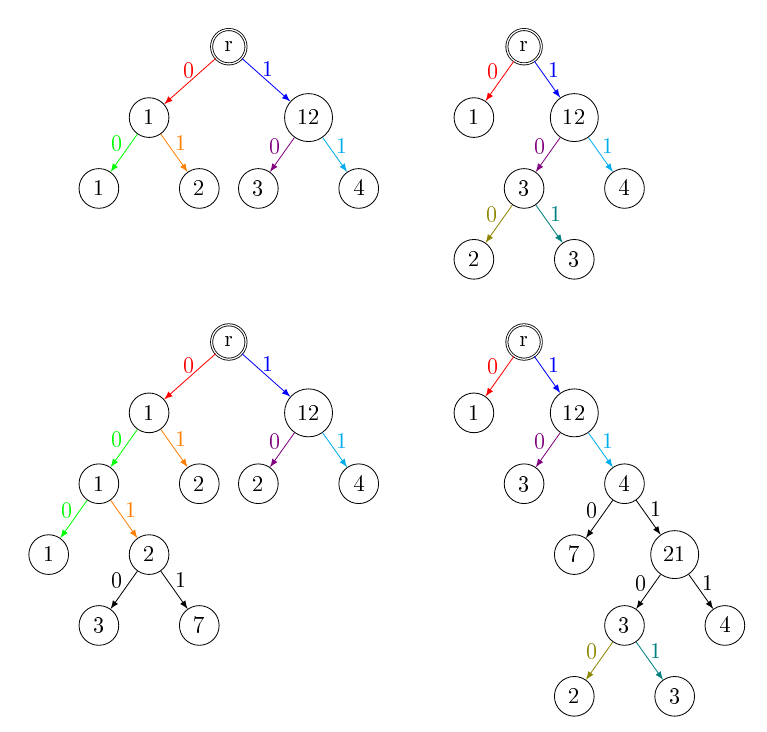}
	\end{center}
	\caption{The olive edges are now identified, the teal edges are now identified and their intial vertices are also identified.}
	\label{fig:first2folding}
\end{figure}

\begin{figure}
	\begin{center}
			\includegraphics{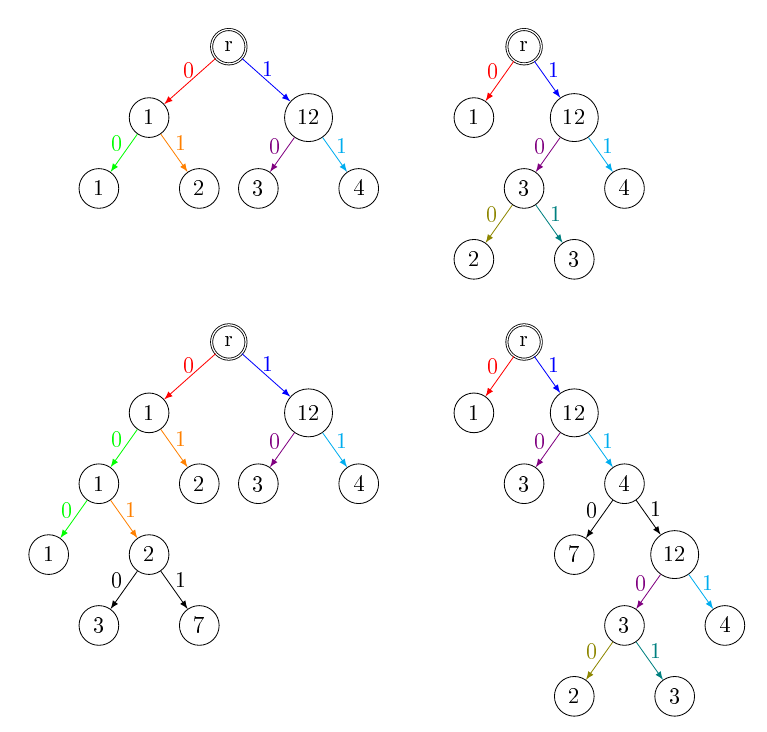}
	\end{center}
	\caption{All the violet edges are now identified, all the cyan edges are now identified and their initial vertices are also identified.}
	\label{fig:second2folding}
\end{figure}

\begin{figure}
	\begin{center}
			\includegraphics{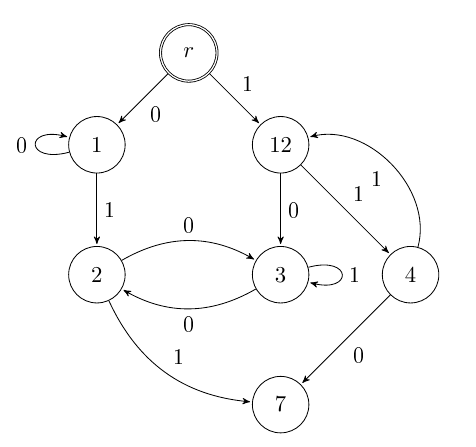}
	\end{center}
	\caption{The core of $H$.}
	\label{fig:coreH}
\end{figure}

%%%%%%%%%%%%%%%%%%%%%%%%%%%%%%%%%%%%%%%%%%%%%%%%%%%%%%%%%%%%%%%%%%%%%%%%%%%%%%%

As noted in \cite{GS1}, the core of $H$ does not depend on the chosen generating set nor on the order of foldings applied.

It follows from the definition of the core of $H$ that the core $\mathcal C(H)$ accepts the generators of $H$ and hence, by Lemma \ref{product}, the entire subgroup $H$. 

\begin{Definition}
	Let $H$ be a subgroup of $F$. Let $\mathcal C(H)$ be the core of $H$. The \emph{closure} of $H$, denoted $\Cl(H)$, is the subgroup of $F$ of all (reduced) tree-diagrams accepted by $\mathcal C(H)$. In other words, the closure of $H$ is the diagram group $\ddd(\mathcal C(H))$. 
\end{Definition}

Let $H$ be a subgroup of $F$. Then $\Cl(H)$ is a closed subgroup of $F$ which contains $H$. By \cite[Theorem 5.6]{G}, the closure of $H$ is the subgroup of $F$ of all piecewise-$H$ functions. Hence, by Lemma \ref{dyadic piecewise}, the closure of $H$ is the minimal closed subgroup of $F$ which contains $H$. In particular, the subgroup $H$ is closed if and only if $H=\Cl(H)$.

\subsection{On the core and closure of subgroups of $F$}

Below, we recall some useful results about the core and the closure of subgroups of $F$. But first, we will need the following lemma.

 A \emph{trail} in a tree-automaton $\A_r$ is a finite sequence of directed edges $e_1,\dots,e_n$ such that for each $i=1,\dots,n-1$ we have ${e_i}_+={e_{i+1}}_-$ (that is, a trail is a ``path'' which does not necessarily start from the root). Clearly, every trail has a finite binary label. Note that if $x$ is a vertex in $\A_r$, then for every finite binary word $u$, there is at most one trail in $\A_r$ labeled $u$ with initial vertex $x$. 

\begin{Lemma}\label{lem:paths in automaton}
	Let $\A_r$ be a tree-automaton such that $u$ and $v$ label paths in $\A_r$. Assume that there is a function $f$ in the diagram group $\ddd(\A_r)$ such that $u\to v$ is a pair of branches of $f$. Then $u^+=v^+$ in $\A_r$. 
\end{Lemma}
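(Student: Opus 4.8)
The plan is to reduce everything to the \emph{reduced} tree-diagram of $f$, since membership $f\in\ddd(\A_r)$ only asserts (by Definition \ref{lab closed}) that the reduced tree-diagram of $f$ is accepted, whereas the hypothesis ``$u\to v$ is a pair of branches of $f$'' merely says that $u\to v$ appears in \emph{some}, possibly unreduced, tree-diagram of $f$. The bridge between these two statements is Remark \ref{rem:branches}: applying it to the pair $u\to v$ produces a common (possibly empty) suffix $w$ of $u$ and $v$ together with finite binary words $p,q$ such that $u\equiv pw$, $v\equiv qw$, and $p\to q$ is a pair of branches of the reduced tree-diagram $(T_+,T_-)$ of $f$.

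Since $f\in\ddd(\A_r)$, the reduced tree-diagram $(T_+,T_-)$ is accepted by $\A_r$, so by Definition \ref{def:accepts}(2) every pair of branches of $(T_+,T_-)$ terminates at a single vertex; in particular $p^+=q^+$ in $\A_r$.

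It then remains to read off the common suffix $w$ from this common vertex. By hypothesis $u\equiv pw$ labels a path in $\A_r$, so $p$ labels a prefix path ending at $p^+$ and $w$ labels a trail starting at $p^+$ and ending at $u^+$; likewise $v\equiv qw$ labels a path, so $w$ labels a trail starting at $q^+$ and ending at $v^+$. As noted just before the statement, for a fixed initial vertex and a fixed label there is at most one trail. Since $p^+=q^+$, the $w$-trail out of $p^+$ coincides with the $w$-trail out of $q^+$, so their terminal vertices agree, i.e. $u^+=v^+$, as required.

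The only genuine subtlety is the first step: one must resist conflating the given, possibly unreduced, pair $u\to v$ with a pair of branches of the reduced diagram, and the role of Remark \ref{rem:branches} is precisely to extract an honest reduced pair $p\to q$ at the cost of stripping off the common suffix $w$. Once that is done, the remainder is routine bookkeeping with the uniqueness of labeled trails, where Conditions $(2)$ and $(3)$ in the definition of a tree-automaton are used implicitly to guarantee that $p^+$, $q^+$, $u^+$, and $v^+$ are well defined as the unique endpoints of the respective paths and trails.
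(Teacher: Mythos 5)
Your proposal is correct and follows essentially the same route as the paper's own proof: pass to the reduced tree-diagram via Remark \ref{rem:branches} to obtain the decomposition $u\equiv pw$, $v\equiv qw$ with $p\to q$ a pair of branches of the reduced diagram, deduce $p^+=q^+$ from acceptance, and conclude $u^+=v^+$ by uniqueness of the $w$-labeled trail out of that common vertex. Nothing further is needed.
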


\begin{proof}
	Let $(T_+,T_-)$ be the reduced tree-diagram of $f$. Since $f$ belongs to $\ddd(\A_r)$, the tree-diagram $(T_+,T_-)$ is accepted by $\A_r$. Hence, if $u\to v$ is a pair of branches of the reduced tree-diagram $(T_+,T_-)$, we are done. Otherwise, by Remark \ref{rem:branches}, there are finite binary words $p,q,w$ such that $u\equiv pw$, $v\equiv qw$ and such that $p\to q$ is a pair of branches of the reduced tree-diagram of $f$. In that case, since $(T_+,T_-)$ is accepted by $\A_r$, we have that $p^+=q^+$ in $\A_r$. Since the word $u\equiv pw$ labels a path in the core, the word $w$ labels a trail in the core with initial vertex $p^+=q^+$. This trail ends at the vertex $(pw)^+=(qw)^+$. Hence, $u^+=(pw)^+=(qw)^+=v^+$ as necessary. 
\end{proof}

Let $H$ be a subgroup of $F$ and assume that $u$ and $v$ are paths in the core $\mathcal C(H)$.   By Lemma \ref{lem:paths in automaton}, if there is a function in $\Cl(H)$ with the pair of branches $u\to v$, then in the core, we have $u^+=v^+$. The following lemma says that the other direction is also true.

\begin{Lemma}[{\cite[Lemma 6.1]{G}}]\label{identified vertices in the core}
	Let $H$ be a subgroup of $F$ and let $\mathcal C(H)$ be the core of $H$. Let $u$ and $v$ be paths in the core $\mathcal C(H)$. Then $u^+=v^+$ if an only if there is an element $h\in \Cl(H)$ such that $h$ has the pair of branches $u\to v$. 
\end{Lemma}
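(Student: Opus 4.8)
The forward implication is exactly Lemma \ref{lem:paths in automaton}, so the plan is to prove the converse: if $u^+=v^+$ in $\mathcal C(H)$ then some $h\in\Cl(H)$ has the pair of branches $u\to v$. I would organize the argument around the relation $\approx$ on readable paths defined by declaring $u\approx v$ iff some element of $\Cl(H)$ has $u\to v$ as a pair of branches. Since $\Cl(H)=\ddd(\mathcal C(H))$ is a subgroup of $F$ (Lemma \ref{product}), $\approx$ is an equivalence relation: reflexivity uses $\1$; symmetry uses that $h^{-1}\in\Cl(H)$ has $v\to u$ whenever $h$ has $u\to v$; and transitivity uses that if $h$ has $u\to v$ and $g$ has $v\to w$, then $hg\in\Cl(H)$ maps $[u]$ linearly onto $[w]$, so $hg$ has $u\to w$.

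Next I would recall that the partition of paths induced by the core, i.e.\ the relation $u\sim v\Leftrightarrow u^+=v^+$, is the smallest equivalence relation on readable paths that (a) identifies $u^i_j\sim v^i_j$ for every pair of branches $u^i_j\to v^i_j$ of the generating tree-diagrams of $H$, and that is closed under the two foldings of Definition \ref{core}: namely (b) $u\sim v\Rightarrow u0\sim v0$ and $u1\sim v1$ (step $(1)$, determinization), and (c) $u0\sim v0$ and $u1\sim v1\Rightarrow u\sim v$ (step $(2)$, merging of fathers with equal children). Consequently it suffices to check that $\approx$ contains the seeds (a) and is closed under (b) and (c); then $\sim\,\subseteq\,\approx$, which is the desired converse.

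The seeds are immediate, since each generator lies in $H\subseteq\Cl(H)$ and realizes its own pairs of branches. Closure under (b) is equally easy: if $h\in\Cl(H)$ maps $[u]$ linearly onto $[v]$, then it maps the left half $[u0]$ linearly onto $[v0]$ and the right half $[u1]$ onto $[v1]$, so $u0\approx v0$ and $u1\approx v1$. The crux, and the step I expect to be the main obstacle, is closure under (c): given $h_0\in\Cl(H)$ with $u0\to v0$ and $h_1\in\Cl(H)$ with $u1\to v1$, produce a single $h\in\Cl(H)$ with $u\to v$.

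For (c) I would define $h$ by gluing: let $h$ coincide with $h_0$ on $[0,\,.u+2^{-|u|-1}]$ and with $h_1$ on $[.u+2^{-|u|-1},1]$, where $.u+2^{-|u|-1}$ is the midpoint of $[u]$. This is well defined and lies in $F$, because $h_0$ and $h_1$ fix the endpoints $0,1$ and agree at the junction: $h_0$ sends the midpoint of $[u]$ (the right end of $[u0]$) to the right end of $[v0]$, and $h_1$ sends it to the left end of $[v1]$, which is the same point, the midpoint of $[v]$. By construction $h$ coincides with an element of $\Cl(H)$ on each piece of a dyadic subdivision, so $h$ is a piecewise-$\Cl(H)$ function; since $\Cl(H)$ is closed, Lemma \ref{dyadic piecewise} yields $h\in\Cl(H)$. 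The key observation making $u\to v$ an honest pair of branches of $h$ is that the slopes match: $h_0$ maps $[u0]$ linearly onto $[v0]$ with slope $2^{|u|-|v|}$, and $h_1$ maps $[u1]$ linearly onto $[v1]$ with the same slope $2^{|u|-|v|}$, these slopes being determined only by the lengths $|u|,|v|$. Hence both pieces of $h|_{[u]}$ lie on the single line through $(.u,.v)$ of slope $2^{|u|-|v|}$, so $h$ is linear on $[u]$ and maps it onto $[v]$; that is, $u\to v$ is a pair of branches of $h\in\Cl(H)$. This establishes closure under (c) and completes the plan.
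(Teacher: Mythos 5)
Your argument is correct, but it is worth noting that the paper does not actually prove this lemma: it is quoted from \cite[Lemma 6.1]{G}, where it is established in the language of diagram groups over directed $2$-complexes. Your proposal is therefore a genuinely different, self-contained route. The forward direction is indeed just Lemma \ref{lem:paths in automaton} applied to $\A_r=\mathcal C(H)$. For the converse, your strategy of showing that the relation $\approx$ (realizability of $u\to v$ by an element of $\Cl(H)$) is an equivalence relation containing the seed identifications and closed under the two folding moves is sound, and the crux — closure under the ``merge two fathers with equal children'' move — is handled correctly by your gluing construction: the function $h$ agreeing with $h_0$ to the left of the midpoint of $[u]$ and with $h_1$ to the right is well defined because both send that midpoint to the midpoint of $[v]$, it lies in $\Cl(H)$ by Lemma \ref{dyadic piecewise} since $\Cl(H)=\ddd(\mathcal C(H))$ is closed, and the decisive observation that the slopes $2^{|u|-|v|}$ on the two halves automatically coincide makes $u\to v$ an honest pair of branches of $h$. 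The one place that deserves an explicit argument rather than a ``recall'' is your characterization of $u^+=v^+$ as the smallest equivalence relation on readable words containing the seeds and closed under (b) and (c): this is not stated in the paper and must be extracted from Definition \ref{core} by an induction on folding steps (every identification performed by a folding of type $(1)$ or $(2)$ is forced by closure under (b) or (c) respectively, and the terminal automaton is closed under both by Conditions $(2)$ and $(3)$ of Definition \ref{def:tree-aut}). That induction is routine and order-independence of foldings is already noted in the paper, so this is a presentational gap rather than a mathematical one. What your approach buys is a proof entirely inside the tree-diagram formalism of this paper, at the cost of redoing the folding bookkeeping that \cite{G} encapsulates.
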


Intuitively, Lemma \ref{identified vertices in the core} says that two paths in the core of $H$ terminate on the same vertex if and only if they ``have to'' in order for the core to accept the subgroup $\Cl(H)$.

The following lemma follows from Lemma \ref{identified vertices in the core} and the fact the  closure of $H$ is the subgroup of $F$ of all piecewise-$H$ functions.

\begin{Lemma}[{\cite[Lemma 4.6]{G}}]\label{identified vertices H}
	Let $H$ be a subgroup of $F$ and let $\mathcal C(H)$ be the core of $H$. Let $u$ and $v$ be finite binary words which label paths in $\mathcal C(H)$. Then $u^+=v^+$ if and only if there is $k\in\mathbb{N}$ such that for any finite binary word $w$ of length $\geq k$, there is an element $h\in H$ with the pair of branches $uw\to vw$. 
\end{Lemma}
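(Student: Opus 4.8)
The plan is to prove the two implications separately: the forward implication I would deduce from Lemma \ref{identified vertices in the core} together with the description of $\Cl(H)$ as the group of piecewise-$H$ functions, while the backward implication I would prove by a direct analysis inside the core $\mathcal{C}(H)$. Throughout, for a vertex $x$ of $\mathcal{C}(H)$ and a word $c$ labelling a trail starting at $x$, write $x\cdot c$ for the endpoint of that trail (this is well defined, since $\mathcal{C}(H)$ has at most one trail with a given label and initial vertex).

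For the forward implication, suppose $u^+=v^+$. By Lemma \ref{identified vertices in the core} there is $h\in\Cl(H)$ with the pair of branches $u\to v$; since $\Cl(H)$ is the group of piecewise-$H$ functions, $h$ has a (not necessarily reduced) tree-diagram $(P_+,P_-)$ each of whose pairs of branches $p\to p'$ is a pair of branches of some element of $H$. Let $k$ be the maximal length of a branch of $P_+$. For every word $w$ with $|w|\geq k$ the word $uw$ has length $\geq k$, so it has a prefix $p$ that is a branch of $P_+$ and hence $[uw]\subseteq[p]$. If $h_p\in H$ realizes $p\to p'$, then $h_p$ agrees with $h$ on $[p]$; since $h$ maps $[uw]$ onto $[vw]$, so does $h_p$, giving $h_p$ the pair of branches $uw\to vw$. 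This produces the required element of $H$.

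For the backward implication, assume such a $k$ exists. For each $w$ with $|w|=k$ I would apply Remark \ref{rem:branches} to the pair of branches $uw\to vw$ of $h_w$, writing $uw\equiv p_ws_w$ and $vw\equiv q_ws_w$ where $p_w\to q_w$ is a pair of branches of the reduced tree-diagram of $h_w$. As $h_w\in H\subseteq\Cl(H)$, this reduced tree-diagram is accepted by $\mathcal{C}(H)$, so $p_w$ and $q_w$ are readable and $p_w^+=q_w^+$. Comparing lengths gives two cases. If $|p_w|\leq|u|$ for some such $w$, then $p_w$ is a prefix of $u$, one checks $u\equiv p_wa$ and $v\equiv q_wa$ for a common word $a$, and reading $a$ from the single vertex $p_w^+=q_w^+$ yields $u^+=v^+$ at once (using that $u$ and $v$ are readable, so the trails exist, and that the automaton is deterministic). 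Otherwise $|p_w|>|u|$ for every such $w$, and then $p_w\equiv uc_w$ and $q_w\equiv vc_w$ for a \emph{nonempty} prefix $c_w$ of $w$, whence $u^+\cdot c_w=(uc_w)^+=(vc_w)^+=v^+\cdot c_w$.

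The only real difficulty is passing from these depth-reduced identifications to $u^+=v^+$: the hypothesis says nothing about how $h_w$ behaves off $[u]$, so one cannot simply assemble a single piecewise-$H$ function carrying $[u]$ to $[v]$ and quote Lemma \ref{identified vertices in the core}. Instead I would isolate the following combinatorial statement about $\mathcal{C}(H)$ and prove it by induction on $k$: if $a,b$ are vertices and for every word $w$ of length $k$ there is a nonempty prefix $c$ of $w$ with $a\cdot c=b\cdot c$ (as readable trails), then $a=b$. In the inductive step one fixes a letter $d\in\{0,1\}$ and restricts to the words beginning with $d$; either some such word already forces $a\cdot d=b\cdot d$, or the induction hypothesis applied to the children $a\cdot d$ and $b\cdot d$ gives $a\cdot d=b\cdot d$. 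Having $a\cdot 0=b\cdot 0$ and $a\cdot 1=b\cdot 1$ with $a,b$ fathers, Condition $(3)$ of Definition \ref{def:tree-aut} forces $a=b$. Applying this with $a=u^+$, $b=v^+$ and the prefixes $c_w$ completes the argument. Thus the heart of the matter is the co-determinism of the core supplied by Condition $(3)$, with the length bookkeeping from Remark \ref{rem:branches} serving to feed the lemma's hypothesis into it.
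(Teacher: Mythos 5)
Your proof is correct, but it is worth noting that the paper does not actually prove this lemma: it cites it as \cite[Lemma 4.6]{G} and only remarks that it ``follows from Lemma \ref{identified vertices in the core} and the fact that the closure of $H$ is the subgroup of all piecewise-$H$ functions.'' Your forward direction is exactly that derivation (take $h\in\Cl(H)$ with pair of branches $u\to v$, a piecewise-$H$ tree-diagram for it, and $k$ the depth of that diagram), so there you coincide with the paper's sketch. The backward direction is where you supply genuine content, and you correctly diagnose why the one-line justification is insufficient there: the elements $h_w$ need not agree off $[u]$, so one cannot assemble a single piecewise-$H$ homeomorphism with pair of branches $u\to v$ and invoke Lemma \ref{identified vertices in the core}. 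Your replacement --- reduce each pair $uw\to vw$ via Remark \ref{rem:branches} to an identification $p_w^+=q_w^+$ in $\mathcal C(H)$, then run the ``every length-$k$ word has a nonempty prefix $c$ with $u^+\cdot c=v^+\cdot c$ implies $u^+=v^+$'' induction, closing each step with Condition $(3)$ of Definition \ref{def:tree-aut} --- is sound; I checked the base case $k=1$ and the dichotomy in the inductive step, and the readability of $u^+\cdot d$ and $v^+\cdot d$ needed to apply the inductive hypothesis does follow from the readability of the trails $c_w$. A marginally shorter route to the same conclusion is to extend $\mathcal C(H)$ to a full tree-automaton (which accepts the same tree-diagrams, hence still accepts $H$), where every $uw$ and $vw$ is readable; Lemma \ref{lem:paths in automaton} then gives $(uw)^+=(vw)^+$ for \emph{all} $w$ of length $k$, and a downward induction on $|w|$ using Condition $(3)$ collapses this to $u^+=v^+$. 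Your version buys the same conclusion while staying inside $\mathcal C(H)$, at the cost of the slightly more delicate ``nonempty prefix'' bookkeeping. Either way, the essential mechanism --- the folding condition $(3)$ acting as co-determinism --- is the same, and your argument is complete.
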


Recall that if $\mathcal C(H)$ is the core of a subgroup $H$ of $F$, then 
 for every vertex $x$ of the core, there is a directed path in the core from the root to $x$.

\begin{Definition}
	Let $H$ be a subgroup of $F$ and let $\mathcal C(H)$ be the core of $H$. Let $x$ be a vertex of $\mathcal C(H)$. 
	\begin{enumerate}
		\item[$(1)$] If there is $n\in\mathbb{N}$ such that $u\equiv 0^n$ is a path in the core such that $u^+=x$  then the vertex $x$ is called a \emph{left vertex} of the core. 
		\item[$(2)$] If there is $n\in\mathbb{N}$ such that $u\equiv 1^n$ is a path in the core such that $u^+=x$  then the vertex $x$ is  a \emph{right vertex} of the core. 
		\item[$(3)$] If there is a path $u$ in the core which contains both digits $0$ and $1$ such that $u^+=x$, then $x$ is a \emph{middle} vertex of the core.
	\end{enumerate}
\end{Definition}

\begin{Remark}\label{rem:different types}
	Let $\mathcal C(H)$ be the core of a subgroup $H$ of $F$. Then each vertex of $\mathcal C(H)$ is exactly one of the following: $(1)$ the root, $(2)$ a left vertex, $(3)$ a right vertex, or $(4)$ a middle vertex.
\end{Remark}

Indeed, since each vertex in the core of $H$ is the end-vertex of some directed path in the core, each vertex in the core is of one of the four mentioned types. Lemma \ref{identified vertices in the core} implies that a vertex cannot be of two different types (for example, a vertex cannot be both a left vertex and a right vertex because that would imply that there is an element in $\Cl(H)$ with a pair of branches of the form, $0^n\to 1^m$ for some $n,m\in\mathbb{N}$). Note also that if $x$ is a middle vertex of the core of $H$ and $x$ has two outgoing edges, then its children are also middle vertices of the core.

\begin{Example}
	The core of Thompson's group $F$ is given in Figure \ref{fig:coreF}. Note that it has exactly four vertices: the root, a unique left vertex, a unique right vertex and a unique middle vertex.  
\end{Example}

One can verify that this is the core of $F$ using the construction in Definition \ref{core} (for example, starting with the generating set $\{x_0,x_1\}$). Alternatively, it follows from Lemma \ref{identified vertices in the core}. Indeed, since the core of $F$ accepts every reduced tree-diagram in $F$, every finite binary word $u$ labels a path in the core of $F$. Then, the fact that there is a unique middle vertex in the core follows from Lemma \ref{identified vertices in the core}, since for every pair of finite binary words $u$ and $v$  which contain both digits $0$ and $1$ there is an element in $F$ with the pair of branches $u\to v$. Similarly, Lemma \ref{identified vertices in the core} implies that there is a unique left vertex and a unique right vertex in the core of $F$.

\begin{figure}[ht]
	\centering
	\includegraphics[width=.4\linewidth]{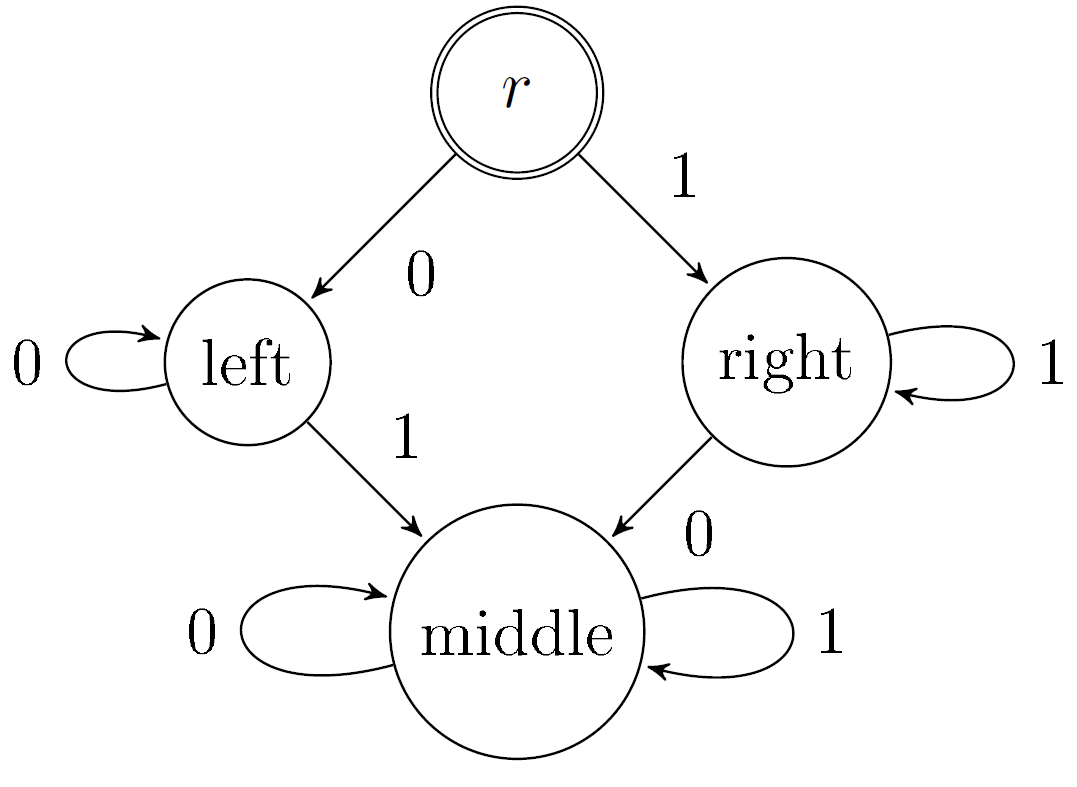}
		\caption{The core of Thompson's group $F$. The root of the core is labeled $r$. There is a unique left vertex, a unique right vertex and a unique middle vertex in the core.}\label{fig:coreF}
\end{figure}

Now, let $H$ be a subgroup of $F$. Since the closure of $H$ is the subgroup of $F$ of all piecewise-$H$ functions, the orbits of the action of $H$ on the set of dyadic fractions $\mathcal D$ coincide with the orbits of the action of $\Cl(H)$. The following lemma follows from (the more general) \cite[
Theorem 6.5]{G}. %(which  characterizes the number of orbits of the action of a subgroup $H$ of $F$ on the interval $[0,1]$ in terms of the core of $H$).  
Recall that an inner vertex of a tree-automaton is a vertex which has two outgoing edges. 

\begin{Lemma}\label{lem:finitely many orbits}
	Let $H$ be a subgroup of $F$ and assume that the core $\mathcal C(H)$ is finite (i.e, that there are finitely many vertices in $\mathcal C(H)$). Then the action of $H$ on the set of  dyadic fractions $\mathcal D$ has finitely many orbits if and only if every vertex in $\mathcal C(H)$ is an inner vertex. 
\end{Lemma}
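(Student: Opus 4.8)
The plan is to work throughout with the closure $\Cl(H)=\ddd(\C(H))$ rather than with $H$ itself, which is legitimate since (as noted just above) $H$ and $\Cl(H)$ have the same orbits on $\mathcal D$. The basic dictionary is this: a dyadic fraction $\alpha\in\mathcal D$ can be written uniquely as $\alpha=.u$ with $u$ ending in $1$, i.e.\ $\alpha=.u0^\N$, and for $g\in\Cl(H)$ one has $g(\alpha)=.v0^\N$ precisely when $g$ has a pair of branches $u0^m\to v0^n$ for some $m,n\ge 0$. By Lemma \ref{identified vertices in the core}, such a $g$ exists (when $u0^m,v0^n$ are readable) if and only if $(u0^m)^+=(v0^n)^+$ in $\C(H)$. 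I will also use repeatedly that a tree-automaton is deterministic: by Condition $(2)$ of Definition \ref{def:tree-aut} each vertex has at most one outgoing edge with a given label, so every word labels at most one path and from any vertex there is a unique $0$-trail.

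For the forward direction, suppose every vertex of $\C(H)$ is an inner vertex. Then each vertex has both a $0$-child and a $1$-child, so $\C(H)$ is complete and every finite binary word is readable. Fix $\alpha=.u0^\N$; following $0$-edges from $u^+$ produces a sequence of vertices which, as $\C(H)$ is finite, is eventually periodic and feeds into a unique directed cycle of $0$-edges, call it $\phi(\alpha)$. I would check that $\phi$ is well defined (the representatives $u$ and $u0^j$ feed into the same $0$-cycle) and that, through the dictionary above, $\alpha$ and $\beta$ lie in the same orbit \emph{if and only if} $\phi(\alpha)=\phi(\beta)$: if $g(\alpha)=\beta$ via a branch $u0^m\to v0^n$, then refining gives $(u0^{m+t})^+=(v0^{n+t})^+$ for all $t$, which for large $t$ forces the two $0$-cycles to coincide; conversely, if $u^+$ and $v^+$ feed into a common $0$-cycle one can choose $m,n$ with $(u0^m)^+=(v0^n)^+$ and invoke Lemma \ref{identified vertices in the core}. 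Hence the orbits inject into the finite set of $0$-cycles of $\C(H)$, so there are finitely many.

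For the converse, suppose $x$ is a leaf of $\C(H)$ and fix a readable word $u$ with $u^+=x$. The crucial observation is that \emph{every} $g\in\Cl(H)$ is affine on the dyadic interval $[u]$: any point in the interior of $[u]$ lies in a unique branch-interval $[p]$ of the reduced top tree of $g$, and since $[p]$ and $[u]$ are nested dyadic intervals, the alternative $[p]\subsetneq[u]$ would make $p$ a readable word properly extending $u$ — impossible, as $u^+=x$ has no outgoing edge; thus $[u]\subseteq[p]$ and $g$ is linear on $[p]\supseteq[u]$. Writing the relevant branch as $p\to q$ with $u\equiv pu''$, the element $g$ maps $[u]$ onto $[Q]$ with $Q\equiv qu''$ by $.u\gamma\mapsto .Q\gamma$; since $p^+=q^+$, following the $u''$-trail from either vertex yields $x$, so $Q^+=x$. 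I then take the family $\alpha_k=.u0^k10^\N\in\mathcal D$, $k\ge 0$, which all lie in the interior of $[u]$, and claim they represent pairwise distinct orbits. If $g(\alpha_k)=\alpha_{k'}$ then $.Q0^k10^\N=.u0^{k'}10^\N$ with $Q^+=x$, and since $\alpha_{k'}\in[u]$ the intervals $[Q]$ and $[u]$ are nested. The case $[Q]\subsetneq[u]$ forces $Q$ to be a readable proper extension of $u$, impossible at the leaf $x$; the case $[u]\subsetneq[Q]$, say $u\equiv Q\eta$ with $\eta$ nonempty, sends $\alpha_{k'}$ to the endpoint $.u$ (excluded) unless $\eta$ is all zeros, which would require a nontrivial $0$-trail from $x$ back to $x$ — again impossible at the leaf; the only remaining case $Q\equiv u$ makes $g$ the identity on $[u]$, giving $k'=k$. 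Hence the $\alpha_k$ lie in infinitely many distinct orbits.

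The main obstacle is the converse, and within it the affineness observation together with the case analysis ruling out every non-identity way of mapping $\alpha_k$ back into the interior of $[u]$. The repeated use of the fact that \emph{no readable word properly extends $u$} (equivalently, the leaf $x$ admits no outgoing $0$-trail) is exactly what kills the dangerous prefix-stripping maps and is the heart of the argument.
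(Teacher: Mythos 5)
Your proof is correct. Note that the paper itself gives no argument for this lemma --- it is quoted as a consequence of the more general \cite[Theorem 6.5]{G} --- so what you have produced is a self-contained proof where the paper only cites an external result. Your two halves are exactly the right mechanisms: in one direction, when the core is full, the orbit of $.u0^{\N}$ is governed by the eventual $0$-cycle of the (deterministic, finite) $0$-trail from $u^{+}$, with Lemma \ref{identified vertices in the core} supplying the element realizing $u0^{m}\to v0^{n}$ whenever the trails meet; in the other, a leaf $u^{+}=x$ forces every element of $\Cl(H)$ to be affine on $[u]$ and to carry $[u]$ onto an interval $[Q]$ with $Q^{+}=x$, which pins down the orbit data of the points $.u0^{k}1$. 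Two small polish points. First, ``any point in the interior of $[u]$ lies in a unique branch-interval $[p]$'' fails for subdivision points of the top tree; pick a non-endpoint (e.g.\ irrational) interior point instead --- the nestedness argument then goes through verbatim. Second, your case $[u]\subsetneq[Q]$ is killed more directly than you state: $u\equiv Q\eta$ with $\eta$ nonempty and $Q^{+}=x$ would make $\eta$ the label of a nonempty trail issuing from the leaf $x$, which is already impossible, with no need to discuss where $\alpha_{k'}$ is sent or whether $\eta$ is a power of $0$. Neither point affects the validity of the argument.
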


\subsection{The derived subgroup of $F$}\label{der_sub}

The derived subgroup of $F$ is an infinite simple group \cite{CFP}. It can be characterized as the subgroup of $F$ of all functions $f$ with slope $1$ both at $0^+$ and at $1^-$ (see \cite{CFP}). In other words, it is the subgroup of all functions in $F$ supported in the interval $(0,1)$.
In particular, the derived subgroup of $F$ acts transitively on the set of dyadic fraction $\mathcal D$. (Indeed, for every pair of dyadic fractions $\alpha,\beta\in\mathcal D$ there is a function $f\in F$ such that $f(\alpha)=\beta$ and such that $f$ is supported in $(0,1)$.)

Since $[F,F]$ is infinite and simple, every finite index subgroup of $F$ contains the derived subgroup of $F$.
Hence, there is a one-to-one correspondence between finite index subgroups of $F$ and finite index subgroups of the abelianization $F/[F,F]$. 

Recall that the abelianization of $F$ is isomorphic to $\mathbb{Z}^2$.  The 
standard map from $F$ to its abelianization $\pi_{ab}\colon F\to \mathbb{Z}^2$ sends an element $f\in F$ to $(\log_2(f'(0^+)),\log_2(f'(1^-)))$ (see, for example, \cite{CFP}). 
Below, when we refer to the image of a subgroup $H$ of $F$ in the abelianization of $F$, we refer to its image in $\mathbb{Z}^2$ under $\pi_{ab}$. 
 The following remark will be useful. 

\begin{Remark}\label{rem:H[F,F]}
	Let $H$ be a subgroup of $F$. Then $H$ is contained in a proper finite index subgroup of $F$ if and only if $H[F,F]<F$  (in other words, if and only if $\pi_{\ab}(H)$ is a strict subgroup of $\mathbb{Z}^2$). 
\end{Remark}

\begin{proof}
	Follows from the fact that every finite index subgroup of $F$ contains the derived subgroup of $F$ and the fact that every strict subgroup of $\mathbb{Z}^2$ is contained in a finite index subgroup of $\mathbb{Z}^2$.
\end{proof}

As noted, there is a one-to-one correspondence between finite index subgroups of $\mathbb{Z}^2$ and finite index subgroups of $F$. 
More generally, there is a one-to-one correspondence between subgroups of $\mathbb{Z}^2$ and subgroups of $F$ which contain the derived subgroup of $F$.  We will be particularly interested in subgroups of $\mathbb{Z}^2$ whose preimage under $\pi_{ab}$ is a closed subgroup of $F$. 

\begin{Definition}
	Let $K$ be a subgroup of $\mathbb{Z}^2$. We say that $K$ is a \emph{closed} subgroup of $\mathbb{Z}^2$ if its preimage under $\pi_{ab}$ is a closed subgroup of $F$. 
\end{Definition}

The following lemma follows easily from the characterization of closed subgroups of $F$ as subgroups $H$ that are closed under taking piecewise-$H$ functions. 

\begin{Lemma}\label{closed subgroups of Z^2}
	Let $K$ be a  subgroup of $\mathbb{Z}^2$. Then $K$ is closed if and only if there exist integers $p,q\geq 0$ 
	such that $K=p\mathbb{Z}\times q\mathbb{Z}$.
\end{Lemma}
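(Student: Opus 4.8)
The plan is to set $H=\pi_{ab}^{-1}(K)$ and reduce the whole statement to a computation of the closure $\Cl(H)$. Since $\ker\pi_{ab}=[F,F]$ (the functions with slope $1$ at both endpoints), we have $[F,F]\le H$, so whether a function $g\in F$ lies in $H$ depends only on $\pi_{ab}(g)=(\log_2 g'(0^+),\log_2 g'(1^-))$. Let $\pi_1,\pi_2\colon\mathbb{Z}^2\to\mathbb{Z}$ be the coordinate projections. The heart of the argument will be to show that
\[
\Cl(H)=\pi_{ab}^{-1}\bigl(\pi_1(K)\times\pi_2(K)\bigr).
\]
Granting this, the lemma follows immediately: by definition $K$ is closed iff $H=\Cl(H)$, i.e.\ iff $K=\pi_1(K)\times\pi_2(K)$; and since every subgroup of $\mathbb{Z}$ has the form $p\mathbb{Z}$ with $p\ge 0$, we may write $\pi_1(K)=p\mathbb{Z}$ and $\pi_2(K)=q\mathbb{Z}$, whence $K$ is closed if and only if $K=p\mathbb{Z}\times q\mathbb{Z}$.

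To prove the displayed identity I would use that, by \cite[Theorem 5.6]{G}, $\Cl(H)$ is exactly the set of piecewise-$H$ functions (equivalently, invoke Lemma \ref{dyadic piecewise}). Thus it suffices to show that $f\in F$ is piecewise-$H$ precisely when $\log_2 f'(0^+)\in\pi_1(K)$ and $\log_2 f'(1^-)\in\pi_2(K)$. The easy direction is: if $f$ is piecewise-$H$, choose a dyadic subdivision witnessing this; on the leftmost interval $f$ agrees with some $h\in H$, so $f'(0^+)=h'(0^+)$ and hence $\log_2 f'(0^+)=\pi_1(\pi_{ab}(h))\in\pi_1(K)$, and symmetrically at $1^-$.

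The substantive (converse) direction is the construction of a decomposition. Given $f$ with $a:=\log_2 f'(0^+)\in\pi_1(K)$ and $c:=\log_2 f'(1^-)\in\pi_2(K)$, pick a dyadic $\varepsilon$ small enough that $f$ is linear on $[0,\varepsilon]$ and on $[1-\varepsilon,1]$, and subdivide $[0,1]$ into $[0,\varepsilon]$, $[\varepsilon,1-\varepsilon]$, $[1-\varepsilon,1]$ (refining the middle into dyadic pieces if necessary). On a middle piece $f$ carries a dyadic interval inside $(0,1)$ to a dyadic interval inside $(0,1)$, so $f$ agrees there with a function supported in $(0,1)$, i.e.\ with an element of $[F,F]\le H$. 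On $[0,\varepsilon]$, since $a\in\pi_1(K)$ there is $b$ with $(a,b)\in K$; I would build $h\in H$ equal to $f$ on $[0,\varepsilon]$ (forcing $h'(0^+)=2^a$) by filling in an arbitrary element of $F$ mapping $[\varepsilon,1]$ onto $[f(\varepsilon),1]$ with right-hand slope $2^b$. Then $\pi_{ab}(h)=(a,b)\in K$, so $h\in H$ and $h$ agrees with $f$ on $[0,\varepsilon]$. The interval $[1-\varepsilon,1]$ is handled symmetrically using $c\in\pi_2(K)$. This exhibits $f$ as piecewise-$H$.

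The main obstacle is exactly this converse construction: one must check that the slope requirements at the two endpoints can be met \emph{independently} by genuine elements of $H$. This is where the hypothesis $[F,F]\le H$ is essential — because $H$ contains every function supported in $(0,1)$, fixing the near-endpoint slope of a building block leaves complete freedom at the far endpoint, which decouples the two coordinates of $\pi_{ab}$ and yields $\pi_1(K)\times\pi_2(K)$ rather than $K$ itself. Everything else is bookkeeping: verifying that $\pi_{ab}^{-1}(\pi_1(K)\times\pi_2(K))$ is a subgroup containing $H$, and applying the classification of subgroups of $\mathbb{Z}$.
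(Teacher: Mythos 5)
Your proposal is correct, and it rests on the same two pillars as the paper's proof: the identification of $\Cl(H)$ with the set of piecewise-$H$ functions, and the observation that because $[F,F]\leq H$, one can prescribe the slope of a building block at one endpoint while leaving the other endpoint free, so that the two coordinates of $\pi_{ab}$ decouple. The difference is organizational. The paper argues the two directions separately and concretely: for ``if'' it simply notes that a piecewise-$F_{p,q}$ function inherits its endpoint slopes from elements of $F_{p,q}$; for ``only if'' it sets $p,q$ to be the relevant gcds, exhibits the explicit element $f=x_0^p\oplus x_0^{-b}\in H$, and observes that $g=x_0^p\oplus 1$ is piecewise-$H$ (agreeing with $f$ on $[0,\frac12]$ and with the identity on $[\frac12,1]$), so closedness forces $(p,0)\in K$. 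You instead prove the stronger, uniform statement $\Cl\bigl(\pi_{ab}^{-1}(K)\bigr)=\pi_{ab}^{-1}\bigl(\pi_1(K)\times\pi_2(K)\bigr)$ for an arbitrary subgroup $K\leq\mathbb{Z}^2$ and read off both directions at once; your three-piece subdivision with $[F,F]$ handling the middle is the general form of the paper's $x_0^p\oplus 1$ trick. What your route buys is a clean description of the closure of every subgroup of $F$ containing $[F,F]$, not just a criterion for when it is closed; what it costs is a slightly longer verification (the middle-piece argument, which the paper avoids entirely since $x_0^p\oplus 1$ needs only a two-piece subdivision). Both are sound; neither has a gap.
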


\begin{proof}
	Assume that there exist $p,q\geq 0$ such that $K=p\mathbb{Z}\times q\mathbb{Z}$. We denote by $F_{p,q}$ the preimage in $F$ of $p\mathbb{Z}\times q\mathbb{Z}$. In other words, $F_{p,q}$ is the subgroup of $F$ of all functions $f$ such that $\log_2(f'(0^+))$ is an integer multiple of $p$ and $\log_2(f'(1^-))$ is an integer multiple of $q$. The subgroup $F_{p,q}$  clearly contains every piecewise-$F_{p,q}$ function. Hence, it is a closed subgroup of $F$. Hence, by definition, $K$ is a closed subgroup of $F$.
	
	In the opposite direction, assume that $K$ is a closed subgroup of $\mathbb{Z}^2$.
	We let $p=\gcd\{a>0 \mid (a,b)\in K\}$ 
	 and $q=\gcd\{b>0\mid (a,b)\in K\}$ (where $\gcd(\emptyset)$ is taken to be zero).
	 Clearly, $K\subseteq p\mathbb{Z}\times q\mathbb{Z}$. We claim that the inverse inclusion also holds. Indeed, let $H$ be the preimage of $K$ under $\pi_{ab}$ and note that $H$ is a closed subgroup of $F$. By the choice of $p$, there exists $b$ such that $(p,b)\in K$. Let $f=x_0^p\oplus x_0^{-b}$ and note that $f'(0^+)=2^p$ and $f'(1^-)=2^b$ (since the slope of $x_0$ at $0^+$ is $2$ and at $1^-$ is $2^{-1}$). Since $\pi_{ab}(f)=(p,b)\in K$, the function $f\in H$. Let $g=x_0^p\oplus 1$ and note that $g$ is a piecewise-$H$ functions. Since $H$ is  closed, the function $g\in H$. Hence, $\pi_{ab}(g)=(p,0)\in K$. In a similar way, one can show that $(0,q)\in K$. Hence, $p\mathbb{Z}\times q\mathbb{Z}\subseteq K$, as required. 
\end{proof}

Let $p,q\geq 0$. As in the proof of Lemma \ref{closed subgroups of Z^2}, we denote by $F_{p,q}$ the preimage in $F$ of $p\mathbb{Z}\times q\mathbb{Z}$. 
 Note that $F_{p,q}$ is of finite index in $F$ if and only if $p,q\geq 1$. In \cite{BW}, Bleak and Wassink proved that for every $p,q\geq 1$, the subgroup $F_{p,q}$ of $F$ (which they denote by $K_{(p,q)}$ and call a \emph{rectangular subgroup of $F$}) is isomorphic to $F$. They also prove that every finite index subgroup of $F$ which is not of this form, is not isomorphic to $F$.

\subsection{Subgroups of $F$ whose closure contains $[F,F]$}

In this paper, we will be interested in subgroups of $F$ whose closure contains the derived subgroup of $F$. In \cite{G}, we gave a characterization of such subgroups in terms of their core.

\begin{Lemma}[{\cite[Lemma 7.1]{G}}]\label{core of derived subgroup}
	Let $H$ be a subgroup of $F$. Then $\Cl(H)$ contains the derived subgroup of $F$ if and only if the core $\mathcal C(H)$ has a unique middle vertex and that middle vertex is an inner vertex (i.e., it has two outgoing edges, necessarily to itself). 
\end{Lemma}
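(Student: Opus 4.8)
The plan is to prove both implications, using throughout the key property of cores recorded in Lemma~\ref{identified vertices in the core}: for readable words $u,v$, one has $u^+=v^+$ in $\mathcal C(H)$ if and only if some element of $\Cl(H)$ has $u\to v$ as a pair of branches. I will also use repeatedly that $\Cl(H)=\ddd(\mathcal C(H))$ is closed (so it consists of all piecewise-$\Cl(H)$ functions, Lemma~\ref{dyadic piecewise}), that $[F,F]$ is exactly the set of elements of $F$ supported in $(0,1)$, and the observation following Remark~\ref{rem:different types} that the children of a middle vertex are again middle vertices.

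For the ``only if'' direction, assume $[F,F]\le \Cl(H)$. First, every word $u$ containing both digits is readable: the copy $(x_0)_{[u]}$ lies in $F_{[u]}\le [F,F]\le \Cl(H)$, hence is accepted, hence readable, and $u$ is a prefix of one of its branches. Next, if $u,v$ both contain both digits then $[u],[v]\subseteq(0,1)$ are interior dyadic intervals, so there is $h\in[F,F]$ supported in $(0,1)$ with $u\to v$ a pair of branches; since $h\in\Cl(H)$, Lemma~\ref{identified vertices in the core} gives $u^+=v^+$. Thus all middle vertices coincide in a single vertex $m$. Finally $m$ is inner: picking $u$ with $u^+=m$, the words $u0,u1$ again contain both digits and are readable, so $m$ has two outgoing edges, and by the remark after Remark~\ref{rem:different types} their endpoints are middle vertices, hence equal to $m$; so both edges are loops.

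For the ``if'' direction, assume $\mathcal C(H)$ has a unique middle vertex $m$, necessarily inner with both edges looping to $m$. The goal is to show every $f\in[F,F]$ is accepted by $\mathcal C(H)$. Since such $f$ has slope $1$ at $0^+$ and $1^-$, its reduced diagram has leftmost pair of branches $0^a\to 0^a$, rightmost pair $1^c\to 1^c$, and every other branch contains both digits; so once I know that all branches are readable, acceptance is automatic (the extreme pairs are identical, and each interior branch ends at the unique middle vertex $m$). Hence the whole direction reduces to the readability claim: \emph{every word containing both digits is readable with endpoint $m$}. Because $m$ loops to itself, readability is preserved under appending digits, so it suffices to treat the words $0^s1$ and $1^s0$; and a short induction, using that an inner vertex has two children, shows that it is enough to prove the left and right spines $0^s,1^s$ never terminate, i.e. the core has no leaf.

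The heart of the matter, and the step I expect to be the main obstacle, is ruling out leaves. A leaf can only be a left or right vertex (a both-digit word would end at the inner vertex $m$), say a leftmost leaf $\ell=(0^{s})^+$ with $s$ minimal. Minimality makes the left spine injective up to $\ell$, and then a branch analysis shows every element of $\Cl(H)$ fixes the dyadic interval $[0,2^{-s}]$ pointwise. On the other hand, the vertex just before $\ell$ (or the root, if $s=1$) has a second outgoing edge into a middle vertex, so some dyadic interval $[y]$ with left endpoint $2^{-s}$ reaches $m$; since $m$ loops, its right half $[y1]$ also reaches $m$, and Lemma~\ref{identified vertices in the core} then produces $h\in\Cl(H)$ carrying $[y1]$ onto $[y]$. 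Such an $h$ must send the left endpoint of $[y1]$ (the midpoint of $[y]$, which lies strictly to the right of $2^{-s}$) to $2^{-s}=h(2^{-s})$, contradicting injectivity of the homeomorphism $h$. This forces the core to be leafless, completing the readability claim and hence the proof. The delicate points to get right will be the uniform treatment of the $s=1$ case (where one must pass to the right spine to locate a suitable $[y]$) and the verification that $\Cl(H)$ genuinely fixes $[0,2^{-s}]$, both of which hinge on Lemma~\ref{identified vertices in the core}.
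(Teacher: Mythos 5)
Your proposal is correct and follows the same route as the paper's (very brief) justification: your ``only if'' direction is exactly the paper's argument, combining Lemma \ref{identified vertices in the core} with the existence of elements of $[F,F]$ realizing any prescribed pair of branches between words containing both digits. For the ``if'' direction the paper only asserts the verification is not difficult; your reduction to leaflessness of the core, and the injectivity contradiction obtained from an $h\in\Cl(H)$ carrying $[y1]$ onto $[y]$ while fixing $[0,2^{-s}]$ pointwise, correctly supplies the one genuinely non-trivial step (and the $s=1$ adjustment you flag does go through, taking $y\equiv 10$ after noting that $(1)^+$ must be inner, since otherwise the core would have no middle vertex at all).
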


It is not difficult to check that if the core of $H$ is as described in Lemma \ref{core of derived subgroup} then it accepts every tree-diagram in $[F,F]$ and thus the closure of $H$ contains $[F,F]$. The opposite direction in the lemma follows from Lemma \ref{identified vertices in the core} and the fact that for every pair of finite binary words $u,v$ which contain both digits $0$ and $1$ there is an element in $[F,F]$ with the pair of branches $u\to v$.

In \cite{G}, we proved that if the closure of a subgroup $H$ of $F$ contains the derived subgroup of $F$, then the subgroup $H$ must be ``big'' in the sense that for any pair of branches of an element of $[F,F]$, if the branches in the pair  are ``extended'' a little, then there must be an element in $H$ with that pair of branches. More accurately, the following was proved in \cite{G}.

\begin{Lemma}[{\cite[Corollary 7.8]{G}}]\label{lm1}
	Let $H$ be a subgroup of $F$ such that $\Cl(H)$ contains the derived subgroup of $F$.
	Let $v_1$ and $v_2$ be a pair of finite binary words which contain both digits $0$ and $1$. Then there exists $k\in\mathbb{N}$ such that for any pair of finite binary words $w_1,w_2$ of length $\ge k$ there is an element $h\in H$ with the pair of branches $v_1w_1\rightarrow v_2w_2$.
\end{Lemma}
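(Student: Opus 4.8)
The plan is to work inside the core $\mathcal C(H)$ and to factor the required pair of branches through the unique middle vertex. By Lemma~\ref{core of derived subgroup}, since $\Cl(H)\supseteq[F,F]$, the core $\mathcal C(H)$ has a unique middle vertex $m$, which is an inner vertex whose two outgoing edges are loops at $m$. Hence every readable word containing both digits terminates at $m$, and once a path reaches $m$ it may be continued by an arbitrary finite binary word. The first step is to verify that $v_1$ and $v_2$ are readable with $v_1^+=v_2^+=m$. Since $v_i$ contains both digits, the dyadic interval $[v_i]$ has both endpoints in the open interval $(0,1)$; thus for any nontrivial $f\in F$ the copy $f_{[v_i]}$ is supported in $[v_i]\subseteq(0,1)$ and therefore lies in $[F,F]\subseteq\Cl(H)=\ddd(\mathcal C(H))$. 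As $f_{[v_i]}$ is accepted by $\mathcal C(H)$, its reduced tree-diagram is readable, and since one of its branches has $v_i$ as a prefix, $v_i$ is readable. Being a both-digit word, it must end at the unique middle vertex, so $v_1^+=v_2^+=m$.

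Next I would reduce the statement to a single \emph{suffix-change} assertion via the factorisation of the target pair of branches as $v_1w_1\to v_2w_1$ followed by $v_2w_1\to v_2w_2$. For the first factor, apply Lemma~\ref{identified vertices H} to the fixed pair $u=v_1$, $v=v_2$: since $v_1^+=v_2^+=m$, there is a threshold $k_0$ \emph{depending only on $v_1$ and $v_2$} such that for every $w_1$ with $|w_1|\ge k_0$ there is $a\in H$ with pair of branches $v_1w_1\to v_2w_1$. Using left-to-right composition, if $b\in H$ has pair of branches $v_2w_1\to v_2w_2$ then $h=a\cdot b$ has the required pair of branches $v_1w_1\to v_2w_2$. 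So it remains to produce, with a threshold independent of $w_1,w_2$, an element $b\in H$ realising the suffix change $v_2w_1\to v_2w_2$.

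The main obstacle is exactly this suffix-change step with a \emph{uniform} threshold. Observe that $(v_2w_1)^+=m=(v_2w_2)^+$, so by Lemma~\ref{identified vertices in the core} there is an element of $\Cl(H)$ with pair of branches $v_2w_1\to v_2w_2$; concretely one may take $g_{[v_2]}$ for any $g\in F$ with branch $w_1\to w_2$, which lies in $[F,F]\subseteq\Cl(H)$. The difficulty is that descending from $\Cl(H)$ to $H$ through the piecewise-$H$ description of the closure furnishes, a priori, only a threshold that may grow with $w_1,w_2$: the piecewise-$H$ subdivision witnessing $g_{[v_2]}\in\Cl(H)$ can be finer than the breakpoints of $g_{[v_2]}$ itself, and its depth is what Lemma~\ref{identified vertices H} converts into the required length bound when $u=v_2w_1$, $v=v_2w_2$ vary.

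To control this uniformly I would exploit the self-similarity at $m$. Because both outgoing edges of $m$ are loops, the part of $\mathcal C(H)$ reachable from $m$ plays the role of the full core of $F$ below $v_2$; more precisely, the behaviour of $H$ inside the natural copy $F_{[v_2]}$ is governed by a subgroup whose closure again contains a copy of $[F,F]$, so that pairs of branches of the form $v_2w_1\to v_2w_2$ are realised in $H$ at a depth bounded solely in terms of $v_2$. Making this precise — that is, bounding, independently of $w_1,w_2$, how far one must descend from the $\Cl(H)$-element $g_{[v_2]}$ to reach an honest element of $H$ — is the crux, and it is precisely here that the strong hypothesis $[F,F]\subseteq\Cl(H)$, rather than mere readability of $v_1,v_2$, is needed. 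Once a uniform threshold $k_1$ for the suffix change is secured, taking $k=\max\{k_0,k_1\}$ and composing $a$ with $b$ as above completes the proof.
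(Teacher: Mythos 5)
The paper does not actually prove this lemma: it is imported verbatim from \cite[Corollary 7.8]{G}, so there is no in-paper argument to compare yours against. Judged on its own, your proposal has a genuine gap, which you yourself flag. The preparatory part is fine: the unique middle vertex $m$ with two self-loops (Lemma \ref{core of derived subgroup}), the readability of $v_1,v_2$ with $v_1^+=v_2^+=m$, and the factorisation $v_1w_1\to v_2w_1\to v_2w_2$, whose first factor is exactly Lemma \ref{identified vertices H} applied to the pair $(v_1,v_2)$ with a threshold depending only on $v_1,v_2$. But the second factor --- a uniform $k_1$ such that for all $w_1,w_2$ of length $\ge k_1$ some $b\in H$ has the pair of branches $v_2w_1\to v_2w_2$ --- is precisely the statement of the lemma in the diagonal case $v_1=v_2$, so your reduction removes only the prefix change and leaves the entire content of the lemma unproved. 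Note that every element you can explicitly extract from Lemma \ref{identified vertices H} has a controlled pair of branches of the form $pw\to qw$, i.e.\ it changes a bounded prefix and preserves a long common tail; no composition of such elements alone can turn the tail of $w_1$ into the (arbitrary, different) tail of $w_2$.

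The appeal to ``self-similarity at $m$'' does not close this. The sub-automaton at $m$ describes $\Cl(H)$ below $[v_2]$, not which elements of $H$ realise which pairs of branches there; the subgroup of $H$ supported in $[v_2]$ need not be large (a priori it could be trivial), and proving that its closure contains the derived subgroup of the copy $F_{[v_2]}$ would merely reproduce the same uniformity problem one level down. What is actually needed --- and what the cited proof in \cite{G} supplies --- is the construction, at a single middle word $u$, of elements of $H$ with the exact pairs of branches $u\to u0$ and $u\to u1$, hence $g_s\in H$ with $u\to us$ for every finite binary word $s$ (compare Corollary \ref{cy1}, which this paper deduces \emph{from} the lemma, not the other way around). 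Given these, $g_{w_1}^{-1}g_{w_2}$ has the pair of branches $uw_1\to uw_2$, and composing with the Lemma \ref{identified vertices H} elements for the pairs $(v_1,u)$ and $(u,v_2)$ finishes the proof with a uniform threshold. Lemma \ref{identified vertices H} applied to $(v,v0)$ with $w=0^k$ readily yields a $0$-appender $u\to u0$ at $u=v0^k$, but a matching $1$-appender at the \emph{same} base word does not fall out of the same trick; producing it is the real work, and it is missing from your proposal.
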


As a corollary from Lemma \ref{lm1} we have the following. 

\begin{Corollary}\label{cy1}
	Let $H$ be a subgroup of $F$ such that $\Cl(H)$ contains the derived subgroup of $F$. Then there is a finite binary word $u$ which contains both digits $0$ and $1$ such that for every finite binary word $w$ there is an element $h\in H$ with the pair of branches $u\rightarrow uw$.  
\end{Corollary}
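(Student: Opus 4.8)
The plan is to derive the statement directly from Lemma \ref{lm1} by a suitable choice of the words involved. First I would apply Lemma \ref{lm1} to the pair $v_1\equiv v_2\equiv 01$ (which indeed contain both digits $0$ and $1$), obtaining a number $k\in\mathbb{N}$ with the property that for any pair of finite binary words $w_1,w_2$ of length $\geq k$ there is an element $h\in H$ with the pair of branches $01w_1\to 01w_2$.

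Next I would set $u\equiv 010^k$. This word contains both digits $0$ and $1$, as required. Given an arbitrary finite binary word $w$, the idea is to absorb the length-$k$ requirement coming from Lemma \ref{lm1} into the common prefix $u$ on both sides. Concretely, I would take $w_1\equiv 0^k$ and $w_2\equiv 0^k w$. Both have length $\geq k$ (the first has length exactly $k$, the second has length $k+|w|$), so Lemma \ref{lm1} provides an element $h\in H$ with the pair of branches $01\cdot 0^k\to 01\cdot 0^k w$. Since $01\cdot 0^k\equiv u$ and $01\cdot 0^k w\equiv uw$, this $h$ has the pair of branches $u\to uw$, which is exactly what is needed.

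There is essentially no obstacle beyond the bookkeeping: the one point that requires care is that Lemma \ref{lm1} imposes the length bound on both suffixes $w_1$ and $w_2$ simultaneously, so one cannot simply take $u\equiv 01$ and use the pair $01\to 01w$ (the empty suffix on the left has length $0<k$). Padding the common word $u$ by $0^k$ and padding both suffixes accordingly resolves this, and the same fixed $u$ then works uniformly for every $w$.
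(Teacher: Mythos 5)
Your proposal is correct and is essentially identical to the paper's own proof: the paper also applies Lemma \ref{lm1} with $v_1\equiv v_2$ (an arbitrary word containing both digits, where you take $01$), sets $u\equiv v_10^k$, and uses the suffixes $w_1\equiv 0^k$ and $w_2\equiv 0^kw$. The padding-by-$0^k$ trick you highlight is exactly the point of the paper's argument.
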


\begin{proof}
	Let $v_1$ be a finite binary word which contains both digits $0$ and $1$ and let $v_2\equiv v_1$. By Lemma \ref{lm1}, there exists $k\in\mathbb{N}$ such that for every pair of finite binary words $w_1,w_2$ of length $\geq k$, there is an element in $H$ with the pair of branches $v_1w_1\to v_2w_2$. Then the finite binary word $u\equiv v_10^k$ satisfies the result. Indeed, for any finite binary word $w$, if one lets $w_1\equiv 0^k$ and $w_2\equiv 0^kw$, then there is an element in $H$ with the pair of branches $v_1w_1\to v_2w_2$, i.e., with the pair of branches $u\to uw$, as required. 
\end{proof}

The following lemma also shows that if the closure of $H$ contains the derived subgroup of $F$ then $H$ is ``big'' in the sense that it must contain elements with  certain properties. 

\begin{Lemma}[{\cite[Lemma 7.12]{G}}]\label{lm2}
	Let $H$ be a subgroup of $F$ such that $\Cl(H)$ contains the derived subgroup of $F$.
	Let $a<b$ in $(0,1)$ be finite dyadic fractions. Let $u$ be a finite binary word which contains both digits $0$ and $1$. Then there is an element $g\in H$ such that $g$ maps the interval $[a,b]$ into the dyadic interval $[u]$.
\end{Lemma}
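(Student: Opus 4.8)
The plan is to reduce the statement to the simultaneous realization of finitely many ``deep'' pairs of branches in $H$, each of which is individually provided by Lemma \ref{lm1}. Since $g$ is an increasing homeomorphism we have $g([a,b])=[g(a),g(b)]$, so it suffices to produce a single $g\in H$ with $g(a),g(b)\in[u]$; equivalently, writing $[a,b]$ as a union of maximal standard dyadic intervals $[p_1],\dots,[p_M]$ (ordered left to right), it suffices to find one $g\in H$ mapping every $[p_j]$ into $[u]$. The first observation I would record is that, because $0<a<b<1$, each label $p_j$ contains both digits $0$ and $1$: a maximal piece with a monochromatic label $0^k$ or $1^k$ has an endpoint at $0$ or at $1$, contradicting $[p_j]\subseteq[a,b]\subseteq(0,1)$. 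Hence Lemma \ref{lm1} applies to every piece, with $v_1=p_j$ and $v_2=u$, and furnishes elements of $H$ carrying arbitrarily deep bichromatic sub-intervals $[p_jw]$ into $[u]$.

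In the favourable case where $a$ and $b$ happen to share a bichromatic common prefix that is long enough, the whole statement is immediate from a single application of Lemma \ref{lm1}: one takes $v_1$ to be such a prefix (chosen to separate $\{a,b\}$ from the complement), $v_2=u$, and obtains $g\in H$ with a branch $z'\to uw_2$ where $[a,b]\subseteq[z']$, so that $g([a,b])\subseteq[uw_2]\subseteq[u]$. The point of the lemma, however, is the general case, in which $[a,b]$ straddles dyadic points and the $M$ pieces cannot be covered by one branch; the heart of the argument is therefore to realise, by a \emph{single} element of $H$, a prescribed reduced tree-diagram whose top tree refines $[a,b]$ into the deep pieces $[p_jw]$ and whose bottom tree sends them into pairwise disjoint slots $[uy_j]\subseteq[u]$ (the behaviour on $[0,a]$ and $[b,1]$ being chosen to make room, hence nontrivial).

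I expect the main obstacle to be precisely this passage from single-branch to multi-branch control \emph{inside $H$} rather than inside $\Cl(H)$: by Lemma \ref{core of derived subgroup} the hypothesis $[F,F]\le\Cl(H)$ forces every bichromatic word to terminate at the unique inner middle vertex of $\mathcal C(H)$, so the target diagram is automatically accepted and lies in $\ddd(\mathcal C(H))=\Cl(H)$ — but this only recovers what we already know from $[F,F]\le\Cl(H)$, and Lemma \ref{lm1} controls only one branch per element, so naively multiplying the per-branch elements fails, each factor acting uncontrollably off its branch and destroying branches already arranged. My plan to overcome this is an induction on $M$ that maintains the invariant that all already-placed images sit inside one common deep bichromatic sub-interval $[D]\subseteq[u]$; inserting the next piece then requires controlling only two branches of the correcting factor (one to keep $[D]$ within $[u]$, one to bring in the new piece), after which the elements of Corollary \ref{cy1}, which act with full flexibility inside a fixed interval, are used to repack the images into a single deeper bichromatic slot and restore the invariant. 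The delicate step I anticipate is establishing this two-branch correction within $H$, for which I would argue directly from the core description (Lemmas \ref{identified vertices in the core} and \ref{identified vertices H}), pushing all the relevant branches simultaneously arbitrarily deep so that a single realizing element can be extracted; once this simultaneous realization is available, Lemma \ref{product} assembles the factors into $g\in H$ and $g([a,b])=\bigcup_j g([p_j])\subseteq[u]$ follows at once.
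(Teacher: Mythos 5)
First, a caveat: the paper does not prove this lemma at all --- it is imported verbatim from \cite[Lemma 7.12]{G} --- so there is no in-paper argument to compare against, and your proposal must stand on its own. It does not: there is a genuine gap at exactly the point you flag as ``delicate''. Your reductions are fine (it suffices to arrange $g(a)\ge .u$ and $g(b)\le .u1^{\N}$, and each maximal piece $[p_j]$ has a bichromatic label), but observe that the whole lemma follows at once from a \emph{single} element of $H$ possessing \emph{two} prescribed deep pairs of branches, say $E\to E'$ with $[E]\subseteq(0,a)$ and $[E']\subseteq[u0]$, and $C\to C'$ with $[C]\subseteq(b,1)$ and $[C']\subseteq[u1]$: monotonicity gives $g(a)\ge g(.E1^{\N})=.E'1^{\N}\ge .u$ and $g(b)\le g(.C)=.C'\le .u1^{\N}$, with no induction on $M$ needed. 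So the entire content of the lemma is the simultaneous realization of two controlled branches \emph{inside $H$}, and this is precisely the step you assert rather than prove. Every tool available in the paper --- Lemma \ref{lm1}, Corollary \ref{cy1}, Lemmas \ref{identified vertices in the core} and \ref{identified vertices H} --- produces, for each choice of deepening words, a \emph{separate} element of $H$ (or of $\Cl(H)$) realizing \emph{one} pair of branches, with no control off that pair; your plan to get the ``two-branch correction'' by ``pushing all the relevant branches simultaneously arbitrarily deep so that a single realizing element can be extracted'' from Lemmas \ref{identified vertices in the core} and \ref{identified vertices H} is exactly the missing argument, not a consequence of them. Since such a two-branch element is equivalent to the conclusion, the proposal is essentially circular at its crux; the fact that your scheme still wraps this inside an induction on $M$ suggests the difficulty has not been correctly isolated.

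Two further points. The tension you yourself note is real and unresolved: because $[F,F]\le\Cl(H)$, tree-diagrams with both controlled branches certainly exist in $\Cl(H)$ (they are accepted by $\C(H)$ via the unique middle vertex, by Lemma \ref{core of derived subgroup}), but the lemma is about $H$, and the passage from $\Cl(H)$ to $H$ is exactly what Lemma \ref{lm1} accomplishes for one branch at a time; no quoted result performs it for two branches at once. Finally, Lemma \ref{product} concerns products of diagrams accepted by a tree-automaton, i.e.\ it lives in $\Cl(H)=\ddd(\C(H))$; it cannot ``assemble the factors into $g\in H$'' (that is just the group law of $H$) and does not help with the $\Cl(H)$-versus-$H$ issue above.
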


\subsection{The generation problem in Thompson's group $F$}

Recall that in \cite{G}, we gave a solution for the generation problem in $F$. That is, we gave an algorithm such that given a finite subset $X$ of $F$ determines whether $X$ generates $F$. In fact, we gave an algorithm such that given a finite subset $X$ of $F$ determines whether the subgroup it generates contains the derived subgroup of $F$ (equivalently, whether the subgroup it generates is a normal subgroup of $F$ \cite{CFP}).

\begin{Theorem}[{\cite[Theorem 1.3]{G}}]\label{thm:der}
	Let $H$ be a subgroup of $F$. Then $H$ contains the derived subgroup of $F$ if and only if the following conditions hold. 
	\begin{enumerate}
		\item[$(1)$] $[F,F]\subseteq\Cl(H)$.
		\item[$(2)$]  There is an element $h\in H$ which fixes a dyadic fraction $\alpha\in(0,1)$ such that the slope $h'(\alpha^-)=2$ and the slope $h'(\alpha^+)=1$. 
	\end{enumerate}
\end{Theorem}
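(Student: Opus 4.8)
\emph{The \emph{only if} direction.} Suppose $[F,F]\le H$. Condition $(1)$ is immediate from $H\le\Cl(H)$. For condition $(2)$ it suffices to exhibit one element of $[F,F]$ with the required germ, and such an element is easy to write down: take $h=x_1^{-1}\oplus\1$, the $[0]$-copy of $x_1^{-1}$. Since $x_1$ is the identity on $[0,\tfrac12]$, this $h$ is supported in $[\tfrac14,\tfrac12]\subset(0,1)$, so $h\in[F,F]\le H$. It fixes the dyadic fraction $\alpha=\tfrac12$; as $x_1'(1^-)=\tfrac12$ we get $h'(\tfrac12^-)=2$, while $h$ is the identity to the right of $\tfrac12$, so $h'(\tfrac12^+)=1$.

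\emph{The \emph{if} direction: reduction and endgame.} Assume $(1)$ and $(2)$, and write $\alpha=.u$ with $u\equiv u'1$. By Lemma \ref{core of derived subgroup}, condition $(1)$ says that the core $\C(H)$ has a unique, inner, middle vertex, which is what makes $H$ ``big'' through Corollary \ref{cy1} and Lemmas \ref{lm1}--\ref{lm2}. Applying Lemma \ref{4parts} to $h$ (with $k=0$ and $\ell=1$) pins down the germ of $h$ at $\alpha$: on the right $h$ is the identity on a neighborhood $[u0^m]$, while on the left $h$ carries $[u'01^n]$ linearly onto $[u'01^{n-1}]$ for suitable $n$. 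Writing $I_n:=[u'01^n]$ (nested intervals sharing the right endpoint $\alpha$) and $J_n:=[u'01^n0]$ (their left halves, which tile a left neighborhood of $\alpha$ and accumulate at it), the element $h$ acts as a one-sided \emph{shift} $J_n\mapsto J_{n-1}$. The plan is to reduce the whole theorem to producing, inside $H$, a single natural copy $F_{[w]}$ of $F$ for some dyadic word $w$ containing both digits. Granting this, one sets $[F,F]_{[w]}=[F_{[w]},F_{[w]}]\le H$; then for each dyadic word $w'$ containing both digits Lemma \ref{lm1} supplies $g\in H$ with a pair of branches $ww_1\to w'w_2$, and since $g$ maps $[ww_1]$ linearly onto $[w'w_2]$ and $[F,F]_{[ww_1]}\le[F,F]_{[w]}\le H$, conjugating by $g$ gives $[F,F]_{[w'w_2]}\le H$. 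Thus $[F,F]_{[v]}\le H$ for every sufficiently deep dyadic interval $[v]\subseteq(0,1)$. The subgroup $N$ generated by all of these is a nontrivial subgroup of $[F,F]$ that is normalized by $[F,F]$: for $f\in[F,F]$ one has $f[F,F]_{[v]}f^{-1}=[F,F]_{f([v])}$, and a standard fragmentation argument writes any element supported in the open interval $f([v])$ as a product of elements each supported in a deep dyadic subinterval, so $[F,F]_{f([v])}\le N$. Since $[F,F]$ is simple, $N=[F,F]$, and therefore $[F,F]\le H$.

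\emph{The heart: building one copy, and the main obstacle.} It remains to produce a single copy $F_{[w]}\le H$, and this is exactly where condition $(2)$ is indispensable. Condition $(1)$ alone makes $H$ big only in the sense of \emph{pairs of branches}: Corollary \ref{cy1} and Lemmas \ref{lm1}--\ref{lm2} furnish elements of $H$ realizing essentially arbitrary prescribed branch behaviour near $\alpha$, but a single prescribed branch pair pins an element of $H$ down only on one dyadic interval, leaving its behaviour elsewhere free. By contrast, the generators of a copy $F_{[w]}$ must be genuinely \emph{supported} in $[w]$, i.e. the identity off $[w]$. The shift $h$ is the tool that converts branch-pair control into support control: starting from big elements of $H$ whose prescribed branch pairs lie in the tiles $J_n$, one forms commutators with $h$ so that the uncontrolled ``far'' parts are conjugated onto disjoint tiles and cancel, leaving elements genuinely supported in a bounded union of tiles; the one-sided germ of $h$ (identity on one side of $\alpha$, doubling on the other) is precisely what makes these cancellations succeed. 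Replicating such localized elements across the tiles by conjugating with powers of $h$, and combining them with the doubling map $I_n\to I_{n-1}$ supplied by $h$ itself, one assembles a full generating set of a copy $F_{[w]}$ inside $H$ for a suitable $[w]$ built from the tiles near $\alpha$. Carrying out this localization and assembly, rather than the soft reduction above, is the main technical difficulty; it is also the step that visibly fails without condition $(2)$, since bigness alone produces branch pairs but never genuinely supported elements.
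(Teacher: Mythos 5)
First, note that this paper does not actually prove Theorem \ref{thm:der}: it is imported verbatim from \cite[Theorem 1.3]{G}, so there is no in-paper proof to compare against; I am assessing your argument on its own terms. Your \emph{only if} direction is correct: $h=(x_1^{-1})_{[0]}$ is supported in $[\frac14,\frac12]$, hence lies in $[F,F]$, fixes $\frac12$, and has the required one-sided slopes. The \emph{if} direction, however, is not a proof. Its core step --- producing a single natural copy $F_{[w]}\leq H$ by ``forming commutators with $h$ so that the uncontrolled far parts are conjugated onto disjoint tiles and cancel'' --- is only a plan, and you say so yourself (``carrying out this localization and assembly \dots is the main technical difficulty''). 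As described it is not even a well-defined procedure: the elements supplied by Corollary \ref{cy1} and Lemmas \ref{lm1}--\ref{lm2} are controlled on exactly one dyadic interval and are arbitrary everywhere else, so their ``far parts'' are not supported in the tiles $J_n$ and there is no reason the proposed commutators kill them. Deferring precisely the step that (as you correctly observe) is where condition $(2)$ enters means the hard implication is not established.

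There is also a concrete error in the ``soft reduction.'' A dyadic point $x$ lies in the interior of only finitely many dyadic intervals $[v]$ (those with $v$ a common proper prefix of the two binary expansions of $x$); e.g.\ $\frac38=.011=.0101^{\mathbb N}$ is interior only to $[\emptyset]$, $[0]$ and $[01]$. Hence an element of $[F,F]$ supported in $(\frac14,\frac12)$ that moves $\frac38$ cannot be written as a product of elements each supported in a \emph{deep} dyadic subinterval --- every such factor fixes $\frac38$. So the ``standard fragmentation argument'' you invoke is false, the group $N$ generated by the deep copies $[F,F]_{[v]}$ fixes such grid points, and $N$ is neither equal to $[F,F]$ nor normalized by it; the appeal to simplicity collapses. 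This particular defect is repairable: instead of normalizing by $[F,F]$, take the normal closure of $[F,F]_{[w]}$ inside $H$ and use Lemma \ref{lm2} to find $g\in H$ with $g([a,b])\subseteq [w]$, so that the $H$-conjugates of $[F,F]_{[w]}$ contain all elements compactly supported in intervals $g^{-1}(\,[w]\,)$ covering any given compact subset of $(0,1)$ with every point interior to some member of the cover; fragmenting over \emph{that} cover does yield $[F,F]\leq H$. But even with this repair, the argument still rests on the unproved construction of a copy $F_{[w]}\leq H$, so the proposal remains incomplete.
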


Given a finite subset $X$ of $F$, we let $H$ be the subgroup generated by $X$. Then it is decidable if condition $(1)$ holds for $H$ (see Lemma \ref{core of derived subgroup}). In \cite[Section 8]{G}, we gave an algorithm for deciding if $H$ satisfies condition $(2)$, given that $H$ satisfies condition $(1)$. Hence, Theorem \ref{thm:der} gives an algorithm for determining if $H$ contains $[F,F]$. 

Now, given a finite subset $X$ of $F$, let $H$ be the subgroup of $F$ generated by $X$. Clearly, if $H[F,F]\neq F$, then $H$ is a strict subgroup of $F$. If $H[F,F]=F$ (which can be checked easily using the abelianization map), then to determine if $H=F$, it suffices to check if $H$ contains the derived subgroup $[F,F]$. 
Hence, Theorem \ref{thm:der} gives a solution to the generation problem in $F$.

\begin{Corollary}\label{cor_int}
	Let $H$ be a subgroup of $F$. Then $H=F$ if and only if the following conditions hold. 
	\begin{enumerate}
		\item[$(1)$] $H[F,F]=F$.
		\item[$(2)$] $[F,F]\subseteq \Cl(H)$
		\item[$(3)$] There is a function $h\in H$ which fixes a finite dyadic fraction $\alpha\in \mathcal D$ such that the slope $h'(\alpha^-)=2$ and the slope $h'(\alpha^+)=1$. 
	\end{enumerate}
\end{Corollary}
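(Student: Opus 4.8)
The plan is to deduce Corollary \ref{cor_int} directly from Theorem \ref{thm:der}, observing that conditions $(2)$ and $(3)$ of the corollary are verbatim the two hypotheses of that theorem (the set $\mathcal D$ of condition $(3)$ is exactly the set of dyadic fractions in $(0,1)$ appearing in Theorem \ref{thm:der}$(2)$). So the whole statement is essentially a repackaging of Theorem \ref{thm:der} together with condition $(1)$.

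For the forward implication I would assume $H=F$ and check the three conditions. Conditions $(1)$ and $(2)$ are immediate: $F[F,F]=F$, and since $F$ is closed we have $\Cl(F)=F\supseteq[F,F]$. For condition $(3)$ I need to exhibit one element of $F$ with the prescribed one-sided slopes at a fixed dyadic point. The natural witness is $\alpha=\tfrac12$ together with $h=x_0^{-1}\oplus\1$, the copy of $x_0^{-1}$ in $F_{[0]}$: it fixes $\tfrac12$, it is the identity on $[\tfrac12,1]$ so that $h'((\tfrac12)^+)=1$, and near $\tfrac12$ it behaves like $x_0^{-1}$ near $1$. Since $x_0$ has slope $\tfrac12$ at $1^-$, its inverse has slope $2$ there, giving $h'((\tfrac12)^-)=2$. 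Thus condition $(3)$ holds.

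For the converse I would assume $(1)$, $(2)$, $(3)$ and apply Theorem \ref{thm:der}, whose hypotheses are precisely conditions $(2)$ and $(3)$. The theorem then yields $[F,F]\subseteq H$. Combined with condition $(1)$ this forces $H=H[F,F]=F$, because $[F,F]\subseteq H$ makes $H[F,F]=H$. This completes the deduction.

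Since the argument merely reformulates Theorem \ref{thm:der}, there is no genuine obstacle; the only place requiring a little care is the verification of condition $(3)$ in the forward direction, where one must name an explicit element and compute its one-sided slopes at the fixed dyadic point, being careful to keep the convention straight (slope $2$ on the left of $\alpha$, slope $1$ on the right). Everything else is purely formal.
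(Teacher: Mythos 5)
Your proposal is correct and follows the same route the paper intends: the corollary is an immediate repackaging of Theorem \ref{thm:der} together with the observation that $[F,F]\subseteq H$ and $H[F,F]=F$ force $H=F$. Your explicit witness $h=x_0^{-1}\oplus\1$ at $\alpha=\tfrac12$ for condition $(3)$ in the forward direction is a correct computation (slope $2$ at $(\tfrac12)^-$ since $x_0'(1^-)=\tfrac12$, slope $1$ at $(\tfrac12)^+$ since $h$ is supported in $[0,\tfrac12]$), though the paper leaves this trivial verification unstated.
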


\section{Improved solution to the generation problem in $F$}\label{main section}

In this section, we prove that if the image of $H$ in the abelianization of $F$ is closed, then the second condition in Theorem \ref{thm:der} is superfluous. More specifically, we prove the following.

\begin{Proposition}\label{main_pro}
	Let $H$ be a subgroup of $F$ such that the following conditions hold. 
	\begin{enumerate}
		\item[$(1)$] The image of $H$ in the abelianization of $F$ is a closed subgroup of $\mathbb{Z}^2$. 
		\item[$(2)$] $[F,F]\subseteq \Cl(H)$. 
	\end{enumerate}
	Then there is an element $h\in H$ which fixes a dyadic fraction $\alpha\in(0,1)$ such that the slope $h'(\alpha^-)=2$ and the slope $h'(\alpha^+)=1$. 
\end{Proposition}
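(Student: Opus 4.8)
The plan is to first convert the hypotheses into a description of the closure and then to reduce the statement to a one–sided germ problem. By Lemma \ref{closed subgroups of Z^2}, hypothesis (1) gives integers $p,q\ge 0$ with $\pi_{ab}(H)=p\mathbb{Z}\times q\mathbb{Z}$. Since $\Cl(H)$ consists of the piecewise-$H$ functions (Lemma \ref{dyadic piecewise}), every $f\in\Cl(H)$ agrees with an element of $H$ near $0$ and near $1$, so $\pi_{ab}(\Cl(H))\subseteq\pi_{ab}(H)$, whence $\pi_{ab}(\Cl(H))=p\mathbb{Z}\times q\mathbb{Z}$. By hypothesis (2), $\Cl(H)$ contains $[F,F]=\ker\pi_{ab}$; as any subgroup containing $[F,F]$ is the full $\pi_{ab}$-preimage of its image, I conclude $\Cl(H)=F_{p,q}$. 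In particular $\Cl(H)\supseteq[F,F]$ acts transitively on $\mathcal D$, and since $H$ and $\Cl(H)$ have the same orbits on $\mathcal D$, the group $H$ itself is transitive on the dyadic fractions.

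Next I reduce the target germ to branch pairs. Writing $\alpha=.u$ with $u\equiv u'1$, an element $h\in F$ fixing $\alpha$ has $h'(\alpha^+)=1$ exactly when it has a branch pair $u0^m\to u0^m$, and $h'(\alpha^-)=2$ exactly when it has a branch pair $u'01^n\to u'01^{n-1}$ (Lemma \ref{4parts}). Thus it suffices to find $h\in H$ realizing both of these pairs; equivalently, since conjugation preserves the one–sided derivatives at a fixed point and $H$ is transitive on $\mathcal D$, it suffices to produce a single $f\in H$ supported in $[0,\alpha]$ with $f'(\alpha^-)=2$. Choosing $u$ so that $u'$ contains both digits $0$ and $1$, Lemma \ref{lm1} (taking $v_1\equiv v_2\equiv u'0$ and suffixes $1^n,1^{n-1}$) produces $a\in H$ with the branch pair $u'01^n\to u'01^{n-1}$, so that $a(\alpha)=\alpha$ and $a'(\alpha^-)=2$. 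The left germ is now under control; the obstruction is the a priori arbitrary right slope $a'(\alpha^+)=2^{s}$.

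The crux is to remove this right slope without touching the left one, i.e. to realize the two disjoint branch pairs $u'01^n\to u'01^{n-1}$ and $u0^m\to u0^m$ by one element of $H$. I would look for a companion $b\in H$ that is the identity on a left–neighbourhood of $\alpha$ and whose action on the interval $[u0^m]$ to the right of $\alpha$ undoes that of $a$; then the appropriate product has germ $(1,0)$ at $\alpha$. Transporting to the right of $\alpha$, this amounts to showing that the group $\{\log_2 g'(\beta^+):g\in H,\ g\equiv\mathrm{id}\text{ on }[0,\beta]\}$ — which, by transitivity and conjugation–invariance of fixed–point germs, is a subgroup of $\mathbb{Z}$ independent of the dyadic $\beta$ — is all of $\mathbb{Z}$ (equivalently, that $H$ contains an element supported in $[0,\alpha]$ whose slope at the right endpoint is exactly $2$). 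This is where I would invoke $\Cl(H)=F_{p,q}$: the equality $\pi_{ab}(H)=\pi_{ab}(\Cl(H))$ ensures that $H$ lacks no boundary slope of $F_{p,q}$, which is what should allow one to upgrade the one–sided constructions available in $\Cl(H)=F_{p,q}$ (via $[F,F]$ and Lemma \ref{lm2}) to genuine elements of $H$.

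I expect this last step to be the main obstacle. Every tool at hand — Lemma \ref{lm1}, Lemma \ref{lm2}, and the piecewise-$H$ description of $\Cl(H)$ — controls the germ of an $H$-element on only one side of $\alpha$, so naively combining two one–sided elements tends merely to relocate the uncontrolled slope rather than eliminate it. To finish I would prove a two–sided strengthening of Lemma \ref{lm1}: using that $\Cl(H)=F_{p,q}$ (so that all interior vertices of the core coincide in the single middle vertex, by Lemma \ref{core of derived subgroup}) together with the group structure of $H$ and a pullback/product argument in the spirit of Lemma \ref{identified vertices H}, one realizes any finite family of deep branch pairs carried by pairwise disjoint dyadic intervals by a single element of $H$. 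Applying this to the family $\{u'01^n\to u'01^{n-1},\,u0^m\to u0^m\}$ then yields the desired $h\in H$ with $h(\alpha)=\alpha$, $h'(\alpha^-)=2$ and $h'(\alpha^+)=1$. Hypothesis (1) enters precisely in keeping $\Cl(H)$ equal to $F_{p,q}$ with the same $p,q$ as the image of $H$, without which the boundary slopes of $H$ could be too sparse to carry out the correction.
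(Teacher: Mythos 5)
Your reductions at the start are sound and match steps the paper also takes: $\Cl(H)=F_{p,q}$ with the same $p,q$ as $\pi_{ab}(H)$, transitivity of $H$ on $\mathcal D$, and the production via Lemma \ref{lm1} of an element $a\in H$ fixing $\alpha$ with $a'(\alpha^-)=2$ but uncontrolled right germ. However, the proof has a genuine gap exactly where you say you "expect the main obstacle" to be. The step you defer --- a ``two-sided strengthening of Lemma \ref{lm1}'' realizing any finite family of deep branch pairs on disjoint dyadic intervals by a \emph{single} element of $H$ --- is not proved, and it is essentially equivalent to (indeed stronger than) the proposition itself: applied to the two pairs $u'01^n\to u'01^{n-1}$ and $u0^m\to u0^m$ it is the conclusion, and in the form stated it would give $[F,F]\leq H$ almost immediately, which by Theorem \ref{thm:der} is precisely what conditions (1)--(2) are supposed to yield only after the proposition is established. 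None of the cited tools (Lemma \ref{lm1}, Lemma \ref{identified vertices H}, the piecewise-$H$ description of $\Cl(H)$) controls more than one branch pair of an $H$-element at a time, so the argument is circular at its crux. Likewise, your intermediate reformulation --- that the group $\{\log_2 g'(\beta^+): g\in H,\ g\equiv\mathrm{id}\text{ on }[0,\beta]\}$ equals $\mathbb{Z}$ --- is just the mirror image of the statement to be proved, not a reduction of it.

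The paper avoids the need for any simultaneous two-sided control by a different mechanism. It introduces the set $\mathcal S_H\subseteq\mathbb{Z}^2$ of all germ-vectors $(\log_2 h'(\alpha^-),\log_2 h'(\alpha^+))$ of elements $h\in H$ at dyadic fixed points $\alpha$ (allowed to vary with $h$), observes that transitivity of $H$ on $\mathcal D$ makes $\mathcal S_H$ a subgroup of $\mathbb{Z}^2$, and reduces the proposition to $(1,0)\in\mathcal S_H$. It then takes an element of $H$ with branch pair $u\to u1$, uses condition (1) to correct its slopes at $0^+$ and $1^-$ so that it becomes compactly supported in $(0,1)$, conjugates it into $[u01]$ via Lemma \ref{lm2}, and assigns to each consecutive pair of branches of the resulting $f\in H$ a $2$-tuple which Lemma \ref{belong to S_H} places in $\mathcal S_H$. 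A caret-counting telescoping identity (Lemmas \ref{num_caret_left}--\ref{main lemma}) shows that the sum of the tuples between the branch $u00\to u00$ and the branch witnessing $f'(\beta^-)=\frac12$ equals $(1,0)$; since $\mathcal S_H$ is a group containing each summand, $(1,0)\in\mathcal S_H$. This is the idea your proposal is missing: one does not build the desired element directly, one shows its germ-vector lies in the group generated by germ-vectors one \emph{can} build.
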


The proof of Proposition \ref{main_pro} relies on some ideas from \cite{G}. Recall that in \cite[Section 8]{G}, we give an algorithm for determining if a subgroup $H$ of $F$ whose closure contains the derived subgroup of $F$ satisfies Condition $(2)$ of Theorem \ref{thm:der}. Proposition \ref{main_pro} claims that if the image of $H$ in the abelianization of $F$ is closed then the algorithm from \cite[Section 8]{G} necessarily returns ``Yes''. While we are not going to consider the algorithm itself, we will use the ``setting" of the algorithm from \cite{G} with some modifications. Until Lemma \ref{belongs to S_H}, we follow \cite[Section 8]{G} with small modifications.

\begin{Definition}
	Let $H$ be a subgroup of $F$. We let
	$$\mathcal S_H=\{(a,b)\in\mathbb{Z}^2 \mid \exists h\in H:\exists\alpha\in\mathcal D: h(\alpha)=\alpha,h'(\alpha^-)=2^a,h'(\alpha^+)=2^b
	\}$$
	That is, we denote by $\mathcal S_H$ the subset of $\mathbb{Z}^2$ of all vectors $(a,b)$ such that there is an element $h\in H$ and a finite dyadic fraction $\alpha\in(0,1)$ such that $h$ fixes $\alpha$, and such that the slope $h'(\alpha^-)=2^a$ and the slope $h'(\alpha^+)=2^b$.
\end{Definition}

Recall that if $H$ is a subgroup of $F$ such that $\Cl(H)$ contains the derived subgroup of $F$, then $H$ acts transitively on the set of dyadic fractions $\mathcal D$ (since $\Cl(H)$ acts transitively on $\mathcal D$). Hence, we have the following. 

\begin{Lemma}
	Let $H$ be a subgroup of $F$ such that $\Cl(H)$ contains the derived subgroup of $F$. Then $\mathcal S_H$ is a subgroup of $\mathbb{Z}^2$.
\end{Lemma}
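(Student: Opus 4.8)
The plan is to show that $\mathcal S_H$ is closed under the group operations of $\mathbb{Z}^2$, namely that it contains $(0,0)$, is closed under negation, and is closed under addition. The crucial tool is the transitivity of the action of $H$ on the set of dyadic fractions $\mathcal D$, which holds because $\Cl(H)$ contains $[F,F]$ and hence acts transitively on $\mathcal D$ (as recalled in Section \ref{der_sub}), and because the orbits of $H$ on $\mathcal D$ coincide with those of $\Cl(H)$.

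First I would verify $(0,0)\in\mathcal S_H$: the identity element $\1\in H$ fixes every dyadic fraction with slopes $2^0=1$ on both sides, so $(0,0)\in\mathcal S_H$. Next, closure under negation is immediate: if $(a,b)\in\mathcal S_H$ via some $h\in H$ fixing $\alpha$ with $h'(\alpha^-)=2^a$ and $h'(\alpha^+)=2^b$, then $h^{-1}\in H$ also fixes $\alpha$, and since the left and right slopes of $h^{-1}$ at $\alpha$ are the reciprocals of those of $h$, we get $(h^{-1})'(\alpha^-)=2^{-a}$ and $(h^{-1})'(\alpha^+)=2^{-b}$, so $(-a,-b)\in\mathcal S_H$.

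The main work is closure under addition. Suppose $(a_1,b_1)$ and $(a_2,b_2)$ lie in $\mathcal S_H$, witnessed by elements $h_1,h_2\in H$ fixing dyadic fractions $\alpha_1,\alpha_2$ respectively, with the prescribed one-sided slopes. The obstacle is that $\alpha_1$ and $\alpha_2$ may differ, so $h_1$ and $h_2$ cannot simply be multiplied. To fix this, I would use transitivity: since $H$ acts transitively on $\mathcal D$, there is an element $g\in H$ with $g(\alpha_1)=\alpha_2$. Then the conjugate $h_1^{g}=g^{-1}h_1 g$ lies in $H$, fixes $\alpha_2$, and has one-sided slopes at $\alpha_2$ equal to those of $h_1$ at $\alpha_1$ (conjugation by $g$ moves the fixed point $\alpha_1$ to $\alpha_2=g(\alpha_1)$ while preserving the local slope data, because the chain rule contributions from $g$ and $g^{-1}$ cancel). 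Hence $h_1^{g}$ witnesses $(a_1,b_1)\in\mathcal S_H$ with fixed point $\alpha_2$. Now $h_1^{g}h_2\in H$ fixes $\alpha_2$, and by the chain rule its left slope at $\alpha_2$ is $2^{a_1}\cdot 2^{a_2}=2^{a_1+a_2}$ and its right slope is $2^{b_1+b_2}$, so $(a_1+a_2,b_1+b_2)\in\mathcal S_H$.

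The only subtle point, which I would state carefully, is the slope behaviour under conjugation: if $g\in F$ maps $\alpha_1$ to $\alpha_2$ then for any $h$ fixing $\alpha_1$, the element $g^{-1}hg$ fixes $\alpha_2$ and $(g^{-1}hg)'(\alpha_2^{\pm})=h'(\alpha_1^{\pm})$. This follows from the chain rule together with the fact that $g$ is an orientation-preserving local diffeomorphism near $\alpha_1$ (so it respects the left/right distinction), making the factors $g'$ and $(g^{-1})'$ reciprocal and cancel. With these three facts — containing $(0,0)$, closure under negation, and closure under addition — $\mathcal S_H$ is a subgroup of $\mathbb{Z}^2$, completing the proof.
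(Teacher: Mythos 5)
Your proof is correct and follows essentially the same route as the paper: use transitivity of $H$ on $\mathcal D$ (coming from $[F,F]\subseteq\Cl(H)$) to conjugate one witness so that both fix the same dyadic fraction, then compose and apply the chain rule to get closure under addition. The only difference is that you also spell out the trivial checks that $(0,0)\in\mathcal S_H$ and that $\mathcal S_H$ is closed under negation via $h\mapsto h^{-1}$, which the paper leaves implicit.
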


\begin{proof}
	Let $(a_1,b_1),(a_2,b_2)\in\mathcal S_H$. We claim that $(a_1+a_2,b_1+b_2)\in\mathcal S_H$. By assumption, there exist $h_1,h_2\in H$ and $\alpha_1,\alpha_2\in\mathcal D$ such that for $i=1,2$, $h_i(\alpha_i)=\alpha_i$, $h_i'(\alpha_i^-)=2^{a_i}$ and $h_i'(\alpha_i^+)=2^{b_i}$. Since $H$ acts transitively on $\mathcal D$, there is an element $h\in H$ such that $h(\alpha_1)=\alpha_2$. Consider the element $g=h_1^h\in H$. The element $g$ fixes $\alpha_2$ and $g'(\alpha_2^-)=h_1'(\alpha_1^-)=2^{a_1}$, $g'(\alpha_2^+)=h_1'(\alpha_1^+)=2^{b_1}$. Hence the element $k=h_2g\in H$ fixes the dyadic fraction $\alpha_2$, has slope $2^{a_1+a_2}$ at $\alpha_2^-$ and slope $2^{b_1+b_2}$ at $\alpha_2^+$. Hence, $(a_1+a_2,b_1+b_2)\in\mathcal S_H$. 
\end{proof}

Let $u$ be a finite binary word. We define $\ell_0(u)$ to be the length of the longest suffix of zeros of $u$ and $\ell_1(u)$ to be the length of the longest suffix of ones of $u$. Note that for every finite binary word $u$, $\ell_0(u)=0$ or $\ell_1(u)=0$.  We make the following definition.

\begin{Definition}
	Let $(T_+,T_-)$ be a tree-diagram of an element in $F$. 
	Let $u_1$ and $u_2$ be a pair of consecutive branches of $T_+$ and $v_1$ and $v_2$ be the corresponding pair of consecutive  branches of $T_-$, so that $u_1\rightarrow v_1$ and $u_2\rightarrow v_2$ are pairs of branches of $(T_+,T_-)$. 
	Then the \textit{$2$-tuple associated with these consecutive pairs of branches of $(T_+,T_-)$} is defined to be 
	$$t=(\ell_1(u_1)-\ell_1(v_1),\ell_0(u_2)-\ell_0(v_2)).$$
\end{Definition}

\begin{Remark}\label{obvious remark}
	Let $(T_+,T_-)$ be a tree-diagram of an element in $F$ and let $u_1\to v_1$ and $u_2\to v_2$ be two consecutive pairs of branches of $(T_+,T_-)$.
	Let $u$ be the longest common prefix of $u_1$ and $u_2$. Then, $$u_1\equiv u01^{m_1}\ \mbox{ and }\ u_2\equiv u10^{n_1}\ \mbox{ for some } \ m_1,n_1\ge 0.$$ 
	Let $v$ be the longest common prefix of $v_1$ and $v_2$. Then, $$v_1\equiv v01^{m_2}\ \mbox{ and }\ v_2\equiv v10^{n_2}\ \mbox{ for some }\ m_2,n_2\ge 0.$$ 
	Then the $2$-tuple associated with this consecutive pair of branches of $(T_+,T_-)$ is 
	$$t=(m_1-m_2,n_1-n_2).$$
\end{Remark}

\begin{Lemma}\label{belongs to S_H}
	Let $(T_+,T_-)$ be a tree-diagram of an element $h\in H$. Assume that $(T_+,T_-)$ has two consecutive pairs of branches $u_1\to v_1$ and $u_2\to v_2$ and let $t$ be the $2$-tuple associated with these pairs of branches. 	
	Let $u$ be the longest common prefix of $u_1$ and $u_2$ and let $v$ be the longest common prefix of $v_1$ and $v_2$. If there is an element $g\in H$ with the pair of branches $v\to u$ then the tuple $t\in \mathcal S_H$. 
\end{Lemma}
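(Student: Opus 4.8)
The plan is to exhibit a single element $k \in H$ that fixes a concrete dyadic fraction and realizes the prescribed slopes $2^{m_1 - m_2}$ and $2^{n_1 - n_2}$. Throughout I adopt the notation of Remark \ref{obvious remark}, so that $u_1 \equiv u01^{m_1}$, $u_2 \equiv u10^{n_1}$, $v_1 \equiv v01^{m_2}$, $v_2 \equiv v10^{n_2}$, and $t = (m_1 - m_2,\, n_1 - n_2)$. Writing $|w|$ for the length of a word $w$, note that $|u_1| = |u| + 1 + m_1$, and similarly for the others.

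First I would locate the candidate fixed point. Since $u_1 \to v_1$ and $u_2 \to v_2$ are consecutive, the dyadic intervals $[u_1]$ and $[u_2]$ are adjacent, and their shared endpoint is $\alpha := .u1$: indeed the right endpoint of $[u_1]$ is $.u01^{m_1}1^{\mathbb{N}} = .u1$ and the left endpoint of $[u_2]$ is $.u10^{n_1} = .u1$. This $\alpha$ is a dyadic fraction in $(0,1)$, and it is precisely the midpoint of $[u]$. Because $h$ maps $[u_1]$ onto $[v_1]$ and $[u_2]$ onto $[v_2]$ (both linearly and increasingly), it carries this boundary point to the common endpoint $\beta := .v1$ of $[v_1]$ and $[v_2]$; that is, $h(\alpha) = \beta$, and $\beta$ is the midpoint of $[v]$.

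Next I would record the relevant one-sided slopes. The piece of $h$ on $[u_1]$ has slope $|[v_1]|/|[u_1]| = 2^{|u_1| - |v_1|} = 2^{(|u| - |v|) + (m_1 - m_2)}$, so $h'(\alpha^-) = 2^{(|u| - |v|) + (m_1 - m_2)}$; likewise the piece on $[u_2]$ gives $h'(\alpha^+) = 2^{(|u| - |v|) + (n_1 - n_2)}$. The hypothesis supplies $g \in H$ whose pair of branches $v \to u$ means $g$ maps $[v]$ linearly and increasingly onto $[u]$, with constant slope $2^{|v| - |u|}$. Since $\beta$ is the midpoint of $[v]$, it is sent to the midpoint $\alpha$ of $[u]$, so $g(\beta) = \alpha$, and $g$ has slope $2^{|v| - |u|}$ on both sides of the interior point $\beta$.

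Finally I would set $k := hg \in H$; recall composition in $F$ is from left to right, so $(hg)(x) = g(h(x))$. Then $k(\alpha) = g(h(\alpha)) = g(\beta) = \alpha$, so $k$ fixes $\alpha$. A left-neighborhood of $\alpha$ is mapped by $h$ into $[v_1] \subseteq [v]$ ending at $\beta$, and a right-neighborhood into $[v_2] \subseteq [v]$ starting at $\beta$, so in the chain rule the factor contributed by $g$ is $2^{|v|-|u|}$ on each side; the exponent $|u| - |v|$ then cancels, yielding $k'(\alpha^-) = 2^{|v|-|u|} \cdot 2^{(|u|-|v|)+(m_1-m_2)} = 2^{m_1 - m_2}$ and, identically, $k'(\alpha^+) = 2^{n_1 - n_2}$. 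Hence $k$ witnesses $t = (m_1 - m_2,\, n_1 - n_2) \in \mathcal{S}_H$. The only point demanding care is the bookkeeping: keeping the composition order and the chain rule consistent with the left-to-right convention, and verifying that the two one-sided slopes of $h$ at $\alpha$ genuinely come from the linear pieces on $[u_1]$ and $[u_2]$ — which is exactly where the facts that $\alpha$ is the shared endpoint and that $\beta = h(\alpha)$ is interior to $[v]$ are used. Beyond this no real obstacle arises.
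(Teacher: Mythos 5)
Your proof is correct and follows essentially the same route as the paper: both form the element $hg\in H$, observe that it fixes $\alpha=.u1$, and check that its one-sided slopes there are $2^{m_1-m_2}$ and $2^{n_1-n_2}$. The only cosmetic difference is that the paper verifies this by composing pairs of branches ($h$ takes $u01^{m_1}\to v01^{m_2}$ and $g$ takes $v01^{m_2}\to u01^{m_2}$), whereas you run the chain rule on the linear pieces over $[u_1]$, $[u_2]$ and $[v]$ — the same computation in different clothing.
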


\begin{proof}
	Since $u$ is the longest common prefix of $u_1$ and $u_2$ and they are consecutive branches of the tree $T_+$,
	$$u_1\equiv u01^{m_1}\ \mbox{ and }\ u_2\equiv u10^{n_1}\ \mbox{ for some } \ m_1,n_1\ge 0.$$ 
	Similarly, 
	$$v_1\equiv v01^{m_2}\ \mbox{ and }\ v_2\equiv v10^{n_2}\ \mbox{ for some }\ m_2,n_2\ge 0.$$ 
	%By definition,
	As noted in Remark \ref{obvious remark}, the $2$-tuple $t=(m_1-m_2,n_1-n_2)$.
	Consider the element $f=hg\in H$. 
	The element $f$ has the pairs of branches 
	$$u01^{m_1}\to u01^{m_2} \mbox{ and } u10^{n_1}\to u10^{n_2}.$$
	(Indeed, $h$ has the pair of branches $u01^{m_1}\to v01^{m_2}$ and $g$ has the pair of branches $v01^{m_2}\to u01^{m_2}$. Hence, $f=hg$ has the pair of branches $u01^{m_1}\to u01^{m_2}$. Similarly, for the second pair of branches.)
	Let $\alpha=.u01^{\mathbb{N}}=.u1$. 
	The above pairs of branches of $f$ imply that $f$ fixes $\alpha$. In addition, they imply that the slope of $f$ at $\alpha^-$ is $2^{m_1-m_2}$ and the slope of $f$ at $\alpha^+$ is $2^{n_1-n_2}$. 
	Hence, $$t=(m_1-m_2,n_1-n_2)\in \mathcal S_H.$$
\end{proof}

\begin{Lemma}\label{num_caret_left}
	Let $T$ be a finite binary tree with branches $u_1,\dots,u_n$. 
	Then the sum $\sum_{i=1}^{n}\ell_1(u_i)$ is equal to the number of carets in $T$. In other words, 
	$$\sum_{i=1}^{n}\ell_1(u_i)=n-1.$$
\end{Lemma}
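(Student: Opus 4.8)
The plan is to read both sides of the identity combinatorially and match them via a bijection. A full finite binary tree with $n$ leaves has exactly $n-1$ inner (father) vertices, and since each father has precisely one outgoing right edge (the one labeled ``$1$''), the tree $T$ has exactly $n-1$ right edges; this $n-1$ is also the number of carets. So it suffices to prove that $\sum_{i=1}^n \ell_1(u_i)$ equals the number of right edges of $T$.

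First I would rewrite the left-hand side as an incidence count. For each branch $u_i$, the number $\ell_1(u_i)$ is the length of the maximal suffix of the path $u_i$ made up of right edges. Hence $\sum_{i=1}^n \ell_1(u_i)$ counts exactly the pairs $(u_i,e)$, where $e$ is a right edge lying on the branch $u_i$ with the property that every edge of $u_i$ occurring after $e$ is again a right edge.

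The key step is to show that each right edge $e$ occurs in exactly one such pair. Write $e=(x,y)$, where $y$ is the right child of $x$. A branch $u_i$ contributes the pair $(u_i,e)$ if and only if $u_i$ passes through $e$ and then descends to its leaf using only right edges. But from $y$ there is a unique all-right descending path to a leaf (the rightmost leaf of the subtree rooted at $y$), so exactly one branch $u_i$ yields the pair $(u_i,e)$. Consequently $\sum_{i=1}^n \ell_1(u_i)$ equals the number of right edges, namely $n-1$.

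The only delicate point is this last uniqueness claim, which genuinely uses that $T$ is a \emph{full} binary tree, so that from any vertex the all-right path is well defined and terminates at a leaf. Alternatively, the statement admits a clean induction on $n$: splitting $T$ at the root into a left subtree $T_L$ (with $a$ leaves) and a right subtree $T_R$ (with $b$ leaves, $a+b=n$), one checks that $\ell_1(0w)=\ell_1(w)$ for every branch $w$ of $T_L$, while $\ell_1(1w)=\ell_1(w)$ for every branch $w$ of $T_R$ except the unique all-ones (rightmost) branch, where it increases by $1$. The inductive hypothesis then gives $(a-1)+(b-1)+1=n-1$, and the extra $+1$ is precisely the contribution flagged by the uniqueness claim above. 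Thus the main thing to get right in either approach is the asymmetric behavior of $\ell_1$ under prepending $0$ versus prepending $1$.
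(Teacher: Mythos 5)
Your argument is correct, but it proceeds differently from the paper. The paper proves the lemma by induction on $n$, locating a caret whose two leaves are consecutive siblings $u_i\equiv v0$, $u_{i+1}\equiv v1$, removing it, and observing that $\ell_1(u_i)=0$ while $\ell_1(u_{i+1})=\ell_1(v)+1$, so the sum drops by exactly one together with the number of carets. Your primary argument is instead a double count: you identify $\sum_i\ell_1(u_i)$ with the number of pairs $(u_i,e)$ in which $e$ is a right edge of $u_i$ followed only by right edges, and then show each right edge lies in exactly one such pair because the all-right path from its lower endpoint to a leaf is unique in a full binary tree; since a full tree with $n$ leaves has $n-1$ right edges (one per inner vertex), the identity follows. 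This is a genuinely different and arguably more illuminating route -- it explains \emph{which} caret each unit of $\ell_1(u_i)$ is counting -- at the cost of having to justify the uniqueness claim you correctly flag as the delicate point. Your alternative induction (splitting at the root into $T_L$ and $T_R$ and tracking how $\ell_1$ behaves under prepending $0$ versus $1$) is also valid and closer in spirit to the paper's proof, though the paper inducts by deleting a bottom caret rather than decomposing at the root. Both of your arguments are complete as sketched; the asymmetry $\ell_1(0w)=\ell_1(w)$ versus $\ell_1(1w)=\ell_1(w)+[w\in 1^*]$ that you isolate is exactly the computation the paper performs locally at the removed caret.
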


\begin{proof}
	The proof is by induction on the number $n$ of leaves of the tree (where a tree with no edges has one leaf). For $n=1$ the claim is obvious. Hence assume that the claim holds for some $n\geq 1$ and let $T$ be a finite binary tree with $n+1$ leaves. Let $u_1,\dots,u_{n+1}$ be the branches of $T$ and let $i\in\{1,\dots,n\}$ be such that the leaves at the end of branches $u_i,u_{i+1}$ have a common father (note such an $i$ must exist). Removing the caret formed by the $i$ and $i+1$ leaves and their father, results in a tree $T'$ with branches $u_1,\dots,u_{i-1},v,u_{i+2},\dots,u_n$, where the finite binary word $v$ is such that $u_i\equiv v0$ and $u_{i+1}\equiv v1$. In particular, $\ell_1(u_i)=0$ and $\ell_1(u_{i+1})=\ell_1(v)+1$. 
	By the induction hypothesis, the sum of $\ell_1(w)$, where $w$ runs over all the branches of $T'$ is $n-1$. Hence, the sum of $\ell_1(w)$ where $w$ runs over all the branches of $T$ is $(n-1)+1=n$ as required. 	
\end{proof}

Similarly, we have the following righ-left analogue. 

\begin{Lemma}\label{num_caret_right}
	Let $T$ be a finite binary tree with branches $u_1,\dots,u_n$. 
	Then the sum $\sum_{i=1}^{n}\ell_0(u_i)$ is equal to the number of carets in $T$. In other words, 
	$$\sum_{i=1}^{n}\ell_0(u_i)=n-1.$$
\end{Lemma}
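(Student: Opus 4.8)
The plan is to deduce this statement from Lemma \ref{num_caret_left} by a symmetry argument rather than repeating the induction. The two lemmas differ only in that the roles of the digits $0$ and $1$ are interchanged, and geometrically this interchange corresponds to reflecting the tree across a vertical axis. So the strategy is to transport the already-proved identity for $\ell_1$ across this reflection.

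First I would make the reflection precise. Given a finite binary tree $T$, let $\bar T$ be the finite binary tree obtained by interchanging, at every father vertex, its left and right outgoing edges (equivalently, by relabeling every edge labeled ``$0$'' as ``$1$'' and every edge labeled ``$1$'' as ``$0$''). Then $\bar T$ is again a full finite binary tree, and it plainly has the same number of leaves as $T$, hence the same number of carets. Moreover, if $u_1,\dots,u_n$ are the branches of $T$, then the branches of $\bar T$ are exactly $\bar u_1,\dots,\bar u_n$, where $\bar u$ denotes the word obtained from $u$ by swapping every $0$ with $1$ (the left-to-right order is reversed, but that is irrelevant for the sum).

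Next I would record the one-line observation that swapping digits interchanges trailing zeros with trailing ones, i.e.\ $\ell_1(\bar u)=\ell_0(u)$ for every finite binary word $u$. Summing over the branches of $\bar T$ and applying Lemma \ref{num_caret_left} to $\bar T$ then yields
\[
\sum_{i=1}^{n}\ell_0(u_i)=\sum_{i=1}^{n}\ell_1(\bar u_i)=n-1,
\]
which is exactly the claim.

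There is essentially no real obstacle here; the only step requiring any care is the bookkeeping for the reflection, namely checking that relabeling preserves fullness and the leaf count (and therefore the caret count) and that it is compatible with $\ell_0\leftrightarrow\ell_1$. If one preferred to avoid introducing $\bar T$, the alternative would be to repeat verbatim the induction proving Lemma \ref{num_caret_left}: removing the caret at leaves $i,i+1$ replaces the branches $u_i\equiv v0$ and $u_{i+1}\equiv v1$ by the single branch $v$, and this time $\ell_0(u_i)=\ell_0(v)+1$ while $\ell_0(u_{i+1})=0$, so the removed caret again lowers the sum $\sum_i\ell_0(u_i)$ by exactly $1$; together with the base case $n=1$ (where the sum is $0$) the induction closes immediately.
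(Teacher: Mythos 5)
Your argument is correct, and it is essentially the paper's: the paper proves Lemma \ref{num_caret_left} by induction and then disposes of Lemma \ref{num_caret_right} with a one-word appeal to the left--right symmetry (``Similarly''), which is exactly the reflection $T\mapsto \bar T$, $\ell_0\leftrightarrow\ell_1$ that you spell out. Your explicit bookkeeping for the reflection (and your noted fallback of rerunning the induction with $\ell_0(u_i)=\ell_0(v)+1$ and $\ell_0(u_{i+1})=0$) is sound and fills in precisely what the paper leaves implicit.
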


\begin{Corollary}
	Let $(T_+,T_-)$ be a tree-diagram of an element $h\in F$ with pairs of branches $u_i\rightarrow v_i$, $i=1,\dots,n$. Let $t_i$, $i=1,\dots,n-1$ be the tuples associated with the consecutive pairs of branches of $(T_+,T_-)$.	Then 
	$$\sum_{i=1}^{n-1} t_i=(-\log_2(h'(1^-)),-\log_2(h'(0^+))).$$ 
\end{Corollary}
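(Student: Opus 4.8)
The plan is to prove the two coordinates of the identity separately, treating each as a telescoping cancellation governed by the caret-counting Lemmas \ref{num_caret_left} and \ref{num_caret_right}, with the ``leftover'' term in each coordinate accounting for the slope of $h$ at an endpoint. Write $t_i=(\ell_1(u_i)-\ell_1(v_i),\,\ell_0(u_{i+1})-\ell_0(v_{i+1}))$ according to the definition of the $2$-tuple associated with the consecutive pairs of branches $u_i\to v_i$ and $u_{i+1}\to v_{i+1}$. Summing coordinatewise, the first coordinate of $\sum_{i=1}^{n-1}t_i$ is $\sum_{i=1}^{n-1}\bigl(\ell_1(u_i)-\ell_1(v_i)\bigr)$ and the second coordinate is $\sum_{i=1}^{n-1}\bigl(\ell_0(u_{i+1})-\ell_0(v_{i+1})\bigr)=\sum_{j=2}^{n}\bigl(\ell_0(u_j)-\ell_0(v_j)\bigr)$. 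Notice that these are the complete sums over all $n$ branches, except that the first coordinate omits the index $j=n$ and the second coordinate omits the index $j=1$.

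For the first coordinate, I would apply Lemma \ref{num_caret_left} to both $T_+$ and $T_-$ (each has $n$ leaves), giving $\sum_{i=1}^{n}\ell_1(u_i)=n-1=\sum_{i=1}^{n}\ell_1(v_i)$, so the full difference over $i=1,\dots,n$ vanishes. Hence
\[
\sum_{i=1}^{n-1}\bigl(\ell_1(u_i)-\ell_1(v_i)\bigr)=-\bigl(\ell_1(u_n)-\ell_1(v_n)\bigr).
\]
The key observation is that $u_n$ and $v_n$ are the rightmost branches of $T_+$ and $T_-$, hence strings of all ones, so $\ell_1(u_n)=|u_n|$ and $\ell_1(v_n)=|v_n|$. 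Since $h$ maps the rightmost dyadic interval $[u_n]$ linearly onto $[v_n]$, the slope at $1^-$ is $h'(1^-)=2^{|u_n|-|v_n|}$, whence $\ell_1(u_n)-\ell_1(v_n)=\log_2(h'(1^-))$ and the first coordinate equals $-\log_2(h'(1^-))$, as desired.

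The second coordinate is handled by the mirror-image argument using Lemma \ref{num_caret_right}: the full difference $\sum_{j=1}^{n}\bigl(\ell_0(u_j)-\ell_0(v_j)\bigr)$ vanishes, so $\sum_{j=2}^{n}\bigl(\ell_0(u_j)-\ell_0(v_j)\bigr)=-\bigl(\ell_0(u_1)-\ell_0(v_1)\bigr)$. Here $u_1$ and $v_1$ are the leftmost branches, hence all zeros, so $\ell_0(u_1)=|u_1|$ and $\ell_0(v_1)=|v_1|$, and since $h$ maps the leftmost interval $[u_1]$ linearly onto $[v_1]$ we get $h'(0^+)=2^{|u_1|-|v_1|}$, giving $\ell_0(u_1)-\ell_0(v_1)=\log_2(h'(0^+))$ and a second coordinate of $-\log_2(h'(0^+))$. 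Combining the two coordinates yields the claimed equality.

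I do not expect a serious obstacle here; the only points requiring care are the index bookkeeping (confirming that exactly the $n$-th term drops out of the first coordinate and the first term out of the second, because of the asymmetric roles of $u_i$ and $u_{i+1}$ in the definition of $t_i$) and the elementary identification of the ``leftover'' length differences with the endpoint slopes via the linear action of $h$ on the extreme dyadic intervals $[u_1]$ and $[u_n]$. Both are routine once the telescoping structure is set up.
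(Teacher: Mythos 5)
Your proposal is correct and follows essentially the same route as the paper: the same coordinatewise telescoping via Lemmas \ref{num_caret_left} and \ref{num_caret_right}, with the leftover terms $\ell_1(u_n)-\ell_1(v_n)$ and $\ell_0(u_1)-\ell_0(v_1)$ identified with $\log_2(h'(1^-))$ and $\log_2(h'(0^+))$ exactly as in the paper's argument. No issues.
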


\begin{proof}
	By definition, for each $i=1,\dots,n-1$, $$t_i=(\ell_1(u_i)-\ell_1(v_i),\ell_0(u_{i+1})-\ell_0(v_{i+1}))$$
	Hence, 
	$$\sum_{i=1}^{n-1}t_i=
	\big(\sum_{i=1}^{n-1}\ell_1(u_i)-\sum_{i=1}^{n-1}\ell_1(v_i), \sum_{i=2}^n \ell_0(u_{i})-\sum_{i=2}^n \ell_0(v_i)  \big).$$
	We note that by Lemma \ref{num_caret_left},
	$$\sum_{i=1}^{n-1}\ell_1(u_i)=\sum_{i=1}^{n}\ell_1(u_i)-\ell_1(u_n)=(n-1)-\ell_1(u_n).$$
	Similarly, by Lemma \ref{num_caret_left},	$$\sum_{i=1}^{n-1}\ell_1(v_i)=(n-1)-\ell_1(v_n).$$
	Hence		
	$$\sum_{i=1}^{n-1}\ell_1(u_i)-\sum_{i=1}^{n-1}\ell_1(v_i)=\ell_1(v_n)-\ell_1(u_n)$$
	Note that $u_n\equiv 1^{\ell_1(u_n)}$ and $v_n\equiv 1^{\ell_1(v_n)}$. Since $h$ has the pair of branches  $u_n\equiv 1^{\ell_1(u_n)}\rightarrow v_n\equiv 1^{\ell_1(v_n)}$, the slope of $h$ at $1$ (from the left) satisfies $$\log_2(h'(1^-))={\ell_1(u_n)}-{\ell_1(v_n)}.$$ Hence, $$\sum_{i=1}^{n-1}\ell_1(u_i)-\sum_{i=1}^{n-1}\ell_1(v_i)=-\log_2(h'(1^-)).$$ 
	Similarly, using Lemma \ref{num_caret_right}, one can get that
	$$\sum_{i=2}^n \ell_0(u_{i})-\sum_{i=2}^n \ell_0(v_i)=-\log_2(h'(0^+)).$$
	Hence, 	
	$$\sum_{i=1}^{n-1} t_i=(-\log_2(h'(1^-)),-\log_2(h'(0^+))),$$
	as required.	
\end{proof}

\begin{Lemma}\label{main lemma}
	Let $(T_+,T_-)$ be a tree-diagram of an element $h\in F$ and let $u_i\to v_i$, $i=1,\dots,n$, be the pairs of branches of $(T_+,T_-)$. Let $t_i$, $i=1,\dots,n-1$, be the tuple associated with the $i$ and $i+1$ pairs of branches of $(T_+,T_-)$.  Assume that there are finite binary words $u$, $w_1$ and $w_2$ and indexes $k<\ell$ in $\{1,\dots,n-1\}$ for which the $k^{th}$ and $\ell^{th}$ pairs of branches of $(T_+,T_-)$ are the pairs
	$$uw_1\rightarrow uw_1 \ \mbox{ and }\ uw_2\rightarrow uw_21,$$
	respectively.
	Then $$\sum_{i=k}^{\ell-1}t_i=(1,0).$$ 
\end{Lemma}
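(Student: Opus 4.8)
The plan is to evaluate the two coordinates of $\sum_{i=k}^{\ell-1}t_i$ separately. Since $t_i=(\ell_1(u_i)-\ell_1(v_i),\,\ell_0(u_{i+1})-\ell_0(v_{i+1}))$, the first coordinate of the sum is $A:=\sum_{i=k}^{\ell-1}\ell_1(u_i)-\sum_{i=k}^{\ell-1}\ell_1(v_i)$ and, after shifting the index in the second coordinate, the second coordinate equals $B:=\sum_{i=k+1}^{\ell}\ell_0(u_i)-\sum_{i=k+1}^{\ell}\ell_0(v_i)$. So it suffices to prove $A=1$ and $B=0$. For a finite binary word $w$ I write $\#_1(w)$ (resp.\ $\#_0(w)$) for the total number of digits of $w$ equal to $1$ (resp.\ to $0$); these count \emph{all} such digits, as opposed to $\ell_1,\ell_0$ which only measure the terminal run.

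The key step is to rewrite $\ell_1$ and $\ell_0$ of consecutive branches as increments of $\#_1$ and $\#_0$. Let $T$ be a finite binary tree with branches $w_1,\dots,w_n$ and let $w_i,w_{i+1}$ be consecutive with longest common prefix $p$. By the decomposition recorded in Remark \ref{obvious remark}, $w_i\equiv p01^{\ell_1(w_i)}$ and $w_{i+1}\equiv p10^{\ell_0(w_{i+1})}$. Counting ones gives $\#_1(w_i)=\#_1(p)+\ell_1(w_i)$ and $\#_1(w_{i+1})=\#_1(p)+1$, whence
\[\ell_1(w_i)=\#_1(w_i)-\#_1(w_{i+1})+1,\]
and counting zeros gives $\#_0(w_i)=\#_0(p)+1$ and $\#_0(w_{i+1})=\#_0(p)+\ell_0(w_{i+1})$, whence
\[\ell_0(w_{i+1})=\#_0(w_{i+1})-\#_0(w_i)+1.\]
Both sums now telescope over the relevant index range, \emph{independently of the internal shape of $T$}: summing the first identity for $i=k,\dots,\ell-1$ gives $\sum_{i=k}^{\ell-1}\ell_1(w_i)=\#_1(w_k)-\#_1(w_\ell)+(\ell-k)$, and summing the second for $i=k,\dots,\ell-1$ (i.e.\ over the branches $w_{k+1},\dots,w_\ell$) gives $\sum_{i=k+1}^{\ell}\ell_0(w_i)=\#_0(w_\ell)-\#_0(w_k)+(\ell-k)$.

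Applying these to $T_+$ (branches $u_i$) and to $T_-$ (branches $v_i$) and subtracting, the $(\ell-k)$ terms cancel, leaving $A=\big(\#_1(u_k)-\#_1(v_k)\big)-\big(\#_1(u_\ell)-\#_1(v_\ell)\big)$ and $B=\big(\#_0(u_\ell)-\#_0(v_\ell)\big)-\big(\#_0(u_k)-\#_0(v_k)\big)$. It then remains to substitute the hypotheses $u_k\equiv v_k\equiv uw_1$, $u_\ell\equiv uw_2$ and $v_\ell\equiv uw_21$. Since $u_k$ and $v_k$ are literally the same word, $\#_1(u_k)=\#_1(v_k)$ and $\#_0(u_k)=\#_0(v_k)$; since $v_\ell$ is obtained from $u_\ell$ by appending a single $1$, we get $\#_1(v_\ell)=\#_1(u_\ell)+1$ while $\#_0(v_\ell)=\#_0(u_\ell)$. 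Hence $A=0-(-1)=1$ and $B=0-0=0$, so $\sum_{i=k}^{\ell-1}t_i=(1,0)$. The one thing to watch — and the reason the passage to $\#_0,\#_1$ is needed — is that the naive telescoping of the branch \emph{lengths} $|w_i|$ only yields the single relation $A-B=1$; and indeed $A$ and $B$ individually are not determined by $\ell-k$ and the branch lengths alone. Replacing lengths by the digit-counts $\#_0,\#_1$ is exactly what decouples the two coordinates and pins each of $A$ and $B$ down from the boundary branches $u_k,v_k,u_\ell,v_\ell$.
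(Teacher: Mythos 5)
Your proof is correct, and it takes a genuinely different route from the paper's. The paper proves the lemma by building an auxiliary element $g$ that coincides with $h$ on $[.uw_1,\,.uw_21^{\mathbb N}]$ and is the identity elsewhere (using that the hypothesized pairs of branches force $h$ to fix the endpoints), then invoking the previously established corollary that the sum of \emph{all} tuples of a tree-diagram equals $(-\log_2(g'(1^-)),-\log_2(g'(0^+)))=(0,0)$, and finally isolating the range $k,\dots,\ell-1$ by computing that the tuples outside it are $(0,0)$ except for $t'_\ell=(-1,0)$. You instead prove the identities $\ell_1(w_i)=\#_1(w_i)-\#_1(w_{i+1})+1$ and $\ell_0(w_{i+1})=\#_0(w_{i+1})-\#_0(w_i)+1$ for consecutive branches of a full binary tree (which follow from the decomposition in Remark \ref{obvious remark}) and telescope directly, so that the partial sum is pinned down by the four boundary branches $u_k,v_k,u_\ell,v_\ell$ alone; substituting the hypotheses gives $(1,0)$. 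Your argument is more elementary and self-contained — it needs neither the auxiliary element nor the corollary on total sums, and it makes transparent \emph{why} the answer depends only on the $k$th and $\ell$th pairs of branches — whereas the paper's proof reuses machinery it has already set up (and that corollary is itself proved by a global count of carets via Lemmas \ref{num_caret_left} and \ref{num_caret_right}, which is morally the same telescoping done all at once). Your closing observation that passing from branch lengths to the separate digit counts $\#_0,\#_1$ is what decouples the two coordinates is apt.
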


\begin{proof}
	Let $a=.uw_1$ and $b=.uw_21^{\mathbb{N}}$ and note that $h$ fixes the  dyadic fractions $a$ and $b$. Hence, the element 
	\[
	g(t) =
	\begin{cases}
	t &  \hbox{ if }  t\in [0,a]\cup[b,1] \\
	h(t)       & \hbox{ if } t\in[a,b] 
	\end{cases} 
	\]
	Belongs to $F$. Let $(T'_+,T'_-)$ be a tree-diagram of $g$ with pairs of branches  $u_i'\rightarrow v_i'$, $i=1,\dots,m$. We can assume (by passing to an equivalent tree-diagram if necessary) that $m=n$ and that the $k$ to $\ell$ pairs of branches of $(T'_+,T'_-)$ coincide with the $k$ to $\ell$ pairs of branches of $(T_+,T_-)$. Note that all other pairs of branches of $(T'_+,T'_-)$ are pairs of branches of the identity; i.e, pairs of branches of the form $w\rightarrow w$ for some finite binary words $w$. 
	
	We let $t'_i$, %\in \mathcal T_F$,
	 $i=1,\dots,n-1$ be the tuples %in $\mathcal T_F$
	  associated with the tree-diagram $(T'_+,T'_-)$ and note that for $i=k\dots,\ell-1$, we have $t'_i=t_i$. Indeed, for each $i=k,\dots,\ell-1$ the $i^{th}$ and $(i+1)^{th}$ pairs of branches of $(T_+,T_-)$ and $(T'_+,T'_-)$ coincide. 
	
	We note also that for each $i\in\{1,\dots,k-1\}\cup\{\ell+1,\dots,n-1\}$ we have $t'_i=(0,0)$. Indeed, for each such $i$, the $i^{th}$ and $(i+1)^{th}$ pairs of branches of $(T'_+,T'_-)$ are pairs of branches of the form $w\rightarrow w$ for finite binary words $w$. It follows easily from the definition, that the corresponding tuple is $(0,0)$. 
	
	Finally, % if $\ell<n$,
	we consider the tuple $t'_\ell$
	$$t'_\ell =(\ell_1(u_2w_2)-\ell_1(u_2w_11),\ell_0(u'_{\ell+1})-\ell_0(v'_{\ell+1}))
	=(-1,0)$$
	
	We note that by the previous corollary, since the slope of $g$ at $0^+$ and at $1^-$ is $1$, 
	$$\sum_{i=1}^{n-1}t'_i=(0,0).$$

	Hence, 
	\begin{equation*}
	\begin{split}
	\sum_{i=k}^{\ell-1}t_i&=\sum_{i=k}^{\ell-1}t'_i\\
	&=\sum_{i=1}^{n-1}t'_i-\sum_{i=1}^{k-1}t_i'-t'_\ell-\sum_{i=\ell+1}^{n-1}t_i'\\
	&=(0,0)-(0,0)-(-1,0)-(0,0)=(1,0),
	\end{split}
	\end{equation*}
	as required.
\end{proof}

\begin{Lemma}\label{belong to S_H}
	Let $H$ be a subgroup of $F$. 
	Let $u$ be a finite binary word such that for every finite binary word $w$, there is an element in $H$ with the pair of branches $uw\to u$. 
	Let $h\in H$ be an element supported in the interval $[u01]$. Then for every tuple $t$ associated with consecutive pairs of branches of $h$, we have $t\in\mathcal S_H$. 
\end{Lemma}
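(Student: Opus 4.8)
The plan is to reduce the statement, consecutive pair by consecutive pair, to Lemma \ref{belongs to S_H}. Recall that lemma asserts the following: if $u_1\to v_1$ and $u_2\to v_2$ are consecutive pairs of branches of $h$, if $c_u$ is the longest common prefix of $u_1,u_2$ and $c_v$ the longest common prefix of $v_1,v_2$, then the associated tuple $t$ lies in $\mathcal S_H$ as soon as there is some $g\in H$ with the pair of branches $c_v\to c_u$. Thus, for a fixed tree-diagram of $h$, it suffices to produce such a $g$ for every consecutive pair of branches.

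First I would record the structure forced by $h$ being supported in $[u01]=[.u01,.u1]$. Since dyadic intervals are pairwise nested or interior-disjoint and $h$ is affine on each leaf interval, for a nontrivial $h$ every branch $w\to w'$ of the chosen tree-diagram falls into exactly one of two types: either $[w]$ is interior-disjoint from $[u01]$, in which case $h$ is the identity on $[w]$ and $w'=w$; or $[w]\subsetneq[u01]$, in which case both $w$ and $w'$ begin with $u01$, because $h$ fixes the endpoints of $[u01]$ and hence maps $[u01]$ onto itself. No leaf interval can strictly contain $[u01]$: being affine and equal to the identity on the part of such an interval lying outside $[u01]$, $h$ would be the identity on all of $[u01]$. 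The case $h=\1$ yields only the tuple $(0,0)\in\mathcal S_H$, so I assume $h\neq\1$; then the ``inside'' branches form a single contiguous block, flanked on both sides by ``outside'' branches, since $0<.u01$ and $.u1<1$.

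Next I would classify the consecutive pairs. When both branches are outside, $u_i=v_i$ forces $c_u=c_v$, so $g=\1$ has the pair of branches $c_v\to c_u$. At the left end of the block the outside neighbour begins with $u00$ while the first inside branch begins with $u01$, so $c_u=c_v=u0$, and $g=\1$ works again; at the right end the outside neighbour begins with $u1$ while the last inside branch begins with $u01$, so $c_u=c_v=u$, and $\1$ works once more. The hypothesis on $u$ is needed only for a pair with both branches inside: there $c_u=u01a$ and $c_v=u01b$ for some words $a,b$, and applying the hypothesis to $w=01a$ and to $w=01b$ produces $g_2,g_1\in H$ with pairs of branches $u01a\to u$ and $u01b\to u$. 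Then $g=g_1g_2^{-1}\in H$ maps $[u01b]$ onto $[u01a]$, i.e.\ has the pair of branches $c_v\to c_u$. In every case Lemma \ref{belongs to S_H} yields $t\in\mathcal S_H$.

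The main obstacle will be the boundary bookkeeping: one must verify that at each end of the inside block the two relevant longest common prefixes genuinely coincide ($u0$ on the left, $u$ on the right) no matter how the tree-diagram subdivides the outside region near $.u01$ and $.u1$, so that the identity really supplies the needed $g$. This rests on the exhaustiveness of the inside/outside dichotomy, that is, on the fact that for a nontrivial $h$ no leaf interval can strictly contain $[u01]$; once that is in place, the remaining verifications are routine prefix computations fed into Lemma \ref{belongs to S_H}.
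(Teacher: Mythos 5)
Your proposal is correct and follows essentially the same route as the paper: both arguments reduce each consecutive pair of branches to Lemma \ref{belongs to S_H} by producing an element of $H$ with the pair of branches $c_v\to c_u$, obtained from the hypothesis on $u$ (the paper treats every pair meeting the interior of the support uniformly via elements $uw_1\to u$ and $uw_2\to u$, whereas you split off the boundary pairs and observe that the identity suffices there, a cosmetic difference).
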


\begin{proof}
	Let $u_i\rightarrow v_i$, $i=1,\dots,n$ be the pairs of branches of $h$. Let $i\in\{1,\dots,n-1\}$ and consider the tuple $t$ associated with the $i$ and $i+1$ pairs of branches. We consider two cases:
	
	$(1)$ Both intervals $[u_i]$ and $[u_{i+1}]$ do not intersect the interior of the support of $h$. In that case, $u_i\equiv v_i$ and $u_{i+1}\equiv v_{i+1}$ and $t=(0,0)\in \mathcal S_H$ as required. 
	
	$(2)$ At least one of the intervals $[u_i]$ or $[u_{i+1}]$ intersects the interior of the support of $h$. We consider the case where $[u_i]$, and hence $[v_i]$, intersects the interior of the support of $h$ (the other case being similar). In that case, $u01$ must be a prefix of $u_i$ and of $v_i$. 
	Since $u_i$ and $u_{i+1}$ are consecutive branches in a full finite binary tree and $u01$ is a prefix of $u_i$, the word $u$ must also be a prefix of $u_{i+1}$. Similarly, the word $u$ must be a prefix of $u_{i+1}$ as well. Hence, the word $u$ is a prefix of $u_i,u_{i+1},v_i,v_{i+1}$. 
	Let $w_1$ be such that $uw_1$ is the longest common prefix of $u_i$ and $u_{i+1}$ and let $w_2$ be such that $uw_2$ is the longest common prefix of $v_i$ and $v_{i+1}$. 
	By Assumption, there is an element $h_1\in H$ with the pair of branches $uw_1\to u$ and an element $h_2\in H$ with the pair of branches $uw_2\to u$. Then the element $h_2h_1^{-1}\in H$ has the pair of branches $uw_2\to uw_1$. 
	Hence, by Lemma \ref{belongs to S_H} the tuple $t\in\mathcal \mathcal S_H$. 
\end{proof}

Now, we are ready to give the proof of Proposition \ref{main_pro}.

\begin{proof}[Proof of Proposition \ref{main_pro}]
	Let $H$ be a subgroup of $F$ such that the closure of $H$ contains the derived subgroup of $F$. Assume also that $\pi_{\ab}(H)$ is a closed subgroup of $\mathbb{Z}^2$ and let $p,q\in\mathbb{Z}$ be such that $\pi_{\ab}(H)=p\mathbb{Z}\times q\mathbb{Z}$. We need to prove that $(1,0)\in \mathcal S_H$. Indeed, in that case, Condition (2) of Theorem \ref{thm:der} holds for $H$.  
	
	Since $\Cl(H)$ contains $[F,F]$, by Corollary \ref{cy1}, there exists a finite binary word $u$ such that for every finite binary word $w$, there is an element in $H$ with the pair of branches $u\to uw$. 
	In particular, there is an element $h\in H$ with the pair of branches $u\to u1$. %wed only need $h$ to have slope 1/2
	Let $\alpha=.u1^{\mathbb{N}}$ and note that $h$ fixes $\alpha$ and that $h'(\alpha^-)=\frac{1}{2}$. 
	
	Since $\pi_{\ab}(H)=p\mathbb{Z}\times q\mathbb{Z}$, there is an element $f_1\in H$ such that $\pi_{\ab}(f_1)=(p,0)$. Note that $f_1'(0^+)=2^p$ and that $f_1$ fixes pointwise a right neighborhood of $1$. That is, for some  dyadic number $c\in (0,1)$, the element $f_1$ fixes the interval $[c,1]$ pointwise. Since $\Cl(H)\supseteq [F,F]$, the subgroup $H$ acts transitively on the set of  dyadic fractions $\mathcal D$. Hence, conjugating $f_1$ by an element of $H$ if necessary, we can assume that $c<\alpha$. 
	
	Recall that the element $h$ fixes $\alpha$ and has slope $\frac{1}{2}$ at $\alpha^-$.
	Let ${m_1}$ be such that $h'(0^+)=2^{m_1}$ and note that $p|m_1$. Hence, the element $$h_1=hf_1^{-\frac{m_1}{p}}\in H.$$ Note that $h_1$ has slope $1$ at $0^+$ and as such, it fixes a right neighborhood of $0$. It also coincides with $h$ on $[c,1]$. In particular, $h_1$ fixes $\alpha$ and has slope $\frac{1}{2}$ at $\alpha^-$.

	Let $m_2$ be the slope of $h_1$ at $1^-$. 	
	Since $\pi_{\ab}(H)=p\mathbb{Z}\times q\mathbb{Z}$, $q|m_2$. In addition, there must exist an element $f_2\in H$ which fixes a right neighborhood of $0$ and has slope $2^q$ at $1^{-}$. Using conjugation if necessary, we can assume that $f_2$ fixes the interval $[0,d]$ for some $d>\alpha$. 
	Now, consider the element  $$h_2=h_1f_2^{-\frac{m_2}{q}}\in H.$$
	The element $h_2$ fixes some left neighborhood of $1$. Since it coincides with $h_1$ on $[0,d]$, it also fixes a right neighborhood of $0$. In addition $h_2(\alpha)=\alpha$ and $h_2'(\alpha^-)=\frac{1}{2}$. 
	
	Let $a<b$ be dyadic fractions in $(0,1)$ such that $h_2$ is supported in the interval $[a,b]$. 
	By Lemma \ref{lm2} there is an element $g\in H$ such that $g([a,b])\subseteq [u01]$. We consider the element $f=h_2^g$. First, we note that $f$ belongs to $H$ and is supported in $[u01]$. In addition, for $\beta=g(\alpha)$ we have that $f$ fixes $\beta$ and $f'(\beta^-)=\frac{1}{2}$.

	Let $(T_+,T_-)$ be the reduced tree diagram of $f$. Since $f$ is supported in the interval $[u01]$, by Lemma \ref{belong to S_H}, every tuple $t$ associated with the tree-diagram $(T_+,T_-)$ belongs to $\mathcal S_H$.  
	
	Since $f$ fixes the interval $[u00]$ pointwise but does not fix the interval $[u01]$ pointwise, the tree-diagram $(T_+,T_-)$ must have the pair of branches $u00\to u00$. Recall that $f(\beta)=\beta$ and let $v$ be the finite binary word such that $\beta=.v1=.v01^\mathbb{N}$. Since $f$ fixes $\beta$ and $f'(\beta^-)=\frac{1}{2}$, by Lemma \ref{4parts}, the tree-diagram $(T_+,T_-)$ must have a pair of branches of the form $v01^n\to v01^{n+1}$ for some $n\geq 0$. Since $\beta=.v1$ is in $[u01]$, the word $u01$ must be a prefix of $v1$. Since $\beta\neq.u01$ (because $f$ does not fix a left neighborhood of $\beta$), the word $u01$ must be a strict prefix of $v1$, and as such, it is a prefix of $v$. Let $w_3$ be such that $v\equiv u01w_3$. Then for $w_4\equiv w_301^n$, the tree-diagram $(T_+,T_-)$ has the pair of branches $u01w_4\rightarrow u01w_41$. 
	In particular, for some $k<\ell$ the $k$ and $\ell$ pairs of branches of $(T_+,T_-)$ are 
	$$(*) u00\rightarrow u00\ \mbox{ and }\ u01w_4\rightarrow u01w_41,$$
	respectively.
	
	%Since $h$ is supported in $[u01]$ and for every finite binary word $w$, we have $[u]=[uw]$, by Lemma \ref{belong to S_H}, for each tuple $t$ associated with $f$, we have $t\in\mathcal S_H$. 
	Hence, by Lemma \ref{main lemma}, and the fact that every tuple associated with $(T_+,T_-)$ belongs to the additive group $\mathcal S_H$ it follows from $(*)$ that $(1,0)\in \mathcal S_H$, as necessary. That completes the proof of the proposition.
\end{proof}

Proposition \ref{main_pro} implies the following. 

\begin{Theorem}\label{main}
	Let $H$ be a subgroup of $F$, which satisfies the following conditions. \begin{enumerate}
		\item[$(1)$] The image of $H$ in the abelianization of $F$ is closed. 
		\item[$(2)$] $\Cl(H)$ contains the derived subgroup of $F$.  
	\end{enumerate}
	Then $H=\Cl(H)=H[F,F]$. 
\end{Theorem}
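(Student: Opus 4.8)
The plan is to deduce Theorem \ref{main} from Proposition \ref{main_pro} together with Corollary \ref{cor_int} (equivalently, the generation-problem characterization of Theorem \ref{thm:der}). The two hypotheses of Theorem \ref{main} are exactly the hypotheses of Proposition \ref{main_pro}, so immediately I obtain an element $h\in H$ which fixes some dyadic fraction $\alpha\in(0,1)$ with $h'(\alpha^-)=2$ and $h'(\alpha^+)=1$. This is precisely Condition $(2)$ of Theorem \ref{thm:der}. Since Condition $(1)$ of that theorem (namely $[F,F]\subseteq\Cl(H)$) is hypothesis $(2)$ of the present theorem, Theorem \ref{thm:der} applies and yields the crucial containment $[F,F]\subseteq H$.

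With $[F,F]\subseteq H$ in hand, I would next establish the three claimed equalities. First, $H[F,F]=H$ is immediate once $[F,F]\subseteq H$. Second, to get $H=\Cl(H)$, I would argue that a subgroup containing the derived subgroup of $F$ is automatically closed. The cleanest route is via Lemma \ref{dyadic piecewise}: if $f\in F$ is a piecewise-$H$ function, then writing $f$ as a function that agrees with elements of $H$ on the pieces of a dyadic subdivision, I can correct the slopes at $0^+$ and $1^-$ using elements of $H$ and then observe that the resulting function, having trivial image in the abelianization on each end, lies in $[F,F]\subseteq H$; hence $f\in H$ and $H$ is closed. Alternatively, and more directly, since $[F,F]\le H\le \Cl(H)$ and $\Cl(H)$ is closed, I can use the correspondence between subgroups of $\mathbb{Z}^2$ and subgroups of $F$ containing $[F,F]$: both $H$ and $\Cl(H)$ contain $[F,F]$, so each is the preimage under $\pi_{\ab}$ of its image in $\mathbb{Z}^2$, and since $\pi_{\ab}(H)$ is closed (hypothesis $(1)$) its preimage is already a closed subgroup equal to $H$, forcing $\Cl(H)=H$.

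I expect the main obstacle to be organizing the second equality $H=\Cl(H)$ cleanly rather than any genuine difficulty, since all the substantive work is hidden inside Proposition \ref{main_pro} and Theorem \ref{thm:der}. The key subtlety is verifying that a subgroup $H$ with $[F,F]\subseteq H$ and $\pi_{\ab}(H)$ closed coincides with its own closure: one must check that $\Cl(H)$ cannot be strictly larger, which follows because $\pi_{\ab}(\Cl(H))=\pi_{\ab}(H)$ (the closure adds no new abelianization image when that image is already closed of the form $p\mathbb{Z}\times q\mathbb{Z}$, by Lemma \ref{closed subgroups of Z^2}), and two subgroups containing $[F,F]$ with the same image under $\pi_{\ab}$ must be equal. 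I would present this as the short concluding computation: $H\le\Cl(H)$, both contain $[F,F]$, and $\pi_{\ab}(H)=\pi_{\ab}(\Cl(H))$ forces $H=\Cl(H)$, which together with $H[F,F]=H$ gives $H=\Cl(H)=H[F,F]$ as required.
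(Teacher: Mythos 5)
Your proposal is correct and follows essentially the same route as the paper: both deduce $[F,F]\subseteq H$ from Proposition \ref{main_pro} combined with Theorem \ref{thm:der}, and then conclude by comparing abelianization images of $H$ and $\Cl(H)$. Your ``more direct'' variant --- observing that once $[F,F]\subseteq H$ one has $H=\pi_{\ab}^{-1}(\pi_{\ab}(H))$, which is closed by the very definition of a closed subgroup of $\mathbb{Z}^2$ --- is a slight streamlining of the paper's explicit check that every piecewise-$H$ function has image in $p\mathbb{Z}\times q\mathbb{Z}$, but the substance is identical.
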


\begin{proof}
	Let $H$ be a subgroup of $F$ which satisfies Conditions $(1)$ and $(2)$ from the theorem. Then by Theorem \ref{thm:der} and Proposition \ref{main_pro}, the subgroup $H$ contains the derived subgroup of $F$. Hence $H[F,F]=H$. 
	Similarly, $\Cl(H)[F,F]=\Cl(H)$. 
	Thus, it suffices to prove that $H[F,F]=\Cl(H)[F,F]$, or equivalently, that the image of $H$ in the abelianization of $F$ coincides with the image of $\Cl(H)$. It is clear that the image of $H$ is contained in the image of $\Cl(H)$. In the other direction, let $p,q\geq 0$ be such that the image of $H$ in the abelianization of $F$ is $p\mathbb{Z}\times q\mathbb{Z}$. Let $f\in \Cl(H)$. It suffices to prove that its image in the abelianization of $F$ belongs to $p\mathbb{Z}\times q\mathbb{Z}$. But since $f\in \Cl(H)$, it is a piecewise-$H$ function. Hence, its slope at $0^+$ coincides with the slope at $0^+$ of some element $h_1\in H$ and its slope at $1^-$ coincides with the slope at $1^-$ of some element $h_2\in H$. Since $h_1,h_2\in H$, we have that $\log_2 f'(0^+)=\log_2 h_1'(0^+)$ is an integer multiple of $p$ and $\log_2 f'(1^-)=\log_2 h_2(1^-)$ is an integer multiple of $q$. Hence, the image of $f$ in the abelianization of $F$ is in $p\mathbb{Z}\times q\mathbb{Z}$, as required. 
\end{proof}

As a corollary from Theorem \ref{main}, we get the following. 

\begin{Corollary}\label{main cor}
	Let $H$ be a subgroup of $F$. Then $H=F$ if and only if the following conditions hold. 
	\begin{enumerate}
		\item[$(1)$] $H[F,F]=F$.
		\item[$(2)$] $[F,F]\subseteq \Cl(H)$.
	\end{enumerate}
\end{Corollary}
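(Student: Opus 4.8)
The plan is to deduce this corollary immediately from Theorem \ref{main}, so no genuinely new argument is needed here. The forward implication is trivial: if $H=F$ then certainly $H[F,F]=F$, and since $F$ is a closed subgroup of itself (it is the diagram group over the tree-automaton with a single vertex carrying two loops), we have $\Cl(H)=\Cl(F)=F\supseteq[F,F]$, so both conditions hold.

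For the converse, the one observation to make is that condition $(1)$, namely $H[F,F]=F$, says precisely that $\pi_{\ab}(H)=\mathbb{Z}^2$. By Lemma \ref{closed subgroups of Z^2} (taking $p=q=1$, so that $\mathbb{Z}^2=1\cdot\mathbb{Z}\times 1\cdot\mathbb{Z}$), the full group $\mathbb{Z}^2$ is a closed subgroup of $\mathbb{Z}^2$. Hence condition $(1)$ guarantees that the image of $H$ in the abelianization of $F$ is closed, which is exactly hypothesis $(1)$ of Theorem \ref{main}. Hypothesis $(2)$ of Theorem \ref{main}, that $\Cl(H)$ contains the derived subgroup, is literally our condition $(2)$. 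Applying Theorem \ref{main} then yields $H=\Cl(H)=H[F,F]$, and combining the last equality with condition $(1)$ gives $H=H[F,F]=F$, as required.

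There is no remaining obstacle at this level: all the substance of the statement—the elimination of the extra condition $(3)$ present in the earlier Corollary \ref{cor_int}—has already been absorbed into Proposition \ref{main_pro} (which produces, from closedness of the abelianized image together with $[F,F]\subseteq\Cl(H)$, an element $h\in H$ fixing a dyadic $\alpha$ with $h'(\alpha^-)=2$ and $h'(\alpha^+)=1$) and into Theorem \ref{main}. The only point to check here is the essentially cosmetic but essential fact that surjectivity onto the abelianization, $\pi_{\ab}(H)=\mathbb{Z}^2$, is an instance of the \emph{closed} case of Lemma \ref{closed subgroups of Z^2}; everything else is a direct invocation of Theorem \ref{main}.
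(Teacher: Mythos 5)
Your proposal is correct and follows exactly the paper's own argument: the forward direction is immediate, and for the converse you observe that $H[F,F]=F$ forces $\pi_{\ab}(H)=\mathbb{Z}^2$, which is a closed subgroup of $\mathbb{Z}^2$, so Theorem \ref{main} applies and gives $H=H[F,F]=F$. No gaps; the invocation of Lemma \ref{closed subgroups of Z^2} with $p=q=1$ is a slightly more explicit justification than the paper's ``in particular it is a closed subgroup,'' but the route is identical.
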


\begin{proof}
	One direction is obvious. In the other direction, note that if $H[F,F]=F$, then the image of $H$ in the abelianization of $F$ is $\mathbb{Z}^2$, and in particular it is a closed subgroup. Hence, if Conditions $(1)$ and $(2)$ hold for $H$, then by Theorem \ref{main}, $H=H[F,F]=F$. 
\end{proof}

Note that Corollary \ref{main cor} gives a simple solution for the generation problem in $F$. Indeed, given a finite set $X$ of elements in $F$, to determine if $X$ generates $F$ one has to $(1)$
find the image of $X$ in the abelianization of $F$ and check whether it generates $\mathbb{Z}^2$ and $(2)$ construct the core of the subgroup $H$ generated by $X$ and use Lemma \ref{core of derived subgroup} to check if $\Cl(H)$ contains $[F,F]$.
This gives us a linear-time algorithm in the sum of sizes of elements in $X$ (where the \emph{size} of an element in $X$ is the number of carets in its reduced tree-diagram).

Corollary \ref{main cor} implies that every maximal subgroup of $F$ which has infinite index in $F$ is closed. 

\begin{Corollary}\label{max closed}
	Let $M$ be a maximal subgroup of $F$ and assume that $M$ has infinite index in $F$. Then $M$ is a closed subgroup of $F$.  
\end{Corollary}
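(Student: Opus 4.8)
The plan is to derive Corollary \ref{max closed} from Corollary \ref{main cor} by a standard maximality argument. Let $M$ be a maximal subgroup of infinite index in $F$. Since $\Cl(M)$ is a subgroup of $F$ containing $M$, maximality forces either $\Cl(M)=M$ (in which case $M$ is closed and we are done) or $\Cl(M)=F$. So the entire task reduces to ruling out the second alternative: I must show that $\Cl(M)=F$ is impossible for a maximal subgroup of infinite index.

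To rule out $\Cl(M)=F$, I would argue by contradiction and invoke the characterization of $F$ from Corollary \ref{main cor}, which states that $M=F$ precisely when $M[F,F]=F$ and $[F,F]\subseteq\Cl(M)$. First I would observe that since $M$ has infinite index in $F$, it has in particular infinite index, so $M\neq F$; I want to extract from $\Cl(M)=F$ the two conditions of Corollary \ref{main cor} applied to $M$ itself. The second condition is immediate: if $\Cl(M)=F$ then certainly $[F,F]\subseteq F=\Cl(M)$. The key step is to establish the first condition, $M[F,F]=F$, equivalently (by Remark \ref{rem:H[F,F]}) that $\pi_{\ab}(M)=\mathbb{Z}^2$. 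Here is where the infinite-index hypothesis enters crucially: if $\pi_{\ab}(M)$ were a proper subgroup of $\mathbb{Z}^2$, then by Remark \ref{rem:H[F,F]} the subgroup $M$ would be contained in a proper finite-index subgroup of $F$; but a maximal subgroup contained in a proper finite-index subgroup would have finite index itself, contradicting the infinite-index assumption. Hence $\pi_{\ab}(M)=\mathbb{Z}^2$, i.e.\ $M[F,F]=F$, giving condition $(1)$.

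With both conditions of Corollary \ref{main cor} verified for $M$, that corollary yields $M=F$, contradicting $M\neq F$. Therefore the alternative $\Cl(M)=F$ cannot occur, and we conclude $\Cl(M)=M$, so $M$ is closed.

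I expect the main (and essentially only) subtle point to be the deduction that $\pi_{\ab}(M)=\mathbb{Z}^2$ from the infinite-index hypothesis, via Remark \ref{rem:H[F,F]}: one must notice that a maximal subgroup sitting inside a proper finite-index subgroup would itself have finite index, which is the link that converts ``infinite index'' into ``$M[F,F]=F$.'' Everything else is a clean application of the maximality dichotomy $\Cl(M)\in\{M,F\}$ together with Corollary \ref{main cor}, so no genuine calculation is required; the proof is short and structural.
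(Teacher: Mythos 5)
Your proposal is correct and follows essentially the same route as the paper: assume $M$ is not closed, use maximality to get $\Cl(M)=F$, deduce $M[F,F]=F$ from the infinite-index hypothesis via Remark \ref{rem:H[F,F]}, and then apply Corollary \ref{main cor} to obtain the contradiction $M=F$. The point you flag as subtle — that a maximal subgroup inside a proper finite-index subgroup would itself have finite index — is exactly the step the paper uses as well.
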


\begin{proof}
	Assume by contradiction that $M$ is not closed. Then $\Cl(M)$ strictly contains $M$. Since $M$ is maximal, $\Cl(M)=F$. Since $M$ is maximal and has infinite index in $F$, it is not contained in any proper finite index subgroup of $F$. Hence, by Remark \ref{rem:H[F,F]}, $M[F,F]=F$. 
	Therefore, $M$ satisfies Conditions $(1)$ and $(2)$ of  Corollary \ref{main cor} which implies that $M=F$, in contradiction to $M$ being a maximal subgroup of $F$. 
\end{proof}

In Section \ref{sec:max}, we derive more results regarding maximal subgroups of $F$. But first, in Section \ref{sec:morph}, we study tree-automata and morphisms between tree-automata. 

\section{Reduced tree-automata and morphisms of tree-automata}\label{sec:morph}

Let $\mathcal T$ be a finite or infinite planar binary tree. Then $\mathcal T$ can be naturally viewed as a rooted tree-automaton, where the root of the automaton is the root of $\mathcal T$, all edges are directed away from the root and every left edge is labeled ``0'' while every right edge is labeled ``1''. Below, we will often consider a binary tree $\mathcal T$ as a rooted tree-automaton  without explicitly saying so. Clearly, the diagram group $\ddd(\mathcal T)$ is the trivial subgroup of $F$.

\begin{Definition}
	Let $\A_r$ be a rooted tree-automaton. An \emph{extention} of $\A_r$ is a rooted tree-automaton obtained from $\A_r$ as follows. Let $\mathcal L=\{\ell_i\mid i\in\mathcal I\}$ be the set of leaves of $\A_r$. For each $i\in\mathcal I$, let $\mathcal T_i$ be a finite or infinite binary tree viewed as a rooted tree-automaton. Then the rooted tree-automaton $\A_r'$ obtained from $\A_r$ by identifying the root of $\mathcal T_i$ with the  leaf $\ell_i$ of $\A_r$, for each $i$, is called an extension of $\A_r$.   
\end{Definition}

It is easy to check that if $\A_r$ is a rooted tree-automaton and $\A_r'$ is an extension of $\A_r$ then $\A_r$ and $\A_r'$ accept the same reduced tree-diagrams in $F$.

A rooted tree-automaton is said to be \emph{full} if it has no leaves. In that case, every finite binary word $u$ labels a unique path in $\A_r$. Note that every rooted tree-automaton can be extended to a full rooted tree-automaton, by attaching a distinct copy of the complete infinite binary tree $\mathcal T$ to each leaf of $\A_r$. 

If $\A_r'$ is an extension of $\A_r$ we will also say that $\A_r$ is a reduction of $\A_r'$. A tree-automaton is said to be \emph{reduced} if it has no reduction other than itself. In other words, $\A_r$ is reduced, if it is not the extension of any tree-automaton other than itself. 

Let $\A_r$ be a rooted tree-automaton and let $x$ and $y$ be vertices of $\A_r$. We say that $y$ is a descendant of $x$ if there is a non-empty  trail in  $\A_r$ with initial vertex $x$ and terminal vertex $y$. Notice that in a rooted tree-automaton, it is possible for two vertices to be descendants of each other and for a vertex to be a descendant of itself. We make the following observation. 

\begin{Lemma}\label{reduced aut}
	Let $\A_r$ be a rooted tree-automaton. Then $\A_r$ is reduced if and only if every vertex $x$ in $\A_r$ satisfies at least one of the following conditions. 
	\begin{enumerate}
		\item[$(1)$] $x$ is a leaf (that is, $x$ has no outgoing edges). 
		\item[$(2)$] $x$ is a descendant of itself.
		\item[$(3)$] $x$ has a descendant $y$ which has two distinct incoming edges in $\A_r$.
	\end{enumerate}
\end{Lemma}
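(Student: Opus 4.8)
The plan is to prove the equivalence in its contrapositive form: $\A_r$ is \emph{not} reduced if and only if there is a vertex $x$ that violates all three listed conditions, that is, an inner vertex $x$ which is not a descendant of itself and none of whose descendants has two distinct incoming edges. I will call such an $x$ a \textbf{bad} vertex. Both implications are then established by exhibiting, in one direction, a bad vertex from a nontrivial reduction and, in the other, a nontrivial reduction from a bad vertex.

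First I would treat the direction ``not reduced $\Rightarrow$ a bad vertex exists''. If $\A_r$ is not reduced, then by definition it is an extension of some reduction $\mathcal B_r\neq\A_r$, so at least one binary tree $\mathcal T$ attached at a leaf $\ell$ of $\mathcal B_r$ is nontrivial. I take $x=\ell$. In $\A_r$ the vertex $x$ is now the root of $\mathcal T$ and hence an inner vertex, so it fails condition (1). Because only the root of $\mathcal T$ is identified (with $\ell$) while all other vertices of $\mathcal T$ are fresh, and because the leaves of $\mathcal T$ are leaves of $\A_r$, one cannot leave $\mathcal T$ once inside it; thus the descendants of $x$ in $\A_r$ are precisely the non-root vertices of $\mathcal T$. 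Since $\mathcal T$ is a tree it has no cycle and its root is not reachable from below, so $x$ is not a descendant of itself (failing (2)); and each non-root vertex of $\mathcal T$ has a unique incoming edge, namely its parent edge in $\mathcal T$ (no other edges enter these fresh vertices), so no descendant of $x$ has two distinct incoming edges (failing (3)). Hence $x$ is bad.

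The more delicate direction is ``a bad vertex $\Rightarrow$ not reduced''. Given a bad vertex $x$, I let $D$ be the set of descendants of $x$ and aim to show that $\{x\}\cup D$ is a pendant binary tree rooted at $x$ which can be trimmed away. The key steps are: (i) each $y\in D$ has a unique incoming edge, and its source again lies in $\{x\}\cup D$ (the last edge of a nonempty trail from $x$ to $y$ is an incoming edge of $y$, hence \emph{the} incoming edge, and its source is reachable from $x$); consequently no vertex of $D$ is reachable from outside $\{x\}\cup D$ except through $x$. (ii) The root $r$ does not lie in $D$: otherwise a nonempty trail from $x$ to $r$, composed with the path from $r$ to $x$ guaranteed by Condition (4) of Definition \ref{def:tree-aut}, would exhibit $x$ as a descendant of itself, contradicting badness. (iii) There is no directed cycle inside $\{x\}\cup D$: such a cycle avoids $x$ (as $x$ is not a descendant of itself), so all its vertices lie in $D$ and have their unique incoming edge \emph{on} the cycle; thus the cycle receives no edge from outside and, since $r\notin D$ by (ii), it is unreachable from $r$, violating Condition (4). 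Combining (i)--(iii), $\{x\}\cup D$ is a finite or infinite binary tree rooted at $x$ with no edges entering $D$ from outside, so deleting $D$ together with its edges turns $x$ into a leaf and yields a genuine tree-automaton $\mathcal B_r$ whose extension, re-attaching this tree at $x$, is $\A_r$; since $x$ is inner the attached tree is nontrivial, so $\mathcal B_r\neq\A_r$ and $\A_r$ is not reduced.

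I expect step (iii), ruling out cycles among the descendants of $x$, to be the main obstacle, because it is exactly here that the hypothesis ``$x$ is not a descendant of itself'' and the reachability axiom (4) of a tree-automaton must be used together: a cycle all of whose vertices carry unique, cycle-internal incoming edges is unreachable from the root unless the root itself sits on it, and that possibility is precisely what step (ii) excludes. The remaining checks, that the trimmed graph $\mathcal B_r$ still satisfies Conditions (1)--(4)---in particular that every surviving vertex keeps a directed path from $r$ avoiding the deleted descendants---are routine once (i)--(iii) are in hand, since anything reachable from a deleted vertex is itself a descendant of $x$ and hence also deleted.
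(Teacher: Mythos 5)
Your proof is correct and follows essentially the same route as the paper's: both directions are argued contrapositively, with the attachment leaf $\ell$ serving as the bad vertex in one direction, and the pendant subgraph on $x$ and its descendants being identified as a binary tree and trimmed off in the other. Your steps (ii) and (iii), which use Condition $(4)$ of Definition \ref{def:tree-aut} to rule out directed cycles among the descendants of $x$, merely make explicit a verification the paper dismisses as ``easy to check''; the underlying argument is the same.
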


\begin{proof}
	Assume first that $\A_r$ is a tree-automaton such that every vertex in $\A_r$ satisfies at least one of Conditions $(1)$-(3). We claim that $\A_r$ must be reduced. Indeed, if $\A_r$ is not reduced then it is the extension of some tree-automaton other than itself. In that case, %it must be the extension of 
	there exist
	some tree-automaton $\mathcal B_r$ which has a leaf $\ell$ and a non-empty binary tree $\mathcal T$, such that $\mathcal A_r$ can be obtained from $\mathcal B_r$ by identifying the root of %a non-empty tree
	 $\mathcal T$ with the leaf $\ell$. 
	 Then, the vertex $\ell$, viewed as a vertex of $\A_r$, does not satisfy any of Conditions $(1)$-(3). Indeed, in $\A_r$ the vertex $\ell$ is not a leaf and thus does not satisfy Condition $(1)$. In addition, the descendants of $\ell$ in $\A_r$ are the vertices of $\mathcal T$, other than its root. As each of them has a unique incoming edge in $\A_r$ and none of them coincides with $\ell$, the vertex $\ell$ does not satisfy Conditions $(2)$ and (3). Hence $\ell$ does not satisfy any of the three conditions in the lemma, in contradiction to the assumption.

	In the other direction, assume that $\A_r$ is reduced, but that there is a vertex $x$ in $\A_r$ which does not satisfy any of the conditions in the lemma. Let $\mathcal T_x$ 
 be the	tree-automaton with root $x$ obtained from $\A_r$ as follows.	
	The vertex set of $\mathcal T_x$ consists of the vertex $x$ as well as all of its descendants in $\A_r$. The labeled directed edges in $\mathcal T_x$ are the edges of $\A_r$ whose end-vertices belong to the vertex set of $\mathcal T_x$. It is easy to check that $\mathcal T_x$ is a tree-automaton with root $x$. We claim that $\mathcal T_x$ is a binary tree. Indeed, its root $x$ has no incoming edges in $\mathcal T_x$ as it is not a descendant of itself in $\A_r$. In addition, every other vertex in $\mathcal T_x$ has exactly one incoming edge (as it is a descendant of $x$ it has an incoming edge and by assumption, it cannot have more than one). Every non-leaf vertex in $\mathcal T_x$ also has two outgoing edges, since it has two outgoing edges in $\A_r$ and their end-vertices are  clearly vertices in $\mathcal T_x$. Hence, $\mathcal T_x$ is a rooted binary-tree.
	Now, let $\mathcal C_r$ be the rooted tree-automaton obtained from $\A_r$ be removing from $\A_r$ all the edges of $\mathcal T_x$ as well as all of the descendants of $x$. One can verify that $\mathcal C_r$ is a rooted tree-automaton with root $r$. Clearly, the vertex $x$ is a leaf of $\mathcal C_r$. Note also that $\A_r$ is an extension of $\mathcal C_r$, where the leaf $x$ of $\mathcal C_r$ is identified with the root of $\mathcal T_x$. Hence, $\A_r$ is not reduced, in contradiction to the assumption. 	
\end{proof}

Note that Lemma \ref{reduced aut} can be simplified in the case where the tree-automaton $\A_r$ is such that the root $r$ is not a descendant of itself (as is always the case in the core of a subgroup of $F$). Indeed, we have the following.

\begin{Remark}\label{simplify}
	Let $\A_r$ be a rooted tree-automaton and assume that the root $r$ is not a descendant of itself. %(as is always the case in the core of a subgroup of $F$).
	Let $x$ be a vertex of $\A_r$ and assume that $x$ is a descendant of itself. Then $x$ has a descendant $y$ which has two distinct incoming edges in $\A_r$. 
\end{Remark}

\begin{proof}
	Let $x$ be a vertex in $\A_r$ which is a descendant of itself. Let $\mathcal B$ be the set of all vertices in $\mathcal A_r$ which are descendants of $x$. 
	Note that $\mathcal B$ is not empty since $x\in\mathcal B$ and that $r\notin B$. Note also that every vertex in $\mathcal B$ has a father which also belongs to $\mathcal B$.  For every vertex in $\mathcal B$ there is at least one path in $\A_r$ from the root to the vertex. Let $u$ be a finite binary word of minimal length which labels a path in $\A_r$ which starts from the root and terminates in a vertex belonging to $\mathcal B$. Clearly, $u$ is not empty. Let $v$ be a finite binary word such that $u\equiv va$ for a letter $a\in\{0,1\}$  and consider the vertex $v^+$. From the minimality of $u$ it follows that the vertex $v^+$ is not a descendant of $x$.  Hence, the vertex $u^+$ has a father $v^+$ which does not belong to $\mathcal B$. Since $u^+\in\mathcal B$ it also has a father which belongs to $\mathcal B$. Hence $u^+$ (which is a descendant of $x$) has at least two distinct fathers in $\A_r$, and in particular, at least two distinct incoming edges. 
\end{proof}

Lemma \ref{reduced aut} and Remark \ref{simplify} imply the following.

\begin{Corollary}\label{cor:red aut}
	Let $\A_r$ be a tree-automaton such that the root $r$ is not a descendant of itself.  Then $\A_r$ is reduced if and only if every inner vertex $x$ in $\A_r$ has a descendant $y$ with at least two distinct incoming edges. 
\end{Corollary}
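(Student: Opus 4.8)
The plan is to derive Corollary \ref{cor:red aut} directly from Lemma \ref{reduced aut} and Remark \ref{simplify}, so the only real work is combining the three conditions of the lemma into the single condition stated for inner vertices. First I would recall that under the standing hypothesis that the root $r$ is not a descendant of itself, Remark \ref{simplify} tells us that whenever a vertex $x$ is a descendant of itself (Condition $(2)$ of Lemma \ref{reduced aut}), it automatically has a descendant $y$ with two distinct incoming edges (Condition $(3)$). Thus, \emph{for every vertex}, Condition $(2)$ implies Condition $(3)$, and so satisfying ``$(1)$ or $(2)$ or $(3)$'' is equivalent to satisfying ``$(1)$ or $(3)$''. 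The characterization of a reduced tree-automaton from Lemma \ref{reduced aut} then reads: $\A_r$ is reduced if and only if every vertex $x$ is either a leaf or has a descendant $y$ with at least two distinct incoming edges.

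The remaining step is to restrict attention from ``every vertex'' to ``every inner vertex''. By definition, an inner vertex is precisely a vertex that is not a leaf (recall that in a tree-automaton every vertex has either zero or two outgoing edges, so the vertices split into leaves and inner vertices). Hence the statement ``every vertex is either a leaf or has a descendant $y$ with two distinct incoming edges'' is logically equivalent to ``every inner vertex has a descendant $y$ with two distinct incoming edges'': leaves satisfy the disjunction vacuously through the first alternative, and for a non-leaf vertex the first alternative is impossible, so the second must hold. I would write this equivalence out in one sentence, invoking the fact that the only non-leaf vertices of a tree-automaton are inner vertices.

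Combining the two reductions completes the proof: $\A_r$ is reduced $\iff$ (by Lemma \ref{reduced aut}) every vertex satisfies $(1)$, $(2)$ or $(3)$ $\iff$ (by Remark \ref{simplify}, using that $r$ is not a descendant of itself) every vertex satisfies $(1)$ or $(3)$ $\iff$ every inner vertex has a descendant with two distinct incoming edges. I do not anticipate a serious obstacle here, since both ingredients are already established; the only point requiring a little care is making explicit that Remark \ref{simplify} applies to \emph{every} vertex that is a descendant of itself (not merely to some distinguished one), so that the implication $(2)\Rightarrow(3)$ can be used uniformly across all vertices when collapsing the three conditions into one. I would state this explicitly to avoid any gap.

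\begin{proof}
	By the standing hypothesis, the root $r$ is not a descendant of itself. Hence, by Remark \ref{simplify}, every vertex $x$ of $\A_r$ which is a descendant of itself (i.e., which satisfies Condition $(2)$ of Lemma \ref{reduced aut}) also has a descendant $y$ with two distinct incoming edges (i.e., also satisfies Condition $(3)$ of Lemma \ref{reduced aut}). Therefore, for every vertex $x$ of $\A_r$, the disjunction of Conditions $(1)$, $(2)$ and $(3)$ of Lemma \ref{reduced aut} holds if and only if Condition $(1)$ or Condition $(3)$ holds.

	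Consequently, by Lemma \ref{reduced aut}, the automaton $\A_r$ is reduced if and only if every vertex $x$ of $\A_r$ is either a leaf or has a descendant $y$ with at least two distinct incoming edges. Now recall that every vertex of a tree-automaton has either zero or two outgoing edges; the vertices with no outgoing edges are exactly the leaves, and the vertices with two outgoing edges are exactly the inner vertices. Thus a vertex is a leaf if and only if it is not an inner vertex. It follows that the condition ``every vertex is a leaf or has a descendant $y$ with at least two distinct incoming edges'' is equivalent to the condition ``every inner vertex $x$ has a descendant $y$ with at least two distinct incoming edges'': leaves satisfy the former condition via the first alternative, while for an inner vertex the first alternative fails and so the second must hold. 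Hence $\A_r$ is reduced if and only if every inner vertex $x$ in $\A_r$ has a descendant $y$ with at least two distinct incoming edges, as claimed.
\end{proof}
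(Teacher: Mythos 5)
Your proof is correct and follows exactly the route the paper intends: the paper states the corollary without proof as an immediate consequence of Lemma \ref{reduced aut} and Remark \ref{simplify}, and your argument supplies precisely the bookkeeping (collapsing Condition $(2)$ into Condition $(3)$ via the remark, then identifying non-leaves with inner vertices) that makes this implication explicit.
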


It follows from \cite[Lemma 10.9]{G} that if $H$ is a subgroup of $F$ then every inner vertex of the core $\mathcal C(H)$  has a descendant with two distinct incoming edges. Hence, we have the following. 

\begin{Lemma}\label{lem:core reduced}
	Let $H$ be a subgroup of $F$. Then the core of $H$ is a reduced tree-automaton.
\end{Lemma}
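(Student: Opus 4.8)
The plan is to obtain the statement as a direct application of Corollary \ref{cor:red aut}, using the cited structural property of cores to verify its defining condition. To invoke Corollary \ref{cor:red aut} for the tree-automaton $\mathcal C(H)$, one first has to check its hypothesis, namely that the root $r$ of the core is not a descendant of itself. This is immediate from the construction in Definition \ref{core}: there $r$ is precisely the unique vertex of $\mathcal C(H)$ with no incoming edges. Indeed, by the definition of ``descendant'', the root $r$ could be a descendant of itself only via a non-empty trail terminating at $r$; the final edge of such a trail would be an incoming edge at $r$, contradicting the fact that $r$ has no incoming edges.

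Having verified the hypothesis, it remains to confirm the right-hand side of the biconditional in Corollary \ref{cor:red aut}, i.e., that every inner vertex of $\mathcal C(H)$ has a descendant with at least two distinct incoming edges. This is exactly the content of \cite[Lemma 10.9]{G}, as recorded in the paragraph immediately preceding the statement. Combining the two observations, Corollary \ref{cor:red aut} yields that $\mathcal C(H)$ is reduced, which is the assertion of the lemma.

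I do not expect any serious obstacle: once Corollary \ref{cor:red aut} and \cite[Lemma 10.9]{G} are in hand, the argument is essentially a two-line deduction. The only point that genuinely requires justification is that the root of the core is not a descendant of itself, and this follows directly from the defining property of $r$ as the unique source (the unique vertex with no incoming edges) in the core.
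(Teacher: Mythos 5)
Your proposal is correct and follows exactly the paper's (implicit) argument: the paper likewise derives the lemma by combining Corollary \ref{cor:red aut} with the fact from \cite[Lemma 10.9]{G} that every inner vertex of the core has a descendant with two distinct incoming edges, the hypothesis about the root being justified by its characterization in Definition \ref{core} as the unique vertex with no incoming edges. No issues.
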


Let $H$ be a subgroup of $F$. Since the core of $H$ accepts the subgroup $H$, it accepts any reduced tree-diagram in $H$. In particular, for every reduced tree-diagram in $H$, all the branches in the tree-diagram are readable on $\mathcal C(H)$. The fact that $\mathcal C(H)$ is reduced implies that the only finite binary words readable on $\mathcal C(H)$ are branches of reduced tree-diagrams in $H$ and prefixes of such branches.
 Indeed, we have the following.

\begin{Lemma}\label{lem:minimal core}
	Let $H$ be a subgroup of $F$ and let $\mathcal C(H)$ be the core of $H$. Let $u$ be a finite binary word. Then $u$ labels a path in $\mathcal C(H)$ if and only if there is an element in $H$
	with reduced tree-diagram $(T_+,T_-)$ such that $u$ is the prefix of some branch of $T_+$. 
\end{Lemma}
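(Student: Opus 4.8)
The plan is to prove both implications, the forward one being routine and the reverse one carrying all the content.

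For the ``if'' direction, suppose $h\in H$ has reduced tree-diagram $(T_+,T_-)$ and $u$ is a prefix of a branch $\beta$ of $T_+$. Since $\C(H)$ accepts $H$, it accepts $(T_+,T_-)$; in particular $T_+$ is readable, so $\beta$ labels a path in $\C(H)$. As $u$ is a prefix of $\beta$, the initial segment of that path is labeled $u$, so $u$ labels a path in $\C(H)$.

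For the ``only if'' direction, let $u$ label a path ending at $x=u^+$. The mechanism I would use to realize $u$ is the following. Suppose I can produce two words $\alpha,\beta$ with $\alpha^+=\beta^+$, with $u$ a prefix of $\alpha$, and whose last letters differ. By Lemma \ref{identified vertices H} there is $k$ so that for every $s$ with $|s|\ge k$ some $h\in H$ has the pair of branches $\alpha s\to\beta s$. I then pass to the reduced tree-diagram of $h$ via Remark \ref{rem:branches}: it has a pair of branches $p\to q$ with $\alpha s\equiv pz$, $\beta s\equiv qz$, and $z$ the longest common suffix of $\alpha s$ and $\beta s$. Because $\alpha$ and $\beta$ end in different letters, the common suffix $z$ cannot extend past $s$, so $z\equiv s$ and $p\equiv\alpha$. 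Thus $\alpha$ is an actual branch of $T_+$ in the reduced diagram of $h\in H$, and $u$, being a prefix of $\alpha$, is a prefix of a branch of $T_+$, as required.

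It remains to produce such $\alpha,\beta$, and here I would split according to the type of $x$ (Remark \ref{rem:different types}). If $x$ is a left vertex, then $u\equiv 0^n$, and readability of $0^n$ forces some generator to have a leftmost branch of $T_+^i$ or $T_-^i$ of length at least $n$ (the all-$0$ path of $\C(H)$ has length equal to the largest leftmost-branch length occurring among the generators' trees). That generator, or its inverse, then has $0^n$ as a prefix of a genuine branch of $T_+$, finishing this case directly with no witness pair needed; the right-vertex case is symmetric. The substantive case is when $x$ is a middle vertex, and here I would invoke reducedness: by Lemma \ref{lem:core reduced} and Corollary \ref{cor:red aut}, if $x$ is an inner vertex it has a descendant $y$ carrying two distinct incoming edges, and every descendant of a middle vertex is again a middle vertex. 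Routing a path from the root through $x$ to such a $y$ provides $\alpha$ (with $u$ as a prefix), while the second incoming edge of $y$ provides $\beta$.

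The main obstacle is precisely that Corollary \ref{cor:red aut} guarantees only two \emph{distinct} incoming edges at $y$, not two edges of \emph{different labels}; if both carry the same label, then $\alpha$ and $\beta$ end in the same letter and the common-suffix reduction of Remark \ref{rem:branches} can erode $\alpha$ down to a proper prefix of $u$, so the mechanism fails at $y$ itself. The heart of the argument is therefore to show that below any inner middle vertex there is a vertex reached by words of \emph{both} parities, so that $\alpha$ and $\beta$ can be chosen to end in different letters. This is where the reducedness of $\C(H)$ must be used essentially, exploiting that the two incoming edges of $y$ issue from fathers that differ in their other child (Condition $(3)$ of Definition \ref{def:tree-aut}) in order to relocate the witness to such a both-label vertex. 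Finally, the case where $x=u^+$ is a leaf falls under the same dichotomy: a leaf preimage in the graph defining $\C(H)$ is a full branch of a generator tree, which realizes $u$ directly when $x$ is reached by words of a single parity, and otherwise the both-label mechanism applies verbatim.
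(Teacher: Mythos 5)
Your overall strategy is the right one and matches the paper's in outline: use Lemma \ref{identified vertices H} to produce an $h\in H$ with a pair of branches $\alpha s\to\beta s$ and then argue via Remark \ref{rem:branches} that the common-suffix reduction cannot erode $\alpha$ below $u$. You also correctly isolate the crux: Corollary \ref{cor:red aut} only gives a descendant $y$ with two \emph{distinct} incoming edges, not two edges with \emph{different labels}. But your resolution of this crux is a genuine gap. You propose to show that below any inner vertex there is a descendant reachable by paths ending in both letters, and you leave this entirely unproved (``this is where the reducedness\dots must be used essentially\dots in order to relocate the witness''). That claim is not an obvious consequence of Condition $(3)$ of Definition \ref{def:tree-aut} plus reducedness, and no argument for it is given; as you yourself say, it is the heart of the matter, so asserting it is not a proof. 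The paper avoids needing it altogether: it takes the extension word to be $0^k$, writes $v_1b_1 0^k\equiv ps$ and $v_2b_2 0^k\equiv qs$ with $p\to q$ a branch pair of the reduced diagram, and shows $|s|\le k$ even when $b_1=b_2$. The point is that if $s$ ate past the letters $b_1,b_2$ then $b_1=b_2$ forces the sources $z_1,z_2$ of the two edges to be distinct, while $p^+=q^+$ in $\mathcal C(H)$ (the core accepts $h$) propagates along the common word $s_1$ to give $v_1^+=v_2^+$, i.e.\ $z_1=z_2$ --- a contradiction. So the same-label case is killed by the acceptance of $H$ by its own core, not by upgrading $y$ to a ``both-labels'' vertex.

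Two further problems. First, your left-vertex shortcut is false as stated: readability of $0^n$ does not force a generator to have a leftmost branch of length $\ge n$. For $H=\langle x_0\rangle$ the core has a $0$-loop at the left vertex, so $0^n$ is readable for every $n$ while the generators' leftmost branches have length at most $2$; the all-zero path length is not the maximum leftmost-branch length of the generators. (The conclusion is still true there, but via elements like $x_0^k$, not via a single generator.) Second, the case analysis by vertex type and the separate leaf discussion are unnecessary and partly confused: the paper works with $u\equiv u_1 a$ and applies the argument to the vertex $u_1^+$, which is automatically an inner vertex whenever $u$ is readable, so no split by type (Remark \ref{rem:different types}) and no special treatment of leaves is needed. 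To repair your proof you would either have to actually prove your ``both parities'' claim or, more simply, adopt the paper's suffix-length argument, which makes the label of the second incoming edge irrelevant.
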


\begin{proof}
	If there exists an element in $H$ with reduced tree-diagram $(T_+,T_-)$ such that $u$ is a prefix of some branch $v$ of $T_+$, then the branch $v$ (and in particular, it's prefix $u$) is readable on $\mathcal C(H)$.

	In the other direction, let $u$ be a finite binary word which labels a path in the core of $H$. We can assume that $u$ is non-empty.  	Let $u_1$ be the prefix of $u$   such that $u\equiv u_1a$, where $a\in\{0,1\}$.
	Note, that it suffices to prove that there is an element in $H$ with  reduced tree-diagram $(T_+,T_-)$ such that $u_1$ is a strict prefix of some branch of $T_+$. Indeed, in that case, both $u_10$ and $u_11$ are prefixes of  branches of $T_+$. Let $x$ be the vertex $u_1^+$ of $\mathcal C(H)$ and note that $x$ is an inner vertex of $\mathcal C(H)$. Since the core $\mathcal C(H)$ is reduced,   the vertex $x$ has a descendant $y$ with two distinct incoming edges.
	Since $y$ is a descendant of $x$ there is a non-empty finite binary word $w$ which labels a trail from $x$ to $y$. Then $u_1w$ labels a path in the core such that $(u_1w)^+=y$. Let $w_1$ be the prefix of $w$ such that  $w\equiv w_1b_1$ for a letter $b_1\in\{0,1\}$ and let $v_1\equiv u_1w_1$. Then $v_1$ labels a path in the core which terminates in a vertex $z_1=v_1^+$. Note that the vertex $z_1$ is a father of the vertex $y$ and that there is an edge $e_1$ labeled $b_1$ from the vertex $z_1$ to the vertex $y$. %labeled $b_1$.
	
	Since $y$ has at least two distinct incoming edges in $\A_r$, it has an incoming edge $e_2\neq e_1$ in $\A_r$. Let $b_2$ be the label of $e_2$, let $z_2={e_2}_-$ and let $v_2$ be a path in the core $\mathcal C(H)$ such that $v_2^+=z_2$. Note that $v_2b_2$ labels a path in $\A_r$ such that $(v_2b_2)^+=y$. Since $v_1b_1$ also labels a path in $\A_r$ such that $(v_1b_1)^+=y$, by Lemma \ref{identified vertices H},	
 there exists $k\in\mathbb{N}$ such that for every finite binary word $w$ of length $\geq k$, there is an element in $H$ with the pair of branches $v_1b_1w\to v_2b_2w$. 
	In particular, for $w\equiv 0^k$, there is an element $h\in H$ which has the pair of branches $v_1b_10^k\to v_2b_20^k$. Hence, $h$ has a (not necessarily reduced) tree-diagram %$(T'_+,T'_-)$ 
	which has the pair of branches $v_1b_10^k\to v_2b_20^k$. 
	Let $(T_+,T_-)$ be the reduced tree-diagram of $h$. 
	By Remark \ref{rem:branches}, there are finite binary words $p,q,s$ such that  $v_1b_10^k\equiv ps$ and $v_2b_20^k\equiv qs$ and such that $p\to q$ is a pair of branches of $(T_+,T_-)$.  %the reduced tree-diagram of $h$. 
	We claim that the word $s$ is of length at most $k$. In other words, we claim that $s$ is a suffix of $0^k$. Indeed, assume by contradiction that the length of $s$ is greater than $k$. Then $b_10^k$ and $b_20^k$ are suffixes of $s$ and in particular $b_1\equiv b_2$. 
		In that case, the  vertices $z_1={e_1}_-$ and $z_2={e_2}_-$ must be distinct (otherwise, the edges $e_1$ and $e_2$ coincide since they have the same initial vertex and the same label). 
Now, since $b_10^k$ is a suffix of $s$, there exists a finite binary word $s_1$ such that 
	  %Let $s_1$ be the finite binary word such that
	   $s\equiv s_1b_10^k$. Note that $$v_1b_10^k\equiv ps\equiv ps_1b_10^k \mbox{ and } v_2b_20^k\equiv qs\equiv qs_1b_20^k.$$
	Hence, $ps_1\equiv v_1$ and $qs_1\equiv v_2$. By Assumption, the reduced tree-diagram of $h$ has the pair of branches $p\to q$. Hence, since $H$ is accepted by $\mathcal C(H)$, we have $p^+=q^+$ in $\mathcal C(H)$. But that implies that $(ps_1)^+=(qs_1)^+$ in the core (indeed, the word $s_1$ labels a unique trail in the core with initial vertex $p^+=q^+$). Hence, $v_1^+=v_2^+$, in contradiction to $v_1^+=z_1$ and $v_2^+=z_2$ being distinct vertices of the core. Hence, the suffix $s$ is a suffix of $0^k$. Since $ps\equiv v_1b_10^k$, we get that $v_1b_1$ is a prefix of $p$. Recall that $u_1$ is a prefix of $v_1$. Hence, $u_1$ is  a strict prefix of $p$ which is a branch of the tree $T_+$ of the reduced tree-diagram $(T_+,T_-)$ of $h\in H$, as required. 
\end{proof}

Intuitively, Lemma \ref{lem:minimal core} says that the only finite binary words readable on $\mathcal C(H)$ are those that must be readable on the core, for it to accept the subgroup $H$.

\begin{Definition}
	Let $\A_r$ and $\A'_{s}$ be two rooted tree-automata. A \textit{morphism of rooted tree-automata} from $\A_r$ to $\A'_{s}$ is a mapping from $\A_r$ to $\A'_{s}$ which maps the root $r$ of $\A_r$ to the root $s$ of $\A'_{s}$, maps each vertex of $\A_r$ to a vertex of $\A'_{s}$ and each edge of $\A_r$ to an edge of $\A'_{s}$, while preserving adjacency of vertices and edges as well as the labels and direction of the edges. 
\end{Definition}

\begin{Lemma}\label{lem:obvious}
	Let $\A_r$ and $\A_s'$ be rooted tree-automata. Then there is a morphism of rooted tree-automata from $\A_r$ to $\A_s'$ if and only if the following conditions hold. 
	\begin{enumerate}
		\item[$(1)$] Every finite binary word $u$ readable on $\A_r$ is also readable on $\A_s'$. 
		\item[$(2)$] If $u$ and $v$ are readable on $\A_r$ such that $u^+=v^+$ in $\A_r$ then ($u$ and $v$ are readable on $\A_s'$ and) $u^+=v^+$ in $\A_{s}'$ as well. 
	\end{enumerate}
If Conditions $(1)$ and $(2)$ hold, then there is a unique morphism $\phi$ from $\A_r$ to $\A_s'$. In addition, the morphism $\phi$ is surjective if and only if every finite binary word readable on  $\A_s'$ is also readable on $\A_r$.
\end{Lemma}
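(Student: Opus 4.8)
The plan is to reduce both the forward implication and the uniqueness clause to the single observation that a morphism tracks paths letter by letter. If $\phi$ is any morphism from $\A_r$ to $\A_s'$ and $u=a_1\cdots a_n$ is readable on $\A_r$ via a path $e_1,\dots,e_n$ starting at $r$, then $\phi(e_1),\dots,\phi(e_n)$ is a path starting at $\phi(r)=s$ whose edges carry the same labels $a_1,\dots,a_n$; hence $u$ is readable on $\A_s'$ and the endpoint satisfies $\phi(u^+)=u^+$ in $\A_s'$. This is exactly Condition $(1)$. If in addition $u^+=v^+$ in $\A_r$, then $\phi(u^+)=\phi(v^+)$, i.e. $u^+=v^+$ in $\A_s'$, which is Condition $(2)$. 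The same computation forces $\phi$ on every vertex (each vertex is $u^+$ for some readable $u$, by Condition $(4)$ in Definition \ref{def:tree-aut}) and on every edge (each edge is the last edge of such a path), so a morphism, if one exists, is unique.

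For the converse I would build $\phi$ directly from Conditions $(1)$ and $(2)$. Given a vertex $x$ of $\A_r$, choose any readable word $u$ with $u^+=x$ (possible by Condition $(4)$) and set $\phi(x):=u^+$ in $\A_s'$; the target exists by Condition $(1)$ and is independent of the choice of $u$ by Condition $(2)$, so $\phi$ is well defined on vertices and $\phi(r)=s$. For an edge $x\xrightarrow{a}y$, pick $u$ with $u^+=x$; then $ua$ is readable on $\A_r$ with $(ua)^+=y$, so by Condition $(1)$ the word $ua$ is readable on $\A_s'$, whence $\phi(x)=u^+$ has an outgoing $a$-edge terminating at $(ua)^+=\phi(y)$, and I send the original edge to this one. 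Because each vertex of a tree-automaton has at most one outgoing edge per label, all of this is deterministic, and the verification that $\phi$ preserves labels, directions and incidences is then routine.

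It remains to treat surjectivity, where I expect the real difficulty. The easy direction is that if every word readable on $\A_s'$ is readable on $\A_r$, then $\phi$ is onto: every vertex $y$ of $\A_s'$ equals $w^+$ for some readable $w$ (Condition $(4)$ applied to $\A_s'$), this $w$ is then readable on $\A_r$, and $\phi$ sends the $\A_r$-endpoint of $w$ to $y$; the same argument applied to the last edge of the $w$-path shows every edge of $\A_s'$ is hit. The reverse implication is the step I expect to be the main obstacle. The natural approach is to lift a given $\A_s'$-path from $s$ to an $\A_r$-path from $r$ by induction on its length, extending the lift one edge at a time: the induction proceeds as long as the current $\A_r$-vertex $x$, whose image $\phi(x)$ carries the next required outgoing edge, itself carries an outgoing edge with that label. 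The crux is therefore to show that a surjective $\phi$ cannot send a leaf of $\A_r$ to an inner vertex of $\A_s'$ — equivalently, that a surjective morphism is \emph{leaf-preserving} in the forward direction — and I would attempt to extract this from surjectivity together with reachability (Condition $(4)$) in both automata. This is the one place where I anticipate genuine care is needed, since it is precisely the interaction between surjectivity, non-injectivity, and the presence of leaves that controls whether the readable languages coincide.
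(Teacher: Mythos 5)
Your treatment of the forward implication, the construction of $\phi$ from Conditions $(1)$ and $(2)$, the uniqueness clause, and the easy direction of the surjectivity statement all coincide with the paper's argument, so those parts are correct and need no further comment. The one genuine gap is the step you yourself flag and then leave open: you never establish that a surjective morphism forces every word readable on $\A_s'$ to be readable on $\A_r$. Your proposed edge-by-edge lifting works only if a surjective morphism cannot send a leaf of $\A_r$ to an inner vertex of $\A_s'$, and that leaf-preservation property does \emph{not} follow from surjectivity together with reachability. Concretely, let $\A_s'$ be the one-vertex automaton with two loops, and let $\A_r$ have root $r$ with $0$-child $x$ and $1$-child $y$, where $x$ carries two loops $x\xrightarrow{0}x$ and $x\xrightarrow{1}x$ and $y$ is a leaf. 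All four conditions of Definition \ref{def:tree-aut} hold, the obvious map onto $\A_s'$ is a surjective morphism, yet $11$ is readable on $\A_s'$ and not on $\A_r$. So the lifting cannot be carried out in the generality of the lemma's hypotheses, and the ``only if'' half of the final claim is not something you could have proved by the route you sketch.

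For comparison, the paper's own proof of this direction consists of the single sentence that surjectivity implies the path in $\A_s'$ ``is the image of a path in $\A_r$ with the same label'' --- exactly the lifting step you anticipated would be the obstacle, asserted without justification, and contradicted by the example above. So your instinct about where the real content lies is sound; the difference between your text and the paper's is that the paper asserts the step while you correctly decline to, but neither contains an argument for it. Everything else in your proposal is a faithful reconstruction of the paper's proof, and the direction of the surjectivity claim that is actually used later in the paper (readable languages coincide implies surjective) is the one you do prove.
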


\begin{proof}
	Assume that there is a morphism $\phi$ from $\A_r$ to $\A_s'$.
	We claim that Conditions $(1)$ and $(2)$ hold. Let $u$ be a word readable on $\A_r$. Then $u$ labels a path $e_1,\dots,e_n$ in $\A_r$. The morphism $\phi$ maps the directed edges $e_1,\dots,e_n$ to directed edges $e_1',\dots,e_n'$ in $\A_s'$. Note that since $\phi$ is a morphism, the directed edges $e_1',\dots,e_n'$ form a path in $\A_s'$ whose label is $u$. Hence, $u$ is readable on $\A_s'$ and Condition $(1)$ holds. 
	
	Now assume that $u$ and $v$ are finite binary words readable on $\A_r$ such that $u^+=v^+$ on $\A_r$. By Condition $(1)$, $u$ and $v$ are also readable on $\A_s'$. Since $\phi$ is a morphism, it must map the end vertex of the path $u$ (resp. $v$) in $\A_r$ to the end vertex of the path $u$ (resp. $v$) in $\A_s'$. Hence, since $u^+=v^+$ in $\A_r$ and $\phi$ is well-defined, we have that $u^+=v^+$ in $\A_s'$ and Condition $(2)$ holds. 
	
	Now assume that Conditions $(1)$ and $(2)$ hold. We claim that there is a  morphism $\phi$ from $\A_r$ to $\A_s'$. Indeed, one can define the action of  $\phi$ on vertices of $\A_r$ as follows. Let $x$ be a vertex of $\A_r$. Then there exists a path $u$ in $\A_r$ such that $u^+=x$. By Condition $(1)$, the word $u$ also labels a path in $\A_s'$. Hence, we can define $\phi(x)$ to be the vertex $u^+$ of $\A_s'$. Condition $(2)$ guarantees that the action of $\phi$ on vertices is well defined (note that this definition also guarantees that the root $r=\emptyset^+$ of $\A_r$  is mapped to the root $s=\emptyset^+$ of $\A_s'$). Next, we define the action of $\phi$ on edges. Let $e$ be an edge of $\A_r$ and let $b$ be its label. The end-vertices $e_-$ and $e_+$ of $e$ are mapped by $\phi$ onto vertices $\phi(e_-)$ and $\phi(e_+)$. We claim that in $\A_s'$ there is a (necessarily unique) directed edge from $\phi(e_-)$ to $\phi(e_+)$ labeled $b$. Indeed, 
	 let $w$ be a path in $\A_r$ such that $w^+=e_-$ and note that $wb$ labels a path in $\A_r$ such that $(wb)^+=e_+$. By the definition of the action of $\phi$ on vertices of $\A_r$, in the tree-automaton $\A_s'$ we have $w^+=\phi(e_-)$ and $(wb)^+=\phi(e_+)$. Hence, $b$ labels a directed edge $e'$ from $\phi(e_-)$  to $\phi(e_+)$ in $\A_s'$. We define $\phi(e)$ to be $e'$. Clearly, $\phi$ preserves adjacency of edges and vertices, as well as the direction and label of edges. Hence, $\phi$ is a morphism of rooted tree-automata. It is easy to see that $\phi$ is the unique morphism from $\A_r$ to $\A_s'$. Indeed,  if $\phi_1$ is a morphism from $\A_r$ to $\A_s'$, it must map a vertex $u^+$ of $\A_r$ to the vertex $u^+$ of $\A_s'$. Hence, its action on vertices of $\A_r$ coincides with the action of $\phi$. But the action of a morphism on vertices of $\A_r$, determines uniquely its action on the edges of $\A_r$ and thus, $\phi$ and $\phi_1$ must coincide.

	  Finally, we note that if every finite binary word readable on $\A_s'$ is readable on $\A_r$,  then the morphism $\phi$ is surjective. Indeed, to show that $\phi$ is surjective on vertices, let $y$ be a vertex of $\A_s'$. Then there is a path $u$ is $\A_s'$ such that $u^+=y$. By assumption, the word $u$ is readable on $\A_r$. Then, by definition, $\phi$ maps the vertex $u^+$ of $\A_r$ onto the vertex $y$ of $\A_r$. Similarly, let $e'$ be an edge of $\A_s'$ labeled $b$. Let $v$ be a path in $\A_s'$ such that $v^+=e'_-$ and note that $vb$ labels a path in $\A_s'$ such that $(vb)^+=e_+$. Then, $vb$ also labels a path in $\A_r$. Let $e$ be the last edge in that path. Then $e$ is labeled $b$, its initial vertex is the vertex $v^+$ and its terminal vertex is the vertex $(vb)^+$. The morphism $\phi$ maps $e$ onto $e'$. In the other direction, assume that $\phi$ is surjective and let $u$ be a finite binary word readable on $\A_s'$. Then $u$ labels a path $e_1',\dots,e_n'$ in $\A_s'$. Surjectivity implies that the path is the image of a path in $\A_r$ with the same label. Hence, $u$ is readable on $\A_r$ as well. 
\end{proof}

\begin{Lemma}\label{lem: morphism from core}
	Let $H$ be a subgroup of $F$ and let $\mathcal A_r$ be a rooted tree-automaton which accepts $H$. Then the following assertions hold. 
	\begin{enumerate}
		\item[$(1)$] There is a unique morphism of rooted tree-automata from $\mathcal C(H)$ to $\A_r$. 
		\item[$(2)$] If the core of $H$ has no leaves, then the unique morphism from $\mathcal C(H)$ to $\A_r$ is surjective.
	\end{enumerate}  
\end{Lemma}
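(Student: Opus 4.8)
The plan is to deduce both assertions directly from Lemma \ref{lem:obvious}, applied with source automaton $\mathcal C(H)$ and target automaton $\A_r$. By that lemma, a (necessarily unique) morphism from $\mathcal C(H)$ to $\A_r$ exists precisely when the following two conditions hold: (1) every finite binary word readable on $\mathcal C(H)$ is readable on $\A_r$; and (2) whenever $u$ and $v$ are readable on $\mathcal C(H)$ with $u^+=v^+$ in $\mathcal C(H)$, one also has $u^+=v^+$ in $\A_r$. Verifying these two conditions will immediately give part $(1)$ together with uniqueness, and the surjectivity criterion of the same lemma will then deliver part $(2)$.

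For Condition $(1)$, I would first recall that since $\A_r$ accepts $H$, the reduced tree-diagram of any element of $H$ is accepted by $\A_r$; in particular both of its trees are readable on $\A_r$, and hence so is every prefix of every branch of those trees. Now if $u$ is readable on $\mathcal C(H)$, then Lemma \ref{lem:minimal core} produces an element of $H$ whose reduced tree-diagram $(T_+,T_-)$ has $u$ as a prefix of some branch of $T_+$; by the previous observation $u$ is therefore readable on $\A_r$. This is exactly where the minimality of the core does the real work: Lemma \ref{lem:minimal core} tells us that the words readable on $\mathcal C(H)$ are precisely those that $\A_r$ is already forced to read in order to accept $H$.

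Condition $(2)$ is the crux, and the key observation is that $\A_r$ accepts not merely $H$ but all of $\Cl(H)$. Indeed, $\ddd(\A_r)$ is by definition a closed subgroup of $F$ and it contains $H$, so by the minimality of the closure we get $\Cl(H)\subseteq \ddd(\A_r)$. Now suppose $u$ and $v$ are readable on $\mathcal C(H)$ with $u^+=v^+$ in $\mathcal C(H)$; by Condition $(1)$ both words are readable on $\A_r$. By Lemma \ref{identified vertices in the core} there is an element $h\in\Cl(H)\subseteq\ddd(\A_r)$ having $u\to v$ as a pair of branches, so Lemma \ref{lem:paths in automaton} yields $u^+=v^+$ in $\A_r$, as required. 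With both conditions verified, Lemma \ref{lem:obvious} provides the unique morphism, proving part $(1)$.

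For part $(2)$, I would simply note that a core with no leaves is full, so every finite binary word labels a path in $\mathcal C(H)$. In particular, every word readable on $\A_r$ is trivially readable on $\mathcal C(H)$, which is exactly the surjectivity criterion furnished by Lemma \ref{lem:obvious}; hence the unique morphism is surjective. The only genuinely nontrivial ingredient in the whole argument is the inclusion $\Cl(H)\subseteq\ddd(\A_r)$ used in Condition $(2)$ — which reduces to the minimality of the closure among closed subgroups containing $H$ — while everything else is routine bookkeeping with readability and prefixes.
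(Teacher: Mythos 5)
Your proof is correct and follows essentially the same route as the paper: verify Conditions (1) and (2) of Lemma \ref{lem:obvious} using Lemma \ref{lem:minimal core} for readability, and the chain $\Cl(H)\subseteq\ddd(\A_r)$ together with Lemmas \ref{identified vertices in the core} and \ref{lem:paths in automaton} for the identification of end vertices, then invoke the surjectivity criterion for part (2). No gaps.
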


\begin{proof}
	$(1)$ We claim that Conditions $(1)$ and $(2)$ from Lemma \ref{lem:obvious} hold for the tree-automata $\mathcal C(H)$ and $\A_r$. 
	Assume that $u$ labels a path in the core $\mathcal C(H)$. Then by Lemma \ref{lem:minimal core}, there is a reduced tree-diagram $(T_+,T_-)$ of an element in $H$ such that $u$ is a prefix of some branch of $T_+$. Since the automaton $\A_r$ accepts $H$, the tree $T_+$ is readable on $\A_r$ and as such, $u$ labels a directed path in $\A_r$. Hence, Condition $(1)$ from Lemma \ref{lem:obvious} holds. Now, let $u$ and $v$ be two finite binary words readable on $\mathcal C(H)$ such that $u^+=v^+$ in $\mathcal C(H)$. Hence, by Lemma \ref{identified vertices in the core}, there is an element $h$ in $\Cl(H)$ with the pair of branches $u\to v$. We claim that $h\in\ddd(\A_r)$. Indeed, since $\A_r$ accepts $H$, the diagram group $\ddd(\A_r)$ is a closed subgroup of $F$ which contains $H$. Hence,  $\ddd(\A_r)$ contains the closure of $H$ and in particular, the element $h$. Then, by Lemma \ref{lem:paths in automaton}, since $h$ has the pair of branches $u\to v$ and $u$ and $v$ label paths in $\A_r$ (since Condition $(1)$ holds) we have $u^+=v^+$ in $\A_r$. Hence, Condition $(2)$ from Lemma \ref{lem:obvious} holds as well. Therefore, by Lemma \ref{lem:obvious}, there is a unique morphism from $\mathcal C(H)$ to $\A_r$. 
	
	$(2)$ This follows immediately from Lemma \ref{lem:obvious}, since by assumption, every finite binary word is readable on $\C(H)$.
\end{proof}

\begin{Corollary}\label{cor:no-leaves}
	Let $H$ and $G$ be subgroups of $F$ such that $H$ is contained in $G$. Then the following assertions hold. 
	\begin{enumerate}
		\item There is a unique morphism from $\mathcal C(H)$ to $\mathcal C(G)$. 
		\item If there are no leaves in $\mathcal C(H)$, then the unique morphism from $\mathcal C(H)$ to $\mathcal C(G)$ is surjective.  
	\end{enumerate}
\end{Corollary}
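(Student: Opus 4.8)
The plan is to reduce both parts of the corollary directly to Lemma \ref{lem: morphism from core} by taking the ambient tree-automaton to be the core $\mathcal C(G)$ of the larger group. The only point that needs to be checked is that $\mathcal C(G)$ accepts the subgroup $H$; once this is in hand, the corollary is just the specialization of Lemma \ref{lem: morphism from core} to the case $\A_r=\mathcal C(G)$.

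First I would recall, as noted after Definition \ref{core}, that the core $\mathcal C(G)$ accepts the generators of $G$ and hence, by Lemma \ref{product}, accepts the entire subgroup $G$. Since $H\subseteq G$ by hypothesis, every reduced tree-diagram representing an element of $H$ is in particular a tree-diagram in $G$, and is therefore accepted by $\mathcal C(G)$. Thus $\mathcal C(G)$ is a rooted tree-automaton which accepts $H$, which is exactly the hypothesis required to invoke Lemma \ref{lem: morphism from core} with the subgroup $H$ and the tree-automaton $\A_r=\mathcal C(G)$.

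For part $(1)$, I would then apply Lemma \ref{lem: morphism from core}$(1)$ directly: since $\mathcal C(G)$ accepts $H$, the lemma produces a unique morphism of rooted tree-automata from $\mathcal C(H)$ to $\mathcal C(G)$. For part $(2)$, I would apply Lemma \ref{lem: morphism from core}$(2)$ in the same setting: if $\mathcal C(H)$ has no leaves, then the unique morphism from $\mathcal C(H)$ to $\mathcal C(G)$ is surjective.

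I do not expect a genuine obstacle here, since all the substantive work is carried by Lemma \ref{lem: morphism from core} (which in turn rests on the characterization of readable words in the core from Lemma \ref{lem:minimal core} and on Lemma \ref{identified vertices in the core}). The only step worth making explicit is the observation that $\mathcal C(G)$ accepts $H$, which is immediate from the containment $H\subseteq G$ together with the fact that $\mathcal C(G)$ accepts $G$; everything else is a transcription of the conclusions of Lemma \ref{lem: morphism from core} into the statement of the corollary.
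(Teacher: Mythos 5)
Your proposal is correct and is exactly the argument the paper intends: the corollary is stated as an immediate consequence of Lemma \ref{lem: morphism from core}, obtained by taking $\A_r=\mathcal C(G)$ and observing that $\mathcal C(G)$ accepts $H$ because it accepts all of $G\supseteq H$. Your explicit verification of that acceptance step is the only substantive point, and you handle it correctly.
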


\section{Maximal subgroups of Thompson's group $F$}\label{sec:max}

Let $u$ be a finite binary word. Recall that $F_{[u]}$ is the subgroup of $F$ of all functions supported in the interval $[u]$. The subgroup $F$ is isomorphic to $F_{[u]}$ and there is a natural isomorphism from $F$ to $F_{[u]}$ mapping each element $g\in F$ to the $[u]$-copy of $g$, denoted $g_{[u]}$ (see Section \ref{sec:copies}). We will need the following simple remark. 

\begin{Remark}\label{rem:obvious}
	Let $u$ be a finite binary word readable on a rooted tree-automaton $\A_r$. Let $T_u$ be the minimal finite binary tree with branch $u$. Then $T_u$ is readable on $\A_r$. 
\end{Remark} 

\begin{proof}
	Let $v$ be a branch of $T_u$. It suffices to prove that $v$ is readable on $T_u$. If $v\equiv u$, we are done. Otherwise, the word $v'$ obtained from $v$ by changing its last letter is a prefix of $u$ and as such, readable on $\A_r$. But that implies that $v$ is also readable on $\A_r$ (indeed, if two words differ only in their last letter and one of them is readable on $\A_r$, the other one is also readable on $\A_r$.)
\end{proof}

\begin{Lemma}\label{Core of H,G}
	Let $H$ and $G$ be subgroups of $F$. Assume that in the core of $H$ there is a leaf $\ell$ and let $u$ be a finite binary word which labels a path in the core $\mathcal C(H)$ such that $u^+=\ell$. Let $G$ be a subgroup of $F$ and consider its $[u]$-copy $G_{[u]}$. Let $K$ be the subgroup of $F$ generated by $H$ and $G_{[u]}$. Then the core of the subgroup $K$ can be obtained from the core $\mathcal C(H)$ and the core $\mathcal C(G)$ by identifying the  root of $\mathcal C(G)$ with the leaf $\ell$ of $\mathcal C(H)$ (where the root of the obtained automaton is taken to be the root of $\mathcal C(H)$).
\end{Lemma}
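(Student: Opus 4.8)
The plan is to compute $\mathcal C(K)$ directly from its definition as a folding of spheres (Definition~\ref{core}), choosing a convenient order of foldings. Fix generating sets $\mathcal S_H$ and $\mathcal S_G$ of $H$ and $G$ by reduced tree-diagrams; then $\mathcal S_H\cup(\mathcal S_G)_{[u]}$ generates $K$, where $(\mathcal S_G)_{[u]}=\{g_{[u]}:g\in\mathcal S_G\}$. Recall from Section~\ref{sec:copies} that if $g\in\mathcal S_G$ has pairs of branches $v_i\to w_i$ and $B$ is the set of branches of the minimal tree $T_u$ containing $u$ other than $u$ itself, then $g_{[u]}$ has pairs of branches $uv_i\to uw_i$ together with $p\to p$ for $p\in B$. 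Write $\mathcal B$ for the rooted tree-automaton obtained by identifying the root of $\mathcal C(G)$ with the leaf $\ell$ of $\mathcal C(H)$. Since the core of $K$ does not depend on the chosen generating set nor on the order in which foldings are applied, it suffices to run the folding construction for $\mathcal S_H\cup(\mathcal S_G)_{[u]}$ in an order that produces $\mathcal B$, and then to verify that $\mathcal B$ admits no further foldings (being a folding result, $\mathcal B$ is automatically a tree-automaton).

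First I would fold each sphere $S_{g_{[u]}}$ internally. Its top and bottom trees both contain the skeleton $T_u$, whose branches other than $u$ carry identity pairs $p\to p$; the two copies of $T_u$ fold together (step~$(1)$), and below the end of the branch $u$ what remains is precisely the original sphere $S_g$ of $g$ (the top tree $T^g_+$ glued to the bottom tree $T^g_-$ along the leaves $v_i,w_i$). Thus $S_{g_{[u]}}$ folds to $T_u$ with $S_g$ attached at the end of the branch $u$. Next I would fold the $\mathcal S_H$-spheres among themselves, obtaining $\mathcal C(H)$, and fold the $T_u$-skeletons of the $g_{[u]}$ into it: since $u$ is readable on $\mathcal C(H)$, the tree $T_u$ is readable on $\mathcal C(H)$ by Remark~\ref{rem:obvious}, so each $T_u$-skeleton folds onto the unique corresponding subgraph of $\mathcal C(H)$, and in particular every copy of the vertex $u^+$ is identified with $\ell$. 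Consequently all the spheres $S_g$ now hang from the single vertex $\ell$, and folding them among themselves (with common root $\ell$) yields exactly $\mathcal C(G)$ rooted at $\ell$, by the very definition of $\mathcal C(G)$. The graph obtained is $\mathcal B$.

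It remains to check that no folding applies to $\mathcal B$; this is the crux of the argument, since I must rule out any identification that merges the copy of $\mathcal C(G)$ into the $\mathcal C(H)$-part (``leaking'' across the junction). No step~$(1)$ folding applies: inside $\mathcal C(H)$ and inside $\mathcal C(G)$ there is none, and the only vertex that has gained outgoing edges is $\ell$, which was a leaf of $\mathcal C(H)$ and so carries only the two distinctly labeled outgoing edges of the root of $\mathcal C(G)$. For step~$(2)$, partition the inner vertices of $\mathcal B$ into those of $\mathcal C(H)$ other than $\ell$, whose children lie in the $\mathcal C(H)$-part, and those of the $\mathcal C(G)$-copy (including $\ell$, the image of the root of $\mathcal C(G)$), whose children lie in the $\mathcal C(G)$-part. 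Since the root of a core is never a descendant of itself, $\ell$ has no incoming edge from within $\mathcal C(G)$, so the children of any $\mathcal C(G)$-type vertex lie in $\mathcal C(G)\setminus\{\ell\}$; as the two parts share only $\ell$, the children pools of the two types are disjoint, and no vertex of one type can share its pair of children with a vertex of the other. Within each type no step~$(2)$ folding applies either, since $\mathcal C(H)$ and $\mathcal C(G)$ are each already folded and the gluing adds outgoing structure only below $\ell$. Hence $\mathcal B$ is fully folded, and by the order-independence of the folding construction $\mathcal B=\mathcal C(K)$, which is the desired description.
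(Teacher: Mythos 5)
Your proof is correct, but it takes a genuinely different route from the paper's. The paper does not touch the folding process at all: it first checks that the glued automaton (your $\mathcal B$, its $\mathcal A_r$) accepts both $H$ and $G_{[u]}$, hence $K$; invokes Lemma \ref{lem: morphism from core} to obtain the unique morphism $\phi\colon\mathcal C(K)\to\mathcal B$; and then proves $\phi$ is bijective on vertices --- surjectivity via Lemma \ref{lem:minimal core} (every vertex of $\mathcal B$ is reached by a prefix of a branch of a reduced diagram of an element of $H$ or of $G_{[u]}$, so the corresponding word is readable on $\mathcal C(K)$), and injectivity by explicitly manufacturing, for any two paths $v_1,v_2$ with $v_1^+=v_2^+$ in $\mathcal B$, an element of $\Cl(K)$ with the pair of branches $v_1\to v_2$ (in the hard case as a product $h_1kh_2$ with $h_1,h_2\in\Cl(H)$ and $k\in\Cl(G)_{[u]}$) and then applying Lemma \ref{identified vertices in the core}. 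You instead compute $\mathcal C(K)$ head-on from Definition \ref{core}, exhibiting a folding schedule that lands on $\mathcal B$ and verifying that $\mathcal B$ is fold-free; the order-independence of folding (which the paper asserts, citing \cite{GS1}) then does all the work that the paper's injectivity argument does by hand. Your approach is more elementary and makes the ``no leakage across the junction'' phenomenon transparent --- the disjointness of the two children pools, resting on the fact that the root of $\mathcal C(G)$ is not a descendant of itself, is exactly the right observation, and is the combinatorial shadow of the paper's case analysis in the injectivity step. What you give up is self-containedness: the argument leans entirely on the confluence of the folding procedure as a black box, including its limit-state version when $H$ or $G$ is infinitely generated (the lemma does not assume finite generation, and ``choosing an order of foldings'' is more delicate for an infinite wedge of spheres), whereas the morphism argument is insensitive to this. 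Both proofs use Remark \ref{rem:obvious} at the same spot, to read $T_u$ on $\mathcal C(H)$.
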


\begin{proof}
	Let $\mathcal A_r$ be the rooted tree-automaton obtained by identifying the root of $\mathcal C(G)$ with the leaf $\ell$ of $\mathcal C(H)$ (where the root of $\A_r$ is taken to be the root of $\mathcal C(H)$). It suffices to prove that $\A_r$ is isomorphic to the core $\mathcal C(K)$. 
	
	First, note that the automaton $\A_r$ accepts every element accepted by $\mathcal C(H)$, and as such, every element of $H$. We claim that it also accepts every element in $G_{[u]}$. Indeed, let $g\in G$ and consider its $[u]$-copy $g_{[u]}\in G_{[u]}$. We claim that $g_{[u]}$ is accepted by $\A_r$.  Indeed, let  $(T_+,T_-)$ be the reduced tree-diagram of $g$ and note that $(T_+,T_-)$ is accepted by $\mathcal C(G)$. %Let $g_{[u]}$ be the $u$-copy of $g$. We claim that $g_{[u]}$ is accepted by $\A_r$. Indeed, 
	Let $u_i\to v_i$, $i=1,\dots,n$ be the pairs of branches of $(T_+,T_-)$, and let $T_u$ be the minimal finite binary tree with branch $u$. Note that by Remark \ref{rem:obvious}, the tree $T_u$ is readable on $\mathcal C(H)$ and as such, on $\A_r$.
	Now, the pairs of branches of the reduced tree-diagram of $g_{[u]}$ are pairs of branches of the form $b\to b$ for every branch $b$ of $T_u$, other than the branch $u$, as well as the pairs of branches $uu_i\to uv_i$ for $i=1,\dots,n$. 	
	It suffices to prove that for each of these pairs of branches, the branches in the pair label paths in $\A_r$ which terminate on the same vertex. For the branches of the form $b\to b$, where $b$ is a branch of $T_u$ distinct from $u$, this is clear. For each $i=1,\dots,n$, it follows from the construction of $\A_r$ that $uu_i$ and $uv_i$ label paths in $\A_r$ whose end vertex is the vertex $u_i^+$ and $v_i^+$ of $\mathcal C(G)$, respectively. Since $(T_+,T_-)$ is accepted by $\mathcal C(G)$,  the vertices $u_i^+$ and $v_i^+$ of $\mathcal C(G)$ coincide, as necessary. 
	
	Since $\A_r$ accepts the subgroup $H$ as well as the subgroup $G_{[u]}$, it accepts the subgroup $K$ generated by $H\cup G_{[u]}$. 
	Therefore, by Lemma \ref{lem: morphism from core} there is a unique morphism $\phi$ from the core $\mathcal C(K)$ to $\A_r$.
	For every path $v$ in $\mathcal C(K)$, the morphism $\phi$ maps the vertex $v^+$ of $\mathcal C(K)$ onto the vertex $v^+$ of $\A_r$ (see the proof of Lemma \ref{lem:obvious}). It suffices to prove that $\phi$ is bijective. 
	
	First, we prove that $\phi$ is surjective. Let $v$ be a path in $\A_r$. By Lemma \ref{lem:obvious}, it suffices to prove that $v$ labels a path in $\C(K)$. First, we consider the case where the vertex $v^+$ in $\A_r$ is a vertex of $\mathcal C(H)$. In that case, the path $v$ must be a path in $\mathcal C(H)$ (indeed, if the path $v$ passes through a vertex in $\A_r$ which does not belong to $\C(H)$, it cannot return later to vertices of $\mathcal C(H)$). Hence, by Lemma \ref{lem:minimal core}, there is an element  $h\in H$ such that $v$ is the prefix of one of the branches of its reduced tree-diagram. Then, since $H\leq K$, $v$ must also label a path in $\mathcal C(K)$, as necessary. Now, assume that the vertex $v^+$ in $\A_r$ is a vertex $y$ of the copy of $\C(G)$, other than its root. Then the path $v$ divides into two subpaths  $v\equiv v_1v_2$ such that $v_1$ is a path in $\mathcal C(H)$ from the root $r$ to $\ell$ and $v_2$ labels a path in $\mathcal C(G)$ from its root to the vertex $y$. Note that by the previous case, $v_1$ labels a path in $\mathcal C(K)$. 
	Since $v_2$ labels a path in $\mathcal C(G)$, by Lemma \ref{lem:minimal core}, there is an element $g\in G$ such that $v_2$ is a prefix of some branch of its reduced tree-diagram. Then, $uv_2$ is a prefix of some branch of the reduced tree-diagram of the element $g_{[u]}$. Since $g_{[u]}\in K$, the word $uv_2$ labels a path in $\mathcal C(K)$. Since $v_1$ and $u$ are both paths in $\mathcal C(H)$ such that $v_1^+=\ell=u^+$, there is an element $h\in\Cl(H)$ with the pair of branches $v_1\to u$. But the element $h$ also belongs to $\Cl(K)$ and $u$ and $v_1$ label paths in $\mathcal C(K)$. Hence, by Lemma \ref{identified vertices in the core}, in $\mathcal C(K)$, we also have $v_1^+=u^+$. Since $uv_2$ labels a path in $\mathcal C(K)$ and $u^+=v_1^+$ in $\mathcal C(K)$, the word $v_1v_2\equiv v$ is also readable on $\mathcal C(K)$, as necessary. 
	
	Next, we prove that $\phi$ is injective on vertices (injectivity of $\phi$ on edges follows easily from that). Let $v_1$ and $v_2$ be two paths in the core $\mathcal C(K)$, such that $\phi(v_1^+)=\phi(v_2^+)$. We need to prove that in $\mathcal C(K)$, we also have $v_1^+=v_2^+$. Let us denote by $z$ the vertex $\phi(v_1^+)$. Again, we consider two cases. 
	
	$(1)$ The vertex $z$ belongs to $\mathcal C(H)$. In that case, any path in $\A_r$ which terminates in $z$ passes only through vertices of $\mathcal C(H)$. Hence, $v_1$ and $v_2$ label paths in $\mathcal C(H)$ which terminate in the same vertex $z$. Hence, by Lemma \ref{identified vertices in the core}, there is an element $f$ in $\Cl(H)$ with the pair of branches $v_1\to v_2$. Note that $\Cl(H)\leq \Cl(K)$, since $H\leq K$. Hence, $f\in \Cl(K)$. Then, since $f$ has the pair of branches $v_1\to v_2$ and $v_1$ and $v_2$ are readable on $\mathcal C(K)$, by Lemma \ref{identified vertices in the core}, we have $v_1^+=v_2^+$ in $\mathcal C(K)$, as required.

	$(2)$ The vertex $z$ does not belong to $\mathcal C(H)$. In that case, it is a vertex of the copy of $\mathcal C(G)$ in $\A_r$, distinct from its root. As noted above, every path in $\A_r$ which terminates in such a vertex can be divided into two subpaths $w_1,w_2$ such that $w_1$ is a path in $\mathcal C(H)$ from $r$ to $\ell$ and $w_2$ labels a path in $\mathcal C(G)$ from its root to the vertex. Hence, there are finite binary words $p_1,p_2$, $q_1,q_2$ such that $v_1\equiv p_1p_2$ and $v_2\equiv q_1q_2$ and such that $p_1$ and $q_1$ label paths in $\mathcal C(H)$ from the root to $\ell$ and $q_1$ and $q_2$ label paths in $\mathcal C(G)$ from its root to the vertex $z$. Note that in $\mathcal C(H)$ we have $u^+=p_1^+=q_1^+$. Hence, by Lemma \ref{identified vertices in the core}, there are elements $h_1,h_2\in \Cl(H)$ such that $h_1$ has the pair of branches $p_1\to u$ and $h_2$ has the pair of branches $u\to q_1$. 
	Similarly, since $q_1^+=q_2^+$ in $\mathcal C(G)$, there is an element  $g\in\Cl(G)$ such that $g$ has the pair of branches $p_2\to q_2$. Then, the element $k=g_{[u]}$, has the pair of branches $up_2\to uq_2$. 
	Note that the elements $k,h_1,h_2$ all belong to the closure of $K$. Hence, $h_1kh_2\in\Cl(K)$. But the element $h_1kh_2$ has the pair of branches $p_1p_2\to q_1q_2$ (indeed, $h_1$ takes the branch $p_1p_2$ to the branch $up_2$, then $k$ takes the branch $up_2$ to the branch $uq_2$ and $h_2$ takes the branch $uq_2$ to the branch $q_1q_2$).  Hence, the paths $p_1p_2\equiv v_1$ and $q_1q_2\equiv v_2$ terminate on the same vertex of $\mathcal C(K)$, as necessary. 
\end{proof}

\begin{Lemma}\label{full}
	Let $H$ be a finitely generated proper subgroup of $F$. Then $H$ is contained in a finitely generated proper subgroup $G$ of $F$ such that the core of $G$ has no leaves. % and assume that the core of $H$ has a leaf. Then $H$ is contained in a finitely generated subgroup $G$ such that the co
\end{Lemma}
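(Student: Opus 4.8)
The plan is to build $G$ by ``completing'' every leaf of the finite automaton $\mathcal C(H)$, gluing a copy of the core of $F$ at each leaf by means of Lemma \ref{Core of H,G}. Since $H$ is finitely generated, $\mathcal C(H)$ is a finite tree-automaton and so has finitely many leaves $\ell_1,\dots,\ell_k$; for each I fix a word $u_j$ with $u_j^+=\ell_j$. First I would dispose of the degenerate case: the root of $\mathcal C(H)$ is a leaf precisely when $H$ is trivial, since any nontrivial element forces a caret at the root and hence gives the root two outgoing edges. If $H=\{1\}$ I simply take $G=F_{2,2}$, a proper finite-index subgroup; it contains $[F,F]$, hence acts transitively on $\mathcal D$, so by Lemma \ref{lem:finitely many orbits} its finite core has no leaves. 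If instead $\mathcal C(H)$ already has no leaves, then $G=H$ works. Thus I may assume $H$ is nontrivial and $\mathcal C(H)$ has at least one leaf, each of which is non-root and therefore, by Remark \ref{rem:different types}, a left, right, or middle vertex.

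For the main case I set $G=\langle H, F_{[u_1]},\dots,F_{[u_k]}\rangle$. Applying Lemma \ref{Core of H,G} once per leaf (the remaining leaves and their paths persist through each gluing, since gluing at one leaf leaves the rest of the automaton untouched) shows that $\mathcal C(G)$ is obtained from $\mathcal C(H)$ by identifying the root of a copy of the core of $F$ (Figure \ref{fig:coreF}) with each $\ell_j$. As the core of $F$ has no leaves and all of its vertices are inner, no leaves remain and none are created, so $\mathcal C(G)$ has no leaves. Moreover $G$ is finitely generated, being generated by a finite generating set of $H$ together with the two generators of each copy $F_{[u_j]}$.

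The heart of the matter, and where I expect the real work to lie, is properness. To show $G\neq F$ I would prove $[F,F]\not\subseteq\Cl(G)$; by Lemma \ref{core of derived subgroup} it suffices to exhibit two distinct middle vertices in $\mathcal C(G)$, since a \emph{unique} inner middle vertex is the only configuration forcing the closure to contain $[F,F]$, and $[F,F]\not\subseteq\Cl(G)$ gives $\Cl(G)\subsetneq F$ and hence $G\subsetneq F$. Fix the leaf $\ell=\ell_1$ with nonempty path $u=u_1$. In the attached copy of the core of $F$, the words $u0,u1,u01$ reach its left, right and middle vertices $L_F,R_F,M_F$, while $u$ reaches $\ell$. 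Now $u01$ always contains both digits, so $M_F$ is a middle vertex; and since $u\neq\emptyset$, at least one of $u,u0,u1$ is also mixed (if $u$ is mixed take $u$, giving $\ell$; if $u=0^n$ take $u1$, giving $R_F$; if $u=1^m$ take $u0$, giving $L_F$), yielding a second middle vertex distinct from $M_F$. These vertices are genuinely distinct in $\mathcal C(G)$ because, by Lemma \ref{Core of H,G}, the glued automaton is already the (reduced) core, so no folding occurs and the four vertices of each attached copy survive. Hence $\mathcal C(G)$ has at least two middle vertices, so $[F,F]\not\subseteq\Cl(G)$ and $G$ is proper.

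The only subtlety to watch is the bookkeeping of vertex types after gluing, namely the verification that the two mixed words land on distinct vertices; this rests entirely on Lemma \ref{Core of H,G} guaranteeing that the glued automaton is the core itself. Granting that, the three ingredients---no leaves, finite generation, and properness via the two-middle-vertex count---together with the separate handling of the trivial group complete the proof.
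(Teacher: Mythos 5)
Your proof is correct and follows essentially the same route as the paper's: glue a copy of $\mathcal C(F)$ onto each leaf of $\mathcal C(H)$ via Lemma \ref{Core of H,G}, observe that the result is finitely generated with a leafless core, and deduce properness from the presence of more than one middle vertex. Your explicit treatment of the trivial subgroup (where the only leaf is the root, $u=\emptyset$, and $F_{[u]}=F$ would ruin properness) and your case analysis exhibiting a second middle vertex merely fill in two points the paper leaves implicit (``it is easy to see''), so the argument is the same.
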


\begin{proof}
	Since $H$ is finitely generated, its core is finite. If the core of $H$ has no leaves, we are done. Hence, assume that $H$ has $n\geq 1$ leaves: $\ell_1,\dots,\ell_n$ and for each $i=1,\dots,n$, let $u_i$ be a finite binary word which labels a path in $\mathcal C(H)$ from the root to $\ell_i$. 
	Let $\A_r$ be the tree-automaton obtained from $\mathcal C(H)$, by identifying each leaf $\ell_i$ with the root of a distinct copy of the core of Thompson's group $F$, $\mathcal C(F)$ (where the root of $\A_r$ is taken to be the root of $\mathcal C(H)$). Repeated applications of Lemma \ref{Core of H,G}, show that $\A_r$ is isomorphic to the core of the subgroup $G$  generated by $H\cup F_{[u_1]}\cup \dots\cup F_{[u_n]}$. Since each of these subgroups is finitely generated, the subgroup $G$ is finitely generated. In addition, since the core of Thompson's group $F$ has no leaves, the core $\mathcal C(G)\cong \A_r$ has no leaves. Therefore, the subgroup $G$ is as required (note that $G\neq F$ since its core is not isomorphic to the core of $F$, indeed, in the core of $F$ there is a unique middle vertex and it is easy to see that in the core of $G$ there is more than one middle vertex).
\end{proof}

\begin{Theorem}
	Let $H$ be a finitely generated proper subgroup of $F$. Then the
	following assertions hold. 
	\begin{enumerate}
		\item[$(1)$] There exists a finitely generated maximal subgroup $M\leq F$ which contains $H$. 
		\item[$(2)$] If the action of  $H$  on the set of dyadic fractions $\mathcal D$ has finitely many orbits then every maximal subgroup of $F$ which contains $H$ is finitely generated. Moreover, there are only finitely many maximal subgroups of infinite index in $F$ which contain $H$. 
	\end{enumerate}
\end{Theorem}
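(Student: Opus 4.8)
The plan is to reduce both parts to the situation where the relevant core has no leaves, and then to exploit two facts: that a finite core has only finitely many quotient automata, and that a maximal subgroup of infinite index is closed (Corollary \ref{max closed}) and hence completely determined by its core. For part (1) I would first apply Lemma \ref{full} to replace $H$ by a finitely generated proper subgroup $G\supseteq H$ whose core $\mathcal C(G)$ has no leaves; since $M\supseteq G\supseteq H$ for any $M$ we produce, it suffices to find a finitely generated maximal subgroup containing $G$. By Zorn's lemma $G$ is contained in some maximal subgroup $M$. If $M$ has finite index in $F$, then $M$ is finitely generated because $F$ is, and we are done. So assume $M$ has infinite index. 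Then $M$ is closed by Corollary \ref{max closed}, i.e. $M=\Cl(M)=\ddd(\mathcal C(M))$, and since $G\leq M$ with $\mathcal C(G)$ having no leaves, Corollary \ref{cor:no-leaves} gives a surjective morphism $\phi\colon\mathcal C(G)\to\mathcal C(M)$; in particular $\mathcal C(M)$ is finite.

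The heart of the matter, and the step I expect to be the main obstacle, is upgrading ``$\mathcal C(M)$ finite'' to ``$M$ finitely generated''. I would do this by realizing $M$ as the closure of a finitely generated subgroup. For each pair of vertices $x,y$ of $\mathcal C(G)$ with $\phi(x)=\phi(y)$, choose paths $u,v$ in $\mathcal C(G)$ ending at $x$ and $y$; these are readable on $\mathcal C(M)$ and satisfy $u^+=v^+$ there, so Lemma \ref{identified vertices in the core} supplies an element $h_{x,y}\in\Cl(M)=M$ with pair of branches $u\to v$. As $\mathcal C(G)$ is finite there are only finitely many such pairs; let $K$ be generated by $G$ together with all the $h_{x,y}$, a finitely generated subgroup of $M$. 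Chasing the morphisms $\mathcal C(G)\to\mathcal C(K)\to\mathcal C(M)$ whose composite is $\phi$, one verifies via Lemma \ref{lem:paths in automaton} that the first morphism already performs every identification that $\phi$ does, so the second is injective as well as surjective, hence an isomorphism $\mathcal C(K)\cong\mathcal C(M)$. Therefore $\Cl(K)=\ddd(\mathcal C(K))=\ddd(\mathcal C(M))=M$, and since $K$ is finitely generated its closure is finitely generated by \cite{GS3}; thus $M$ is finitely generated.

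For part (2) the finitely-many-orbits hypothesis lets me bypass Lemma \ref{full} entirely. Since $\mathcal C(H)$ is finite and $H$ has finitely many orbits on $\mathcal D$, Lemma \ref{lem:finitely many orbits} shows every vertex of $\mathcal C(H)$ is an inner vertex, i.e. $\mathcal C(H)$ itself has no leaves. Hence the construction above applies verbatim with $H$ in place of $G$ to any maximal subgroup $M\supseteq H$: finite-index ones are finitely generated because $F$ is, and infinite-index ones are closed with $\mathcal C(M)$ a finite quotient of $\mathcal C(H)$, so the same argument realizes $M=\Cl(K)$ for a finitely generated $K$. This proves the first assertion of (2).

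For the second assertion, each maximal subgroup of infinite index $M\supseteq H$ is closed, so $M=\ddd(\mathcal C(M))$ is completely determined by its core, and $\mathcal C(M)$ arises as the quotient of the fixed finite automaton $\mathcal C(H)$ under the surjection furnished by Corollary \ref{cor:no-leaves}. Since such a quotient is determined by the partition of the finite vertex set of $\mathcal C(H)$ into fibers, there are only finitely many possibilities for $\mathcal C(M)$; and because distinct closed subgroups have distinct cores, the assignment $M\mapsto\mathcal C(M)$ is injective on the set of infinite-index maximal subgroups containing $H$. Hence only finitely many such $M$ exist. The only genuinely delicate point throughout is the vertex-identification bookkeeping in the second paragraph, ensuring that adjoining finitely many elements $h_{x,y}$ forces $\mathcal C(K)$ to coincide with $\mathcal C(M)$ rather than merely map onto it.
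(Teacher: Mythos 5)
Your proposal is correct, and its overall architecture coincides with the paper's: Lemma \ref{full} plus Zorn's lemma for part (1), reduction to the leafless-core situation, Corollary \ref{max closed} to get closedness of infinite-index maximal subgroups, Corollary \ref{cor:no-leaves} to get a surjective morphism onto $\C(M)$, and for the finiteness assertion the observation that a finite automaton has only finitely many quotients and that distinct closed subgroups have non-isomorphic cores. The one step you handle genuinely differently is the passage from ``$\C(M)$ is finite'' to ``$M$ is finitely generated,'' which you correctly single out as the crux. The paper disposes of it in one line by citing \cite[Corollary 5.14]{GS3} (a closed subgroup of $F$ is finitely generated if and only if its core is finite), whereas you reprove the needed direction in this special situation: you adjoin to $G$ finitely many elements $h_{x,y}\in\Cl(M)=M$ witnessing the identifications made by $\phi$, and use Lemma \ref{lem:paths in automaton} together with the uniqueness and surjectivity statements of Lemmas \ref{lem:obvious} and \ref{lem: morphism from core} to force $\C(K)\cong\C(M)$, so that $M=\Cl(K)$ is finitely generated by the (also cited) fact that closures of finitely generated subgroups are finitely generated. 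Your vertex-identification bookkeeping is sound: surjectivity of $\C(G)\to\C(K)$ plus the implication $\phi(x)=\phi(y)\Rightarrow\psi_1(x)=\psi_1(y)$ does yield injectivity of $\C(K)\to\C(M)$. The trade-off is that your route is longer but rests on a weaker input from \cite{GS3}; the paper's route is shorter but leans on the stronger imported result.
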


\begin{proof}
	We begin by proving (2). Since $H$ is finitely generated, its core is finite. Hence, by Lemma \ref{lem:finitely many orbits}, the fact that the action of $H$ on the set of dyadic fractions $\mathcal D$ has finitely many orbits implies that $\mathcal C(H)$ has no leaves. We claim that every maximal subgroup of $F$ which contains $H$ is finitely generated. Indeed, let $M$ be a maximal subgroup of $F$ which contains $H$. If $M$ has finite index in $F$ then it is finitely generated and we are done. Hence, we can assume that $M$ has infinite index in $F$. Hence, by Corollary \ref{max closed}, $M$ is closed. Let $\C(M)$ be the core of $M$. Since $H\leq M$ and since $\C(H)$ has no leaves, by Corollary \ref{cor:no-leaves}, there is a surjective morphism from $\mathcal C(H)$ to $\mathcal C(M)$. Since the core of $H$ is finite, its surjective image $\C(M)$ must also be finite. In \cite[Corollary 5.14]{GS3}, 
	we proved that the core of a closed subgroup of $F$ is finite if and only if the subgroup is finitely generated. Hence, the closed subgroup $M$ is finitely generated, as required. 
	
	Note that the above argument shows that every maximal subgroup $M$ of infinite index in $F$ which contains $H$ is a closed subgroup whose core $\C(M)$ is a surjective image of $\C(H)$. Since $\C(H)$ is finite, it only has finitely many non-isomorphic surjective images. Hence, there are only finitely many maximal subgroups of $F$ of infinite index which contain $H$ 
	(note that if $G_1$ and $G_2$ are distinct closed subgroups of $F$ then $\C(G_1)$ and $\C(G_2)$ are not isomorphic, since $G_1=\ddd(\C(G_1))$ and $G_2=\ddd(\C(G_2))$).
	%(note that if $G$ is a closed subgroup of $F$, then $G=\ddd(\C(G))$. Hence, if two closed subgroups of $F$ have isomorphic cores, the two subgroups coincide). 
	
	Now, we are ready to prove $(1)$. By Lemma \ref{full}, there exists a finitely generated proper subgroup $G\leq F$ which contains $H$ such that the core $\mathcal C(G)$ has no leaves. It suffices to show that $G$ is contained in some finitely generated maximal subgroup of $F$. Note that by Zorn's lemma, $G$ must be contained inside some maximal subgroup of $F$. We claim that every such subgroup is finitely generated. Indeed, since $G$ is finitely generated its core $\C(G)$ is finite. Since in addition $\C(G)$ has no leaves, by Lemma \ref{lem:finitely many orbits}, the action of $G$ on the set of dyadic fractions $\mathcal D$ has finitely many orbits. Hence, by part (2), every maximal subgroup of $F$ which contains $G$ is finitely generated. % and we are done. 
\end{proof}

We finish this section by giving a characterization of maximal subgroups of $F$ of infinite index.  
Note that by Corollary \ref{max closed}, we only need to consider closed subgroups of $F$.

\begin{Definition}
	Let $\A_r$ be a rooted tree-automaton. We say that $\A_r$ is a \emph{core automaton} if there is a subgroup $H$ of $F$ such that $\A_r$ is isomorphic to the core $\mathcal C(H)$ (i.e., such that there exists a bijective morphism of rooted tree-automata from $\A_r$ to $\mathcal C(H)$).
\end{Definition}

\begin{Lemma}\label{lem:max char}
	Let $H$ be a closed subgroup of $F$. Then $H$ is a maximal subgroup of infinite index in $F$ if and only if the following conditions hold. 
	\begin{enumerate}
		\item[$(1)$] $H[F,F]=F$. 	
		\item[$(2)$] The core $\mathcal C(H)$ is full. 
		\item[$(3)$] There is more than one middle vertex in the core $\C(H)$. 
		\item[$(4)$] Each core-automaton $\A_r$ that is a surjective image of $\C(H)$ but non isomorphic to $\C(H)$ is isomorphic to $\C(F)$.  
	\end{enumerate}
\end{Lemma}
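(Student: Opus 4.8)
The plan is to prove both implications by translating containments of closed subgroups into surjective morphisms of their cores. The underlying dictionary is this: if $G$ is a closed subgroup with $H\le G$ and $\C(H)$ is full, then by Corollary \ref{cor:no-leaves}(2) the unique morphism $\C(H)\to\C(G)$ is surjective, so $\C(G)$ is a core-automaton that is a surjective image of $\C(H)$; conversely, if $\A_r$ is a core-automaton admitting a surjective morphism $\C(H)\to\A_r$, then Lemma \ref{lem:obvious} shows every diagram accepted by $\C(H)$ is accepted by $\A_r$, so $H\le\ddd(\A_r)$, a closed subgroup whose core is isomorphic to $\A_r$. Since distinct closed subgroups have non-isomorphic cores, this yields a bijection between closed subgroups containing $H$ and isomorphism types of core-automata that are surjective images of $\C(H)$. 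Condition $(4)$ is exactly the assertion that, under this dictionary, the only closed subgroups containing $H$ are $H$ and $F$ (recalling $\ddd(\C(F))=F$).

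For the forward direction, suppose $H$ is maximal of infinite index, so $H$ is closed (Corollary \ref{max closed}) and nontrivial. Maximality forbids $H$ from lying in a proper finite-index subgroup, so $H[F,F]=F$ by Remark \ref{rem:H[F,F]}, giving $(1)$. For $(2)$, if $\C(H)$ had a leaf $\ell$ with defining path $u$, I would set $K=\langle H,F_{[u]}\rangle$; by Lemma \ref{Core of H,G} its core is $\C(H)$ with a copy of $\C(F)$ glued at $\ell$, so $\C(K)$ has at least five vertices (the distinct vertices $r$ and $\ell$ of $\C(H)$, together with the three new vertices of the glued copy) and hence $\C(K)\not\cong\C(F)$. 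Since $(x_0)_{[u]}\in\Cl(K)\setminus H$ (it cannot be accepted by $\C(H)$, as $u$ ends at a leaf), we get $\Cl(K)\supsetneq H$, so maximality forces $\Cl(K)=F$ and $\C(K)\cong\C(F)$, a contradiction; thus $\C(H)$ is full. For $(3)$, since $H$ is proper and $H[F,F]=F$ we cannot have $[F,F]\le H=\Cl(H)$; by Lemma \ref{core of derived subgroup} this means $\C(H)$ does not have a unique inner middle vertex, and as fullness makes every vertex inner and forces at least one middle vertex, there must be more than one. Finally $(4)$ follows from the dictionary together with maximality: any core-automaton that is a surjective image of $\C(H)$ and not isomorphic to $\C(H)$ corresponds to a closed subgroup strictly containing $H$, which by maximality is $F$, whose core is $\C(F)$.

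For the converse, assume $(1)$--$(4)$. Condition $(3)$ and Lemma \ref{core of derived subgroup} give $[F,F]\not\le H$, so $H$ is not of finite index, and in particular $H\ne F$. To prove maximality, let $H\subsetneq G\le F$; it suffices to show $G=F$. Passing to $\Cl(G)$, note $\Cl(G)\supsetneq H$ (otherwise $G\le\Cl(G)=H$). Since $\C(H)$ is full by $(2)$, Corollary \ref{cor:no-leaves}(2) gives a surjective morphism $\C(H)\to\C(G)$, and $\C(G)\not\cong\C(H)$ because $\Cl(G)\ne H$. By $(4)$, $\C(G)\cong\C(F)$, hence $\Cl(G)=\ddd(\C(G))=F$ and in particular $[F,F]\le\Cl(G)$. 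Also $G[F,F]=F$ since $H\le G$ and $(1)$ holds. Corollary \ref{main cor} then yields $G=F$, so $H$ is maximal of infinite index.

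The main obstacle I expect is making the core dictionary precise in both directions — in particular verifying that every core-automaton surjective image of $\C(H)$ genuinely arises as the core of a closed subgroup containing $H$, so that Condition $(4)$ is equivalent to ``the only closed overgroups are $H$ and $F$'' — and carrying out the fullness argument for $(2)$, where the gluing Lemma \ref{Core of H,G} must be combined with the explicit four-vertex description of $\C(F)$ to force the contradiction. The remaining steps are bookkeeping with the already-established correspondence results, Lemma \ref{lem: morphism from core}, and Corollary \ref{main cor}.
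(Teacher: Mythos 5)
Your proof is correct and follows essentially the same route as the paper: condition $(1)$ via Remark \ref{rem:H[F,F]}, condition $(2)$ by gluing $F_{[u]}$ at a putative leaf as in Lemma \ref{full}, condition $(3)$ via Lemma \ref{core of derived subgroup}, condition $(4)$ via the correspondence between surjective images of $\C(H)$ and closed overgroups, and the converse via Corollary \ref{cor:no-leaves} and Corollary \ref{main cor}. The only cosmetic difference is that in step $(2)$ you rule out $\C(K)\cong\C(F)$ by counting at least five vertices, whereas the paper counts middle vertices; both work.
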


\begin{proof}
	Assume that $H$ is a maximal subgroup of infinite index in $F$, then it cannot be contained in any proper finite index subgroup of $F$. Hence, by Remark \ref{rem:H[F,F]}, $H[F,F]=F$. 	
	 We claim that the core of $H$ has no leaves. Otherwise, let $\ell$ be a leaf of $H$ and let $u$ be a path in the core $\mathcal C(H)$ such that $u^+=\ell$. As in the proof of Lemma \ref{full}, one can show that the subgroup $K$ of $F$ generated by $H$ and $F_{[u]}$ is a strict subgroup of $F$. It strictly contains $H$ (indeed, every non-trivial element of $F_{[u]}$ is not accepted by $\mathcal C(H)$ and as such it does not belong to $H$). Hence, $H$ is not a maximal subgroup of $F$, a contradiction. Now, we claim that there is more than one middle vertex in $\mathcal C(H)$. Otherwise, there is a unique middle vertex in $\C(H)$. Since $\C(H)$ is full, the unique middle vertex is an inner vertex. In that case, by Lemma \ref{core of derived subgroup}, the closed subgroup $H=\Cl(H)$ contains the derived subgroup of $F$. But then $H=H[F,F]=F$, in contradiction to $H$ being a maximal subgroup of $F$. Finally, let $\A_r$ be a core-automaton that is a surjective  image of $\C(H)$ but not isomorphic to $\C(H)$. Since $\A_r$ is a core-automaton, there exists a subgroup $G$ of $F$ such that $\A_r$ is isomorphic to $\mathcal C(G)$. The surjective morphism from $\C(H)$ to $\C(G)$ implies that every tree-diagram accepted by $\C(H)$ is accepted by $\C(G)$. Hence $H=\Cl(H)\leq \Cl(G)$. If $H=\Cl(G)$, then the core $\mathcal C(H)$ is isomorphic to the core $\mathcal C(G)$ (indeed, for every subgroup $K$ of $F$, the core of $K$ coincides with the core of $\Cl(K)$, see \cite{G}). Since by assumption, $\A_r\cong \C(G)$ is not isomorphic to $\mathcal C(H)$, we have that $H<\Cl(G)$. Since $H$ is a maximal subgroup of $F$, it follows that $\Cl(G)=F$. Hence $\A_r\cong \mathcal C(G)\cong \mathcal C(F)$, as required.

	In the other direction, assume that $H$ is a closed subgroup of $F$ which satisfies Conditions $(1)-(4)$. We claim that $H$ is a maximal subgroup of infinite index in $F$. First note that Condition (3) and Lemma \ref{core of derived subgroup} imply that $H$ does not contain the derived subgroup of $F$. Hence, $H$ has infinite index in $F$. Now, let $G$ be a subgroup of $F$ which strictly contains $H$. It suffices to prove that $G=F$. Condition $(1)$ implies that $G[F,F]=F$. Condition $(2)$ and Corollary \ref{cor:no-leaves} imply that there is a unique surjective morphism from $\mathcal C(H)$ to $\mathcal C(G)$. Since $G$ strictly contains $H=\Cl(H)$, there is an element in $G$ that is not accepted by the core of $H$. Hence,  the core  $\mathcal C(G)$ is not isomorphic to $\C(H)$. Therefore, by Condition $(4)$, the core $\C(G)$ is isomorphic to $\C(F)$. In particular, $\Cl(G)=\Cl(F)=F\supseteq [F,F]$. Hence, by Corollary \ref{main cor}, $G=F$, as necessary.  
\end{proof}

Note that if $K$ is a finitely generated subgroup of $F$, then given a finite generating set of $K$ one can construct its core $\mathcal C(K)$. By \cite[Corollary 5.14]{GS3}, the subgroup $H=\Cl(K)$ is also finitely generated. Moreover, in \cite{GS3}, we give an algorithm for finding a finite generating set of $H=\Cl(K)$ (given the core  $\mathcal C(K)$). Now, in order to determine if the closed subgroup $H$ is a maximal subgroup of $F$ of infinite index, one can attempt to verify if Conditions $(1)-(4)$ of Lemma \ref{lem:max char} hold for $H$. Note that the first $3$ conditions are simple to verify (indeed, to check if Condition $(1)$ holds for $H$ one can consider the image of its finite generating set in the abelianization of $F$, and in order to check if Conditions $(2)$ and $(3)$ hold one only has to consider the core of $H$, which coincides with the core of $K$). The only condition we do not have an algorithm for verifying is Condition $(4)$. Note that Condition $(4)$ %from Lemma \ref{lem:max char} 
can  also be formulated as follows: every rooted tree-automaton $\A_r$ that is a surjective image of $\mathcal C(H)$ is either isomorphic to $\mathcal C(H)$ or to $\mathcal C(F)$ or is not a core-automaton. 
In the next section, we give a characterization of rooted tree-automata which are core-automata.
The characterization relies on results of \cite[Section 10]{G}.
While the characterization does not yield a complete algorithm for verifying if a finitely generated closed subgroup $H$ of $F$ satisfies Condition (4) of Lemma \ref{lem:max char}, it often enables us to check if the condition holds or not (see Remarks \ref{rem:core-automaton} and \ref{rem:delete?} at the end of the next section) and thus, to verify if $H$ is a maximal subgroup of infinite index in $F$.

\section{Core Automata}\label{sec:core}

The following lemma gives a characterization of rooted tree-automata which are core-automata. It follows from \cite[Lemmas 10.9 and 10.6]{G} and from Corollary \ref{cor:red aut} and Lemma \ref{identified vertices in the core}, but we prefer to give a self-contained proof. 

\begin{Lemma}\label{core automaton}
	Let $\A_r$ be a rooted tree-automaton. Then $\A_r$ is a core automaton if and only if the following conditions hold. 
	\begin{enumerate}
		\item[$(1)$] $\A_r$ is reduced. 
		\item[$(2)$] For every pair of finite binary words $u$ and $v$ which label paths in $\A_r$ such that $u^+=v^+$, there is an element in the diagram group $\ddd(\A_r)$ with the pair of branches $u\to v$. 	
	\end{enumerate} 
\end{Lemma}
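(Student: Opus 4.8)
The plan is to prove the two implications separately, with the forward direction essentially packaging earlier lemmas and the reverse direction carrying the real content. For the forward direction, suppose $\A_r\cong\C(H)$ for some subgroup $H$ of $F$; I may assume $\A_r=\C(H)$ outright, since both conditions are invariant under isomorphism of rooted tree-automata. Condition $(1)$ is then exactly Lemma \ref{lem:core reduced}, which says the core of any subgroup is reduced. For Condition $(2)$, if $u$ and $v$ label paths in $\C(H)$ with $u^+=v^+$, then Lemma \ref{identified vertices in the core} produces an element $h\in\Cl(H)=\ddd(\C(H))=\ddd(\A_r)$ with pair of branches $u\to v$, which is precisely what Condition $(2)$ demands.

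For the reverse direction I set $H:=\ddd(\A_r)$, a closed subgroup of $F$, and aim to prove $\C(H)\cong\A_r$. Since $\A_r$ accepts $H$ by the very definition of $\ddd(\A_r)$, Lemma \ref{lem: morphism from core} furnishes a unique morphism $\phi\colon\C(H)\to\A_r$, so it remains to show $\phi$ is bijective. The conceptual point is that in any rooted tree-automaton a vertex is the class of a readable word under the relation $u\sim v\iff u^+=v^+$; hence it suffices to show $(\mathrm{i})$ the words readable on $\A_r$ are exactly those readable on $\C(H)$, and $(\mathrm{ii})$ for such words, $u^+=v^+$ in $\A_r$ if and only if $u^+=v^+$ in $\C(H)$. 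By Lemma \ref{lem:obvious}, the existence of $\phi$ already gives the inclusion ``readable on $\C(H)$ $\Rightarrow$ readable on $\A_r$'' in $(\mathrm{i})$ and the implication ``$u^+=v^+$ in $\C(H)$ $\Rightarrow$ $u^+=v^+$ in $\A_r$'' in $(\mathrm{ii})$.

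The main step, which I expect to be the chief obstacle, is to show conversely that every word $u$ readable on $\A_r$ is readable on $\C(H)$; this is where reducedness of $\A_r$ and Condition $(2)$ combine. By Lemma \ref{lem:minimal core} it is enough to exhibit an element of $H=\ddd(\A_r)$ whose reduced tree-diagram has $u$ as a prefix of a branch. I will adapt the proof of Lemma \ref{lem:minimal core}: writing $u\equiv u_1a$, the vertex $x=u_1^+$ is an inner vertex. First I note that the root is not a descendant of itself, for a nonempty path $u$ with $u^+=r=\emptyset^+$ would, by Condition $(2)$, yield an element of $F$ carrying the proper dyadic interval $[u]$ onto $[0,1]$, which is impossible; hence Corollary \ref{cor:red aut} applies and $x$ has a descendant $y$ with two distinct incoming edges. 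Choosing paths $v_1b_1$ and $v_2b_2$ that both terminate at $y$ along these two distinct edges, Condition $(2)$ gives $h\in H$ with pair of branches $v_1b_1\to v_2b_2$. The delicate point is that passing to the reduced tree-diagram cancels a common suffix $s$ (Remark \ref{rem:branches}), and I must rule out that this cancellation erases $u_1$. I will show $|s|$ is strictly smaller than the length of the trail $w$ from $x$ to $y$: otherwise $w$, and in particular its last letter $b_1=b_2$, would be a suffix of $s$, so the surviving branches $p\to q$ (which satisfy $p^+=q^+$ in $\A_r$ because $h\in\ddd(\A_r)$) together with the common trail make up $v_1$ and $v_2$, forcing $v_1^+=v_2^+$ in $\A_r$ and contradicting that $v_1b_1$ and $v_2b_2$ reach $y$ through distinct edges. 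Thus $u_1$ is a strict prefix of a branch of the reduced diagram of $h$, and $u$ is readable on $\C(H)$.

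Once $(\mathrm{i})$ is in hand, injectivity of $\phi$ follows quickly and establishes $(\mathrm{ii})$: if $u^+=v^+$ in $\A_r$, Condition $(2)$ gives $h\in H$ with branches $u\to v$, and since $u$ and $v$ are now known to be readable on $\C(H)$ and $h\in H\subseteq\ddd(\C(H))$, Lemma \ref{lem:paths in automaton} yields $u^+=v^+$ in $\C(H)$. Therefore $\phi$ is a bijection on vertices, hence an isomorphism of rooted tree-automata, and $\A_r$ is a core automaton. The only genuinely technical part is the suffix-cancellation argument of the third paragraph; everything else is bookkeeping with the morphism machinery of Lemmas \ref{lem:obvious} and \ref{lem: morphism from core}.
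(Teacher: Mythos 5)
Your proposal is correct and follows essentially the same route as the paper's proof: the forward direction packages Lemmas \ref{lem:core reduced} and \ref{identified vertices in the core}, and the reverse direction sets $H=\ddd(\A_r)$, invokes Lemma \ref{lem: morphism from core}, and proves bijectivity of the morphism using reducedness (via Corollary \ref{cor:red aut}, after ruling out that the root is a descendant of itself) together with Condition $(2)$ and a suffix-cancellation argument. The only difference is organizational: the paper argues surjectivity by a minimal counterexample and shows the reduced tree-diagram of $h$ retains the full pair of branches $vpb\to qc$, whereas you bound the cancelled suffix directly as in Lemma \ref{lem:minimal core}; both versions of the cancellation step are valid and rest on the same observation that excessive cancellation would force the two incoming edges at $y$ to coincide.
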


\begin{proof}
	For every subgroup $H$ of $F$, the core $\mathcal C(H)$ satisfies conditions $(1)$ and $(2)$ by Lemmas \ref{lem:core reduced} and \ref{identified vertices in the core}.  Hence, if $\A_r$ is a core automaton, it satisfies Conditions $(1)$ and $(2)$.

	In the opposite direction, let $\A_r$ be a rooted tree-automaton which satisfies conditions $(1)$ and $(2)$. Let $H$ be the subgroup of $F$ accepted by $\A_r$ (i.e., $H=\ddd(\A_r)$). We claim that there is a bijective morphism from $\mathcal C(H)$ to $\A_r$. First, by Lemma \ref{lem: morphism from core}, there is a unique morphism $\phi$ from $\mathcal C(H)$ to $\A_r$. For each finite binary word $u$ which labels a path in $\C(H)$, the morphism $\phi$ maps the vertex 
	$u^+$ of $\mathcal C(H)$ to the vertex $u^+$ of $\A_r$.  It suffices to prove that $\phi$ is bijective.
	
	First, we claim that $\phi$ is injective on vertices (from that it follows that it is also injective on edges). Indeed, let $u$ and $v$ be two paths in the core $\mathcal C(H)$ and assume that $\phi$ maps $u^+$ and $v^+$ onto the same vertex in $\A_r$. It suffices to show that in $\mathcal C(H)$ we have $u^+=v^+$. But since $\phi(u^+)=\phi(v^+)$, the paths $u$ and $v$ on $\A_r$ terminate on the same vertex. Hence, it follows from Condition $(2)$ that there is an element $h$ in the subgroup $H=\ddd(\A_r)$ with the pair of branches $u\to v$. Since $u$ and $v$ label paths in the core of $H$, the existence of the element $h$ implies by Lemma \ref{identified vertices in the core} that $u^+=v^+$ in the core $\mathcal C(H)$, as required. 
	
	Hence, it suffices to show that $\phi$ is surjective. %on vertices.
	 To do so, we first note that in $\A_r$ the root $r$ is not a descendant of itself. Otherwise, there is a non-empty finite binary word $u$ which labels a path in $\A_r$ such that $u^+=r=\emptyset^+$. Hence, by Condition $(2)$, there is an element in $H$ (and in particular in $F$) with the pair of branches $u\to\emptyset$, which is impossible. 
	
	Now, to prove that $\phi$ is surjective, 
	%on vertices, 
	it suffices to show that for any path $u$ in $\A_r$, the word $u$ labels a path in $\mathcal C(H)$. Assume by contradiction that this is not the case, and let $u$ be a finite binary word of minimal length such that $u$ labels a path in $\A_r$ but does not label a path in $\mathcal C(H)$. Let $v$ be a finite binary word such that $u\equiv va$ for a letter $a\in\{0,1\}$. By assumption, the word $v$ labels a path in the core $\mathcal C(H)$. The vertex $v^+$ is necessarily a leaf of $\mathcal C(H)$ since $va$ does not label a path in $\mathcal C(H)$. Let us consider the vertex $v^+$ in $\A_r$. The vertex is not a leaf of $\A_r$. Hence, by Corollary \ref{cor:red aut}, the vertex $v^+$ has a descendant in $\A_r$ with two distinct incoming edges. Hence there exists a non-empty finite binary word $w$ such that the vertex $y=(vw)^+$ has two distinct incoming edges in $\A_r$.

	Let $p$ be the prefix of $w$ such that $w\equiv pb$ for some letter $b\in\{0,1\}$ and note that $vpb$ is a path in $\A_r$ such that $(vpb)^+=y$. Let $e_1$ be the last edge of the path $vpb$ in $\A_r$.  
By assumption, the vertex $y$ has an incoming edge $e_2\neq e_1$ in $\A_r$. Let $q$ be a path in $\A_r$ such that $q^+={e_2}_-$ and let $c$ be the label of $e_2$. Then $qc$ labels a path in $\A_r$ such that $(qc)^+=y$. Since $(vpb)^+=(qc)^+$ in $\A_r$, 	
 by Condition $(2)$, there is an element $h$ in $H$ with the pair of branches $vpb\to qc$. We claim that the reduced tree-diagram of $H$ also has this pair of branches. Indeed, assume by contradiction that this is not the case, then the letters $b$ and $c$ must coincide. In addition, by reducing one relevant common caret, we get that $h$ has the pair of branches $vp\to q$. But since $H$ is accepted by $\A_r$ and $vp$ and $q$ label paths in $\A_r$, we must have $(vp)^+=q^+$ in $\A_r$. Hence, the edges $e_1$ and $e_2$ have the same initial vertex (since ${e_1}_-=(vp)^+=q^+={e_2}_-$) and the same label. Hence, the edges $e_1$ and $e_2$ coincide, in contradiction to the assumption. Hence, $vpb\to qc$ is a pair of branches of the reduced tree-diagram of $h$. Therefore, $vpb$ labels a path in $\mathcal C(H)$, in contradiction to $v^+$ being a leaf of $\C(H)$. Hence, $\phi$ is bijective on vertices. 
\end{proof}

Note that even if $\A_r$ is a finite tree-automaton, if some vertex in $\A_r$ is a descendant of itself (equiv. if there is a directed cycle in $\A_r$), then there are infinitely many finite binary words which label paths in $\A_r$ and there are infinitely many pairs of words $u$ and $v$ which label paths in $\A_r$ such that $u^+=v^+$. However, it turns  out \cite{G} that to verify if condition $(2)$ of Lemma \ref{core automaton} holds for a finite tree-automaton $\A_r$ it suffices to consider only finitely many pairs of finite binary words.

Indeed, following \cite[Section 10.2]{G}, given a rooted tree-automaton $\A_r$, we associate labeled binary trees $T_{\A_r}$ and $T_{\A_r}^{\min}$ with the tree-automaton $\A_r$. 

Given a labeled binary tree $T$, a \emph{path} $p$ in $T$ is always a simple path starting from the root. Every path is labeled by a finite binary word $u$. As for paths in tree-automata, we  rarely distinguish between the path $p$ and its label $u$. Similarly, we  denote by $p^+$ or $u^+$ the terminal vertex of the path $p$ in $T$ and by $\lab(u^+)$ or $\lab(p^+)$   the label of this terminal vertex. An \emph{inner} vertex of $T$ is a vertex which is not a leaf. 

Now, let $\A_r$ be a rooted tree-automaton. The labeled binary tree $T_{\A_r}$ associated with $\A_r$ is defined as follows. The labels of vertices in $T_{\A_r}$ are the vertices of $\A_r$.  Recall that each finite binary word $u$ labels at most one path in $\A_r$. We let $T_{\A_r}$ be the maximal binary tree such that for every finite path $u$ in $T_{\A_r}$, the finite binary word $u$ labels a path in $\A_r$. For example, if there are no leaves in $\A_r$ then $T_{\A_r}$ is the complete infinite binary tree. The label of each vertex $u^+$ of $T_{\A_r}$ is the vertex $u^+$ of $\A_r$.

Notice that every caret in $T_{\A_r}$ is labeled with accordance with some father and his children (sometimes called a \emph{caret})  in the tree-automaton $\A_r$. In fact, $T_{\A_r}$ can be constructed inductively as follows. One starts with a root labeled by the root $r$ of $\A_r$. Whenever there is a leaf in the tree whose label is a father $x$ in  $\A_r$,  one attaches a caret to the leaf and labels the left (resp. right) leaf of the caret by the left (resp. right) child  of $x$. 

Now let $T$ be a rooted subtree of $T_{\A_r}$, maximal with respect to the property that there is no pair of distinct inner vertices in $T$ which have the same label. If $\ell$ is a leaf of $T$ and $\ell$ does not share a label with any inner vertex in $T$, then $\ell$ must be a leaf of $T_{\A_r}$. Indeed, otherwise one could attach the caret of $T_{\A_r}$ with  root $\ell$ to the  subtree $T$ and get a larger subtree where no pair of distinct inner vertices share a label. 

If the leaf $\ell$ shares a label with some inner vertex $x$ of $T$, then in $T_{\A_r}$, $\ell$ has two children. Each  child of $\ell$ is labeled as the respective child of $x$. Continuing in this manner, we see that it is possible to get $T_{\A_r}$ from $T$, by inductively attaching carets to leaves which share their label with inner vertices of $T$ and labeling the new leaves appropriately. It follows that $T$ and $T_{\A_r}$ have the same set of labeled carets. Since no labeled caret appears in $T$ more than once, $T$ is a minimal subtree of $T_{\A_r}$ with respect to the property that the sets of labeled carets of $T$ and $T_{\A_r}$ coincide.

We let a \emph{minimal tree associated with $\A_r$}, denoted $T_{\A_r}^{\min}$, be a tree $T$ as described in the preceding paragraph. We note that a minimal tree associated with $\A_r$ is not unique. However, the label of the root of $T_{\A_r}^{\min}$ (which is $r$) and the set of labeled carets of $T_{\A_r}^{\min}$ are determined uniquely by $\A_r$. Thus, we can consider different minimal trees associated with $\A_r$ to be equivalent. Clearly, a minimal tree associated with $\A_r$ also determines $\A_r$ uniquely.

The following lemma follows immediately from \cite[Lemma 10.6]{G}. 

\begin{Lemma}\label{T_min}
	Let $\A_r$ be a rooted tree-automaton and let $T_{\A_r}^{\min}$ be an associated minimal tree. Assume that for every pair of finite binary words $u$ and $v$ which label paths in $T_{\A_r}^{\min}$ such that $u^+$ and $v^+$ share a label in $T_{\A_r}^{\min}$ %(equiv. in $\A_r$)
	 there is a tree-diagram $(T_+,T_-)$ which has the pair of branches $u\to v$ and is accepted by $\A_r$. Then for every pair of finite binary words $u_1$ and $v_1$ which label paths in $\A_r$ such that $u_1^+=v_1^+$, there is a tree-diagram accepted by $\A_r$ which has the pair of branches $u_1\to v_1$. 
\end{Lemma}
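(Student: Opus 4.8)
The plan is to reformulate everything in terms of the relation ``$u\sim v$'', meaning that $u$ and $v$ are readable on $\A_r$ and there is a tree-diagram accepted by $\A_r$ (equivalently, an element of the diagram group $\ddd(\A_r)$) having $u\to v$ as a pair of branches. First I would check that $\sim$ is an equivalence relation on readable words: reflexivity comes from the identity diagram $(T_u,T_u)$, where $T_u$ is the minimal tree with branch $u$ (readable by Remark \ref{rem:obvious}); symmetry from inverting a diagram; and transitivity from Lemma \ref{product}, since the product of accepted diagrams carrying $u\to v$ and $v\to w$ is accepted and carries $u\to w$. I would then record the crucial \emph{right-extension} property: if $u\sim v$ and $uw$ is readable, then $uw\sim vw$. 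This follows by attaching the minimal tree $S_w$ containing $w$ at the leaves $u$ and $v$ of a diagram realizing $u\to v$; since $u^+=v^+$ by Lemma \ref{lem:paths in automaton}, every new pair of branches $uz\to vz$ satisfies $(uz)^+=(vz)^+$, so the enlarged diagram is still accepted. In this language the hypothesis says $\bar u\sim\bar v$ whenever $\bar u,\bar v$ label paths of $T_{\A_r}^{\min}$ ending at vertices with the same label (i.e.\ $\bar u^+=\bar v^+$ in $\A_r$), and the goal is to show $u_1\sim v_1$ for all readable $u_1,v_1$ with $u_1^+=v_1^+$.

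The heart of the argument is a \emph{reduction} claim: every word $w$ readable on $\A_r$ satisfies $w\sim\bar w$ for some $\bar w$ labeling a path in $T_{\A_r}^{\min}$. I would prove this by induction on the \emph{overflow} of $w$, defined as $|w|-|p|$, where $p$ is the longest prefix of $w$ labeling a path in $T_{\A_r}^{\min}$. If the overflow is $0$ then $w$ already lies in $T_{\A_r}^{\min}$ and $\bar w=w$ works. Otherwise $p$ ends at a leaf $\ell$ of $T_{\A_r}^{\min}$ (a binary tree gets stuck only at a leaf, since every inner vertex has both children), and because $w$ continues past $\ell$ the label $\lab(\ell)$ is a father in $\A_r$, so $\ell$ is not a leaf of $T_{\A_r}$; by the defining property of the minimal tree, $\ell$ then shares its label with an inner vertex of $T_{\A_r}^{\min}$, reached by some word $p'$. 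Writing $w\equiv ps$ and noting $p^+=p'^+$ in $\A_r$, the hypothesis gives $p\sim p'$, and since $s$ labels a trail from this common vertex, $p's$ is readable with $(p's)^+=w^+$; right-extension yields $w\equiv ps\sim p's$. The key point is that $p'$ ends at an \emph{inner} vertex of $T_{\A_r}^{\min}$, so reading $p's$ inside $T_{\A_r}^{\min}$ consumes all of $p'$ \emph{and} at least the first letter of $s$; hence the overflow of $p's$ is at most $|s|-1$, strictly less than the overflow $|s|$ of $w$. By induction $p's\sim\bar w$ with $\bar w$ in $T_{\A_r}^{\min}$, and transitivity gives $w\sim\bar w$.

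With the reduction claim in hand the lemma follows quickly: given readable $u_1,v_1$ with $u_1^+=v_1^+$, pick $\bar u_1,\bar v_1$ in $T_{\A_r}^{\min}$ with $u_1\sim\bar u_1$ and $v_1\sim\bar v_1$. Since $\sim$ preserves endpoints (Lemma \ref{lem:paths in automaton}) we get $\bar u_1^+=u_1^+=v_1^+=\bar v_1^+$, so $\bar u_1,\bar v_1$ are paths of $T_{\A_r}^{\min}$ ending at the same vertex of $\A_r$; the hypothesis gives $\bar u_1\sim\bar v_1$, and the chain $u_1\sim\bar u_1\sim\bar v_1\sim v_1$ produces the required accepted tree-diagram with pair of branches $u_1\to v_1$. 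I expect the main obstacle to be the reduction claim — specifically, isolating the right monovariant. The length of $w$ need not decrease under the substitution $p\rightsquigarrow p'$, since the path $p'$ to the inner vertex can be longer than $p$; the argument works only because the \emph{overflow} genuinely drops, as $p'$ terminates at an inner vertex and therefore absorbs at least one further letter of $s$ inside $T_{\A_r}^{\min}$. Verifying that $p's$ remains readable with the same endpoint, and that both $p$ and $p'$ lie in $T_{\A_r}^{\min}$ so that the hypothesis applies, are the remaining points requiring care.
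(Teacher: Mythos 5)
Your proposal is correct. Note that the paper does not prove this lemma at all: it is quoted as following immediately from \cite[Lemma 10.6]{G}, so there is no in-paper argument to compare against. Your self-contained derivation is sound, and its engine --- replace the maximal prefix $p$ of a readable word $w\equiv ps$ that lies in $T_{\A_r}^{\min}$ by a path $p'$ to an inner vertex carrying the same label, use the hypothesis to get $p\sim p'$, right-extend by $s$, and induct on the ``overflow'' $|s|$, which drops because $p'$ ends at an inner vertex and so absorbs at least one more letter of $s$ --- is exactly the rewriting mechanism one would expect behind the cited result. The supporting steps all check out: reflexivity, symmetry and transitivity of $\sim$ via Lemmas \ref{reduced accepted}, \ref{inv_reduced} and \ref{product}; the right-extension property via inserting the minimal tree containing $w$ at the leaves $u$ and $v$ (acceptance of the enlarged diagram follows from Lemma \ref{inv_reduced} once one notes $vw$ is readable because $v^+=u^+$); and the fact that a leaf of $T_{\A_r}^{\min}$ whose label is a father of $\A_r$ must share its label with an inner vertex of $T_{\A_r}^{\min}$, by the maximality defining the minimal tree. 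One pedantic point worth a sentence in a written-up version: the product furnished by Lemma \ref{product} is the \emph{reduced} diagram, which may carry only a contracted pair $p\to q$ with $u\equiv pz$, $w\equiv qz$ rather than the literal pair $u\to w$; since $u$ and $w$ are readable, inserting common carets and invoking Lemma \ref{inv_reduced} recovers an accepted diagram with the literal pair of branches, so transitivity of $\sim$ does hold as you use it.
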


Note that Lemma \ref{T_min} shows that if $\A_r$ is a finite rooted tree-automaton then to check if Condition (2) from Lemma \ref{core automaton} holds for $\A_r$ it suffices to check finitely many pairs of finite binary words. Following \cite{G}, we make the following definition.

\begin{Definition}
	Let $\A_r$ be a tree-automaton. We define a semigroup presentation $\mathcal P^{\A_r}$ associated with $\A_r$ as follows. The alphabet of $\mathcal P^{\A_r}$ is the vertex set $V$ of $\A_r$. The set of relations $R$ is the set of all relations of the form $a=bc$ where $a,b,c\in V$ and $b$ and $c$ are the left and right child of $a$, respectively. The semigroup presentation $\mathcal P^{\A_r}=\la V\mid R\ra$ is the \emph{presentation associated with $\A_r$}.
\end{Definition}

Note that if $\mathcal A_r$ is a rooted tree-automaton, then the associated semigroup presentation together with the root $r$ completely determine $\mathcal A_r$. %up to the choice of the root $r$. 

Let $\mathcal A_r$ be a rooted tree-automaton and let $V$ be the vertex set of $\mathcal A_r$. 
Let $u$ be a finite binary word which labels a path in $\mathcal A_r$ and let $T_u$ be the minimal binary tree with branch $u$. Note that by Remark \ref{rem:obvious}, $T_u$ must be readable on $\mathcal A_r$, so in particular, $T_u$ is a rooted subtree of $T_{\A_r}$. Let us label each vertex of $T_u$ by its label when it is viewed as a rooted subtree of $T_{\A_r}$ and note that if $u_1,\dots,u_n$ are the branches of $T_u$, then the labels of the leaves of $T_u$, read from left to right, are the vertices $u_1^+,\dots,u_n^+$ of $\A_r$. Now, assume that $u$ is the $k^{th}$ branch of $T_u$. The words 
$p_u\equiv u_1^+\cdots u_{k-1}^+$ and $q_u\equiv u_{k+1}^+\cdots u_n^+$ (where for each $i$, $u_i^+$ is the end vertex of the path $u_i$ in $\mathcal A_r$) are words over the alphabet $V$. 
Note that $p_u$ or $q_u$ might be empty (note also that the labels of the leaves of $T_u$ read from left to right spell the word $p_u u^+q_u$). The pair $(p_u,q_u)$ is called the \emph{pair of words associated with the path $u$  in $\mathcal A_r$}. Note that (if they are not empty) $p_u$ and $q_u$ represent elements of the semigroup $S^{\A_r}$ represented by the semigroup presentation $\P^{\A_r}$.

The following lemma is a slight modification of \cite[Lemma 10.10]{G} (and follows immediately from the proof in \cite{G}). 

\begin{Lemma}\label{lem:pupv}
	Let $\A_r$ be a rooted tree-automaton and let $H$ be the subgroup of $F$ accepted by $\A_r$.  Let $u,v$ be finite binary words which label paths in $\A_r$ and let $(p_u,q_u)$ and $(p_v,q_v)$ be the associated pairs of words. Then there is an element $h\in H$ with the pair of branches $u\to v$ if and only if the following assertions hold. 
	\begin{enumerate}
		\item[$(1)$] $u^+=v^+$ in $\A_r$. 
		\item[$(2)$] $p_u= q_u$ and $p_v=q_v$ in the semigroup $S^{\A_r}$ defined by the presentation $\mathcal P^{\A_r}$ associated with the automaton $\A_r$. 	
	\end{enumerate}	
\end{Lemma}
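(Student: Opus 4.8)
The plan is to translate membership in $H=\ddd(\A_r)$ into the language of the semigroup $S^{\A_r}$ and then read the statement off from this dictionary. First I would record the basic correspondence: a tree-diagram $(T_+,T_-)$ with top branches $a_1,\dots,a_m$ and bottom branches $b_1,\dots,b_m$ is accepted by $\A_r$ exactly when it is readable and $a_i^+=b_i^+$ in $\A_r$ for every $i$; reading the leaf-labels from left to right then yields one and the same word $a_1^+\cdots a_m^+=b_1^+\cdots b_m^+$ over the vertex alphabet $V$, and this word equals the root $r$ in $S^{\A_r}$ (contracting all carets is precisely applying the relations $a=bc$ right-to-left). The crucial observation is that attaching a caret at a readable leaf labelled by a father $a$ replaces $a$ by its two children $b,c$, i.e.\ applies the relation $a=bc$; hence the element of $S^{\A_r}$ spelled by the leaf-labels of a readable tree is invariant under refinement. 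Since $T_u$ is the minimal tree with branch $u$ and is therefore contained in every readable tree having $u$ as a branch, the block of leaves to the left of $u$ always spells $p_u$ and the block to the right always spells $q_u$ in $S^{\A_r}$, independently of the refinement used.

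With this dictionary the forward direction is immediate. If $h\in H$ has the pair of branches $u\to v$, pick an accepted tree-diagram $(T_+,T_-)$ of $h$ in which $u$ is the $k$-th branch of $T_+$ and $v$ the $k$-th branch of $T_-$ (the leaves of a tree-diagram are matched in order, so $u$ and $v$ occur at the same position). Condition (1), $u^+=v^+$, is Lemma \ref{lem:paths in automaton}. For the remainder, the leaves of $T_+$ and of $T_-$ strictly to the left of position $k$ are matched pairwise, so $a_1^+\cdots a_{k-1}^+$ and $b_1^+\cdots b_{k-1}^+$ are literally the same word; by refinement-invariance these represent $p_u$ and $p_v$ in $S^{\A_r}$, whence $p_u=p_v$, and symmetrically $q_u=q_v$. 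This is precisely condition (2): the pair of words associated with $u$ and the pair associated with $v$ coincide in $S^{\A_r}$.

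For the converse I would build $h$ by hand. Put $c:=u^+=v^+$. The tree $T_u$ is a derivation of the word $p_u\,c\,q_u$ from $r$ and $T_v$ a derivation of $p_v\,c\,q_v$; I want to refine their left blocks to one common word and their right blocks to one common word, leaving the central leaves $u$ and $v$ untouched. Here the key step enters: the equality $p_u=p_v$ in $S^{\A_r}$ must be upgraded to a \emph{common refinement}, that is, a word $W_L$ reachable from both $p_u$ and $p_v$ by a sequence of father-to-children expansions $a\to bc$, each of which is a readable caret-growing move; and similarly a common refinement $W_R$ of $q_u$ and $q_v$. Granting this, refine the left block of $T_u$ and of $T_v$ to $W_L$ and their right blocks to $W_R$, keeping $u$, $v$ as leaves. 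The resulting trees $T_+,T_-$ have the same leaf-label word $W_L\,c\,W_R$, so matching the $i$-th leaf of $T_+$ to the $i$-th leaf of $T_-$ produces a tree-diagram accepted by $\A_r$ whose branch at position $|W_L|+1$ is exactly $u\to v$; the element it represents is the desired $h\in H$.

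The main obstacle is exactly this common-refinement step. I would justify it by regarding the expansions $a\to bc$ as a string-rewriting system whose left-hand sides are single letters, each father carrying a unique ordered pair of children by the definition of a tree-automaton. Such a system has no critical pairs, i.e.\ it is orthogonal, and orthogonal rewriting systems are confluent even though expansion never terminates when $\A_r$ contains cycles. Confluence of expansion is precisely the Church--Rosser property that converts the zigzag of expansions and contractions witnessing $p_u=p_v$ in $S^{\A_r}$ into a single common expansion $W_L$, and likewise on the right. This is also the point at which one may instead simply invoke \cite[Lemma 10.10]{G}, from whose proof the present statement follows after the cosmetic passage from a single word to a pair $u,v$; and should an effective test be wanted, Lemma \ref{T_min} reduces the pairs that must be checked to finitely many.
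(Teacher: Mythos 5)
Your proof is correct, and it is a genuinely different route from the paper's, for the simple reason that the paper offers no proof at all: it derives the lemma as ``a slight modification of [G, Lemma 10.10]'', whereas you reconstruct the underlying Guba--Sapir dictionary between readable trees and derivations over the presentation $\P^{\A_r}$ from scratch. You correctly isolate the crux, namely upgrading the semigroup equality $p_u=p_v$ (a priori a zigzag of expansions $a\to bc$ and contractions) to a single common expansion $W_L$, and your justification is sound: each letter carries at most one expansion rule by Condition $(2)$ of Definition \ref{def:tree-aut} and distinct redexes occupy disjoint positions, so the expansion relation is confluent without being terminating, and Church--Rosser converts derivability in $S^{\A_r}$ into a common refinement. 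Two glosses are needed to make the argument airtight. First, you prove the statement with Condition $(2)$ read as ``$p_u=p_v$ and $q_u=q_v$''; that is the intended reading (it is what the remark following the lemma and every later application, e.g.\ Lemma \ref{lem:coreFp1}, actually use), but be aware the printed statement literally says ``$p_u=q_u$ and $p_v=q_v$'', a typo you have silently and correctly repaired. Second, in the forward direction you ``pick an accepted tree-diagram of $h$ in which $u$ and $v$ are the $k$-th branches''; such a diagram exists but this deserves a line, since an arbitrary tree-diagram of $h$ realizing the pair of branches $u\to v$ need not be readable on $\A_r$. Write $u\equiv pw$, $v\equiv qw$ with $p\to q$ a pair of branches of the reduced (hence accepted) tree-diagram of $h$ as in Remark \ref{rem:branches}, and attach the minimal tree with branch $w$ at the leaves $p$ and $q$; readability of the resulting top tree follows from the hypothesis that $u$ labels a path in $\A_r$ by the argument of Remark \ref{rem:obvious}, and acceptance then follows from Lemma \ref{inv_reduced}. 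With these two points addressed, your argument is complete and self-contained, which is arguably an improvement over deferring to the external reference.
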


Note that when we write $p_u=p_v$ in the semigroup $S^{\A_r}$ defined by the presentation $\mathcal P^{\A_r}$, the meaning is that either both  $p_u$ and $p_v$ are empty or that they are both non-empty and $p_u=p_v$ in $S^{\A_r}$. Similarly, for $q_u$ and $q_v$. %(In other words, instead of considering equality in the semigroup $S^{\A_r}$ we consider equality in the monoid obtained from $S^{\A_r}$ by adding the identity element $\emptyset$.)

\begin{Remark}\label{rem:core-automaton}
	Let $\A_r$ be a finite rooted tree-automaton. To check if $\A_r$ is a core automaton one has to check whether it is reduced (which can be easily done) and whether it satisfies Condition (2) from Lemma \ref{core automaton}. For that, one can construct the (finite) tree $T_{\A_r}^{\min}$. Then one has to check for each pair of finite binary words $u$ and $v$ which label paths on $T_{\A_r}^{\min}$ such that $u^+$ and $v^+$ share a label in $T_{\A_r}^{\min}$ whether Condition (2) from Lemma \ref{core automaton} holds for $u$ and $v$. Lemma \ref{lem:pupv} shows that to check if Condition (2) from Lemma \ref{core automaton} holds for $u$ and $v$ one has to check equality of words  in the semigroup $S^{\A_r}$ given by the presentation $\P^{\A_r}$. Hence, if the word problem for $\P^{\A_r}$ is decidable (for example, if $\P^{\A_r}$ has a finite completion), we get a method for checking if $\A_r$ is a core automaton.
	Note that in general, the word problem for finite semigroup presentations is undecidable. However, not every semigroup presentation is associated with a rooted tree-automaton and to verify if Condition (2) from Lemma \ref{lem:pupv} holds for every relevant  pair of finite binary words, we do not need a complete solution for the word problem over the presentation $\P^{\A_r}$. Hence, it is possible that the problem of deciding if a given rooted tree-automaton is a core automaton is decidable. 
\end{Remark}

\begin{Example}
	The rooted tree-automaton $\A_r$ from Figure \ref{fig:NotCore} is not a core automaton.

	\begin{figure}[ht]
		\centering
		\includegraphics[width=.38\linewidth]{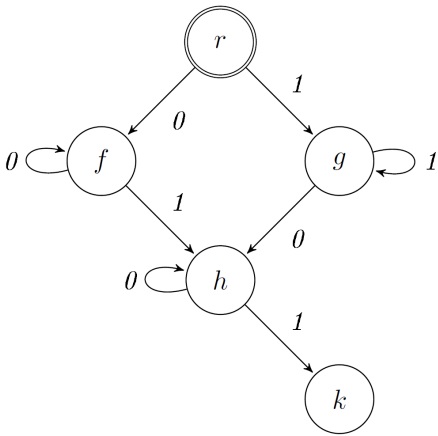}
		\caption{A rooted tree-automaton which is not a core automaton.}\label{fig:NotCore}
	\end{figure}

\begin{proof}
Note that the following is a minimal tree associated with $\A_r$: 

\Tree[.$r$ [.$f$  [.$f$ ] [.$h$ [.$h$ ] [.$k$ ] ] ] [.$g$ [.$h$ ] [.$g$ ] ] ] 

The semigroup presentation associated with $\A_r$ is: 
$$\mathcal P^{\A_r}=\la r,f,g,h,k\mid r=fg,f=fh,g=hg,h=hk \ra$$
Now, consider the words $u\equiv 01$ and $v\equiv 010$. Note that $u^+=v^+$ in $\A_r$ and let $(p_u,q_u)$ and $(p_v,q_v)$ be the pairs of words associated with the paths $u$ and $v$ in $\A_r$, respectively. Then $q_u\equiv g$ and $q_v\equiv kg$. 
Since no relation in $\P^{\A_r}$ has $k$ as its first letter (on either side of the relation), every word over the alphabet of $\P^{\A_r}$ that is equal to the word $q_v\equiv kg$ in $S^{\A_r}$ must start with $k$. Hence, 
 $q_u\neq q_v$ in the semigroup $S^{\A_r}$. Hence, by Lemma \ref{lem:pupv}, there is no element in $\ddd(\A_r)$ with the pair of branches $u\to v$ and therefore, by Lemma \ref{core automaton}, $\A_r$ is not  a core automaton. 
 \end{proof}
\end{Example}

\begin{Remark}\label{rem:delete?}
	Let $H$ be a finitely generated closed subgroup of $F$ with core $\mathcal C(H)$. Recall that $H$ satisfies Condition (4) from Lemma \ref{lem:max char} if and only if for every rooted tree-automaton $\A_r$ that is a surjective image of $\C(H)$, the tree-automaton $\A_r$ is either isomorphic to $\C(H)$ or to $\C(F)$ or is not a rooted tree-automaton.  
	Hence, to check if Condition (4) from Lemma \ref{lem:max char} holds for $H$, it suffices to consider all surjective images $\A_r$ of $\C(H)$, which are not isomorphic to $\C(H)$ nor to $\C(F)$ and check whether any of them is a core-automaton. This is often doable using Remark \ref{rem:core-automaton}. 
	%To check if a given surjective image $\A_r$ of $\C(H)$ is a core-automaton one has to check whether it is reduced (which can be easily done) and whether it satisfies Condition (2) from Lemma \ref{core automaton}. For that, one can construct the tree $T_{\A_r}^{\min}$. Then one has to check for each pair of finite binary words $u$ and $v$ which label paths on $T_{\A_r}^{\min}$ such that $u^+$ and $v^+$ share a label in $T_{\A_r}^{\min}$, whether Condition (2) from Lemma \ref{lem:pupv} holds for $u$ and $v$. Note that to check if Condition (2) from Lemma \ref{lem:pupv} holds for $u$ and $v$ one has to check equality of words  in the semigroup $S^{\A_r}$ given by the presentation $\P^{\A_r}$. While this is very often simple (in examples checked by the author), we do not have an algorithm for doing so. Note that in general, the word problem for finitely presented semigroups is undecidable. However, not every semigroup presentation is associated with a rooted tree-automaton and to verify if Condition (2) from Lemma \ref{lem:pupv} holds for every relevant  pair of finite binary words, we do not need a complete solution for the word problem over the presentation $\P^{\A_r}$. Hence, it is possible that the problem of deciding if a given rooted tree-automaton is a core automaton is decidable. 
\end{Remark}

\begin{Example}\label{exm}
	The subgroup  $H=\la x_0,x_1^2x_3^{-1}x_2^{-1}x_1^{-1},x_1x_2^2x_3^{-1}x_1^{-2}\ra$ is a maximal subgroup of infinite index in $F$. 
	\begin{proof}
	First, we note that $H$ is a closed subgroup of $F$. Indeed, the core of $H$ is given in Figure \ref{fig:CoreMaximal}.

		\begin{figure}[ht]
		\centering
		\includegraphics[width=.4\linewidth]{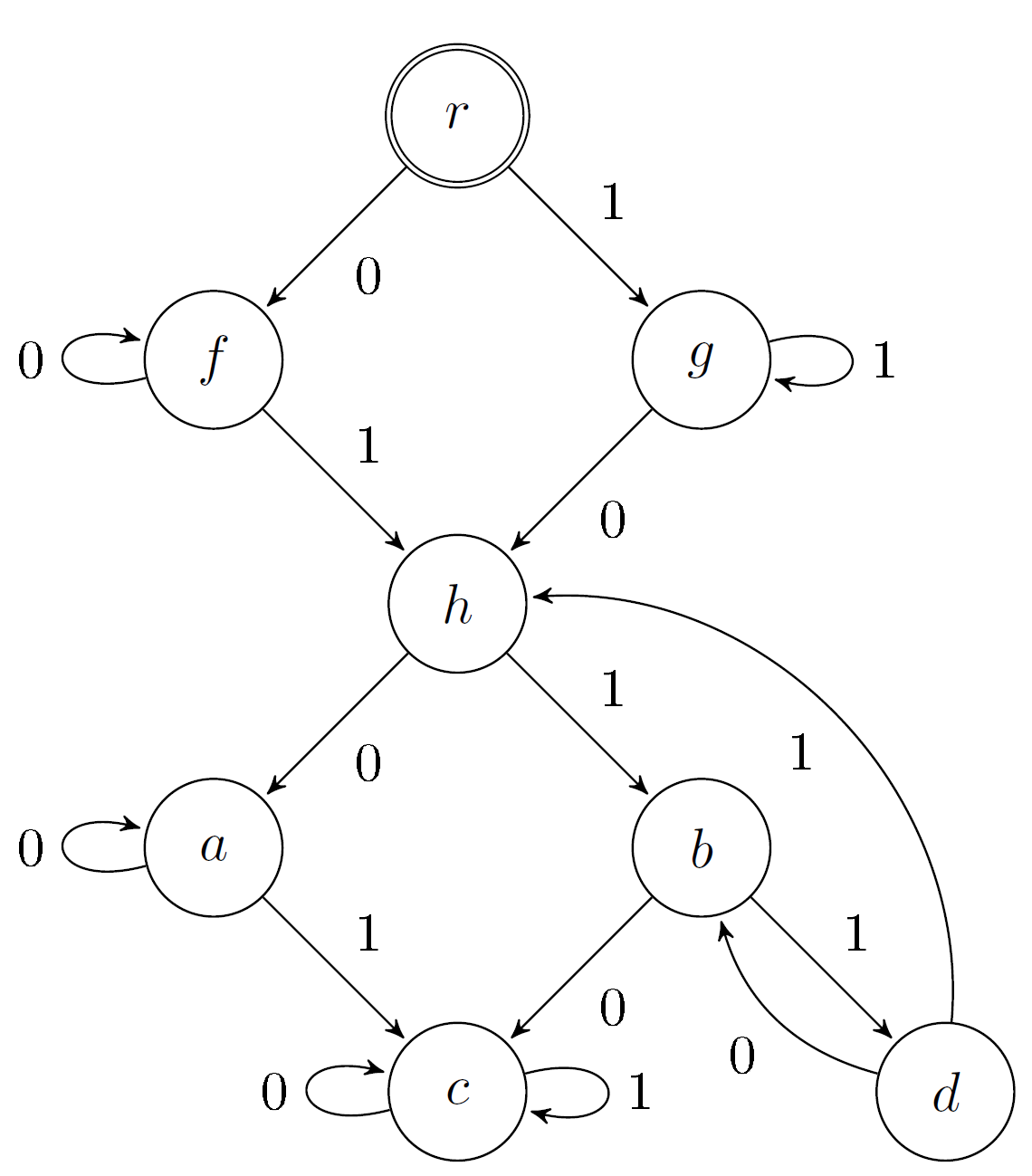}
		\caption{The core of $H$.}\label{fig:CoreMaximal}
	\end{figure}

If one applies the algorithm from \cite{GS3} for finding a generating set of $\Cl(H)$ one gets the given generating set of $H$\footnote{More accurately, the algorithm gives a generating set such that after a few elementary Nielsen transformations coincides with the given generating set of $H$.}. Hence $\Cl(H)=H$. Therefore to show that $H$ is a maximal subgroup of infinite index in $F$ it suffices to  verify that it satisfies Conditions (1)-(4) from Lemma \ref{lem:max char}. 
Computing the images of the generators of $H$ in the abelianization of $F$ shows that $\pi_{\ab}(H)=\mathbb{Z}^2$. Hence, Condition (1) from Lemma \ref{lem:max char} holds. Conditions (2) and (3) are clear from the above core. Hence, it suffices to check that Condition (4) from the lemma holds. Let $\A_r$ be a rooted tree-automaton that is a surjective image of $\C(H)$ but non-isomorphic to $\C(H)$. It suffices to prove that $\A_r$ is either not a core automaton or isomorphic to $\C(F)$. Let $\phi$ be the (unique) surjective morphism from $\C(H)$ to $\A_r$. Since $\phi$ is not injective, there are at least two distinct vertices $x\neq y$ in $\C(H)$ such that $\phi(x)=\phi(y)$. We note that if $x$ or $y$ is not a middle vertex of $\C(H)$ (i.e., if $x$ or $y$ belong to $\{r,f,g\}$) 
then by Remark \ref{rem:different types} the rooted tree-automaton $\A_r$ is not a core-automaton, and we are done. Hence, we can assume that $\phi(r),\phi(f)$ and $\phi(g)$ are all distinct vertices of $\A_r$ and that for every vertex $z$ of $\C(H)$ such that $z\notin\{r,f,g\}$ we have $\phi(z)\notin\{\phi(r),\phi(f),\phi(g)\}$. In particular, the vertices $x$ and $y$ must both be middle vertices of $\C(H)$. %and as such  $x,y\in\{h,a,b,c,d\}$.
 It is not difficult to check that if $\{x,y\}\neq \{a,c\}$, then the rooted tree-automaton $\A_r$ must be isomorphic to $\C(F)$. Indeed, assume for  example that $\{x,y\}=\{b,c\}$, so that $\phi(b)=\phi(c)$. Then since $d$ and $c$ are the right children of $b$ and $c$ respectively, we must have $\phi(d)=\phi(c)$. Similarly, since $h$ and $c$ are the right children of $d$ and $c$, respectively, we have $\phi(h)=\phi(c)$. 
  Continuing in this manner we get that $\phi(h)=\phi(a)=\phi(b)=\phi(c)=\phi(d)$. That clearly implies that $\A_r$ is isomorphic to $\C(F)$ (recall that $\C(F)$ appears in Figure \ref{fig:coreF}). Hence,
  it suffices to consider the case where $\{x,y\}=\{a,c\}$ so that $\phi(a)=\phi(c)$. Moreover, we can assume that that $\phi$ is injective on the vertices $h$, $b$ and $d$ and that $\phi(c)\notin\{\phi(h),\phi(b),\phi(d)\}$ (otherwise, we are done by the previous case). It follows that $\A_r$ is the rooted tree-automaton in Figure \ref{fig:NotCoreMaximal}.

  	\begin{figure}[ht]
  	\centering
  	\includegraphics[width=.42\linewidth]{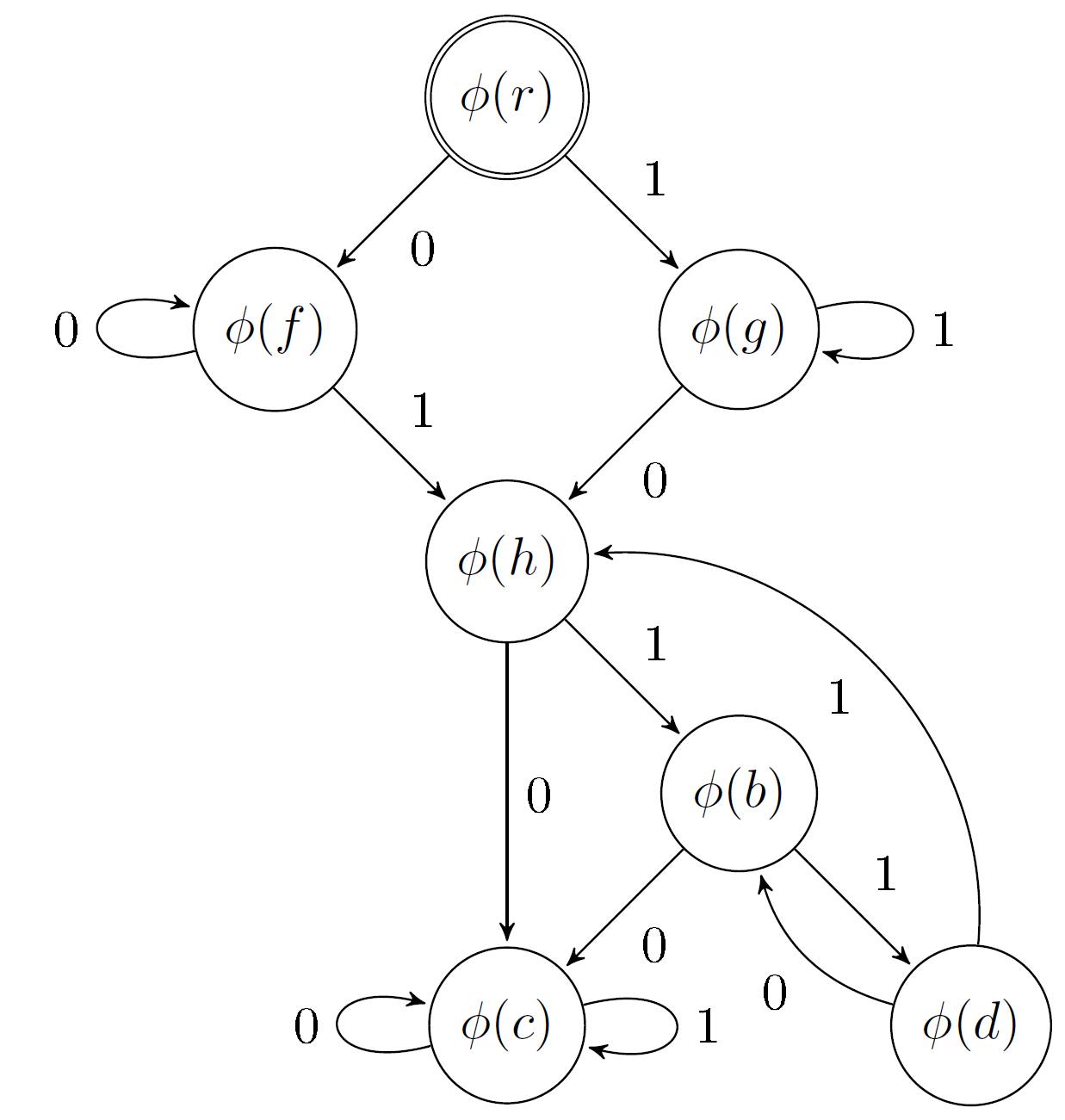}
  	\caption{The rooted tree-automaton $\A_r$.}\label{fig:NotCoreMaximal}
  \end{figure}

This rooted tree-automaton is not a core automaton. Indeed, let $u\equiv 010$ and $v\equiv 0101$ and note that $u^+=v^+=\phi(c)$ in $\A_r$. Let $(p_u,q_u)$ and $(p_v,q_v)$ be the pairs of words associated with the paths $u$ and $v$ in $\A_r$. Then $p_u\equiv \phi(f)$ and $p_v\equiv \phi(f)\phi(c)$. It is easy to check that $p_u\neq p_v$ in the semigroup $S^{\A_r}$ represented by the semigroup presentation $\P^{\A_r}$ (indeed, in every relation of $\P^{\A_r}$ the last letter on the right hand-side of the relation is $\phi(c)$ if and only if the last letter on the left hand-side of the relation is $\phi(c)$). Hence $\A_r$ is not a core automaton and we are done. 
\end{proof}
\end{Example}

\section{An infinite family of non-isomorphic maximal subgroups of Thompson's group $F$}\label{sec:Jones}

\subsection{Jones' subgroups of Thompson's group $F$}

	Vaughan Jones \cite{Jones} defined a family of unitary representations of Thompson's group $F$ using planar algebras. These representations give rise to interesting subgroups of $F$ (the stabilizers of the vacuum vector in these representations). Jones' subgroup $\overrightarrow{F}$, defined in \cite{Jones} is particularly interesting. Indeed, Jones proved that elements of $F$ encode in a natural way all knots and links and elements of $\overrightarrow{F}$ encode all oriented links and knots. 
	
	In \cite{GS} we proved that Jones' subgroup $\overrightarrow{F}$ is isomorphic to
	the ``brother group'' $F_3$  of $F$ (Recall that for every $n\ge 2$ one can define a ``brother group'' $F_n$ of $F=F_2$ as the group of all piecewise linear increasing homeomorphisms of the unit interval where all slopes are powers of $n$ and all breaks of the derivative occur at $n$-adic fractions, i.e., points of the form $\frac a{n^k}$ where $a, k$ are positive integers \cite{B}. It is well known that $F_n$ is finitely presented for every $n$ (a concrete and easy presentation can be found in \cite{GuSa97})).  We also showed that $\overrightarrow F$ is the stabilizer of the set $S$ of all dyadic fractions such that the sum of digits in their finite binary representation is odd.
	
	In \cite{GS1}, we proved that the only subgroups of $F$ strictly containing Jones' subgroup $\arr{F}$ are $F_{1,2}$ and $F$
	 %Jones' subgroup $\overrightarrow{F}$ is maximal inside the subgroup $F_{1,2}$ of Thompson's group $F$ 
	 (recall that $F_{1,2}$ is the subgroup of $F$ of index $2$ of all functions whose slope at $1^-$ is an even power of $2$). In particular, $\arr{F}$ is a maximal subgroup of $F_{1,2}$.  Since by \cite{BW}, $F_{1,2}$ is isomorphic to $F$, it followed that $F$ has a maximal subgroup isomorphic to Jones' subgroup $\overrightarrow{F}$. This was the first example of a maximal subgroup of $F$ which is not the stabilizer of any number in $(0,1)$. 
	 
	In \cite{GS}, we also studied a family of subgroups, which we called Jones' subgroups $\overrightarrow F_n$, which can be defined in an analogous way to $\overrightarrow F$, where $\overrightarrow F_2=\overrightarrow F$ (for further details, see \cite[Section 5]{GS}). We showed that like $\overrightarrow{F}$, for each $n$, Jones' subgroup $\overrightarrow{F}_n$ is isomorphic to the brother group $F_{n+1}$ of $F$. We also showed that for each $n$, $\overrightarrow{F}_n$ is the intersection of stabilizers of certain sets of dyadic fractions. 
	
	\begin{Lemma}[{\cite[Theorem 5.11]{GS}}]\label{lem:stabilizers}
		Let $n\geq 2$. For each $i=0,\dots,n-1$, let $S_i$ be the set of all dyadic fractions such that the sum of digits in their finite binary representation is $i$ modulo $n$. 
		Then 
		$$\overrightarrow{F}_n=\bigcap_{i=0}^{n-1}\Stab(S_i).$$
		That is, Jones' subgroup $\overrightarrow{F}_n$ is the intersection of the stabilizers of $S_i$, $i=0,\dots,n-1$ under the natural action of $F$ on the interval $[0,1]$. 
	\end{Lemma}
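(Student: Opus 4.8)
The plan is to identify both sides of the claimed equality with a single combinatorial condition on the pairs of branches of an element: that every pair of branches $u\to v$ of $f$ satisfies $\sigma(u)\equiv\sigma(v)\pmod n$, where $\sigma(w)$ denotes the number of $1$'s occurring in a finite binary word $w$.

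First I would analyze the right-hand side $\bigcap_{i=0}^{n-1}\Stab(S_i)$. The sets $S_0,\dots,S_{n-1}$ partition $\mathcal D$ according to the residue modulo $n$ of the digit sum of the terminating binary representation, and every $f\in F$ restricts to a bijection of $\mathcal D$. Hence $f$ lies in $\bigcap_i\Stab(S_i)$ if and only if $f$ preserves this partition, i.e. $\sigma(f(\alpha))\equiv\sigma(\alpha)\pmod n$ for every $\alpha\in\mathcal D$. One direction is immediate; for the other, a bijection of $\mathcal D=\bigsqcup_i S_i$ that sends each $S_i$ into itself must (by disjointness and surjectivity) send each $S_i$ onto itself.

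Next I would translate this pointwise condition into a condition on branches. Fix any tree-diagram of $f$ with pairs of branches $u_i\to v_i$. For a dyadic $\alpha$ in the interior of the interval $[u_i]$, write $\alpha=.u_i\delta$ with $\delta$ ending in $1$; this is the terminating representation, so $\sigma(\alpha)=\sigma(u_i)+\sigma(\delta)$, while $f(\alpha)=.v_i\delta$ gives $\sigma(f(\alpha))=\sigma(v_i)+\sigma(\delta)$. Thus on the interior of each $[u_i]$ the digit sum is shifted by the constant $\sigma(v_i)-\sigma(u_i)$; the finitely many subdivision points $.u_i$ are handled by noting that stripping trailing zeros does not change $\sigma$, so $\sigma(.u_i)=\sigma(u_i)$ and $\sigma(.v_i)=\sigma(v_i)$. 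Consequently $f$ preserves digit sum modulo $n$ if and only if $\sigma(u_i)\equiv\sigma(v_i)\pmod n$ for every $i$. I would also record that this branch condition is independent of the chosen diagram: inserting a common caret replaces a pair $u\to v$ by $u0\to v0$ and $u1\to v1$, which changes $\sigma(u)$ and $\sigma(v)$ by the same amount, hence preserves the difference $\sigma(u)-\sigma(v)$.

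Finally I would match the left-hand side to the very same condition by invoking the definition of Jones' subgroup $\overrightarrow F_n$ from \cite{GS}: an element belongs to $\overrightarrow F_n$ exactly when, for corresponding leaves of its two trees, the digit sums of the two branch words are congruent modulo $n$, which is precisely the requirement $\sigma(u_i)\equiv\sigma(v_i)\pmod n$ for all $i$; for $n=2$ this specializes to the already-quoted identity $\overrightarrow F=\Stab(S)$. Combining the three steps yields $\overrightarrow F_n=\bigcap_{i=0}^{n-1}\Stab(S_i)$. The main obstacle is this last step: relating the \emph{intrinsic} definition of $\overrightarrow F_n$ coming from Jones' planar-algebra construction to the branch-pair digit-sum condition. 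For $n=2$ this is the substance of the earlier stabilizer theorem, and for general $n$ it requires carefully tracking the $n$-coloring of the trees through the tree-diagram description of elements of $F$; once it is in place, the two stabilizer steps above are routine.
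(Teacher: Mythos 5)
This lemma is imported into the paper verbatim from \cite[Theorem 5.11]{GS}; the paper gives no proof of it, so there is no internal argument to compare against. Judged on its own, your proposal establishes only the easy half of the statement. Your reduction of $\bigcap_{i=0}^{n-1}\Stab(S_i)$ to the branch condition ``$\sigma(u)\equiv\sigma(v)\pmod n$ for every pair of branches $u\to v$'' is correct and routine (it is essentially the computation in the paper's Lemma \ref{lem:dgfp}, which identifies this set with the diagram group $\ddd(\Asum)$ for $n=p$ --- though note that Lemma \ref{lem:dgfp} itself invokes the present lemma as an input, so it cannot be cited here without circularity).

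The gap is your third step. The branch digit-sum condition is not the definition of $\overrightarrow F_n$ in \cite{GS}: there $\overrightarrow F_n$ arises from Jones' planar-algebra construction and is characterized as the smallest subgroup containing $x_0x_1\cdots x_{n-1}$ that is closed under the addition $\oplus$, equivalently the subgroup generated by $\{x_ix_{i+1}\cdots x_{i+n-1}\mid i=0,\dots,n\}$ (Lemma \ref{lem:closed for addition} of the present paper). Checking that these generators satisfy the digit-sum condition yields the inclusion $\overrightarrow F_n\subseteq\bigcap_i\Stab(S_i)$, but the reverse inclusion --- that every element of $F$ preserving digit sums modulo $n$ is a product of those generators --- is the entire content of \cite[Theorem 5.11]{GS}, and it is precisely the step you defer with ``once it is in place, the two stabilizer steps above are routine.'' In the terminology of this paper, what must be shown is that the subgroup generated by $\{x_ix_{i+1}\cdots x_{i+n-1}\}$ is \emph{closed}, i.e.\ coincides with the diagram group over its core; in \cite{GS} this is done by an explicit analysis of the associated directed complex, and no version of that argument appears in your proposal. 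Appealing to the $n=2$ case does not help, since that case is itself a prior theorem of the same kind rather than a base for an induction. So the proposal correctly handles the surrounding bookkeeping but omits the theorem's actual substance.
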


	It follows from Lemma \ref{lem:stabilizers}, that for each $n$, $\arr{F}_n$ is a closed subgroup of Thompson's group  $F$. Indeed, Lemma \ref{lem:stabilizers} implies that every piecewise-$\arr{F}_n$ function belongs to $\arr{F}_n$. The following was also proved in \cite{GS}.
	
	\begin{Lemma}[See {\cite[Section 5.2]{GS}}]\label{lem:closed for addition}
		For every $n\in\mathbb{N}$, Jones' subgroup $\arr{F}_n$ is the  minimal subgroup of $F$ which contains the element $x_0x_1\dots x_{n-1}$ and is closed for addition. Moreover, the subgroup $\arr{F}_n$ is generated by the set $\{x_ix_{i+1}\cdots x_{i+n-1}\mid i=0,\dots, n\}$.
	\end{Lemma}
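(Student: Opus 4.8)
The plan is to work throughout with the combinatorial description of $\arr{F}_n$ that follows from Lemma \ref{lem:stabilizers}: a function $f\in F$ lies in $\arr{F}_n$ if and only if every pair of branches $u\to v$ of $f$ satisfies $\sigma(u)\equiv\sigma(v)\pmod n$, where $\sigma(\cdot)$ counts the $1$'s in a finite binary word. This is immediate, since $f$ sends $.u\beta$ to $.v\beta$ and hence changes the digit-sum of a dyadic fraction precisely by $\sigma(v)-\sigma(u)$; the condition is preserved under inserting and reducing common carets, so it may be checked on the reduced tree-diagram. Two facts are then routine. First, $\arr{F}_n$ is closed for addition: the branches of $g\oplus h$ are the words $0u\to 0v$ (from $g$) and $1p\to 1q$ (from $h$), and prepending a fixed digit changes $\sigma$ equally on both sides of a pair. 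Second, a direct computation of the reduced tree-diagram of $w:=x_0x_1\cdots x_{n-1}$ shows that each of its pairs of branches preserves $\sigma$ modulo $n$, so $w\in\arr{F}_n$. Consequently the minimal subgroup $M$ containing $w$ and closed for addition satisfies $M\le\arr{F}_n$, and it remains to prove the reverse inclusion and to identify the generating set.

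Next I would record the index-shift identity for addition. The map $g\mapsto\1\oplus g$ is the isomorphism of $F$ onto $F_{[1]}$ from Section \ref{sec:copies}, and comparing word-replacement rules gives $\1\oplus x_0=x_1$ and, more generally, $\1\oplus x_i=x_{i+1}$ (consistent with the infinite presentation, under which $x_j^{-1}x_ix_j=x_{i+1}$ is carried to the same relation with all indices raised by one). Since $g\mapsto\1\oplus g$ is a homomorphism, applying it to $w$ yields $\1\oplus(x_ix_{i+1}\cdots x_{i+n-1})=x_{i+1}\cdots x_{i+n}$; writing $y_i:=x_ix_{i+1}\cdots x_{i+n-1}$ this reads $\1\oplus y_i=y_{i+1}$. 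As $y_0=w\in M$ and $M$ contains the trivial element and is closed for addition, induction gives $y_i\in M$ for every $i\ge 0$; in particular $G:=\la y_0,\dots,y_n\ra\le M$. Combined with the first paragraph this produces the chain $G\le M\le\arr{F}_n$, so the entire lemma follows once I establish the single inclusion $\arr{F}_n\le G$.

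For that inclusion I would use the positive-word (tree) normal form. Assign to each branch $u$ the colour $\sigma(u)\bmod n$; by the characterization above the tree-diagrams occurring in $\arr{F}_n$ are exactly the colour-matched pairs, in which the $i$-th leaves of the two trees carry equal colours. Every $f\in F$ can be written as $ab^{-1}$ with $a,b$ positive words representing the two trees of $f$, so it suffices to show that every tree appearing in a colour-matched pair is obtained from the one-leaf tree by a finite sequence of admissible expansions, each realized by right multiplication by some $y_i$. Equivalently, I would set up the bijection between colour-matched binary tree-pairs and pairs of $(n+1)$-ary trees under which $y_i$ corresponds to the $i$-th standard generator of the brother group $F_{n+1}$; this is the bijection underlying the isomorphism $\arr{F}_n\cong F_{n+1}$ already recorded in \cite{GS}. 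Generation of $\arr{F}_n$ by $y_0,\dots,y_n$ then follows because $F_{n+1}$ is generated by its first $n+1$ standard generators.

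The main obstacle is precisely this last step: the combinatorial claim that the colour-matched tree-pairs are exactly those assembled from the $y_i$-expansions (equivalently, that the colouring bijection genuinely carries the tree-pairs of $\arr{F}_n$ onto $(n+1)$-ary tree-pairs and intertwines the group operations). Proving it amounts to an induction on the number of carets in which, in any colour-matched pair, one must locate an outermost block of carets coinciding with a copy of the pattern of some $y_i$ and peel it off by multiplying by $y_i^{\pm 1}$ without destroying colour-matching or increasing the caret count; tracking the residues modulo $n$ through caret reductions is the delicate bookkeeping. Once this is in place the chain $G\le M\le\arr{F}_n\le G$ collapses, giving $\arr{F}_n=M=G$ and proving both assertions of Lemma \ref{lem:closed for addition} at once.
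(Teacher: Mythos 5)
This lemma is not proved in the paper at all: it is quoted from \cite[Section 5.2]{GS}, so there is no in-paper argument to compare against, and your proposal has to stand on its own. Its first half does. The branch-wise characterization of $\arr{F}_n$ via digit sums modulo $n$ follows correctly from Lemma \ref{lem:stabilizers}, closure of $\arr{F}_n$ under $\oplus$ is immediate from it, the identity $\1\oplus x_i=x_{i+1}$ (hence $\1\oplus y_i=y_{i+1}$ for $y_i=x_i\cdots x_{i+n-1}$) is right, and these facts do give the chain $\la y_0,\dots,y_n\ra\leq M\leq \arr{F}_n$, where $M$ is the minimal summand-closed subgroup containing $y_0$.

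The genuine gap is the remaining inclusion $\arr{F}_n\leq\la y_0,\dots,y_n\ra$, which is where the entire content of the lemma lives, and which you explicitly defer (``the main obstacle is precisely this last step''). The reduction you propose is also not correct as stated: it does not suffice to show that each tree occurring in a colour-matched pair is built from the one-leaf tree by ``$y_i$-expansions,'' because right multiplication by $y_i$ is not a single-leaf expansion (the top tree of $y_0=x_0x_1$ already has four leaves of alternating colours, and multiplying by $y_i$ modifies both trees of a diagram after passing to a common refinement). The correct framework is the one from \cite{GS}/\cite{GuSa97}: colour-matched pairs are precisely diagrams over the presentation $\Psum=\la a_0,\dots,a_{n-1}\mid a_i=a_ia_{i+1}\ra$, and one must show that this diagram group is generated by the $n+1$ elementary diagrams corresponding to the cells $a_i=a_ia_{i+1}$, which is exactly the peeling-off induction you name but do not carry out. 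Until that induction (or the equivalent identification of $\arr{F}_n$ with $F_{n+1}$ together with a matching of generating sets) is actually done, neither the minimality claim nor the generation claim is established.
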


\subsection{Maximality of Jones subgroup $\protect\arr{F}_p$ inside $F_{1,p}$}

As noted above, in \cite{GS1}, we proved that the only subgroups of $F$ which strictly contain Jones' subgroup $\arr{F}$ are $F_{1,2}$ and $F$. In this section, we generalize this result for every prime number $p$. Namely, we show that the only subgroups of $F$ which strictly contain Jones' subgroup $\arr{F}_p$ are $F_{1,p}$ and $F$.

For the remainder of this section, let us fix a prime number $p$. We start by defining a rooted tree-automaton which we denote by $\Asum$ such that $\ddd(\Asum)=\arr{F}_p$. 

\begin{Definition}
	 The rooted tree-automaton $\Asum$ (see Figure \ref{fig:Asum}) is defined to be the rooted tree-automaton with vertices $a_0,\dots,a_{p-1}$ such that each vertex $a_i$ has two outgoing edges: a directed edge labeled $``0"$ from the vertex to itself and a directed edge labeled $``1"$ from the vertex to $a_{i+1}$, where $i+1$ is taken modulo $p$. The root of $\Asum$ is the vertex $a_0$. %See figure \ref{}
\end{Definition}

	\begin{figure}[h]
	\centering
	\includegraphics[width=.42\linewidth]{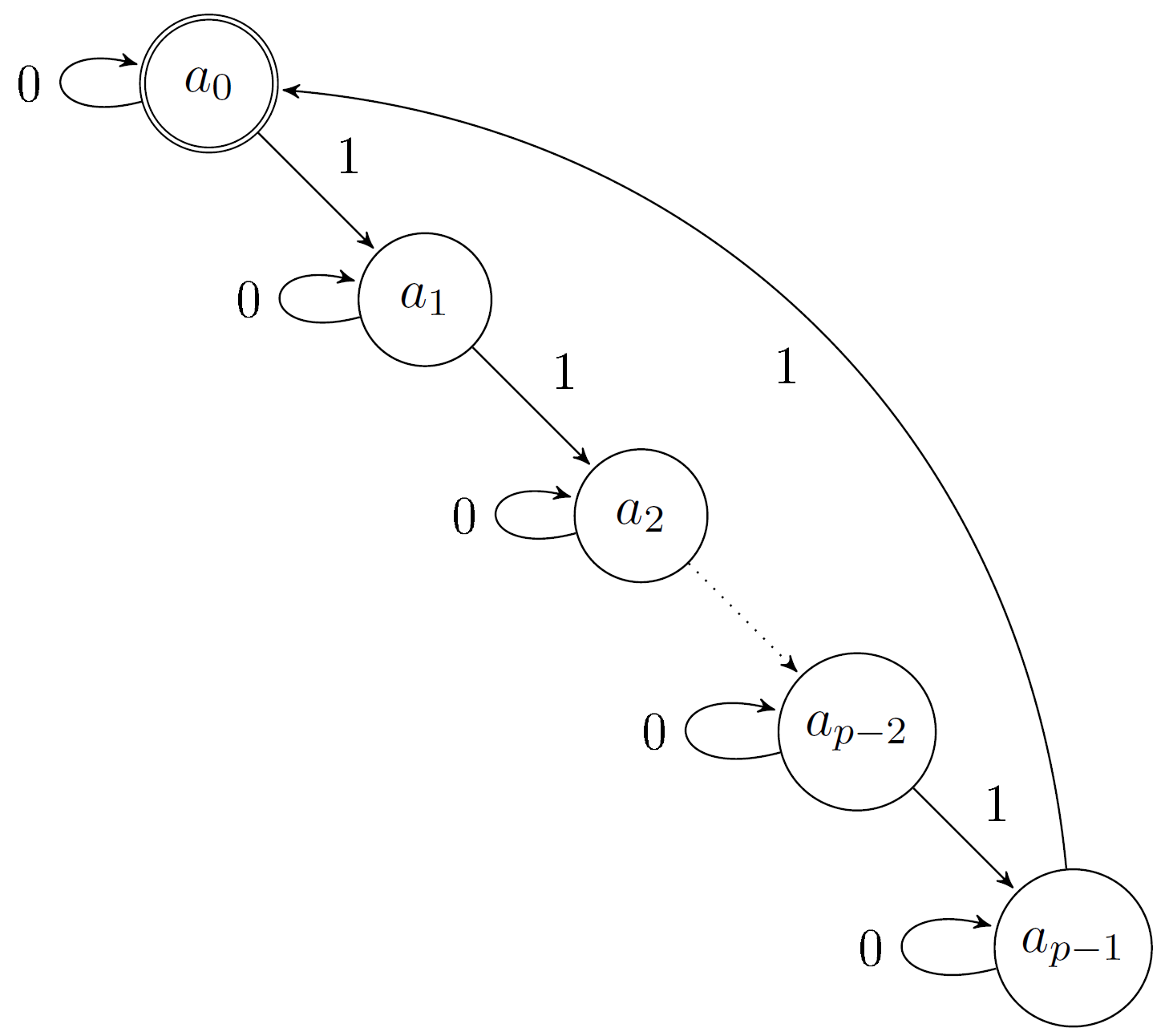}
	\caption{The rooted tree-automaton $\Asum$.}\label{fig:Asum}
\end{figure}

Note that $\Asum$ is a full rooted tree-automaton. In particular, every finite binary word $u$ is readable on $\Asum$. 
Let $u$ be a finite binary word. We denote by $\mathrm{sum}_p(u)$ the number in $\{0,\dots,p-1\}$ that is equal to the sum of digits of $u$  modulo $p$ (i.e., the number of appearances of the digit $1$ in $u$ modulo $p$). 
Similarly, if $\alpha$ is a dyadic fraction and $u$ is a finite binary word such that $\alpha=.u$, we let  $\mathrm{sum}_p(\alpha)=\mathrm{sum}_p(u)$. It is easy to prove by induction, that for every finite binary word $u$, the vertex $u^+$ in $\Asum$ is the vertex $a_{\sump(u)}$.

Lemma \ref{lem:stabilizers} implies the following. 

\begin{Lemma}\label{lem:dgfp}
	The diagram group $\ddd(\Asum)=\arr{F}_p$.
\end{Lemma}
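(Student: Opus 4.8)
The plan is to prove the two inclusions separately, in each case translating membership into a statement about the sum of binary digits modulo $p$. Recall first the two characterizations that will be matched against each other. On the automaton side, since $\Asum$ is full every tree-diagram is readable on it, so by Definition \ref{def:accepts} a reduced tree-diagram $(T_+,T_-)$ lies in $\ddd(\Asum)$ if and only if every pair of branches $u\to v$ satisfies $u^+=v^+$; as $u^+=a_{\sump(u)}$ and $v^+=a_{\sump(v)}$, this means exactly $\sump(u)=\sump(v)$ for every pair of branches. On the subgroup side, Lemma \ref{lem:stabilizers} gives $\arr{F}_p=\bigcap_{i=0}^{p-1}\Stab(S_i)$; since the sets $S_0,\dots,S_{p-1}$ partition $\mathcal D$ and every $f\in F$ restricts to a bijection of $\mathcal D$, the element $f$ stabilizes every $S_i$ if and only if $f$ preserves the digit sum modulo $p$ of every dyadic fraction, i.e.\ $\sump(f(\alpha))=\sump(\alpha)$ for all $\alpha\in\mathcal D$. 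I will also use repeatedly the additivity $\sump(uw)=\sump(u)+\sump(w)$ in $\mathbb{Z}/p\mathbb{Z}$ and the fact that $\sump$ is unaffected by appending trailing zeros, so that $\sump(\alpha)=\sump(s)$ whenever $s$ is any finite binary word with $\alpha=.s$.

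For the inclusion $\arr{F}_p\subseteq \ddd(\Asum)$, I would take $f\in\arr{F}_p$ with reduced tree-diagram $(T_+,T_-)$ and fix an arbitrary pair of branches $u\to v$. Evaluating $f$ at the interior dyadic point $\alpha=.u1=.u\,10^\N$ gives $f(\alpha)=.v1$, because $f$ sends $.u\gamma$ to $.v\gamma$ for every infinite word $\gamma$. Digit-sum preservation then yields $\sump(u)+1=\sump(\alpha)=\sump(f(\alpha))=\sump(v)+1$ in $\mathbb{Z}/p\mathbb{Z}$, hence $\sump(u)=\sump(v)$. As this holds for every pair of branches, $(T_+,T_-)$ is accepted by $\Asum$ and $f\in\ddd(\Asum)$.

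For the reverse inclusion $\ddd(\Asum)\subseteq\arr{F}_p$, I would start from a reduced tree-diagram $(T_+,T_-)$ accepted by $\Asum$, representing $f$, with branches $u_i\to v_i$ satisfying $\sump(u_i)=\sump(v_i)$ for all $i$. To show $\sump(f(\alpha))=\sump(\alpha)$ for an arbitrary dyadic $\alpha=.s$, I would use the infinite representation $s0^\N$ and locate the unique branch $u_i$ of $T_+$ that is a prefix of $s0^\N$, so that $s0^\N=u_i\gamma$ and $f(\alpha)=.v_i\gamma$. There are two cases: either $u_i$ is a prefix of $s$, say $s=u_it$, in which case $\gamma=t0^\N$ and $\sump(f(\alpha))=\sump(v_i)+\sump(t)=\sump(u_i)+\sump(t)=\sump(s)$; or $s$ is a prefix of $u_i$, forcing $u_i=s0^k$ for some $k\ge 0$ and $\gamma=0^\N$, in which case $\sump(f(\alpha))=\sump(v_i)=\sump(u_i)=\sump(s)$. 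Either way $\sump(f(\alpha))=\sump(\alpha)$, so $f\in\arr{F}_p$, completing the proof.

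The computations themselves are routine once the two characterizations are in hand, so the only genuine care needed --- and the step I expect to be the main (if modest) obstacle --- is the bookkeeping with binary representations of dyadic fractions: fixing the terminating representation so that $\sump$ is well defined, passing to the infinite representation $s0^\N$ in order to single out a branch uniformly (including when $\alpha$ is a subdivision endpoint of the partition induced by $T_+$), and verifying that the two prefix cases above are exhaustive. Checking that the identity $f(.u_i\gamma)=.v_i\gamma$ continues to hold for $\gamma=0^\N$ (left endpoint mapped to left endpoint) is precisely what makes this uniform treatment of all dyadics, endpoints included, legitimate.
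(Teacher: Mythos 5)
Your proposal is correct and follows essentially the same route as the paper: both directions are handled by identifying acceptance by $\Asum$ with the condition $\sump(u)=\sump(v)$ on every pair of branches (via $u^+=a_{\sump(u)}$) and matching this against the digit-sum-preservation characterization of $\arr{F}_p$ coming from Lemma \ref{lem:stabilizers}. Your evaluation at $.u1$ and the explicit two-case prefix bookkeeping are just slightly more careful treatments of the endpoint/representation issues that the paper's proof passes over, but the argument is the same.
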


\begin{proof} 
	Let $(T_+,T_-)$ be a reduced tree-diagram of some element $f\in F$. It suffices to prove that $(T_+,T_-)$ is accepted by $\Asum$ if and only if $f\in \Fp$. 
	
	Let $u_i\to v_i$, $i=1,\dots,n$ be the pairs of branches of $(T_+,T_-)$ and assume first that $(T_+,T_-)$ is accepted by $\Asum$. Then for each $i=1,\dots,n$, we have $u_i^+=v_i^+$ in $\Asum$. By the remarks preceding the lemma, that implies that for each $i=1,\dots,n$, we have $\sump(u_i)=\sump(v_i)$. We claim that $f\in \Fp$. By Lemma \ref{lem:stabilizers}, it suffices to prove that for each dyadic fraction $\alpha\in (0,1)$, the function  $f$ preserves the sum of digits modulo $p$ in the binary representation of $\alpha$.  %every dyadic fraction.
	 Let $\alpha\in(0,1)$ be a dyadic fraction. Then $\alpha$ has a finite binary representation of the form $.u_is$ for some $i\in\{1,\dots,n\}$ and some finite binary word $s$. Then $f(\alpha)=.v_is$. Since $\sump(u_i)=\sump(v_i)$, we have $\sump(f(\alpha))=\sump(.v_is)=\sump(.u_is)=\sump(\alpha)$, as necessary.

	In the other direction, assume that $f\in \Fp$. It suffices to prove that for each $i=1,\dots,n$, ${u_i}^+={v_i}^+$ in $\Asum$, or equivalently, that for each $i$, we have $\sump(u_i)=\sump(v_i)$. But for each $i$, we have $f(.u_i)=.v_i$. Hence, by Lemma \ref{lem:stabilizers}, $\sump(.u_i)=\sump(.v_i)$, as necessary. 
\end{proof}

Let $\mathcal P^{\mathrm{sum}}$ be the semigroup presentation associated with the rooted  tree-automaton $\Asum$. That is, 
$$\Psum=\la a_0,a_1,\dots,a_{p-1}\mid a_i=a_ia_{i+1}, i\in \{0,\dots,p-1\}\ra,$$ where $i+1$ is taken modulo $p$. The following is verified easily.

\begin{Lemma}
	 The semigroup $\Ssum$ represented by $\Psum$ is a left-zero semigroup of order $p$, such that $a_0,\dots,a_{p-1}$ are the distinct elements of $\Ssum$.
\end{Lemma}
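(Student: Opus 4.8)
The plan is to pin down $\Ssum$ by two complementary bounds: first that the defining relations force every nonempty word over $\{a_0,\dots,a_{p-1}\}$ to collapse to its first letter (giving at most $p$ elements and the left-zero multiplication law), and second that the $p$ generators remain pairwise distinct (giving order exactly $p$). Together these identify $\Ssum$ with the left-zero semigroup on $p$ elements.

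The heart of the argument is the identity $a_i a_j = a_i$ for all $i,j\in\{0,\dots,p-1\}$, which I would prove by induction on the cyclic distance $d(i,j)\in\{0,\dots,p-1\}$ determined by $j\equiv i+d(i,j)\pmod p$. The case $d=1$ is exactly a defining relation $a_i=a_ia_{i+1}$. For $2\le d\le p-1$, I would write $a_ia_j=(a_ia_{i+1})a_j=a_i(a_{i+1}a_j)$ and apply the inductive hypothesis to $a_{i+1}a_j$, whose distance is $d-1\ge 1$, obtaining $a_ia_{i+1}=a_i$; note that this entire range of cases never invokes $d=0$. Finally the self-product case $d=0$, namely $a_ia_i=a_i$, follows once the previous cases are established, via $a_ia_i=(a_ia_{i+1})a_i=a_i(a_{i+1}a_i)=a_ia_{i+1}=a_i$, using that $d(i+1,i)=p-1$.

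With $a_ia_j=a_i$ available for all generators, any nonempty word $a_{i_1}\cdots a_{i_k}$ equals $a_{i_1}$ by repeatedly absorbing letters from the left. Hence every element of $\Ssum$ is represented by a single generator, so $\Ssum$ has at most $p$ elements and its multiplication is the left-zero law $xy=x$. To see the generators are genuinely distinct, and therefore that the order is exactly $p$, I would exhibit the standard left-zero semigroup $L=\{e_0,\dots,e_{p-1}\}$ with product $e_ie_j=e_i$ and check that $a_i\mapsto e_i$ respects the defining relations, since $e_ie_{i+1}=e_i$ in $L$. This yields a surjective homomorphism $\Ssum\twoheadrightarrow L$ onto a semigroup with $p$ distinct elements, forcing the $a_i$ to be distinct in $\Ssum$. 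Combining the two bounds, $\Ssum$ is a left-zero semigroup of order exactly $p$ whose distinct elements are $a_0,\dots,a_{p-1}$.

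The only delicate point is the collapse identity $a_ia_j=a_i$: the relations are stated only for consecutive indices, so they must be propagated around the full cycle, and the self-product $a_ia_i=a_i$ is not itself a relation and has to be recovered last, using that traversing the whole cycle of length $p$ returns the index to $i$. Everything else, including well-definedness and surjectivity of the map to $L$, is routine.
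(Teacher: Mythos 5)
Your proof is correct and complete. The paper itself offers no argument here (it states only that the lemma ``is verified easily''), and your verification is the natural one: the induction on cyclic distance correctly propagates the consecutive-index relations around the cycle, the separate treatment of $a_ia_i=a_i$ via $d(i+1,i)=p-1\ge 1$ avoids circularity, and the homomorphism onto the concrete left-zero semigroup $L$ cleanly establishes that the $p$ generators stay distinct.
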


Let $u$ be a finite binary word. We denote by $\mathrm{suf}_p(u)\in\{0,\dots,p-1\}$ the  length modulo $p$ of the longest suffix of $u$ where only ones appear.

\begin{Lemma}\label{sumsuf}
	Let $u$ and $v$ be paths in the automaton $\Asum$ and let $(p_u,q_u)$ and $(p_v,q_v)$ be the pairs of words associated with the paths $u$ and $v$ in $\Asum$, respectively. Let $\Ssum$ be the semigroup represented by  $\Psum$. Then the following assertions hold.  
	\begin{enumerate}
		\item If $u$ and $v$ contain the digit $1$, then $p_u=p_v$ in $\Ssum$. 
		\item If $u$ and $v$ contain the digit $0$, then $q_u=q_v$ in $\Ssum$ if and only if $$\sump(u)-\sufp(u)\equiv_p\sump(v)-\sufp(v)$$ 
	\end{enumerate}
\end{Lemma}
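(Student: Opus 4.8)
The plan is to exploit the description of $\Ssum$ recorded just above: it is a left-zero semigroup, so a nonempty product $a_{i_1}a_{i_2}\cdots a_{i_k}$ equals its first letter $a_{i_1}$, and the elements $a_0,\dots,a_{p-1}$ are pairwise distinct. Consequently, to evaluate $p_u$ (resp.\ $q_u$) in $\Ssum$ it suffices to find the \emph{leftmost} leaf of the minimal tree $T_u$ lying strictly to the left (resp.\ strictly to the right) of the branch $u$, and to read off the index of the vertex of $\Asum$ labelling it; recall that any finite binary word $w$ satisfies $w^+=a_{\sump(w)}$. So the whole statement reduces to a short combinatorial identification of two leaves.

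First I would describe the leaves of $T_u$ concretely. Writing $u\equiv u^{(1)}\cdots u^{(m)}$, the leaves of $T_u$ other than $u$ are exactly the siblings $s_j\equiv u^{(1)}\cdots u^{(j-1)}\overline{u^{(j)}}$ for $j=1,\dots,m$ (with $\overline{0}=1$, $\overline{1}=0$); such a sibling lies to the left of $u$ precisely when $u^{(j)}=1$ and to the right precisely when $u^{(j)}=0$. Comparing where $s_j$ and $s_{j'}$ diverge from $u$ shows that among the left siblings the leftmost is the one with the \emph{smallest} $j$, while among the right siblings the leftmost is the one with the \emph{largest} $j$. Hence the first letter of $p_u$ comes from the first occurrence of the digit $1$ in $u$, and the first letter of $q_u$ comes from the last occurrence of the digit $0$ in $u$.

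For assertion (1), assume $u$ and $v$ each contain a $1$, so $p_u$ and $p_v$ are nonempty. If $j_0$ is the position of the first $1$ in $u$, then $u^{(1)}\cdots u^{(j_0-1)}\equiv 0^{j_0-1}$, so the leftmost left sibling is $0^{j_0}$, with label $a_{\sump(0^{j_0})}=a_0$. Thus $p_u=a_0$ in $\Ssum$, and likewise $p_v=a_0$, giving $p_u=p_v$. For assertion (2), assume $u$ and $v$ each contain a $0$, so $q_u$ and $q_v$ are nonempty. Let $j$ be the position of the last $0$ in $u$; then positions $j+1,\dots,m$ are all $1$, this maximal suffix of ones has length $m-j$, so $\sufp(u)\equiv_p m-j$. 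The leftmost right sibling is $u^{(1)}\cdots u^{(j-1)}1$, and the number of $1$'s in $u^{(1)}\cdots u^{(j-1)}$ equals the total number of $1$'s in $u$ minus $m-j$ (position $j$ being a $0$); hence its label is $a_{(\sump(u)-\sufp(u)+1)\bmod p}$. Therefore $q_u=a_{(\sump(u)-\sufp(u)+1)\bmod p}$ in $\Ssum$, and similarly for $v$. Since the $a_i$ are distinct, $q_u=q_v$ holds if and only if $\sump(u)-\sufp(u)+1\equiv_p\sump(v)-\sufp(v)+1$, i.e.\ iff $\sump(u)-\sufp(u)\equiv_p\sump(v)-\sufp(v)$.

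Once the left-zero reduction is in place the computations are routine; the only steps I would double-check are the bookkeeping that singles out the leftmost sibling on each side (increasing $j$ on the left, decreasing $j$ on the right) and the off-by-one in the exponent count, which arises because the relevant right sibling flips the bit at the position of the last $0$. These are the points where a careless index could slip in, but neither presents a genuine obstacle.
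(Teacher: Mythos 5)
Your proposal is correct and follows essentially the same route as the paper: both arguments use the left-zero property of $\Ssum$ to reduce $p_u$ and $q_u$ to their first letters, identify that first letter as the label of the all-zeros branch of $T_u$ (for $p_u$) and of the branch $u'1$ immediately to the right of $u$ where $u\equiv u'01^{s}$ (for $q_u$), and then compute $\sump$ of those words. Your version merely spells out the sibling-ordering bookkeeping more explicitly than the paper does.
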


\begin{proof}
	Let $T_u$ (resp. $T_v$) be the minimal finite binary tree with branch $u$ (resp. $v$).
	
	(1) Assume that $u$ and $v$ contain the digit $1$.  In that case, $u$ (resp. $v$) is not the left-most branch of $T_u$ (resp. $T_v$). Hence, the word $p_u$ (resp. $p_v$) is not empty. Let $u_1$ (resp. $v_1$) be the first branch of $T_u$ (resp. $T_v$). Clearly, the words $u_1$ and $v_1$ only contain the digit $0$. Hence, $\sump(u_1)=\sump(v_1)=0$. Therefore, the paths $u_1$ and $v_1$ in $\Asum$ terminate in the vertex $u_1^+=a_0=v_1^+$. But the vertex $u_1^+$ (resp. $v_1^+$) is the first letter of $p_u$ (resp. $p_v$). Therefore, $a_0$ is the first letter of both  $p_u$ and $p_v$. Since $\Ssum$ is a left-zero semigroup, it follows that $p_u=a_0$ in $\Ssum$ and $p_v=a_0$ in $\Ssum$. Hence $p_u=p_v$ in $\Ssum$.

	$(2)$ Assume that $u$ and $v$ contain the digit $0$. Hence the words $q_u$ and $q_v$ are not empty. Let $u'$ and $v'$ be finite binary words such that $u\equiv u'01^{\sufp(u)}$ and $v\equiv v'01^{\sufp(v)}$ and note that the first branch in $T_u$ following the branch $u$ is the branch $u'1$. Hence, the first letter of $q_u$ is the vertex  $(u'1)^+$ of $\Asum$. Similarly, the vertex $(v'1)^+$ of $\Asum$ is the first letter of $q_v$. Since $\Ssum$ is a left-zero semigroup, we have $q_u=(u'1)^+=a_{\sump(u'1)}$ and  $q_v=(v'1)^+=a_{\sump(v'1)}$.
	Note that
	\begin{equation*}
	\begin{split}
\sump(u'1) &\equiv_p{\sump(u)-\sufp(u)+1},\\ 
\sump(v'1)&\equiv_p{\sump(v)-\sufp(v)+1}
	\end{split}
	\end{equation*} 
	Hence, $q_u=q_v$ in $\Ssum$ if and only if $$\sump(u)-\sufp(u)\equiv_p\sump(v)-\sufp(v)$$ 
\end{proof}

\begin{Lemma}\label{lem:coreFp1}
	Let $u$ and $v$ be finite binary words. 
	Then there is an element $h\in \arr{F}_p$ with the pair of branches $u\to v$ if and only if the following conditions hold. 
	\begin{enumerate}
		\item[$(1)$] Both $u$ and $v$ contain the digit $0$, or both $u$ and $v$ do not contain the digit $0$.  
		\item[$(2)$] Both $u$ and $v$ contain the digit $1$, or both $u$ and $v$ do not contain the digit $1$.
		\item[$(3)$] $\sump(u)=\sump(v)$.
		\item[$(4)$] $\sufp(u)=\sufp(v)$.  
	\end{enumerate}
\end{Lemma}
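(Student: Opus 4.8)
The plan is to combine Lemma \ref{lem:dgfp}, which identifies $\Fp$ with $\ddd(\Asum)$, with the semigroup criterion of Lemma \ref{lem:pupv} applied to the automaton $\A_r=\Asum$. Since $\Fp=\ddd(\Asum)$, an element $h\in\Fp$ with the pair of branches $u\to v$ exists if and only if one exists in $\ddd(\Asum)$, and by Lemma \ref{lem:pupv} this happens exactly when $u^+=v^+$ in $\Asum$ and the equalities $p_u=p_v$ and $q_u=q_v$ hold in the left-zero semigroup $\Ssum$, where $(p_u,q_u)$ and $(p_v,q_v)$ are the pairs of words associated with the paths $u$ and $v$. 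The whole argument then reduces to translating these three automaton/semigroup conditions into the four numerical conditions in the statement.

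First I would dispose of the condition $u^+=v^+$. Recall from the discussion preceding Lemma \ref{lem:dgfp} that $u^+=a_{\sump(u)}$ and $v^+=a_{\sump(v)}$ in $\Asum$, and that $a_0,\dots,a_{p-1}$ are distinct vertices. Hence $u^+=v^+$ if and only if $\sump(u)=\sump(v)$, which is precisely condition (3).

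Next I would analyze $p_u=p_v$ and $q_u=q_v$, keeping track of the empty-word convention from the remark after Lemma \ref{lem:pupv}. The word $p_u$ is empty exactly when $u$ is the left-most branch of $T_u$, i.e. when $u$ contains no digit $1$; likewise $q_u$ is empty exactly when $u$ contains no digit $0$. Thus the equality $p_u=p_v$ forces $u$ and $v$ to agree on whether they contain a $1$ (condition (2)), while when both do contain a $1$ the equality $p_u=p_v$ holds automatically by Lemma \ref{sumsuf}(1); so $p_u=p_v$ is equivalent to condition (2). Symmetrically, $q_u=q_v$ forces $u$ and $v$ to agree on whether they contain a $0$ (condition (1)), and when both contain a $0$, Lemma \ref{sumsuf}(2) renders $q_u=q_v$ equivalent to $\sump(u)-\sufp(u)\equiv_p\sump(v)-\sufp(v)$.

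Finally I would assemble the pieces: existence of $h$ is equivalent to (3), (2), (1) together with the extra requirement $\sump(u)-\sufp(u)\equiv_p\sump(v)-\sufp(v)$ in the sub-case where both $u$ and $v$ contain the digit $0$. Assuming (3), this extra requirement simplifies to $\sufp(u)\equiv_p\sufp(v)$, and since $\sufp(u),\sufp(v)\in\{0,\dots,p-1\}$ this is exactly $\sufp(u)=\sufp(v)$, namely condition (4). In the complementary sub-case where neither $u$ nor $v$ contains a $0$, both words are powers of the digit $1$, so $\sufp=\sump$ for each, whence (4) follows automatically from (3); here $q_u=q_v$ holds for free since both are empty, which is consistent. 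This shows the collected conditions are precisely (1)--(4). The main obstacle I expect is exactly this bookkeeping of the degenerate cases --- when $p_u$ or $q_u$ is empty --- and checking that condition (4) is recovered correctly both when the words contain a $0$ (where it comes from Lemma \ref{sumsuf}(2) combined with (3)) and when they do not (where it is forced by (3) alone).
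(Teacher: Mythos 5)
Your proposal is correct and follows essentially the same route as the paper: both reduce the statement to the criterion of Lemma \ref{lem:pupv} applied to $\Asum$ (using $\Fp=\ddd(\Asum)$) and then translate $u^+=v^+$, $p_u=p_v$, $q_u=q_v$ into conditions (1)--(4) via Lemma \ref{sumsuf}. The only cosmetic difference is that the paper obtains conditions (1) and (2) in the forward direction from the fact that they hold for any pair of branches of any element of $F$, whereas you extract them from the empty-word conventions for $p_u$ and $q_u$; both are valid.
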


\begin{proof}
	Consider the words $u$ and $v$ as paths in the automaton $\Asum$ and let $(p_u,q_u)$ and $(p_v,q_v)$ be the associated pairs of words. Let $\Ssum$ be the semigroup represented by $\Psum$.

		Assume first that there is an element $h\in \arr{F}_p$ with the pair of branches $u\to v$. Then conditions $(1)$ and $(2)$ must hold (since they hold for every pair of branches of any element in $F$). Since $\arr{F}_p$ is the subgroup of $F$ accepted by $\Asum$, it follows from Lemma \ref{lem:pupv}, that $u^+=v^+$ in $\Asum$ and that $p_u=p_v$ and $q_u=q_v$ in $\Ssum$. Since $u^+=v^+$ in $\Asum$, we have $\sump(u)=\sump(v)$, so Condition $(3)$ holds. Finally, if $u$ and $v$  contain the digit $0$, then since $q_u=q_v$ in $\Ssum$ and $\sump(u)=\sump(v)$, Lemma \ref{sumsuf}$(2)$ implies that $\sufp(u)=\sufp(v)$. If both $u$ and $v$ do not contain the digit $0$, then for some $m,n\geq 0$, we have $u\equiv 1^m$ and $v\equiv 1^n$. Then $\sufp(u)=\sump(u)\equiv_p m$ and $\sufp(v)=\sump(v)\equiv_p n$. Since $\sump(u)=\sump(v)$, we have that $\sufp(u)=\sufp(v)$. Hence, Condition (4) holds as well.

	In the other direction, assume that Conditions $(1)$ to $(4)$ are satisfied and note that Condition $(3)$ implies that $u^+=v^+$ in $\Asum$.  We claim that $p_u=p_v$ and $q_u=q_v$ in the semigroup $\Ssum$. Then, by Lemma \ref{lem:pupv}, we would have that there is an element $h\in \ddd(\Asum)=\arr{F}_p$ with the pair of branches $u\to v$. 
	
	First, we show that $p_u=p_v$ in $\Ssum$. Indeed, if $u$ and $v$ both contain the digit $1$, then by Lemma \ref{sumsuf}$(1)$, $p_u=p_v$ in $\Ssum$. Otherwise, since $u$ and $v$ satisfy Condition $(2)$, both $u$ and $v$ do not contain the digit $1$. In that case, $u$ and $v$ are both (possibly empty) powers of $0$ and $p_u\equiv\emptyset\equiv p_v$.  
	
	Now, we prove that $q_u=q_v$ in $\Ssum$. If both $u$ and $v$ contain the digit $0$, then it follows immediately from Lemma \ref{sumsuf}$(2)$, since Conditions $(3)$ and $(4)$ hold for $u$ and $v$. Hence, assume that both $u$ and $v$ are (possibly empty) powers of $1$. In that case, $q_u\equiv q_v\equiv \emptyset$ and we are done. 
\end{proof}

Next, we want to consider the core of $\arr{F}_p$. First, we make the following simple observation. 

\begin{Lemma}
	Every finite binary word labels a path in the core $\mathcal C(\arr{F}_p)$. 
\end{Lemma}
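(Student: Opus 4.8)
The plan is to invoke Lemma \ref{lem:minimal core}, which reduces the claim to the following statement: for every nonempty finite binary word $u$ there is an element of $\arr{F}_p$ whose reduced tree-diagram has $u$ as a strict prefix of some branch of its top tree. (For $u$ empty there is nothing to prove, since the empty word labels the trivial path at the root of $\mathcal{C}(\arr{F}_p)$.) So the whole argument comes down to producing, for each $u$, a suitable element of $\arr{F}_p$ that ``does something'' inside the dyadic interval $[u]$.

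The key observation I would establish first is that $\arr{F}_p$ is closed under taking $[u]$-copies: for every $g\in\arr{F}_p$, the copy $g_{[u]}$ again lies in $\arr{F}_p$. To see this, recall from Section \ref{sec:copies} that $g_{[u]}$ fixes every dyadic fraction outside $[u]$ and sends a fraction $.uz\in[u]$ to $.uz'$, where $g$ takes $.z$ to $.z'$. Since $g\in\arr{F}_p$, Lemma \ref{lem:stabilizers} gives $\sump(.z)=\sump(.z')$, and therefore $\sump(.uz)\equiv \sump(u)+\sump(.z)\equiv \sump(u)+\sump(.z')\equiv \sump(.uz') \pmod p$. Thus $g_{[u]}$ preserves $\sump$ of every dyadic fraction, so by Lemma \ref{lem:stabilizers} it belongs to $\arr{F}_p$.

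With this in hand, I would fix a nonempty $u$ and take any nontrivial $g\in\arr{F}_p$, for instance the generator $g=x_0x_1\cdots x_{p-1}$ from Lemma \ref{lem:closed for addition}. Then $g_{[u]}\in\arr{F}_p$ is nontrivial and supported in $[u]$. By the explicit description in Section \ref{sec:copies}, the reduced tree-diagram of $g_{[u]}$ is obtained from the minimal tree $T_u$ with branch $u$ by grafting the reduced top tree of $g$ at the end of the branch $u$; since $g$ is nontrivial, that top tree has at least two leaves, so the reduced top tree of $g_{[u]}$ has a branch of the form $us$ with $s$ nonempty. Hence $u$ is a strict prefix of a branch of the reduced top tree of $g_{[u]}\in\arr{F}_p$, and Lemma \ref{lem:minimal core} gives that $u$ labels a path in $\mathcal{C}(\arr{F}_p)$, as required.

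The only genuinely substantive point is the closure of $\arr{F}_p$ under $[u]$-copies; everything else is a direct application of Lemma \ref{lem:minimal core} together with the explicit form of the reduced diagram of a copy. I do not anticipate a real obstacle here, but one should be careful that $g_{[u]}$ is taken with its reduced diagram, so that the branch $us$ actually survives after reduction --- this is exactly guaranteed by the remark in Section \ref{sec:copies} that the grafted diagram $(R_+,R_-)$ is already reduced whenever $(T_+,T_-)$ is.
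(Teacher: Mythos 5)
Your proof is correct. It rests on the same pivot as the paper's --- Lemma \ref{lem:minimal core}, which reduces the claim to exhibiting, for each $u$, an element of $\arr{F}_p$ whose reduced tree-diagram has $u$ as a prefix of a branch --- but you produce the witnessing element differently. The paper runs a short induction on the length of the word, prepending a digit at each step: if $u$ is a prefix of a branch of the reduced diagram of $f\in\arr{F}_p$, then $0u$ and $1u$ are prefixes of branches of $f\oplus\1$ and $\1\oplus f$, which lie in $\arr{F}_p$ because the subgroup is closed for addition (Lemma \ref{lem:closed for addition}). You instead prove in one shot that $\arr{F}_p$ is closed under taking $[u]$-copies for an \emph{arbitrary} finite binary word $u$, verifying via the digit-sum characterization of Lemma \ref{lem:stabilizers} that $g_{[u]}$ preserves $\sump$ of every dyadic fraction, and then graft a nontrivial generator into $[u]$. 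Your closure-under-$[u]$-copies statement strictly generalizes the closure-for-addition property the paper invokes (the $[0]$- and $[1]$-copies are the special cases used in defining $\oplus$), so you prove slightly more than is needed, at the cost of a direct computation that the paper avoids by citing Lemma \ref{lem:closed for addition}; in exchange you dispense with the induction entirely. Your attention to the fact that the grafted diagram $(R_+,R_-)$ is already reduced (so the branch $us$ survives) is exactly the right point to flag, and it is indeed covered by the remark in Section \ref{sec:copies}.
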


\begin{proof}
	The lemma follows from the fact that $\Fp$ is closed for addition. Indeed, if $u$ labels a path in the core $\CFp$ then by Lemma  \ref{lem:minimal core}, there exists an element $f\in\Fp$ such that $u$ is a prefix of some branch of its reduced tree-diagram. But then $0u$ and $1u$ are prefixes of branches of the reduced tree-diagrams of $f\oplus 1$ and $1\oplus f$, respectively. 
	As such, by Lemma \ref{lem:minimal core}, they label paths in $\CFp$. 
\end{proof}

Recall that there are $4$ types of vertices in the core of a subgroup of $F$: the root, left vertices, right vertices and middle vertices.

\begin{Lemma}\label{lem:coreFp2}
	Let $u$ and $v$ be two finite binary words. Then $u^+=v^+$ in $\CFp$ if and only if the following assertions hold. 
	\begin{enumerate}
		\item $u^+$ and $v^+$ are vertices of the same type.
		\item $\sump(u)=\sump(v)$.
		\item $\sufp(u)=\sufp(v)$. 
	\end{enumerate}
\end{Lemma}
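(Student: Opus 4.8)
The plan is to reduce the statement to Lemma \ref{lem:coreFp1} by exploiting the fact that $\Fp$ is closed. First I would record two preliminary facts: that $\Fp$ is closed (so $\Cl(\Fp)=\Fp$), and that, by the lemma stating that every finite binary word labels a path in $\CFp$, both $u$ and $v$ are automatically paths in the core, so that Lemma \ref{identified vertices in the core} applies to them. Combining these, $u^+=v^+$ in $\CFp$ if and only if there is an element $h\in\Cl(\Fp)=\Fp$ with pair of branches $u\to v$. By Lemma \ref{lem:coreFp1}, such an $h$ exists if and only if the four conditions of that lemma hold: $u$ and $v$ agree on whether they contain the digit $0$; they agree on whether they contain the digit $1$; $\sump(u)=\sump(v)$; and $\sufp(u)=\sufp(v)$.

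Two of these four conditions, $\sump(u)=\sump(v)$ and $\sufp(u)=\sufp(v)$, are verbatim conditions (2) and (3) of the present lemma, so the entire proof comes down to showing that the remaining two conditions of Lemma \ref{lem:coreFp1} --- agreement of $u$ and $v$ on containment of $0$ and on containment of $1$ --- are together equivalent to condition (1), that $u^+$ and $v^+$ are vertices of the same type. For this I would set up a dictionary between the digit-content of a word and the type of the vertex it reaches, directly from the definitions of the four vertex types: $u\equiv\emptyset$ reaches the root; a nonempty word with no $1$ (i.e.\ $u\equiv 0^n$) reaches a left vertex; a nonempty word with no $0$ (i.e.\ $u\equiv 1^n$) reaches a right vertex; and a word containing both digits reaches a middle vertex. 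Since the root of a core is never a descendant of itself, the empty word is the only word reaching the root, so these four cases --- determined exactly by the pair (does $u$ contain $0$?, does $u$ contain $1$?) --- are mutually exclusive and exhaustive.

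By Remark \ref{rem:different types}, every vertex of $\CFp$ has exactly one of the four types. Hence the assignment above shows that the type of $u^+$ is a function of the pair (contains $0$?, contains $1$?): if two words had the same such pair but their endpoints had different types, one of the four exclusive type-cases would be violated. Consequently $u^+$ and $v^+$ have the same type if and only if $u$ and $v$ give the same answers to ``contains $0$?'' and ``contains $1$?'', which is precisely the conjunction of conditions (1) and (2) of Lemma \ref{lem:coreFp1}. Substituting this equivalence into the characterization obtained in the first paragraph completes the proof.

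The step I expect to require the most care is this last dictionary. One must use Remark \ref{rem:different types} (uniqueness of type) together with the observation that the root is not a descendant of itself in order to rule out degenerate overlaps --- for instance, that a pure-ones word with sum of digits $0\bmod p$, such as $1^p$, although it shares $\sump=\sufp=0$ with a pure-zeros word $0^n$, reaches a right vertex rather than a left vertex. This shows that the type condition (1) is genuinely needed and is not subsumed by conditions (2) and (3), and it is exactly the place where one could otherwise slip.
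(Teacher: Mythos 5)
Your proposal is correct and follows essentially the same route as the paper: both reduce the statement to Lemma \ref{identified vertices in the core} (using that $\Cl(\Fp)=\Fp$ and that every finite binary word is a path in $\CFp$) and then to Lemma \ref{lem:coreFp1}, observing that Conditions $(1)$ and $(2)$ of that lemma are equivalent to $u^+$ and $v^+$ having the same type. Your explicit dictionary between digit-content and vertex type merely spells out what the paper leaves as a parenthetical remark.
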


\begin{proof}
	By Lemma \ref{identified vertices in the core}, the vertices $u^+$ and $v^+$ coincide in $\CFp$ if and only if there is an element $h\in\Cl(\Fp)=\Fp$ which has the pair of branches $u\to v$. Hence, the result follows from Lemma \ref{lem:coreFp1} (note that Conditions $(1)$ and $(2)$ from Lemma \ref{lem:coreFp1} are equivalent to $u^+$ and $v^+$ being vertices of the same type). 
\end{proof}

Note that Lemma \ref{lem:coreFp2} gives a complete characterization of the core $\CFp$.

\begin{Corollary}\label{cor:core stru}
	The core $\mathcal C(\Fp)$ has $p^2+p+2$ vertices. It has a unqiue left vertex, $p$ distinct right vertices, $p^2$ distinct middle vertices and a root.
\end{Corollary}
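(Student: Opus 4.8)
The plan is to enumerate the vertices of $\CFp$ type by type, using the fact (Remark \ref{rem:different types}) that every vertex is \emph{exactly} one of: the root, a left vertex, a right vertex, or a middle vertex, together with the complete description of which words terminate at a common vertex provided by Lemma \ref{lem:coreFp2}. Since words of different types never yield the same vertex, I would count, separately for each type, how many distinct values the pair $(\sump(u),\sufp(u))$ takes as $u$ ranges over the words realizing that type: by Lemma \ref{lem:coreFp2}, two words of the same type terminate at the same vertex precisely when these pairs agree.

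The root is realized by the empty word and contributes a single vertex. For left vertices, $u\equiv 0^n$ with $n\geq 1$, and here $\sump(u)=\sufp(u)=0$ for every such $n$; hence all powers of $0$ terminate at one and the same vertex, giving a unique left vertex. For right vertices, $u\equiv 1^n$ with $n\geq 1$, and both $\sump(1^n)$ and $\sufp(1^n)$ equal $n \bmod p$; as $n$ runs over the positive integers, $n\bmod p$ attains each residue in $\{0,1,\ldots,p-1\}$ (for instance $n=p$ realizes residue $0$), and by Lemma \ref{lem:coreFp2} two such words coincide if and only if $n\equiv m \pmod p$. This produces exactly $p$ distinct right vertices. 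Note that the right vertex of residue $0$, say $(1^p)^+$, does not collapse onto the unique left vertex even though both carry $(\sump,\sufp)=(0,0)$: they are of different types, and the ``same type'' hypothesis in Lemma \ref{lem:coreFp2} forbids their identification.

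The crux is the count of middle vertices, i.e.\ words $u$ containing both digits, where I must show that $(\sump,\sufp)$ realizes all $p^2$ pairs in $\{0,\ldots,p-1\}^2$. To produce a middle vertex with prescribed $\sump=s$ and $\sufp=t$, I would exhibit the explicit word $u\equiv 1^a 0 1^b$ with $b\equiv t \pmod p$ and $a\equiv s-t \pmod p$, choosing $a,b\geq 0$, and taking $a=p,\ b=0$ in the single degenerate case $s=t=0$ so that $u$ genuinely contains a $1$. A direct check gives $\sufp(u)=b\bmod p=t$ and $\sump(u)=(a+b)\bmod p=s$, while $u$ plainly contains both digits, so $u^+$ is a middle vertex with the required pair. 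Since every pair occurs and, by Lemma \ref{lem:coreFp2}, distinct pairs give distinct middle vertices, there are exactly $p^2$ middle vertices. Summing the four contributions yields $1+1+p+p^2=p^2+p+2$, and the stated breakdown follows. The only mildly delicate points are the construction of the witness words for the middle vertices and the verification that the residue-$0$ right vertex is not mistakenly merged with the left vertex; both are handled by carefully retaining the ``same type'' condition of Lemma \ref{lem:coreFp2}.
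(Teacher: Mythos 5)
Your proposal is correct and follows essentially the same route as the paper: both count vertices type by type via Remark \ref{rem:different types} and the characterization in Lemma \ref{lem:coreFp2}, with the only difference being that you supply an explicit witness word $1^a01^b$ for each pair $(\sump,\sufp)$ where the paper merely notes such a word exists. Your extra care about the residue-$0$ right vertex not merging with the left vertex is implicit in the paper via the ``same type'' condition, so nothing substantive differs.
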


\begin{proof}
	Clearly, the core has a unique root. We claim that there is a unique left vertex in the core. It suffices to show that for every $m,n\in\mathbb{N}$, the vertices $(0^m)^+$ and $(0^n)^+$ in $\CFp$ coincide. But, $\sump(0^n)=0=\sump(0^m)$ and $\sufp(0^n)=0=\sufp(0^m)$. Hence, by Lemma \ref{lem:coreFp2}, the left vertices $(0^n)^+$ and $(0^m)^+$  of the core coincide.

	Next, we claim that there are exactly $p$ distinct right vertices in the core. It suffices to prove that for every $m,n\in\mathbb{N}$, the vertices $(1^m)^+$ and $(1^n)^+$ in $\CFp$ coincide if and only if $m\equiv_p n$. Note that $\sump(1^m)=\sufp(1^m)\equiv_p m$ and $\sump(1^n)=\sufp(1^n)\equiv_p n$. Hence, by Lemma \ref{lem:coreFp2}, the right vertices $(1^m)^+$ and $(1^n)^+$ coincide in $\Fp$ if and only if $m\equiv_p n$. 
	
	Finally, we claim that there are exactly $p^2$ distinct middle vertices in the core. Let $u$ and $v$ be finite binary words which contain both digits $0$ and $1$. By Lemma \ref{lem:coreFp2} the middle vertices $u^+$ and $v^+$ coincide if and only if $\sump(u)=\sump(v)$ and $\sufp(u)=\sufp(v)$. Hence, there are exactly $p^2$ distinct middle vertices in the core (note that for each pair of $i,j\in\{0,\dots,p-1\}$, there is a finite binary word $w$ with $\sump(w)=i$ and $\sufp(w)=j$).
\end{proof}

To prove that the only subgroups of $F$ which strictly contain $\Fp$ are $F_{1,p}$ and $F$, we will  consider homomorphic images of the core $\CFp$. One such homomorphic image is the tree-automaton $\Asum$. Another one is the rooted tree-automaton $\Asuf$ defined as follows. 

\begin{Definition}
	The rooted tree-automaton $\Asuf$ (see Figure \ref{fig:Asuf}) is defined to be the rooted tree-automaton with vertices $b_0,\dots,b_{p-1}$ such that each vertex $b_i$ has two outgoing edges: a directed edge labeled $``0"$ to the vertex $b_0$ and a directed edge labeled $``1"$ from the vertex to $b_{i+1}$, where $i+1$ is taken modulo $p$. The root of $\Asuf$ is the vertex $b_0$. 
\end{Definition}

	\begin{figure}[h]
	\centering
	\includegraphics[width=.42\linewidth]{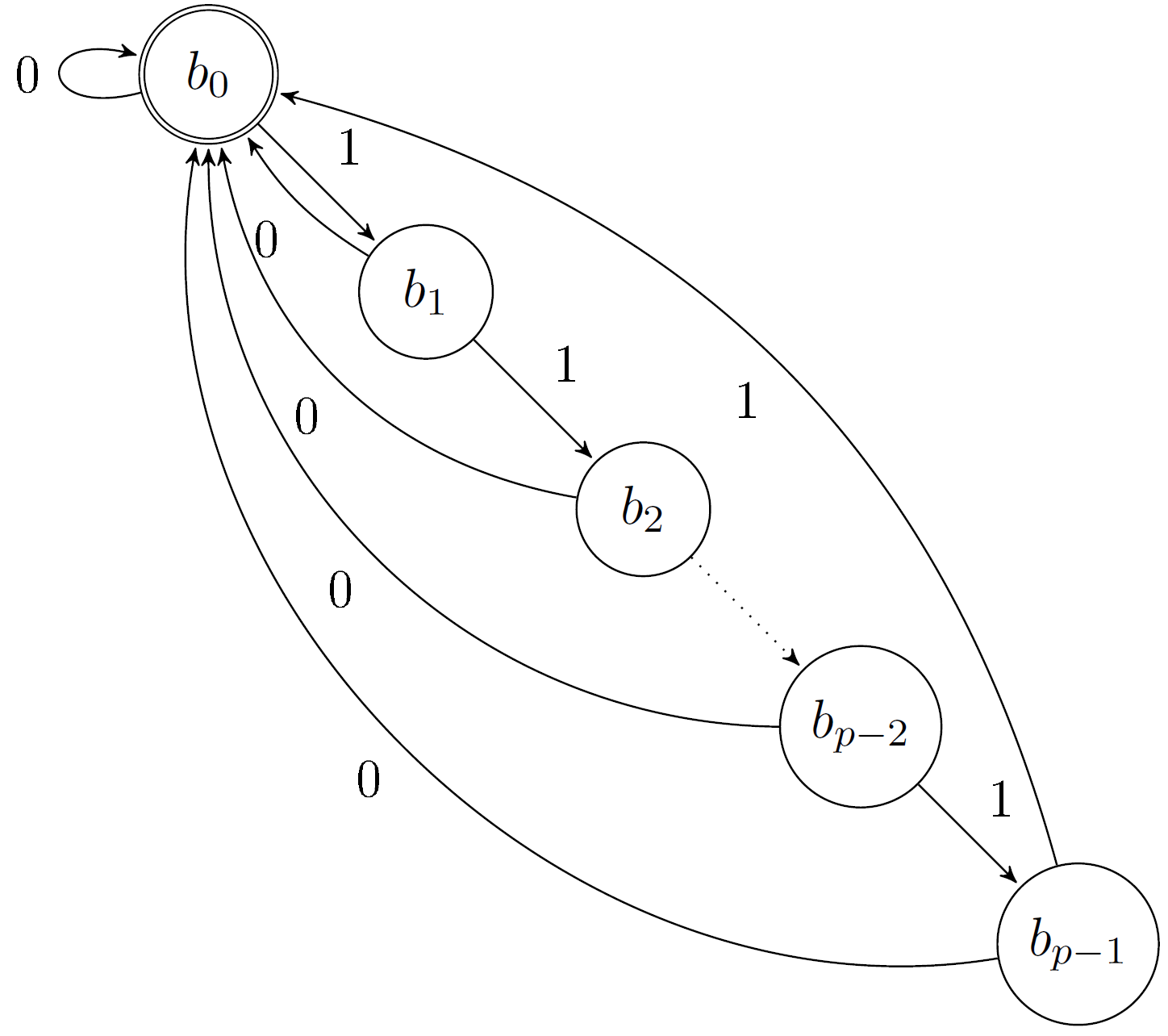}
	\caption{The rooted tree-automaton $\Asuf$.}\label{fig:Asuf}
\end{figure}

Note that $\Asuf$ is a full rooted tree-automaton. In particular, every finite binary word $u$ is readable on $\Asuf$. It is easy to prove by induction, that for every finite binary word $u$, the vertex $u^+$ in $\Asuf$ is the vertex $b_{\sufp(u)}$. It follows from Lemma \ref{lem:coreFp1}, that for every pair of branches $u\to v$ of a function in $\Fp$, we have $u^+=v^+$ in $\Asuf$. Hence, the subgroup $\Fp$ is accepted by $\Asuf$. In fact, below we prove that $\ddd(\Asuf)=\Fp$. To do so, we consider the semigroup presentation  $\mathcal P^{\mathrm{suf}}$ associated with the rooted  tree-automaton $\Asuf$. Note that 
$$\Psuf=\la b_0,b_1,\dots,b_{p-1}\mid b_i=b_0b_{i+1}, i\in \{0,\dots,p-1\}\ra,$$ where $i+1$ is taken modulo $p$. Note that the semigroup  $\Ssuf$ represented by $\Psuf$ is a cyclic group of order $p$, where $b_0,\dots,b_{p-1}$ are its distinct elements ($b_1$ is the identity element of the group).

\begin{Lemma}\label{pu}
	Let $u$ be a path in the rooted tree-automaton $\Asuf$ and let $(p_u,q_u)$ be the pair of words associated with the path $u$ in $\Asuf$. Let $\Ssuf$ be the semigroup represented by $\Psuf$. Assume that $u$ contains the digit $1$, then $p_u=b_0^{{\sump(u)}}$ in $\Ssuf$. 
\end{Lemma}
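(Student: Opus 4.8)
The plan is to unwind the definition of the minimal tree $T_u$ and of the associated pair $(p_u,q_u)$, to observe that every letter of the word $p_u$ is the vertex $b_0$, and then to finish using the fact that $\Ssuf$ is a group of order $p$.

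First I would write $u\equiv a_1a_2\cdots a_m$ with each $a_j\in\{0,1\}$ and describe explicitly the branches of the minimal finite binary tree $T_u$ that lie strictly to the left of the branch $u$. These are exactly the words $a_1\cdots a_{j-1}0$, one for each index $j$ with $a_j=1$: at every step where the path $u$ turns right, the omitted left child is a leaf of $T_u$ sitting to the left of $u$, while the left turns contribute leaves to the right of $u$. Comparing two such words lexicographically shows that they are ordered from left to right by increasing $j$, so the number of branches of $T_u$ to the left of $u$ is precisely the number $k$ of occurrences of the digit $1$ in $u$. Since by hypothesis $u$ contains the digit $1$, we have $k\ge 1$, and hence $p_u$ is a genuine nonempty word of length $k$ over the alphabet $\{b_0,\dots,b_{p-1}\}$.

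Next I would compute the letters of $p_u$. By definition, the letters of $p_u$ are the vertices $w^+$ of $\Asuf$ as $w$ runs over the branches of $T_u$ to the left of $u$. Each such $w\equiv a_1\cdots a_{j-1}0$ ends in the digit $0$, so its longest suffix consisting only of ones is empty and $\sufp(w)=0$. Using the identity $w^+=b_{\sufp(w)}$ recalled just before the statement, every letter of $p_u$ equals $b_0$, and therefore $p_u\equiv b_0b_0\cdots b_0$ ($k$ times), so $p_u=b_0^{k}$ in $\Ssuf$.

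Finally, I would pass from $k$ to $\sump(u)$. Since $\Ssuf$ is a group of order $p$ with identity $b_1$, every element $x$ satisfies $x^{p}=b_1$; in particular $b_0^{p}=b_1$. As $k$ counts the occurrences of $1$ in $u$, we have $k\equiv_p \sump(u)$, whence $b_0^{k}=b_0^{\sump(u)}$ in $\Ssuf$. Combining this with $p_u=b_0^{k}$ gives $p_u=b_0^{\sump(u)}$, as required. I do not anticipate a genuine obstacle: the only point that needs care is the explicit identification of the left branches of $T_u$ together with the observation that each of them ends in $0$, and once that is in place the conclusion is immediate from $w^+=b_{\sufp(w)}$ and the order of the cyclic group $\Ssuf$.
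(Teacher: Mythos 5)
Your proof is correct and follows essentially the same route as the paper's: identify the branches of $T_u$ to the left of $u$, note that each ends in the digit $0$ and hence terminates at $b_0$ in $\Asuf$, so $p_u\equiv b_0^{k}$ with $k$ the number of ones in $u$, and then reduce the exponent modulo $p$ using that $\Ssuf$ is cyclic of order $p$. The only difference is that you spell out explicitly which words the left branches are, a detail the paper asserts without proof.
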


\begin{proof}
	Consider the structure of the minimal tree $T_u$ with branch $u$. The number of branches in $T_u$ to the left of the branch $u$ is equal to the number of digits $1$ in $u$ (i.e., to the sum of digits in $u$). Each branch $v$ of $T_u$ to the left of the branch $u$ terminates with the digit $0$ and therefore satisfies $v^+=b_0$ in $\Asuf$. Hence, the word $p_u\equiv b_0^{s(u)}$, where $s(u)$ is the sum of digits in $u$. Since $\Ssuf$ is a group of order $p$ generated by $b_0$ and $\sump(u)\equiv_p s(u)$, we have $p_u=b_0^{\sump(u)}$ in $\Ssuf$. 
\end{proof}

\begin{Lemma}\label{dgsuf}
	The diagram group $\ddd(\Asuf)=\Fp$. 
\end{Lemma}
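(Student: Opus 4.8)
The inclusion $\Fp \subseteq \ddd(\Asuf)$ was already noted before the statement, so the plan is to prove the reverse inclusion $\ddd(\Asuf) \subseteq \Fp$. The characterization I would use is the one extracted in the proof of Lemma~\ref{lem:dgfp} from Lemma~\ref{lem:stabilizers}: a function $f \in F$ belongs to $\Fp$ if and only if every pair of branches $u \to v$ of $f$ satisfies $\sump(u) = \sump(v)$. Accordingly, I would fix an arbitrary $f \in \ddd(\Asuf)$ with reduced tree-diagram $(T_+,T_-)$ and an arbitrary pair of branches $u \to v$, and reduce the whole lemma to showing $\sump(u) = \sump(v)$.

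The essential step is to apply Lemma~\ref{lem:pupv} with $H = \ddd(\Asuf)$. Since $f$ is itself an element of $H$ carrying the pair of branches $u \to v$, that lemma yields $p_u = p_v$ in the semigroup $\Ssuf$. This is exactly where $\Asuf$ differs usefully from $\Asum$: whereas the left-zero semigroup $\Ssum$ collapses $p_u$ to its first letter, the semigroup $\Ssuf$ is the cyclic group of order $p$ generated by $b_0$, so $p_u$ remembers the full residue $\sump(u)$. Concretely, I would split into cases according to whether $u$ contains the digit $1$. Recall that conditions $(1)$ and $(2)$ of Lemma~\ref{lem:coreFp1} hold automatically for any element of $F$, so $u$ contains the digit $1$ if and only if $v$ does. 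If both contain the digit $1$, then Lemma~\ref{pu} gives $p_u = b_0^{\sump(u)}$ and $p_v = b_0^{\sump(v)}$ in $\Ssuf$; since $\Ssuf$ is cyclic of order $p$ with generator $b_0$ and $\sump(u),\sump(v) \in \{0,\dots,p-1\}$, the equality $p_u = p_v$ forces $\sump(u) = \sump(v)$. If neither contains the digit $1$, then $u$ and $v$ are powers of $0$ and $\sump(u) = 0 = \sump(v)$ trivially. In either case $\sump(u) = \sump(v)$, and as this holds for every pair of branches, $f \in \Fp$, which completes the reverse inclusion.

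I expect the main obstacle to be conceptual rather than computational. The definition of acceptance by $\Asuf$, through the identity $u^+ = b_{\sufp(u)}$, only controls the suffix data $\sufp$, and $\sufp(u) = \sufp(v)$ is genuinely weaker than $\sump(u) = \sump(v)$ (for instance $10$ and $110$ agree in $\sufp$ but not in $\sump$). The point to get right is therefore that the stronger constraint $\sump(u) = \sump(v)$ is not visible at the level of vertex identifications but is forced by the \emph{group} structure of $\Ssuf$, accessed through Lemma~\ref{lem:pupv} and the computation of $p_u$ in Lemma~\ref{pu}. Once that is isolated, the case analysis above is routine.
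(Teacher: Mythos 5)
Your proposal is correct and follows essentially the same route as the paper: both arguments hinge on applying Lemma \ref{lem:pupv} to an element of $\ddd(\Asuf)$ to get $p_u=p_v$ in $\Ssuf$, and then using Lemma \ref{pu} together with the fact that $\Ssuf$ is cyclic of order $p$ to recover $\sump(u)=\sump(v)$. The only cosmetic difference is the final packaging --- you conclude via acceptance by $\Asum$ and Lemma \ref{lem:dgfp}, while the paper verifies Conditions $(1)$--$(3)$ of Lemma \ref{lem:coreFp2} to conclude acceptance by $\C(\Fp)$ --- and both are valid.
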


\begin{proof}
	As noted above, $\Fp$ is contained in $\ddd(\Asuf)$. In the other direction, let $(T_+,T_-)$ be a tree-diagram in $\ddd(\Asuf)$. We claim that $(T_+,T_-)$ belongs to $\Fp$. Let $u\to v$ be a pair of branches of $(T_+,T_-)$. It suffices to show that $u$ and $v$ satisfy Conditions $(1)-(3)$ from Lemma \ref{lem:coreFp2}. Indeed, that would imply that  $(T_+,T_-)$ is accepted by the core of $\Fp$ and as such, belongs to the closed subgroup $\Fp$. 
	
	Since $u\to v$ is a pair of branches of the tree-diagram $(T_+,T_-)$, the vertices $u^+$ and $v^+$ of $\mathcal C(\Fp)$ must be vertices of the same type. Let $(p_u,q_u)$ and $(p_v,q_v)$ be the pairs of words associated with the paths $u$ and $v$ in $\Asuf$. Since $(T_+,T_-)$ is accepted by $\Asuf$, it follows from Lemma \ref{lem: morphism from core} that $u^+=v^+$ in $\Asuf$ and that $p_u=p_v$ in $\Ssuf$. Now, the fact that $u^+=v^+$ in $\Asuf$ implies that $\sufp(u)=\sufp(v)$. By Lemma \ref{pu}, the fact that $p_u=p_v$ 
	 implies that $\sump(u)=\sump(v)$ (if $u$ and $v$ do not contain the digit $1$ then  $\sump(u)=\sump(v)=0$). Hence, $u$ and $v$ satisfy Conditions $(1)-(3)$ from Lemma \ref{lem:coreFp2}, as necessary.
\end{proof}

\begin{Remark}
	Lemma \ref{dgsuf} can also be derived from \cite[Section 5.3]{GS}, where the isomorphism from Thompson's group  $F_{n+1}$ to Jones' subgroup $\arr{F}_n$ is described explicitly. 
\end{Remark}

\begin{Proposition}\label{prop:main2}
	Let $G$ be a subgroup of $F$ which strictly contains $\Fp$. Then there is a unique middle vertex in the core of $G$. 
\end{Proposition}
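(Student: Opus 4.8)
The plan is to use the surjection from the core of $\Fp$ onto the core of $G$ and to show that it collapses all middle vertices to a single point. Since $\Fp\le G$ and the core $\CFp$ has no leaves, Corollary \ref{cor:no-leaves} gives a surjective morphism $\phi\colon\CFp\to\mathcal C(G)$; in particular $\mathcal C(G)$ is full, and every middle vertex of $\mathcal C(G)$ is the $\phi$-image of one of the $p^2$ middle vertices of $\CFp$, which by Lemma \ref{lem:coreFp2} and Corollary \ref{cor:core stru} are indexed by the pairs $(\sump(u),\sufp(u))\in\{0,\dots,p-1\}^2$. Writing $M_{(a,b)}$ for the image under $\phi$ of the middle vertex of class $(a,b)$, it therefore suffices to prove that all $M_{(a,b)}$ coincide, i.e.\ that for any two middle words $u,v$ we have $u^+=v^+$ in $\mathcal C(G)$. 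By Lemma \ref{identified vertices in the core} (applied to $G$), this follows once I produce, for each relevant pair, an element of $\Cl(G)\supseteq G$ with the pair of branches $u\to v$.

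The engine is the elementary observation that if $u\to v$ is a pair of branches of an element $g$, then so is $uz\to vz$ for every finite binary word $z$; appending $z$ leaves the $\sump$-difference unchanged, while whenever $z$ contains the digit $0$ it forces $\sufp(uz)=\sufp(vz)=\sufp(z)$. Now fix $g\in G\setminus\Fp$. Since $\Fp=\ddd(\Asum)=\ddd(\Asuf)$ (Lemmas \ref{lem:dgfp} and \ref{dgsuf}), $g$ is accepted by neither automaton, so it has a pair of branches violating the $\sump$-condition and a pair violating the $\sufp$-condition. First I would extract a middle $\sump$-move: any $\sump$-violating pair $u_0\to v_0$ is non-leftmost, hence both words contain a $1$, and appending $z=1^s01^t$ (with $s,t$ chosen appropriately) realizes, as a pair of branches of $g$, middle words of class $(a,b)$ and $(a+\delta,b)$ for \emph{every} prescribed $(a,b)$, where $\delta:=\sump(v_0)-\sump(u_0)\not\equiv 0$. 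By Lemma \ref{identified vertices in the core} this gives $M_{(a,b)}=M_{(a+\delta,b)}$ in $\mathcal C(G)$ for all $(a,b)$; since $p$ is prime and $\delta\not\equiv 0$, each ``row'' $\{M_{(a,b)}:a\}$ collapses to a single vertex $\mu_b$.

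It then remains to identify the $\mu_b$ across different values of $b$, for which I need a \emph{middle} pair of branches of $g$ whose $\sufp$-difference $\gamma$ is nonzero: appending $z=1^t$ to such a pair preserves $\gamma$ and sweeps the $\sufp$-value of the base class through all residues, so after the row-collapse it yields $\mu_c=\mu_{c+\gamma}$ for every $c$, whence (again by primality of $p$) a single middle vertex. The hard part will be producing this middle $\sufp$-move, because a priori the only $\sufp$-violation of $g$ could occur at the rightmost pair $1^m\to 1^{m'}$, which is of right type, and appending a digit either destroys the $\sufp$-difference (appending $0$) or keeps the pair of right type (appending $1$). I would resolve this using the caret-counting identity stated after Lemma \ref{num_caret_right}: for the reduced tree-diagram of $g$ with pairs of branches $u_i\to v_i$ one has $\sum_{i=1}^{n-1}\bigl(\ell_1(u_i)-\ell_1(v_i)\bigr)=-\log_2 g'(1^-)$, and reducing modulo $p$ gives $\sum_{i=1}^{n-1}\bigl(\sufp(u_i)-\sufp(v_i)\bigr)\equiv-\log_2 g'(1^-)\pmod p$. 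This sum omits the rightmost branch, and its leftmost term vanishes; hence if the rightmost pair is $\sufp$-violating (equivalently $g\notin F_{1,p}$), the right-hand side is nonzero modulo $p$ and some non-rightmost, necessarily middle, pair must be $\sufp$-violating. If instead the rightmost pair preserves $\sufp$, then the $\sufp$-violation guaranteed by $g\notin\ddd(\Asuf)$ already occurs at a middle pair. Either way a middle $\sufp$-move exists, and the two-stage collapse then forces a unique middle vertex in $\mathcal C(G)$.
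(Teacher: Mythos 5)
Your proof is correct and follows essentially the same route as the paper's: both use the surjective morphism $\phi\colon\CFp\to\mathcal C(G)$, index the $p^2$ middle vertices by the pairs $(\sump,\sufp)$, extract a $\sump$-violating and a $\sufp$-violating pair of branches from some $g\in G\setminus\Fp$ via Lemmas \ref{lem:dgfp} and \ref{dgsuf}, translate these violations to every class by appending words of the form $1^s01^t$ resp.\ $1^t$, and invoke the primality of $p$ twice to collapse first the rows and then the columns. The only point of divergence is your use of the caret-counting corollary to guarantee a \emph{middle} $\sufp$-violating pair before appending $1^t$; the paper's Step 4 instead applies its appending trick to an arbitrary $\sufp$-violating pair $w_1\to w_2$ and treats the words $w_i1^{p-m_1+k}$ as middle words, which tacitly passes over the case where the only such violation sits at the all-ones rightmost pair --- your extra argument explicitly closes that case and is, if anything, more careful than the original.
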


\begin{proof}
	Let us consider the automaton $\mathcal C(\Fp)$. It has $p^2$ middle vertices, which can be referred to as $x_{i,j}$, $i,j\in\{0,\dots,p-1\}$, where $x_{i,j}$ is the end vertex of all paths $u$ in $\mathcal C(\Fp)$ such that $\sump(u)=i$ and $\sufp(v)=j$.
	Since $G$ contains $\Fp$, by Corollary \ref{cor:no-leaves}, there is a surjective morphism $\phi$ from $\mathcal C(\Fp)$ to $\C(G)$.
	Note that for each finite binary word $u$ the morphism $\phi$ maps the vertex $u^+$ of $\mathcal C(\Fp)$ onto the vertex $u^+$ of $\C(G)$.
		It suffices to prove that for every $(i,j)\in\{0,\dots,p-1\}\times\{0,\dots,p-1\}$, we have $$\phi(x_{i,j})=\phi(x_{0,0}).$$ 
	We do it in several steps. In all of the steps below, when we write $x_{i,j}$ for some integers $i,j$, the integers are taken modulo $p$. 
	
	\textbf{Step 1:} There exist some $i_1< i_2$ in $\{0,\dots,p-1\}$ such  that $\phi(x_{i_1,0})=\phi(x_{i_2,0})$. 
	
	Indeed, since $G$ strictly contains $\Fp$, there is an element $g\in G\setminus\Fp$. Since $g\notin \Fp$, by Lemma \ref{lem:dgfp}, it is not accepted by the automaton $\Asum$. Hence, $g$ has a pair of branches $u\to v$ such that $\sump(u)\neq \sump(v)$. Let $i_1=\sump(u)$ and $i_2=\sump(v)$. We can assume that $i_1<i_2$, by replacing $g$ by $g^{-1}$ if necessary.  Note that $u0\to v0$ is also a pair of branches of $g$ and
	 that in $\mathcal C(\Fp)$ we have $(u0)^+=x_{i_1,0}$ and $(v0)^+=x_{i_2,0}$. Since $u0\to v0$ is a pair of branches of $g\in G$ we must have $(u0)^+=(v0)^+$ in $\C(G)$. Hence, 
	$$\phi(x_{i_1,0})=\phi(x_{i_2,0}),$$
	as required.  
	
	\textbf{Step 2:} For every $j,k\in\{0,\dots,p-1\}$ we have $\phi(x_{j,k})=\phi(x_{j+(i_2-i_1),k})$
	
	Indeed, recall that $g$ has the pair of branches $u\to v$ such that $i_1=\sump(u)$ and $i_2=\sump(v)$. Let $j,k\in\{0,\dots,p-1\}$ and note that $g$ has the pair of branches $u_1\to v_1$, where
	$$u_1\equiv u1^{2p-i_1-k+j}01^k\mbox{ and } v_1\equiv v1^{2p-i_1-k+j}01^k.$$
	Note also that $\sump(u_1)=j$,  $\sump(v_1)\equiv_p i_2-i_1+j$ and $\sufp(u_1)=\sufp(v_1)=k$. As in Step 1, the pair of branches $u_1\to v_1$ of $g$ implies that in $\mathcal C(G)$, we have $\phi(x_{j,k})=\phi(x_{j+(i_2-i_1),k})$, as necessary. 
	
	\textbf{Step 3:} For every $j,k\in\{0,\dots,p-1\}$ we have $\phi(x_{j,k})=\phi(x_{0,k})$
	
	Indeed, let $\ell=i_2-i_1$ and note that $\ell\in\{1,\dots,p-1\}$ is co-prime to $p$. Let $d\in\{1,\dots,p-1\}$ be such that $d\ell\equiv_p 1$ and let $k\in\{0,\dots,p-1\}$. 
	By Step 2, we have that for all $j$, $$\phi(x_{j,k})=\phi(x_{j+\ell,k})$$ 
	It follows that for every $j$, we have $$\phi(x_{j,k})=\phi(x_{j+(p-j)d\ell,k})$$
	Hence, for every $j$, we have 
	$$\phi(x_{j,k})=\phi(x_{0,k}).$$

	\textbf{Step 4:} There exists $\ell_1\in\{1,\dots,p-1\}$ such that for every $k\geq 0$ we have 
	 $\phi(x_{0,\ell_1+k})=\phi(x_{0,k})$.
	
	Indeed, since $g\notin \Fp$, by Lemma \ref{dgsuf}, the element $g$ is not accepted by $\Asuf$. Hence, it must have a pair of branches $w_1\to w_2$ such that  $\sufp(w_1)\neq \sufp(w_2)$. Let $m_1=\sufp(w_1)$ and $m_2=\sufp(w_2)$. We can assume without loss of generality that $m_1<m_2$ and let $\ell_1=m_2-m_1$. Now, let $k\geq 0$ and note that $g$ also has the pair of branches $w_11^{p-m_1+k}\to w_21^{p-m_1+k}$. Hence, in $\mathcal C(G)$, we have $(w_11^{p-m_1+k})^+= (w_21^{p-m_1+k})^+$.
	Note also that $\sufp(w_11^{p-m_1+k})\equiv_p k$ and $\sufp(w_2^{p-m_1+k})\equiv_p m_2-m_1+k\equiv_p \ell_1+k$. 
	 Let $j_1=\sump(w_11^{p-m_1+k})$ and $j_2=\sump(w_21^{p-m_1+k})$. Then since  $(w_11^{p-m_1+k})^+= (w_21^{p-m_1+k})^+$ in $\C(G)$, we have  
		$$\phi(x_{j_1,k})=\phi(x_{j_2,\ell_1+k}).$$
		Now, 
		 by Step 3 and the last equation we have 
		$$\phi(x_{0,k})=\phi(x_{j_1,k})=\phi(x_{j_2,\ell_1+k})=\phi(x_{0,\ell_1+k}),$$
		as necessary. 
		
		\textbf{Step 5:} For every $k\in\{0,\dots,p-1\}$, we have $\phi(x_{0,k})=\phi(x_{0,0})$.
		
		Indeed, since $\ell_1\in\{1,\dots,p-1\}$, it is co-prime to $p$. Let $d_1\in\{1,\dots,p-1\}$ be such that $d_1\ell_1\equiv_p 1$.  By step 5, we have that for all $k$, 
		$$\phi(x_{0,k})=\phi(x_{0,k+\ell_1}).$$
		It follows that for every $k$, we have 
		$$\phi(x_{0,k})=\phi(x_{0,k+(p-k)d_1\ell_1}).$$
		Hence, for every $k$, we have 
		$$\phi(x_{0,k})=\phi(x_{0,0}).$$
		
		\textbf{Step 6:} For every $j,k\in\{0,\dots,p-1\}$, we have $\phi(x_{j,k})=\phi(x_{0,0})$.
		
		Indeed, that follows immediately from Steps 3 and 5. 
		
		\noindent Hence, there is a unique middle vertex in the core of $G$, as required. 
\end{proof}

\begin{Theorem}\label{thm: main2}
	The only subgroups of Thompson's group $F$ which strictly contain $\Fp$ are $F_{1,p}$ and $F$. 
\end{Theorem}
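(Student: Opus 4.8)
The plan is to run Proposition \ref{prop:main2} into the machinery of Section \ref{main section}, using the primality of $p$ only at the level of the abelianization. So let $G$ be a subgroup of $F$ with $\Fp\subsetneq G$; I must show that $G$ is either $F_{1,p}$ or $F$.

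The first step is to show $[F,F]\subseteq\Cl(G)$. By Proposition \ref{prop:main2} the core $\mathcal C(G)$ has a unique middle vertex. Since $\mathcal C(\Fp)$ has no leaves (every finite binary word labels a path in it), Corollary \ref{cor:no-leaves} provides a surjective morphism $\phi\colon\mathcal C(\Fp)\to\mathcal C(G)$. As $\phi$ is surjective and every vertex of $\mathcal C(\Fp)$ has both a $0$-edge and a $1$-edge, the same holds for every vertex of $\mathcal C(G)$, so $\mathcal C(G)$ is full. In particular the unique middle vertex of $\mathcal C(G)$ is an inner vertex, and Lemma \ref{core of derived subgroup} then gives $[F,F]\subseteq\Cl(G)$.

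The second step is to locate $\pi_{\ab}(G)$ inside $\mathbb{Z}^2$. I claim $\pi_{\ab}(\Fp)=\mathbb{Z}\times p\mathbb{Z}$. For the inclusion $\subseteq$, the rightmost pair of branches of any $f\in\Fp$ has the form $1^n\to 1^{n'}$, and the forward direction of Lemma \ref{lem:coreFp1} forces $n\equiv_p n'$, so $\log_2 f'(1^-)=n-n'\in p\mathbb{Z}$ (while the slope at $0^+$ is unconstrained). For the reverse inclusion, take $g=x_0x_1\cdots x_{p-1}\in\Fp$, which has $\pi_{\ab}(g)=(1,-p)$; since $\Fp$ is closed under addition (Lemma \ref{lem:closed for addition}), the elements $g\oplus\1$ and $\1\oplus g$ lie in $\Fp$ with images $(1,0)$ and $(0,-p)$, and these generate $\mathbb{Z}\times p\mathbb{Z}$. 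Hence $\pi_{\ab}(G)\supseteq\pi_{\ab}(\Fp)=\mathbb{Z}\times p\mathbb{Z}$, and because $\mathbb{Z}^2/(\mathbb{Z}\times p\mathbb{Z})\cong\mathbb{Z}/p\mathbb{Z}$ is simple (here $p$ is prime), either $\pi_{\ab}(G)=\mathbb{Z}^2$ or $\pi_{\ab}(G)=\mathbb{Z}\times p\mathbb{Z}$.

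The final step splits into these two cases. If $\pi_{\ab}(G)=\mathbb{Z}^2$ then $G[F,F]=F$, so together with $[F,F]\subseteq\Cl(G)$ from the first step, Corollary \ref{main cor} yields $G=F$. If instead $\pi_{\ab}(G)=\mathbb{Z}\times p\mathbb{Z}$, then this image is a closed subgroup of $\mathbb{Z}^2$ (Lemma \ref{closed subgroups of Z^2}), so Theorem \ref{main} applies and gives $G=G[F,F]$; thus $[F,F]\subseteq G$, whence $G$ is exactly the preimage under $\pi_{\ab}$ of $\mathbb{Z}\times p\mathbb{Z}$, namely $F_{1,p}$. The real content is Proposition \ref{prop:main2}, which is already established; the one place to be careful is the first step, where fullness of $\mathcal C(G)$ must be used to promote ``unique middle vertex'' to ``unique \emph{inner} middle vertex'', since that is precisely the hypothesis Lemma \ref{core of derived subgroup} requires.
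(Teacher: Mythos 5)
Your proposal is correct and follows essentially the same route as the paper: Proposition \ref{prop:main2} plus Lemma \ref{core of derived subgroup} to get $[F,F]\subseteq\Cl(G)$, the computation $\pi_{\ab}(\Fp)=\mathbb{Z}\times p\mathbb{Z}$ together with primality of $p$ to pin down $\pi_{\ab}(G)$, and then Theorem \ref{main} (equivalently Corollary \ref{main cor} in the full-abelianization case) to identify $G$. The only difference is that you spell out the verification that the unique middle vertex of $\mathcal C(G)$ is inner and the computation of $\pi_{\ab}(\Fp)$, both of which the paper leaves as brief remarks.
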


\begin{proof}
	First, we note that the image of $\Fp$ in the abelianization of $F$ is  $\mathbb{Z}\times p\mathbb{Z}$.
	One can verify it, for example, by computing the image in the abelianization of the generating set of $\Fp$ from Lemma \ref{lem:closed for addition}.

	Now, let $G$ be a subgroup of $F$ which strictly contains $H$. Then by Proposition \ref{prop:main2}, the core of $G$ has a unique middle vertex. Clearly, this vertex has two outgoing edges.  Hence, by Lemma \ref{core of derived subgroup}, $\Cl(G)$ contains the derived subgroup of $F$. 
	 In addition, since $H\leq G$, it follows that $\pi_{\ab}(H)\leq \pi_{\ab}(G)$. Since $\pi_{\ab}(H)=\mathbb{Z}\times p\mathbb{Z}$ is a maximal subgroup of $\mathbb{Z}^2$, there are two options for $\pi_{\ab}(G)$: either $\pi_{\ab}(G)=\mathbb{Z}\times p\mathbb{Z}$ or $\pi_{\ab}(G)=\mathbb{Z}^2$. Note that in either case, $\pi_{\ab}(G)$ is a closed subgroup of $\mathbb{Z}^2$. Hence, by Theorem \ref{main}, in either case, $G=G[F,F]=\pi_{\ab}^{-1}(G)$. Hence, either $G=F_{1,p}$ or $G=F$, as required. 
\end{proof}

It follows from Theorem \ref{thm: main2} that $\Fp$ is a maximal subgroup of $F_{1,p}$. Since $F_{1,p}$ is isomorphic to $F$, we have the following. 

\begin{Corollary}\label{cor:max Jones}
	For every prime number $p$, Thompson's group $F$ has a maximal subgroup isomorphic to Jones' subgroup $\Fp$. 
\end{Corollary}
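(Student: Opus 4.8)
The plan is to derive the corollary directly from Theorem \ref{thm: main2} together with the Bleak--Wassink isomorphism $F_{1,p}\cong F$. First I would record that $\Fp$ is a proper subgroup of $F_{1,p}$. On the one hand, the image of $\Fp$ in the abelianization of $F$ is $\mathbb{Z}\times p\mathbb{Z}$ (as computed, for instance, from the generating set in Lemma \ref{lem:closed for addition}), which is precisely $\pi_{\ab}(F_{1,p})$; since $F_{1,p}=\pi_{\ab}^{-1}(\mathbb{Z}\times p\mathbb{Z})$, this gives $\Fp\leq F_{1,p}$. On the other hand, by Corollary \ref{cor:core stru} the core of $\Fp$ has $p^2$ middle vertices, so by Lemma \ref{core of derived subgroup} the closure of $\Fp$, which equals $\Fp$ itself, does not contain the derived subgroup $[F,F]$; as $F_{1,p}$ does contain $[F,F]$, the inclusion $\Fp< F_{1,p}$ is strict.

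Next I would establish that $\Fp$ is a maximal subgroup of $F_{1,p}$. Let $G$ be any subgroup with $\Fp< G\leq F_{1,p}$. Then $G$ strictly contains $\Fp$, so by Theorem \ref{thm: main2} we have $G=F_{1,p}$ or $G=F$. Since $F_{1,p}$ is a proper subgroup of $F$, the containment $G\leq F_{1,p}$ rules out $G=F$. Hence $G=F_{1,p}$, which shows that $\Fp$ has no subgroup strictly between it and $F_{1,p}$, i.e., $\Fp$ is maximal in $F_{1,p}$.

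Finally, I would invoke the theorem of Bleak and Wassink \cite{BW} that $F_{1,p}$ is isomorphic to $F$. Fixing an isomorphism $\Phi\colon F_{1,p}\to F$, the image $\Phi(\Fp)$ is a maximal subgroup of $F$, because isomorphisms carry maximal subgroups to maximal subgroups, and $\Phi(\Fp)\cong\Fp$. This produces a maximal subgroup of $F$ isomorphic to Jones' subgroup $\Fp$, as required.

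I do not anticipate any genuine obstacle in this argument, since all of the substantive work is contained in Theorem \ref{thm: main2} and in the cited isomorphism $F_{1,p}\cong F$. The only points needing care are the routine verifications that $\Fp$ is \emph{strictly} contained in $F_{1,p}$ and that $F$ is not contained in $F_{1,p}$; both follow immediately from the abelianization computation and from the fact that $F_{1,p}$ is a proper (index $p$) subgroup of $F$.
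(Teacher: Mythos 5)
Your proposal is correct and follows the same route as the paper: deduce maximality of $\Fp$ in $F_{1,p}$ from Theorem \ref{thm: main2}, then transport it to $F$ via the Bleak--Wassink isomorphism $F_{1,p}\cong F$. The paper leaves the routine verifications (that $\Fp$ is strictly contained in $F_{1,p}$, and that $G=F$ is excluded when $G\leq F_{1,p}$) implicit, whereas you spell them out; there is no substantive difference.
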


\section{Final Remarks and Open Problems}\label{sec:open}

Let $p$ be a prime number, For each $i=0,\dots,p-1$, let $S_i$ be the set of all dyadic fractions such that the sum of digits in their finite binary representation is $i$ modulo $p$. 

\begin{Remark}
	It follows from Theorem \ref{thm: main2} that for each $i=0,\dots,p-1$, Jones' subgroup $\Fp=\Stab(S_i)$. Indeed, by Lemma \ref{lem:stabilizers}, for each $i=0,\dots,p-1$, Jones' subgroup $\Fp$ is contained in $\Stab(S_i)$ and clearly, $\Stab(S_i)\notin\{ F_{1,p}, F\}$.
\end{Remark}

\begin{Lemma}
	The number of orbits of the action of $\Fp$ on the set of dyadic fractions $\mathcal D$ is $p$. 
\end{Lemma}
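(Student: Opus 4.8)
The plan is to show that two dyadic fractions lie in the same $\Fp$-orbit precisely when the sums of digits of their finite binary representations agree modulo $p$; since this quantity realizes all $p$ values on $\mathcal D$, this yields exactly $p$ orbits. Recall first that for a dyadic fraction $\alpha$ with finite binary representation $\alpha=.u$ the quantity $\sump(\alpha)=\sump(u)$ is well defined, because any two finite representations of $\alpha$ differ only by a suffix of zeros, which does not affect the digit sum modulo $p$.

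For the easy direction I would invoke Lemma \ref{lem:stabilizers}: since $\Fp=\bigcap_{i=0}^{p-1}\Stab(S_i)$, every $f\in\Fp$ stabilizes each $S_i$, so $\sump(f(\alpha))=\sump(\alpha)$ for all $\alpha\in\mathcal D$. Thus each $S_i$ is $\Fp$-invariant, and as all $p$ sets $S_0,\dots,S_{p-1}$ are non-empty (for instance $.1^i\in S_i$ when $i\ge 1$, and $.1^p\in S_0$), there are at least $p$ orbits.

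The hard part will be the reverse direction, namely transitivity of $\Fp$ on each $S_i$. Given $\alpha,\beta\in\mathcal D$ with $\sump(\alpha)=\sump(\beta)$, I would pick finite binary representations $\alpha=.u$ and $\beta=.v$ and then pass to the extended words $u0$ and $v0$, which still represent $\alpha$ and $\beta$. The purpose of appending a single zero is exactly to force the four hypotheses of Lemma \ref{lem:coreFp1} to hold for the pair $u0\to v0$: both words now contain the digit $0$; both contain the digit $1$ (since $\alpha,\beta\in(0,1)$ are non-zero, any finite representation already contains a $1$); $\sump(u0)=\sump(u)=\sump(v)=\sump(v0)$; and $\sufp(u0)=0=\sufp(v0)$. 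Lemma \ref{lem:coreFp1} then produces an element $f\in\Fp$ with the pair of branches $u0\to v0$, so that $f(\alpha)=f(.u0)=.v0=\beta$, placing $\alpha$ and $\beta$ in the same orbit.

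Combining the two directions, the orbits of $\Fp$ on $\mathcal D$ are precisely $S_0,\dots,S_{p-1}$, and there are exactly $p$ of them. The two points requiring care are the well-definedness of $\sump$ on $\mathcal D$ and the normalization trick of appending one zero, which is what makes the $\sufp$-condition (and the ``contains a $0$'' condition) of Lemma \ref{lem:coreFp1} hold simultaneously for both words; everything else is bookkeeping.
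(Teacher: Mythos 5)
Your proposal is correct and follows essentially the same route as the paper: the orbits are identified with the sets $S_i$ via Lemma \ref{lem:stabilizers} for invariance, and transitivity on each $S_i$ is obtained by normalizing the binary representations to end in $0$ (so that $\sufp=0$ for both words) and applying Lemma \ref{lem:coreFp1}. The only cosmetic difference is that you append a zero to arbitrary representations while the paper chooses representations already ending in zero.
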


\begin{proof}
 As $\mathcal D$ is the disjoint union of the sets $S_i$ for $i=0,\dots,p-1$, it suffices to prove that for each $i$, the set $S_i$ is an orbit of the action of $\Fp$ on $\mathcal D$. Let $i\in\{0,\dots,p-1\}$ and let $\alpha\in S_i$. By Lemma \ref{lem:stabilizers}, the orbit of $\alpha$ is contained in $S_i$. Hence, it suffices to prove that if $\beta\in S_i$ then $\beta$ is in the orbit of $\alpha$.
 Let $u$ and $v$ be finite binary words such that $\alpha=.u$ and $\beta=.v$ and such that the last digit of both $u$ and $v$ is zero. We claim that there is an element in $\Fp$ with the pair of branches $u\to v$. To prove that it suffices to show that $u$ and $v$ satisfy Conditions $(1)-(4)$ from Lemma \ref{lem:coreFp1}. First, note that since $\alpha$ and $\beta$ are in $\mathcal D\subseteq(0,1)$, the finite binary words $u$ and $v$ must also contain the digit $1$. Hence, Conditions (1) and (2) from the lemma are satisfied.  
 In addition, since $\alpha$ and $\beta$ belong to $S_i$, we have $\sump(u)=i=\sump(v)$. Finally,  $\sufp(u)=0=\sufp(v)$. Hence Conditions $(3)$ and $(4)$ of Lemma \ref{lem:coreFp1} also hold and there is an element $h\in \Fp$ with the pair of branches $u\to v$. In particular, $h(\alpha)=h(.u)=.v=\beta$. Hence, $\beta$ is in the orbit of $\alpha$. 
\end{proof}

Let $\nu\colon F_{1,p}\to F$ be an isomorphism. Then $\nu(\arr{F}_p)$ is a maximal subgroup of $F$, isomorphic to $\Fp$. We claim that the action of $\nu(\arr{F}_p)$ on the set of dyadic fractions $\mathcal D$ also has exactly $p$ orbits. Indeed, since the action of $F_{1,p}$ on the interval $(0,1)$ is locally dense (see, for example \cite[Lemma 7.2]{BW}), Rubin's theorem (see \cite[Section 9]{B}) implies that 
there exists a homeomorphism $\phi\colon(0,1)\to(0,1)$ such that $\nu(f)=\phi^{-1}f\phi$ for every $f\in F_{1,p}$. The homeomorphism $\phi$ must map the set of dyadic fractions $\mathcal D$ onto itself (indeed, that follows from consideration of the groups of germs of $F_{1,p}$ and $F$ (see, for example \cite{L}) at dyadic fractions, rational non-dyadic fractions and irrational numbers in $(0,1)$).
It follows that $\phi(\mathcal D)=\mathcal D$ is the disjoint union of the sets $\phi(S_i)$, $i=0,\dots,p-1$. But these  sets are orbits of the action of $\nu(\arr{F}_p)$ on the interval $(0,1)$. Hence, the action of $\nu(\arr{F}_p)$ on $\mathcal D$ has exactly $p$ orbits. 

\begin{Corollary}
	For every prime number $p$, Thompson's group $F$ has a maximal subgroup whose action on the set of dyadic fractions $\mathcal D$ has exactly $p$ orbits. 
\end{Corollary}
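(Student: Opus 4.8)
The plan is to produce the desired maximal subgroup explicitly as a copy of Jones' subgroup $\arr{F}_p$ sitting inside $F$, and then to transport the orbit count across the isomorphism used to embed it. First I would recall that by Theorem \ref{thm: main2} the only subgroups of $F$ strictly containing $\arr{F}_p$ are $F_{1,p}$ and $F$; in particular $\arr{F}_p$ is a maximal subgroup of $F_{1,p}$. Since $F_{1,p}\cong F$ by \cite{BW}, fixing an isomorphism $\nu\colon F_{1,p}\to F$ yields a subgroup $M:=\nu(\arr{F}_p)$ of $F$ that is isomorphic to $\arr{F}_p$ and is maximal in $F$ (this is exactly Corollary \ref{cor:max Jones}). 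It then remains to show that the action of $M$ on $\mathcal D$ has precisely $p$ orbits.

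The input on the source side is the preceding lemma, which shows that $\arr{F}_p$ acts on $\mathcal D$ with exactly $p$ orbits, namely the sets $S_0,\dots,S_{p-1}$ of dyadic fractions whose binary digit-sum is $i \bmod p$. The main obstacle is that $\nu$ is only an abstract group isomorphism, whereas the number of orbits is a dynamical invariant of the action on $(0,1)$ rather than an algebraic one; a priori $\nu$ need not respect that action at all. The hard part will therefore be to realize $\nu$ geometrically and to verify that this realization preserves the set $\mathcal D$.

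To do this I would invoke Rubin's reconstruction theorem. The action of $F_{1,p}$ on $(0,1)$ is locally dense (see \cite[Lemma 7.2]{BW}), so Rubin's theorem furnishes a homeomorphism $\phi\colon(0,1)\to(0,1)$ with $\nu(f)=\phi^{-1}f\phi$ for every $f\in F_{1,p}$. I would then argue that $\phi(\mathcal D)=\mathcal D$: by comparing the groups of germs of $F_{1,p}$ and of $F$ at a point, which distinguish dyadic, rational non-dyadic, and irrational points, the conjugating homeomorphism $\phi$ is forced to carry dyadic fractions to dyadic fractions. Since $\phi$ is a homeomorphism intertwining the two actions, it sends the $\arr{F}_p$-orbits on $\mathcal D$ bijectively to the $\nu(\arr{F}_p)$-orbits on $\phi(\mathcal D)=\mathcal D$. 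Hence the orbits of $M=\nu(\arr{F}_p)$ on $\mathcal D$ are exactly $\phi(S_0),\dots,\phi(S_{p-1})$, and there are $p$ of them. This exhibits, for the given prime $p$, a maximal subgroup of $F$ whose action on $\mathcal D$ has exactly $p$ orbits, completing the proof.
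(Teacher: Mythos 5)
Your proposal is correct and follows essentially the same route as the paper: realizing the maximal subgroup as $\nu(\arr{F}_p)$ for an isomorphism $\nu\colon F_{1,p}\to F$, invoking Rubin's theorem via local density of the $F_{1,p}$-action to obtain a conjugating homeomorphism $\phi$, using germ-group considerations to show $\phi(\mathcal D)=\mathcal D$, and transporting the $p$ orbits $S_0,\dots,S_{p-1}$ across $\phi$. No substantive differences from the paper's argument.
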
 

We believe the answer to the following problem is positive. 

\begin{Problem}\label{p1}
	Is it true that for every $n\in\mathbb{N}$, Thompson's group $F$ has a maximal subgroup whose action on the set of dyadic fractions $\mathcal D$ has exactly $n$ orbits? 
\end{Problem}

\begin{Remark}[Added in revision]
	Problem \ref{p1} was recently solved in the affirmative by the author \cite{G23}. 
\end{Remark}

Corollary \ref{cor:max Jones} shows that there are at least countably many distinct isomorphism classes of maximal subgroups of infinite index in Thompson's group $F$. However, the following problem remains open. 

\begin{Problem}\label{pro:uncountable}
	Are there uncountably many distinct isomorphism classes of maximal subgroups of Thompson's group $F$?
\end{Problem}

Clearly, in order to answer Problem \ref{pro:uncountable}, one has to consider infinitely generated maximal subgroups of Thompson's group $F$. Recall that $\Stab(\alpha)$ for $\alpha\in (0,1)$ is not finitely generated if and only if $\alpha$ is irrational (see \cite{GS2}). Hence, Thompson's group $F$ has maximal subgroups which are not finitely generated. However, the stabilizers $\Stab(\alpha)$ for $\alpha\in (0,1)\setminus\mathbb{Q}$ are all isomorphic \cite{GS2}. Hence, up to isomorphism, there is only one known maximal subgroup of $F$ which is not finitely generated. 

\begin{Problem}
	Is there a maximal subgroup of $F$ which is not finitely generated and does not fix any number in $(0,1)$? 
\end{Problem}

Note that for every $n\in\mathbb{N}$, the rank of Jones' subgroup $\arr{F}_n\cong F_{n+1}$ is $n+1$ (see \cite{GuSa97}). 
Hence, Thompson's group $F$ has finitely generated maximal subgroups of arbitrarily large rank.  Note also that all known finitely generated maximal subgroups of Thompson's group $F$ are finitely presented. Indeed, all brother groups $F_n$ of Thompson's group $F$ are finitely presented \cite{GuSa97}. For every $\alpha\in\mathbb{Q}$, the stabilizer $\Stab(\alpha)$ is finitely presented \cite{GS2}. Using results for diagram groups from \cite[Section 9]{GuSa97}, one can also prove that the maximal subgroups from \cite{G16}, from \cite{AN} and from Example \ref{exm} above are all finitely presented (the algorithm from \cite[Section 9]{GuSa97} can also be used to find explicit finite presentations for these subgroups).

\begin{Problem}
	Are all finitely generated maximal subgroups of Thompson's group $F$ finitely presented? 
\end{Problem}

\begin{minipage}{3 in}
	Gili Golan\\
	Department of Mathematics,\\
	Ben Gurion University of the Negev,\\ %Be'er Sheva, Israel\\
	golangi@bgu.ac.il
\end{minipage}


\begin{thebibliography}{AAA}
	
\bibitem{A1} V. Aiello, 
	\it On the Alexander Theorem for the oriented Thompson group $F$, 
	\rm Algebraic and Geometric Topology 20.1 (2020), 429-438.
	
	\bibitem{AN1} V. Aiello and T. Nagnibeda,
	\it  On the oriented Thompson subgroup $\arr{F}_3$ and its relatives in higher Brown-Thompson groups,
	\rm Journal of	Algebra and its Applications, Volume 21 (07), (2022).
	
	\bibitem{AN} V. Aiello and N. Nagnibeda,
	\it On the 3-colorable subgroup $\mathcal F$ and maximal subgroups of Thompson's group $F$,
	\rm arxiv:2103.07885
	
	
	
	\bibitem{BW} C. Bleak and B. Wassink,
	\it Finite index subgroups of R. Thompson’s group F. \rm arXiv:0711.1014 
	
	\bibitem{Brin} M. Brin, 
	\it The Ubiquity of Thompson's Group F in Groups of Piecewise Linear Homeomorphisms of the Unit Interval,
	\rm Journal of the London Mathematical Society, 60 (2), 1999, 449-460.  
	
	\bibitem{BrinR} M. Brin, 
	\it Higher Dimensional Thompson Groups, 
	\rm Geometriae Dedicata, 108, 163-192.

	
	\bibitem{B} K. S. Brown,
	\it  Finiteness properties of groups,
	\rm Proceedings of the Northwestern conference on cohomology of groups (Evanston, Ill., 1985). J. Pure Appl. Algebra 44 (1987), no. 1--3, 45--75.
	
	\bibitem{CFP} J. Cannon, W. Floyd and W. Parry,
	\it  Introductory notes on Richard Thompson's groups.
	\rm L'Enseignement Mathematique, 42 (1996), 215--256.
	
	\bibitem{GGJ} T. Gelander, G. Golan and K. Juschenko,
	\it Invariable generation of Thompson groups,
	\rm Journal of Algebra 478 (2017), 261--270.
	
	
	
	\bibitem{GS} G. Golan and M. Sapir,
	\it On Jones' subgroup of R. Thompson group $F$,
	\rm Journal of Algebra 470 (2017), 122--159.
	
	\bibitem{GS1} G. Golan and M. Sapir,
	\it On subgroups of R. Thompson group $F$,
	\rm Trans. Amer. Math. Soc. 369 (2017), 8857--8878.
	%
	\bibitem{GS2} G. Golan and M. Sapir,
	\it  On the stabilizers of finite sets of numbers in the R. Thompson group $F$,
	\rm St. Petersburg Math Journal (Algebra i Analiz), 1 (29), 2017, 70--109.
	
	\bibitem{GS3} G. Golan-Polak and M. Sapir, 
	\it On closed subgroups of R. Thompson group $F$, 
	\rm arXiv:2105.00531, to appear in the Israel Journal of Mathematics. 
	
	\bibitem{GS4} G. Golan-Polak and M. Sapir, 
	\it On some generating set of Thompson's group $F$, 
	\rm arXiv:2210.16876.
	
	
	\bibitem{G16} G. Golan-Polak,
	\it The generation problem in Thompson group $F$,
	\rm arxiv:1608.02572, to appear in Memoirs of the AMS. 
	
	
	\bibitem{G} G. Golan-Polak,
	\it Random generation of Thompson group $F$
	\rm Journal of Algebra, 593 (2022), 507-524.
	
	
	\bibitem{G23} G. Golan-Polak,
	\it Some results on Maximal subgroups of Thompson's group $F$, 
	\rm In preparation. 
	
	
	%\bibitem{Go}  E.~S.~Golod,
	%\textit{Some problems of Burnside type},
	%1968,  Proc. Internat. Congr. Math. (Moscow, 1966)  pp. 284--289, Izdat. "Mir'', Moscow.
	
	\bibitem{GuSa97} V. Guba and M. Sapir,
	\it Diagram groups,
	\rm Memoirs of the Amer. Math. Soc. 130, no. 620 (1997), 1--117.
	%%
	%%
	%
	\bibitem{GuSa99} V. Guba and M Sapir,
	\it On subgroups of the R. Thompson group F and other diagram groups,
	\rm (Russian) Mat. Sb. 190 (1999), no. 8, 3--60; translation in Sb. Math. 190 (1999), no. 7-8, 1077-1130.
	
	\bibitem{Jones} V. Jones,
	\it Some unitary representations of Thompson’s groups F and T,
	\rm Journal of Combinatorial Algebra, Volume 1, Issue 1, 2017, pp. 1–44.
	
	\bibitem{Jones1} V. Jones,
	\it A no-go theorem for the continuum limit of a periodic quantum spin chain,
	\rm Comm. Math. Phys. 357 (2018), no. 1, 295-317.
	
	\bibitem{Jones2} V. Jones, 
	\it On the construction of knots and links from Thompson’s groups,
	\rm Knots,	Low-Dimenional Topology and Applications: Knots in Hellas, InternationalOlympic Academy, Greece, July 2016. Springer International Publishing,	2019." 
	
	\bibitem{L} Y. Lodha, 
	\it Coherent actions by homeomorphisms on the real line or an interval,
	\rm  Israel J. Math. 235, 183–212 (2020).
	
	\bibitem{Sa} M.~Sapir,
	\emph{Combinatorial algebra: syntax and semantics}, Springer Monographs in Mathematics, 2014.
	
	
	\bibitem{Sav}
	Dmytro Savchuk,
	\it Some graphs related to Thompson's group $F$.
	\rm Combinatorial and geometric group theory, 279-296,
	Trends Math., Birkhauser/Springer Basel AG, Basel, 2010.
	
	
	\bibitem{Sav1} Dmytro Savchuk,
	\it Schreier graphs of actions of Thompson's group $F$ on the unit interval and on the Cantor set,
	\rm Geom. Dedicata 175 (2015), 355--372.
	
\end{thebibliography}
\end{document}